\setlist[enumerate]{label=\rm{(\arabic*)}}
\setlist[enumerate,2]{label=\rm({\it\roman*})}
\setlist[itemize]{label=\raisebox{0.25ex}{\tiny$\bullet$}}
\newtheorem{theorem}{Theorem}
\newtheorem{lemma}{Lemma}[subsection]
\newtheorem{corollary}[lemma]{Corollary}
\newtheorem{proposition}[lemma]{Proposition}
\newtheorem{question}[lemma]{Question}
\theoremstyle{definition}
\newtheorem{definition}[lemma]{Definition}
\theoremstyle{remark}
\newtheorem{remark}[lemma]{Remark}
\newtheorem{algorithm}[lemma]{Algorithm}
\newtheorem{example}[lemma]{Example}
\newcommand\A{{\mathbb A}}
\newcommand\C{{\mathbb C}}
\newcommand\F{{\mathbb F}}
\newcommand\N{{\mathbb N}}
\newcommand\R{{\mathbb R}}
\newcommand\Z{{\mathbb Z}}
\newcommand\ZZ{{\mathcal{Z}}}
\newcommand\p{{\mathbb P}}
\renewcommand\k{\mathrm{k}}
\newcommand\Weyl{{\mathrm W}_{\infty}}
\newcommand\car{{\mathrm{char}}}
\newcommand{\trace}{\mathrm{trace}}
\newcommand{\pr}{\mathrm{pr}}
\newcommand\into{\hookrightarrow}
\newcommand{\predecessor}{\raisebox{0.7mm}{\makebox{\scaleobj{0.5}{\to}}}}
\newcommand{\predecessorLow}{\scaleobj{0.5}{\to}}
\renewcommand{\b}{{\rm Bir}({\mathbb P}^2)}
\newcommand{\bb}{{\mathfrak B}{\mathfrak i}{\mathfrak r}({\mathbb P}^2)}
\renewcommand{\r}{{\rm Rat}({\mathbb P}^2)}
\newcommand{\rr}{{\mathfrak R}{\mathfrak a}{\mathfrak t}({\mathbb P}^2)}
\newcommand{\Jonq}{{\rm Jonq}}
\newcommand{\JJonq}{{\mathfrak J}{\mathfrak o}{\mathfrak n}{\mathfrak q}}
\newcommand{\LL}{{\mathfrak L}}
\newcommand{\PP}{{\mathfrak P}}
\newcommand{\Comp}{{\mathfrak C}{\mathfrak o}{\mathfrak m}{\mathfrak p}}
\newcommand\lb[1]{\text{\it #1}}
\newcommand\lbd[2]{\text{\it #1\kern-1.1pt.\kern-1.1pt#2}}
\DeclareMathOperator{\Aut}{Aut}
\DeclareMathOperator{\GL}{GL}
\DeclareMathOperator{\PGL}{PGL}
\DeclareMathOperator{\SL}{SL}
\DeclareMathOperator{\Aff}{Aff}
\DeclareMathOperator{\M}{Mat}
\DeclareMathOperator{\B}{\mathcal{B}}
\DeclareMathOperator{\Base}{\mathrm{Base}}
\DeclareMathOperator{\Pic}{\mathrm{Pic}}
\DeclareMathOperator{\Sym}{Sym}
\DeclareMathOperator{\Bir}{Bir}
\DeclareMathOperator{\J}{J}
\DeclareMathOperator{\id}{id}
\DeclareMathOperator{\lgth}{lgth}
\newcommand{\dlgth}{\mathfrak{d}_{\lgth}}
\DeclareMathOperator{\comult}{comult}
\DeclareMathOperator{\triple}{{\mathcal T}}
\DeclareMathOperator{\Spec}{Spec}
\title[Length in the Cremona group]{Length in the Cremona group}
\author{J\'er\'emy Blanc}
\address{J\'er\'emy Blanc, Universit\"{a}t Basel, Spiegelgasse $1$, CH-$4051$ Basel, Switzerland.}
\email{jeremy.blanc@unibas.ch}
\author{Jean-Philippe Furter}
\address{Jean-Philippe Furter, Dpt. of Math., Univ. of La Rochelle, av. Cr\'epeau, 17000 La Rochelle, France}
\email{jpfurter@univ-lr.fr}
\date{\today}
\thanks{The first author acknowledges support by the Swiss National Science Foundation Grant  ``Birational Geometry'' PP00P2\_153026. The second author acknowledges support by the  R\'egion Poitou-Charentes \includegraphics[scale=0.05]{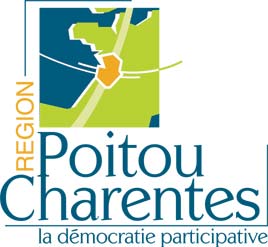} Both authors were also supported by the ANR Grant ``BirPol'' ANR-11-JS01-004-01.}
\begin{document}

\begin{abstract}
The Cremona group is the group of birational transformations of the plane.
A birational transformation for which there exists a pencil of lines which is sent onto another pencil of lines is called a Jonqui\`eres transformation. By the famous Noether-Castelnuovo theorem, every birational transformation $f$ is a product of Jonqui\`eres transformations.
The minimal number of factors in such a product
will be called the length, and written $\mathrm{lgth}(f)$. Even if this length is rather unpredictable, we provide an explicit algorithm to compute it, which only depends on the multiplicities of the linear system of $f$.

As an application of this computation, we give a few properties of the dynamical length of $f$ defined as the limit of the sequence $n \mapsto \mathrm{lgth} (f^n) / n$. It follows for example that an element of the Cremona group is distorted if and only if it is algebraic.
The computation of the length may also be applied to the so called Wright complex associated with the Cremona group: This has been done recently by Lonjou.
Moreover, we show that the restriction of the length to the automorphism group of the affine plane is the classical length of this latter group (the length coming from its amalgamated structure).
In another direction, we compute the lengths and dynamical lengths of all monomial transformations, and of some Halphen transformations. Finally, we show that the length is a lower semicontinuous map on the Cremona group endowed with its Zariski topology.
\end{abstract}  
\maketitle

\tableofcontents

\section{Introduction}
\subsection{The length of elements of the Cremona group}
Let us fix an algebraically closed field $\k$.
The Cremona group over $\k$, often written $\mathrm{Cr}_2(\k)$, is the group $\Bir(\p^2)$ of $\k$-birational transformations of the
projective plane $\p^2$.
Such transformations can be written in the form
\[[x:y:z]\dasharrow [u_0(x,y,z):u_1(x,y,z):u_2(x,y,z)]\]
where $u_0,u_1,u_2\in \k[x,y,z]$
are homogeneous polynomials of the same degree, and this degree is the degree of the map, if the polynomials have no common factor. The Cremona transformations of degree $1$ are the automorphisms of $\p^2$,
i.e.~the elements of the group $\Aut(\p^2)=\PGL_3(\k)$. The Cremona transformations of degree $2$ are called quadratic.

The group $\Bir(\p^2)$ is generated by the automorphism group  $\Aut (\p^2)$
and by the single involution $\sigma\colon [x:y:z]\dasharrow [yz:xz:xy]$, called the \emph{standard quadratic transformation}.
In Castelnovo's
proof of this result \cite{Cas}, an element of $\Bir(\p^2)$ is first decomposed into a product of \emph{Jonqui\`eres} elements (also called \emph{Jonqui\`eres transformations}).

These latter  maps are defined as the birational maps $f$ for which there exist points $p,q \in \p^2$ such that $f$ sends the pencil of lines passing through $p$ to the pencil of lines passing through $q$.
In this text, the group of Jonqui\`eres transformations preserving the pencil of lines passing through a given point $p \in \p^2$ is denoted by $\Jonq_p\subseteq \Bir(\p^2)$. The set of all Jonqui\`eres transformations is then equal to
 \[ \Jonq = \bigcup_{p \, \in \, \p^2} \Aut(\p^2)\Jonq_p \Aut(\p^2)= \bigcup_{p \, \in \, \p^2} \Aut(\p^2)\Jonq_p= \Aut(\p^2)\Jonq_{p_0}\Aut(\p^2), \]
 for any fixed point $p_0\in \p^2$. The above equalities follow from the equality $\alpha  \circ \Jonq_p \circ \,  \alpha^{-1} = \Jonq_{\alpha (p)}$, which holds for each $\alpha \in \Aut (\p^2)$. 
 
Nowadays, one can also see the proof of
Castelnuovo's
theorem by using the Sarkisov program (see \cite{Cor}), and the number of Jonqui\`eres transformations needed corresponds to the number of links involved, which do not preserve a fibration. 
The proof that every Jonqui\`eres transformation is a product of linear maps and $\sigma$ is then an easy exercise (see for example~\cite[$\S8.4$]{Alberich}).

In order to study the complexity of an element of $\Bir(\p^2)$, according to the above decomposition, the following definition seems natural:

\begin{definition}
For each Cremona transformation $f \in \Bir(\p^2)$ we define its \emph{length} $\lgth(f)$
as follows. If $f\in \Aut(\p^2)$, we set $\lgth(f)=0$.
Otherwise we set $\lgth (f) =n$, where $n$  is the least positive integer for which $f$ admits a decomposition
\[ f=\varphi_n \circ \cdots \circ \varphi_1, \quad \forall \, i, \, \varphi_i \in \Jonq. \] 
\end{definition}

With this definition, note that we have $\lgth (f^{-1} ) = \lgth (f)$.

For any fixed point $p_0 \in \p^2$ the group $\Bir(\p^2)$ is generated by its two subgroups $\Aut(\p^2)$ and $\Jonq _{p_0}$.
The length of $f$ might also be seen as the least non-negative
integer $n$ for which $f$ admits a decomposition
\[ f=\alpha_n \circ \varphi_n \circ  \cdots  \circ \alpha_1 \circ \varphi_1 \circ  \alpha_0,
\quad \forall \,i, \, \alpha_i \in \Aut ( \p^2) \text{ and } \forall \, j, \, \varphi_j  \in \Jonq_{p_0}. \]   
In particular, this least integer $n$
does not depend on
$p_0$. All this follows from the equality $\Jonq = \Aut(\p^2) \Jonq_{p_0} \Aut(\p^2)$.

This notion of length is similar to the case of automorphisms of the affine plane. Taking a linear embedding $\A^2 \hookrightarrow \p^2$, the classical Jung-Van der Kulk theorem says that $\Aut(\A^2)$ is generated by $\Aff_2 :=\Aut ( \p^2) \cap \Aut(\A^2)$ and $\Jonq_{p,\A^2}:=\Jonq_p \cap \Aut(\A^2)$ for any $p\in \p^2\setminus \A^2$ \cite{Jung,VdK53,LamJung}.
Furthermore, there
are no relations
except the trivial ones, i.e.~the group $\Aut(\A^2)$ is the amalgamated product of $\Aff_2$ and $\Jonq_{p,\A^2}$ over their intersection.

The length in $\Aut(\A^2)$ is then easy to compute, by writing an element in a reduced form
(i.e.~as a product of elements of $\Aff_2$ and $\Jonq_{p,\A^2}$ where two consecutive elements do not belong to the same group). 
It has moreover natural properties. For example, it
is lower semicontinuous for the Zariski topology on $\Aut(\A^2)$, as shown
in \cite{Furterlength}  when $\car(\k)=0$
(in fact this result also holds in positive characteristic by Theorem~\ref{Continuitylength} and Proposition~\ref{Prop:LengthThesameinA2BirP2} below).

The case of $\Bir(\p^2)$ is more complicated, as $\Bir(\p^2)$ is not
the amalgamated product of $\Aut (\p^2)$ and $\Jonq_{p_0}$. There is only one relation, of very small length \cite{BlaRel}, which makes the group $\Bir(\p^2)$ more complicated than the group $\Aut(\A^2)$ (see also \cite{GizRel,IskRel} for other presentations with generators and relations of $\Bir(\p^2)$).
In particular, there exist  elements of $\Bir(\p^2)$ of finite order (finitely many families up to conjugacy) which are neither conjugate to an element of $\Aut (\p^2)$ nor to an element of $\Jonq_p$ \cite{Blfiniteorder}, contrary to the case of amalgamated products.
Another way to see the difference is that $\Aut(\A^2)$ acts on a tree
thanks to its amalgamated structure
\cite{SerreTrees,LamyTits}, but $\Bir(\p^2)$ only acts on a simply connected simplicial complex
of dimension two \cite{Wright}.
The group $\Bir(\p^2)$ does not act (non-trivially) on a tree because it is not a non-trivial amalgamated product \cite{Cornulier}.

Computing the length of an element $f\in \Bir(\p^2)$ is then more tricky
 than the case of $\Aut(\A^2)$ and we cannot only take a reduced decomposition (i.e.~a product $f=\varphi_n \circ \cdots \circ \varphi_1$ of Jonqui\`eres elements such that $\varphi_{i+1}\circ \varphi_{i}$ is not Jonqui\`eres for $i=1,\dots,n-1$).
The length of such reduced decompositions is unbounded (Proposition~\ref{Prop:Unboudedlengthreduceddec}).
One way to give an upper bound for the length of an element is to follow the proof of Castelnuovo and to apply successive Jonqui\`eres elements to decrease the degree
(details are given in Algorithm~\ref{Algo:Castelnuovo}).  
There is no reason a priori to expect this upper bound to be equal to the length, but in one of our main results, Corollary~\ref{Cor:CastelnuovoOptimal}, we prove that this is actually the case.
We also prove that in the (possible) case where the  Algorithm of Castelnuovo does not provide the smallest possible degree after finitely many steps, then, this smallest possible degree can be obtained by
composing on the right with a single quadratic map (Corollary~\ref{Cor:ErrorQuadratic}).

Multiplying an element $f\in \Bir(\p^2)$ with a Jonqui\`eres element $\varphi$,
we have $\lgth (f\circ \varphi) = \lgth (f) + \varepsilon$ where $\varepsilon \in \{ -1, 0 , 1 \}$.
The possibilities occur in a rather chaotic way since
there are examples where $\deg(f\circ \varphi)>\deg(f)$ but $\lgth(f\circ \varphi)=\lgth(f)-1$
(Proposition~\ref{Proposition:Bertiniexample}\ref{Bertiniexample2}).
Moreover, the number of Jonqui\`eres elements $\varphi$ such that $\lgth(f\circ \varphi)=\lgth(f)-1$ can be infinite, even up to right multiplication with an element of $\Aut(\p^2)$ (Proposition~\ref{Proposition:Bertiniexample}\ref{Bertiniexample3}).

We will however show that there is a natural algorithm that yields the length. Moreover, we will show that the length only depends on combinatorial properties of the maps, namely the multiplicities of the base-points.
Let us briefly recall the definition of the base-points and their multiplicities.

Let $f$ be an element of $\Bir(\p^2)$. Write it in the form
\[ [x:y:z]\dasharrow [ u_0(x,y,z):u_1(x,y,z):u_2(x,y,z)]\]
where $u_0,u_1,u_2$ are homogeneous of the same degree $d:= \deg f$. Then, the linear system of $f$ is the net of curves
\[ \lambda_0 u_0 + \lambda_1 u_1 + \lambda_2 u_2 = 0, \; [\lambda_0 : \lambda_1 : \lambda_2 ] \in \p^2. \]
Equivalently, it is the inverse image by $f$ of the net of lines in $\p^2$. A linear system of this form is called 
\emph{homaloidal}.
It consists of curves of degree $d$ passing through finitely many points $p_1,\dots,p_r$ (lying on $\p^2$ or infinitely near) with some multiplicities $m_1,\dots,m_r$ such that $\sum m_i=3(d-1)$ and $\sum (m_i)^2=d^2-1$ (see below, in particular Remark~\ref{Rem:BasePtsClassical} and Lemmas~\ref{Lemm:BirP2ZP2} and \ref{Lemm:EasyWeyl}). The points $p_i$ are called the base-points of $f$ and the set $\{ p_1, \ldots,p_r  \}$ of base-points is denoted $\Base (f)$.

In particular, maps of degree $1$ have no base-points and maps of degree $2$ have three base-points of multiplicity~$1$. We say that $(m_1,\dots,m_r)$ is the \emph{homaloidal type} of $f$. It
is a finite sequence up to permutation, or equivalently a
multiset (a multiset, unlike a set, allows for multiple instances for each of its elements).
We often write $(d;m_1,\dots,m_r)$ to see the degree, even though the degree of course is uniquely determined by the multiplicities. 
In our text and by definition, each homaloidal type will be of this form, i.e.~will be the homaloidal type of at least one birational transformation of $\p^2$ (in \cite{Alberich,BlaCal} such homaloidal types are called \emph{proper homaloidal types}).
We also define the \emph{comultiplicity} of $f$ to be $\deg (f) - \max_i m_i$.
This notion is sometimes used in the literature, for instance in the proof of the Noether-Castelnuovo
theorem given by Alexander \cite{Alexander}. One can observe that $\comult(f)=1$ if and only if $f$ is a  Jonqui\`eres element (follows from Lemma~\ref{Lemm:JoCremWinfty} and Definition~\ref{definition: Jonquieres element}). To state our main result, we use the following notion:

\begin{definition} \label{DefiPredeBirP2}
Let $f\in \Bir(\p^2)$. A \emph{predecessor of $f$} is an element
of minimal degree among the elements of the form $f \circ \varphi$ where $\varphi$ is a Jonqui\`eres transformation.
\end{definition}

A precise description of the predecessors of an element of $\Bir(\p^2)$ is algorithmic and not very difficult to obtain. In particular, the following holds:

\begin{lemma}   \label{Lem:Pred}
Let $f\in \Bir(\p^2)$. 
\begin{enumerate}
\item\label{Lem:Pred1}
The homaloidal type of a predecessor of~$f$ is uniquely determined by 
the homaloidal type of~$f$.
\item\label{Lem:Pred2}
There are infinitely many predecessors of $f$, but only finitely many classes up to right composition with an element of $\Aut(\p^2)$. 
\item\label{Lem:Pred3}
If $\varphi$ is a Jonqui\`eres transformation such that $f \circ \varphi$ is a predecessor of $f$, then $\Base(\varphi^{-1})\subseteq \Base(f)$.
\end{enumerate}
\end{lemma}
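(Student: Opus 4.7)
The plan rests on the classical degree formula for a composition: if $f \in \Bir(\p^2)$ has degree $d$ and $\varphi$ is Jonqui\`eres of degree $e$, so that $\varphi^{-1}$ has base points of multiplicities $(e-1, 1, \ldots, 1)$ (with $2e-2$ ones) in the target of $\varphi$, then
\begin{equation*}
\deg(f \circ \varphi) \;=\; de \;-\; \sum_{p} m_f(p)\, m_{\varphi^{-1}}(p),
\end{equation*}
where $p$ ranges over common base points (possibly infinitely near) and $m_f(p)$, $m_{\varphi^{-1}}(p)$ are extended by $0$ outside the respective base loci. Moreover, via the action on the infinite lattice $\ZZ(\p^2)$, the full homaloidal type of $f \circ \varphi$ is determined by the homaloidal types of $f$ and $\varphi$ together with the combinatorial datum of which base points of $\varphi^{-1}$ coincide with which base points of $f$.

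First I would prove \ref{Lem:Pred3} by contradiction. Suppose $p \in \Base(\varphi^{-1}) \setminus \Base(f)$. If there also exists $q \in \Base(f) \setminus \Base(\varphi^{-1})$, I construct a Jonqui\`eres $\varphi'$ of the same degree $e$ whose inverse has $(\Base(\varphi^{-1}) \setminus \{p\}) \cup \{q\}$ as base points, with the multiplicity formerly carried by $p$ now carried by $q$. The formula then yields $\deg(f \circ \varphi') = \deg(f \circ \varphi) - m_f(q)\,m_{\varphi^{-1}}(p) < \deg(f \circ \varphi)$, contradicting minimality. In the complementary case $\Base(f) \subseteq \Base(\varphi^{-1})$, the same strategy is applied with a Jonqui\`eres $\varphi'$ of degree $e-1$ obtained by deleting two free base points of $\varphi^{-1}$ and appropriately reassigning the pencil centre; a direct calculation with the formula shows that the degree drops by at least $d - m_1$, where $m_1 = \max_i m_i$, which is strictly positive since $d > m_i$ for every base point of a non-trivial $f$.

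Given \ref{Lem:Pred3}, part \ref{Lem:Pred2} follows quickly. The condition $\Base(\varphi^{-1}) \subseteq \Base(f)$ together with $|\Base(\varphi^{-1})| = 2e-1$ bounds $e \le (|\Base(f)| + 1)/2$, leaving only finitely many admissible configurations: a subset of $\Base(f)$ and the choice of a distinguished element carrying multiplicity $e-1$. The key bijection is that $\varphi$ modulo right composition with $\Aut(\p^2)$ is completely determined by the linear system of $\varphi^{-1}$ in its source (equivalently, by $\Base(\varphi^{-1})$ with its multiplicities), because choosing $\varphi^{-1}$ realising this linear system amounts to choosing a basis of a two-dimensional subsystem, i.e., acting by an element of $\Aut(\p^2)$. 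Hence finitely many classes. That there are nonetheless infinitely many predecessors is immediate, since right-multiplication by any $\alpha \in \Aut(\p^2)$ preserves both the Jonqui\`eres property of $\varphi$ and the degree of $f \circ \varphi$.

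Finally, for \ref{Lem:Pred1}, the optimisation problem of minimising
\begin{equation*}
de - (e-1)\, m_f(p_0) - \sum_{i=1}^{2e-2} m_f(p_i)
\end{equation*}
over admissible choices of $e$, of the distinguished $p_0 \in \Base(f)$, and of $2e-2$ further points of $\Base(f)$, depends only on the multiset of multiplicities of $f$; the resulting homaloidal type of the predecessor is likewise recovered from the same combinatorial data via the action on $\ZZ(\p^2)$. The main obstacle throughout is the realisability issue: one must verify that each combinatorially optimal configuration is the base locus of an actual Jonqui\`eres transformation, which requires invoking the classical description of Jonqui\`eres configurations (the positioning of the $2e-2$ simple base points relative to the pencil centre, including the proximity structure of infinitely near base points).
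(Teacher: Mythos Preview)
Your approach is a direct attack via the degree formula, whereas the paper takes a completely different route: it postpones the proof of Lemma~\ref{Lem:Pred} until after the Weyl-group machinery of Section~3 is in place. There, one first proves the analogue of all three parts for predecessors in $\Weyl(e_0)$ (Lemma~\ref{Lemm:AlgoFirstStepS} and Corollary~\ref{Coro:UniquenessPredUpSym}), where your ``swap $p$ for $q$'' is trivially available via a transposition in $\Sym_{\p^2}$, with no realisability issue. The hard step---that the Weyl-group minimum coincides with the $\Bir(\p^2)$ minimum---is Proposition~\ref{Prop:PredecessorsBirP2FromCastelnuovo-predecessor}, proved through the Castelnuovo reduction. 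Lemma~\ref{Lem:Pred} then follows in a few lines from Corollary~\ref{Cor:EquiTwoPred}.

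Your direct argument has a genuine gap precisely at the point you flag: in Case~1 of~\ref{Lem:Pred3}, the set $(\Base(\varphi^{-1})\setminus\{p\})\cup\{q\}$ need not be the base locus of any Jonqui\`eres transformation of $\p^2$. If $q$ is infinitely near to a point not in this set, or if the new configuration violates the admissibility conditions for a Jonqui\`eres net (alignment of simple points with the pencil centre, proximity constraints), no such $\varphi'$ exists. The paper's detour through $\Weyl$ is designed exactly to avoid this. Your Case~2 also has a numerical error: deleting the free point $p$ and a simple point $p_j\in\Base(f)$ drops the degree by $d-m_0-m_j$, not $d-m_1$; since $m_0+m_j\le d$ with equality possible (Corollary~\ref{Coro:PositivityOrbit}), this drop can vanish. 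Finally, even granting~\ref{Lem:Pred3}, your argument for~\ref{Lem:Pred1} only shows the minimum \emph{degree} is determined by the multiplicities; that all minimisers have the same full homaloidal type is the nontrivial content of Lemma~\ref{Lemm:AlgoFirstStepS}\ref{ModuloSymP2Onepred} (the set $\mathcal S$ there can contain several values of $s$, and one must check they all give $\Sym_{\p^2}$-equivalent results).
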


\begin{remark}
Lemma~\ref{Lem:Pred}\ref{Lem:Pred2} asserts that any Cremona transformation $f \in \Bir (\p^2)$ admits finitely many predecessors up to right multiplication by an element of $\Aut(\p^2)$. However, we will see in Lemma~\ref{lemm:UnboundedNumberPred} that this number is not uniformly bounded on $\Bir(\p^2)$, even if it 
is bounded by an integer only depending on $\deg(f)$ (Remark~\ref{Rem:BoundPredDegree}).
\end{remark}

Computing a sequence of predecessors (which is algorithmic, as said before, and whose homaloidal types are uniquely determined by the one of the map we start with) yields then a finite algorithm to compute the length of any element of $\Bir(\p^2)$, as our main theorem states: 

\begin{theorem}\label{TheMainTheorem}
Let
$f_0$ be an element of $\Bir(\p^2)$,
let $n\ge 1$ be an integer, and let $(f_i)_{i\in \N}$ be a sequence of elements of $\Bir(\p^2)$ such that $f_i$ is a predecessor of $f_{i-1}$ for each $i\ge 1$. For all Jonqui\`eres elements
$\varphi_1, \ldots, \varphi_n$ of  $\Bir(\p^2)$,
the element $g_n=f \circ \varphi_1\circ \dots \circ \varphi_n$
satisfies
\begin{enumerate}
\item\label{Degfngn}
$\deg(f_n)\le \deg(g_n)$;
\item\label{Comultfngn}
$\comult(f_n)\le \comult(g_n)$;
\item\label{AnBnSymp2}
If $\deg(f_n)= \deg(g_n)$, then $f_n$ and $g_n$ have the same homaloidal type.
\end{enumerate}
In particular, $\lgth(f)=\min \{n\mid \deg(f_n)=1\}$.
\end{theorem}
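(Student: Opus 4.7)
My plan is to proceed by induction on $n$, simultaneously proving~(1), (2), (3), and then derive the length statement from~(1) alone.

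The base case $n=0$ is trivial since $f_0=g_0$. For the inductive step, write $g_n=g_{n-1}\circ\varphi_n$ and invoke the induction hypothesis for $(f_{n-1},g_{n-1})$. The clean case is $\deg(f_{n-1})=\deg(g_{n-1})$: the inductive~(3) then gives the same homaloidal type, and Lemma~\ref{Lem:Pred}\ref{Lem:Pred1} propagates this to predecessors, so that the minimal degree of $g_{n-1}\circ\psi$ over Jonqui\`eres $\psi$ equals $\deg(f_n)$; thus $\deg(g_n)\ge\deg(f_n)$. In case of equality, $g_n$ is itself a predecessor of $g_{n-1}$ and inherits the homaloidal type of $f_n$, giving~(3) and~(2) together.

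What remains delicate is (a) the subcase $\deg(g_n)>\deg(f_n)$ of the clean situation, where~(2) still needs to be justified, and (b) the case $\deg(f_{n-1})<\deg(g_{n-1})$. I would handle both via a single monotonicity lemma for predecessors, phrased as: if two Cremona transformations have homaloidal types ordered coordinate-wise in degree and comultiplicity, then so do the homaloidal types of their predecessors. This reduces (b) to (a) by factoring the comparison through a predecessor $h$ of $g_{n-1}$, and allows (a) itself to be checked using the explicit Noether--Castelnuovo reduction formulae together with the characterisation of homaloidal types. Establishing this monotonicity is the technical heart of the proof; I expect it to require the explicit combinatorics of base-point multiplicities and the defining relations $\sum m_i=3(d-1)$ and $\sum m_i^2=d^2-1$.

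For the length statement, set $n_0:=\min\{n\mid\deg(f_n)=1\}$. By Definition~\ref{DefiPredeBirP2} one can choose Jonqui\`eres elements $\psi_i$ with $f_i=f_{i-1}\circ\psi_i$, whence $f_0=f_{n_0}\circ\psi_{n_0}^{-1}\circ\cdots\circ\psi_1^{-1}$. Since $f_{n_0}\in\Aut(\p^2)\subseteq\Jonq$ and $\Aut(\p^2)\cdot\Jonq\subseteq\Jonq$, the automorphism $f_{n_0}$ can be absorbed into the adjacent Jonqui\`eres factor, giving $\lgth(f_0)\le n_0$. Conversely, if $f_0=\chi_1\circ\cdots\circ\chi_k$ is a shortest Jonqui\`eres decomposition with $k=\lgth(f_0)$, setting $\varphi_i:=\chi_{k+1-i}^{-1}$ gives $g_k=\id$ of degree one, so~(1) forces $\deg(f_k)\le 1$, hence $n_0\le k$.
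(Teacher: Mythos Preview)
Your inductive scheme hinges on the monotonicity lemma you sketch: if $\deg(a)\le\deg(a')$ and $\comult(a)\le\comult(a')$, then the predecessors satisfy the same inequalities. This lemma is \emph{false}. Take the Bertini type $a=(17;6^8)$ and the type $a'=(19;7^7,4,1)$: one has $\deg(a)=17<19=\deg(a')$ and $\comult(a)=11<12=\comult(a')$, yet the predecessor of $a$ has degree $14$ while the predecessor of $a'$ has degree $13$ (these are computed in Examples~\ref{Example:SomeSequences} and~\ref{Example:SequenceBertini}). So no amount of Noether--equality combinatorics will establish the inequality you need; the information you carry through the induction---degree and comultiplicity alone---is simply insufficient. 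This pair is not artificial: Proposition~\ref{Proposition:Bertiniexample} shows that starting from an $f_0$ of type $(17;6^8)$, a single Jonqui\`eres step can produce a $g_1$ of type $(19;7^7,4,1)$, so the very situation your lemma must handle arises already at $n=2$ with $f_1$ of type $(14;6,5^6,3)$ and $g_1$ of type $(19;7^7,4,1)$---your argument would then try to bound $\deg(f_2)$ via the predecessor of $g_1$, and although the numbers happen to work there, the mechanism you propose (a general monotonicity principle) does not.

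The paper's proof avoids this by never trying to compare two abstract types. It first lifts the problem to the Weyl group $\Weyl$ (Proposition~\ref{Prop:AlgoWeyl}) and there runs an induction on the triple $(n,\lgth(a_0),\deg(b_1))$ ordered lexicographically. The crucial leverage is that both sequences start at the \emph{same} element $a_0$: this lets one rewrite the product $\psi_n\circ\cdots\circ\psi_1$ using the algorithm applied to $\psi^{-1}(e_0)$, which has strictly smaller length than $a_0$, and then reduce via transpositions (Lemma~\ref{Lemm:Permutationdecreases}) and quadratic modifications $\sigma_{p_1,q,r}$ to decrease $\deg(b_1)$. The transfer back to $\Bir(\p^2)$ is then done via Corollary~\ref{Cor:EquiTwoPred}, which identifies predecessors in $\Bir(\p^2)$ with predecessors in $\Weyl(e_0)$. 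Your derivation of the length formula from assertion~(1) is fine, but the heart of the argument cannot be reduced to a type-by-type monotonicity.
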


Thus  the length of all maps of some given degree can be easily computed (see $\S\ref{SubSec:LengthSmallDegree}$ for
tables up to degree $12$).

Another consequence of Theorem~\ref{TheMainTheorem} is that the length of an element of $\Aut(\A^2)$, viewed as an element of $\Bir(\p^2)$, is the same as the classical length given by
the amalgamated product structure
(Proposition~\ref{Prop:LengthThesameinA2BirP2}).

In the general case, the length in $\Bir(\p^2)$ can be interpreted in terms of the natural distance defined on the already mentioned \emph{Wright complex} \cite{Wright} or simply on its associated graph.
We now
recall the construction of this graph (the Wright complex being then the two-dimensional simplicial complex obtained from this graph by adding a two-dimensional face to each triangle).
We fix two distinct points $p,q\in \p^2$ and look at the three subgroups
$G_0,G_1,G_2$ of $\Bir(\p^2)$
given by
\[G_0=\Aut(\p^2),\ G_1=\Jonq_p,\ G_2=\pi^{-1} \Aut(\p^1\times \p^1) \pi\]
where $\pi\colon \p^2\dasharrow \p^1 \times \p^1$ is the birational map induced by the projections away from $p$ and $q$ respectively. We then consider as vertices the set $\mathcal{A}_0\cup \mathcal{A}_1\cup \mathcal{A}_2$, where $\mathcal{A}_i= \{ G_i f \mid f\in \Bir(\p^2)\}$ is the set of right cosets modulo $G_i$, for $i=0,1,2$. There is a triangle between three elements of the three sets $\mathcal{A}_i$ if and only if these are of the form $G_0f$, $G_1f$ and $G_2f$, for some $f\in \Bir(\p^2)$. As proven in \cite{Wright}, the associated simplicial complex (i.e.~the Wright complex) is simply connected, which corresponds to
saying that $\Bir (\p^2)$ is the amalgamated product of the three groups $G_i$ along their pairwise intersections. The set $\mathcal{A}_0$ corresponds to the set of homaloidal linear systems and the distance between  $G_0 \varphi $ and $G_0= G_0 \id$
is given by $2\lgth(\varphi)$ for each $\varphi\in \Bir(\p^2)$
(see Lemma \ref{Lem:DistanceWright} below).

Hence, Theorem~\ref{TheMainTheorem} provides a way to compute the distance in this graph. In particular, the graph of Wright is of unbounded length (follows for instance from Lemma~\ref{Lemma:AutA2}), a fact that does not follow directly from its definition.
Another application of Theorem~\ref{TheMainTheorem},
done by Lonjou, is that the graph of Wright is not hyperbolic in the sense of Gromov \cite{Lonjou}.

In $\S\ref{SubSec:LengthMonomial}$, we compute the length and dynamical length of all monomial elements of $\Bir(\p^2)$. We relate these notions with some decompositions of elements in $\GL_2(\Z)$ and also with continued fractions.

\subsection{Dynamical length}
Since $\lgth (f \circ g) \leq  \lgth (f) + \lgth (g)$ for all $f,g \in \Bir (\p^2)$,
the sequence $n \mapsto \lgth (f^n)$ is subadditive so that the sequence $n \mapsto \frac{\lgth (f^n)}{n}$ admits a finite
limit when $n$ goes to infinity. This allows the following definition:

\begin{definition} \label{defintion: dynamical length}
For each $f\in \Bir(\p^2)$, the {\it dynamical length} is defined as
\[ \dlgth(f) := \lim_{n \to \infty} \frac{\lgth (f^n)}{n} \in \R_+.\] 
\end{definition}
Note that $\dlgth(f)$ is invariant under conjugation (contrary to the length), and satisfies  $0\le \dlgth(f)\le \lgth(f)$. It is not very easy to compute $\dlgth(f)$ in general, but we will do it precisely for all monomial elements of $\Bir(\p^2)$, and relate this to continued fractions and decompositions in
$\GL_2(\Z)$ (Section~\ref{SubSec:LengthMonomial}). We will also show that
\[\frac{1}{2}\Z_{ \geq 0} \cup \frac{1}{3}\Z_{ \geq 0}\subseteq \dlgth(\Bir(\p^2))=\{\dlgth(f)\mid f\in \Bir(\p^2)\}\] 
(Corollary~\ref{Cor:1213LengthSpectrum}),
but we do not
have any example of  a Cremona transformation $f\in \Bir(\p^2)$ such that
$\dlgth(f)\notin \frac{1}{2}\Z_{ \geq 0} \cup \frac{1}{3}\Z_{ \geq 0}$. In particular, every monomial map of $\Bir(\p^2)$ has a dynamical length in $\frac{1}{2}\Z_{ \geq 0}$
(follows from Proposition~\ref{proposition: Three cases for a unimodular matrix}).

\begin{question}
What does the set $\dlgth(\Bir(\p^2))=\{\dlgth(f)\mid f\in \Bir(\p^2)\}$ look like?
\end{question}

\subsection{Distorted elements}\label{Sec:Distorted}
We begin with the two following definitions.

\begin{definition}
If $G$ is a group generated by a finite subset $F\subseteq G$, the $F$-length $\lvert g\rvert_F$ of an element $g$ of $G$ is defined as the least non-negative integer $\ell$ such that $g$ admits an expression of the form $g= f_1 \ldots f_{\ell}$ where each $f_i$ belongs to $F \cup F^{-1}$. We then say that $g$ is \emph{distorted} if $\lim\limits_{n\to \infty} \frac{\lvert g^n\rvert_F}{n}=0$
(note that the limit $\lim\limits_{n\to \infty} \frac{\lvert g^n\rvert_F}{n}$ always exists and is a real number since the sequence $n \mapsto \lvert g^n\rvert_F$ is subadditive).
This notion actually does not depend on the chosen $F$, but only on the pair $(g,G)$.

If $G$ is any group, an element $g\in G$ is said to be \emph{distorted} if it is distorted in
some finitely generated subgroup of $G$.
\end{definition}

\begin{definition}
An element $f\in \Bir(\p^2)$ is said to be \emph{algebraic} (or elliptic) if it is contained in an algebraic subgroup of $\Bir(\p^2)$, or equivalently if the sequence $n \mapsto \deg(f^n) $ is bounded (\cite[\S 2.6]{BF13}). By \cite[Proposition~2.3]{BlaDes} (see also Proposition~\ref{Prop:BlaDes} which explains why the proof also works in positive characteristic), this is also equivalent to saying that $f$ is of finite order or conjugate to an element of $\Aut(\p^2)$.
\end{definition}

An easy computation shows that every element of $\Aut(\p^2)$ is distorted in $\Bir(\p^2)$ (Lemma~\ref{Lemm:AutP2distorted}).
Consequently, every algebraic element of $\b$ is distorted.
Using the dynamical length, we will prove the converse statement  (which has also be proven in the recent preprint \cite{CantatCornulier}, with another technique):

\begin{theorem}\label{Thm:DistAutP2}
Any distorted element of $\Bir(\p^2)$ is algebraic.
\end{theorem}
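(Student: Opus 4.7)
Suppose $f\in\Bir(\p^2)$ is distorted, so that $f$ lies in some finitely generated subgroup $H=\langle F\rangle\subseteq\Bir(\p^2)$ with $|f^n|_F/n\to 0$. The length $\lgth$ is subadditive by its very definition as the minimal number of Jonqui\`eres factors, and $\log\deg$ is subadditive because $\deg(gh)\le\deg(g)\deg(h)$; both functions are therefore bounded on the finite set $F$ and hence on all of $H$ by a constant multiple of the word-length $|\cdot|_F$. Applied to $g=f^n$ and divided by $n$, the distortion hypothesis yields
\[\dlgth(f)=0\qquad\text{and}\qquad\lambda(f):=\lim_{n\to\infty}\deg(f^n)^{1/n}=1.\]
It therefore suffices to show that no non-algebraic element of $\Bir(\p^2)$ can simultaneously satisfy $\lambda(f)=1$ and $\dlgth(f)=0$. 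By the Diller--Favre classification (extended to positive characteristic via Proposition~\ref{Prop:BlaDes}), a non-algebraic $f$ with $\lambda(f)=1$ is either a Jonqui\`eres twist ($\deg f^n\asymp n$, with $f^n\in\Jonq_p$ for a fixed $p$) or a Halphen twist ($\deg f^n\asymp n^2$, with $f^n$ preserving a pencil of elliptic curves).

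For the Halphen case, Theorem~\ref{TheMainTheorem} is expected to give the contradiction: the iterate $f^n$ is Halphen of degree $3m_n$ with $m_n\asymp n^2$, hence of comultiplicity $\comult(f^n)=2m_n\asymp n^2$. Combining item~\ref{Comultfngn} of the main theorem with the finiteness of predecessor classes modulo $\Aut(\p^2)$ (Lemma~\ref{Lem:Pred}\ref{Lem:Pred2}) to bound how much a single predecessor step can reduce the comultiplicity, one should produce a linear lower bound $\lgth(f^n)\ge cn$, contradicting $\dlgth(f)=0$.

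For the Jonqui\`eres case, since $f^n\in\Jonq_p$ forces $\lgth(f^n)\le 1$, dynamical length alone is useless. Here the plan is to exploit the fibered structure $\Jonq_p\cong\PGL_2(\k(t))\rtimes\PGL_2(\k)$: after replacing $f$ by a suitable power lying in the kernel $\PGL_2(\k(t))$, the twist hypothesis forces this power to be loxodromic on the Bruhat--Tits tree of $\PGL_2(\k(t))$ associated to the valuation at infinity, whose loxodromic isometries have positive translation length and so cannot be distorted.

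The principal obstacle is the Halphen step: turning the qualitative predecessor statement of Theorem~\ref{TheMainTheorem}\ref{Comultfngn} into a genuinely linear lower bound for $\lgth(f^n)$ requires a sharp quantitative analysis of a single predecessor step when the homaloidal type is rigidly constrained by a Halphen pencil. The Jonqui\`eres step is essentially a tree-action argument, but it also requires transferring the distortion from the ambient group $\Bir(\p^2)$ to the subgroup $\Jonq_p$ (and then to $\PGL_2(\k(t))$), which is delicate because the witnessing finitely generated subgroup need not be contained in $\Jonq_p$ a priori.
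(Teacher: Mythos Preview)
Your reduction to $\dlgth(f)=0$ and $\lambda(f)=1$ is correct, but you are missing the third subadditive invariant that the paper exploits: the number of base-points $f\mapsto\lvert\Base(f)\rvert$. Its dynamical version $\mu(f)=\lim_n\lvert\Base(f^n)\rvert/n$ also vanishes on every distorted element, and this is precisely what disposes of the Jonqui\`eres twists: by \cite[Theorem~B]{BlaDes} an element is regularisable if and only if $\mu=0$, and Jonqui\`eres twists are not regularisable, so $\mu(f)>0$. Your Bruhat--Tits tree plan cannot close the gap you yourself flag: distortion in $\Bir(\p^2)$ does not descend to distortion in $\Jonq_p$, because the witnessing finitely generated subgroup need not lie in $\Jonq_p$, and there is no retraction $\Bir(\p^2)\to\Jonq_p$ through which to push the word-length estimate.

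For Halphen twists the paper does prove $\dlgth(f)>0$, but not via comultiplicity. Your assertion $\deg(f^n)=3m_n$, $\comult(f^n)=2m_n$ presumes the maximal multiplicity of $f^n$ equals one third of its degree, which is not the shape of a general Halphen iterate (and the type $(3m;m^9)$ even violates the Noether equality $\sum m_i=3(d-1)$). The paper's argument is instead: after conjugating $f$ to an automorphism of a Halphen surface (a blow-up of $9$ points), every iterate $f^n$ has at most $9$ base-points; then a short induction on the length (Lemma~\ref{Lemm:BasePtsbounded}, built on Lemma~\ref{Lemm:9ptssqrt}: $\sqrt{\deg(g_1g_2^{-1})}\le\sqrt{\deg g_1}+\sqrt{\deg g_2}$ when $\Base(g_1)\cup\Base(g_2)$ has at most $9$ points) gives $\lgth(f^n)\ge\sqrt{\deg(f^n)/5}\asymp n$. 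Even if one granted quadratic growth of $\comult(f^n)$, your route would still require a uniform bound on how much $\comult$ can drop in a single predecessor step, and Lemma~\ref{Lemm:AlgoFirstStepS}\ref{CoMult} only supplies monotonicity, not such a quantitative estimate.
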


A function $S \colon \Bir(\p^2)\to \R_{\ge 0}$ is said to be \emph{subadditive} if it satisfies $S (f\circ g)\le S (f)+S (g)$ for all $f,g\in \Bir(\p^2)$. For such a function, the
sequence $n \mapsto  \frac{S (f^n)}{n}$ admits a finite limit
for any $f$, and
this limit
is moreover equal to zero when $f$ is distorted.

It turns out that the three following functions are subadditive on $\b$: the length,
the number of base-points, and the logarithm of the degree. For any such $S$, the corresponding limit
of the sequence $ n \mapsto \frac{S (f^n)}{n}$
is: the dynamical length $\dlgth(f)$, the dynamical number of base-points (written $\mu(f)$ in \cite{BlaDes}), and the logarithm of the dynamical degree $\log(\lambda(f))$, where the dynamical degree is $\lambda(f)=\lim\limits_{n\to \infty} (\deg(f^n))^{1/n}$.

We will show that $f$ is algebraic if and only if $\dlgth(f) = \mu (f) = \log(\lambda(f)) = 0$, thus proving Theorem~\ref{Thm:DistAutP2}. More precisely, we can decompose elements of $\Bir(\p^2)$ into five disjoint subsets of elements (see $\S\ref{Subsection:Dynamical-length-of-regularisable-elements}$), and the situation is as in
Figure~\ref{FigureSubadditive}. In particular,
\begin{figure}[h]
\[\begin{array}{|c|c|c|c|}
\hline
f & \dlgth(f) & \mu(f) & \log(\lambda(f))\\
\hline
\text{Algebraic elements}
& 0 & 0 & 0\\
\text{Jonqui\`eres twists} & 0 & >0 & 0\\
\text{Halphen twists} & >0\ (\text{Corollary~\ref{Cor:Halphentwist}}) & 0 & 0\\
\text{Regularisable loxodromic elements} & >0\  (\text{Proposition~\ref{Prop:RegLox}}) & 0 & >0\\
\text{Non-regularisable loxodromic elements} & \text{sometimes} >0\  (\text{Lemma~\ref{Lemma:AutA2}}) & >0 & >0\\
\hline\end{array}\] \caption{Positivity of $\dlgth(f)$, $\mu(f)$, $\log(\lambda(f))$ for elements $f\in\Bir(\p^2)$} \label{FigureSubadditive}\end{figure}
Corollary~\ref{Cor:Halphentwist} is sufficient for showing that an element $f$ of $\b$ is algebraic if and only if $\dlgth(f) = \mu (f) = \log(\lambda(f)) = 0$. However, Proposition~\ref{Prop:RegLox} shows us that this is also equivalent to $\dlgth(f) = \mu (f) = 0$.

\subsection{Lower semicontinuity of the length}

Even if $\Bir(\p^2)$ is not naturally an ind-group (\cite[Theorem 1]{BF13}), following \cite{De,Se}, it admits a natural Zariski topology (see Definition~\ref{defi: Zariski topology}). We prove that the length is compatible with this topology, and thus behaves well in families:

\begin{theorem}\label{Continuitylength}
The length map $ \lgth \colon \Bir (\p^2) \to \N$, $f \mapsto \lgth (f)$ is lower semiconti\-nuous. In other words, for each integer $\ell \ge 0$, the set $\{f\in \Bir(\p^2)\mid \lgth(f)\le \ell\}$ is closed.
\end{theorem}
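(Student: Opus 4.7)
The plan is to combine Theorem~\ref{TheMainTheorem}, which asserts that $\lgth(f)$ depends only on the homaloidal type of $f$, with an analysis of how homaloidal types behave in algebraic families. Unwinding the definition of the Zariski topology on $\Bir(\p^2)$ (Definition~\ref{defi: Zariski topology}), it suffices to prove that for every morphism $\psi\colon A\to \Bir(\p^2)$ from an irreducible algebraic variety $A$, the subset $\{a\in A\mid \lgth(\psi(a))\le \ell\}$ is closed in $A$. Writing such a family $(f_a)_{a\in A}$ by homogeneous polynomials of bounded degree whose coefficients depend regularly on $a$, the degree $a\mapsto \deg(f_a)$ is upper semicontinuous (a common factor of the three defining polynomials is a closed condition), and on the dense open stratum of maximal degree the base scheme is finite and flat of fixed length. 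Refining this stratum further by the way base-points come together (proper collisions and coalescences of infinitely near points), one obtains a finite stratification of $A$ by locally closed subsets on each of which the homaloidal type is constant.

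The essential content is then the following \emph{specialization inequality}: whenever $(f_a)_{a\in A}$ has generic homaloidal type $\tau$ and the special fibre at $a_0$ has type $\tau'$, obtained from $\tau$ by proper base-points colliding, infinitely-near points coalescing, or an effective fixed curve splitting off the linear system and being factored out, then $\lgth(\tau')\le \lgth(\tau)$. Granting this, the set $\{a\in A\mid \lgth(f_a)\le \ell\}$ is a union of strata closed under specialization, hence Zariski closed in $A$, as desired.

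The inequality itself I would prove by induction on $\lgth(\tau)$. The base case $\lgth(\tau)=0$ corresponds to $\tau$ being the trivial type of degree $1$, whose only specialization is itself. For the inductive step, choose a predecessor $f\circ \varphi$ of the generic member $f$; by Lemma~\ref{Lem:Pred}\ref{Lem:Pred1} its homaloidal type $\sigma$ is uniquely determined by $\tau$ and satisfies $\lgth(\sigma)=\lgth(\tau)-1$. The key point is that the predecessor construction is compatible with specialization: using Lemma~\ref{Lem:Pred}\ref{Lem:Pred3}, the base-points of $\varphi^{-1}$ can be located among the base-points of $f_a$ and followed through the degeneration (possibly after a finite base change, which is harmless since closedness descends along finite surjections), so that one obtains a family of Jonqui\`eres transformations $\varphi_a$ whose limit $\varphi_{a_0}$ is still Jonqui\`eres. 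The type of the specialized predecessor $f_{a_0}\circ \varphi_{a_0}$ is then a specialization $\sigma'$ of $\sigma$ which is itself a predecessor of $\tau'$, so the inductive hypothesis gives $\lgth(\tau')\le \lgth(\sigma')+1\le \lgth(\sigma)+1=\lgth(\tau)$.

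The principal obstacle, I expect, is precisely this compatibility between the predecessor operation and specialization. Lemma~\ref{Lem:Pred}\ref{Lem:Pred2} tells us that predecessors are finite only up to right multiplication by $\Aut(\p^2)$, so extracting an algebraic family of predecessors requires controlling which cluster of base-points is used to build $\varphi_a$ and arguing that when these base-points collide the resulting Jonqui\`eres pencil degenerates to a genuine Jonqui\`eres pencil rather than becoming ill-defined. Once this bookkeeping is carried out, the rest of the argument is formal, and, combined with the obvious closedness of the locus $\{\lgth=0\}=\Aut(\p^2)\subseteq \Bir(\p^2)$, it produces the lower semicontinuity of $\lgth$.
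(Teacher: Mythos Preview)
Your inductive skeleton---compose the generic member with a Jonqui\`eres predecessor, drop the length by one, pass to the limit---is exactly the backbone of the paper's argument. The difference lies in how the limit is taken, and this is where your proposal has a genuine gap that you have correctly located but not closed.

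You want to extend the Jonqui\`eres $\varphi_a$ across the special fibre and then use $g_{a_0}=f_{a_0}\circ\varphi_{a_0}$. There are two obstructions. First, when $\deg f_{a_0}<\deg f_a$ (a fixed component splits off), the base-points of $f_a$ through which $\varphi_a^{-1}$ is built simply do not all survive to $a_0$; there is no natural candidate for $\varphi_{a_0}$, and the ``bookkeeping'' you allude to cannot be carried out. Second, even when $\varphi_{a_0}$ exists, you need the limit of the simplified family $g_a$ (in $\bb_{d'}$ for the smaller degree $d'$) to coincide with the composition $f_{a_0}\circ\varphi_{a_0}$; but simplification of common factors does not commute with specialisation, so this equality can fail. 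Your claim that ``the type of the specialized predecessor $f_{a_0}\circ\varphi_{a_0}$ is a specialization $\sigma'$ of $\sigma$'' conflates these two objects.

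The paper avoids both problems by never specialising $\varphi$. Working over $K=\k((t))$ via the valuative criterion, it moves a $K$-rational base-point of maximal multiplicity to $p=[0:0:1]$ and chooses $\varphi\in\Jonq_p(K)$ with $\Pi\circ\varphi=\Pi$, where $\Pi$ is the projection from $p$. Then $\Pi\circ\psi_g=\Pi\circ\psi_f$ holds over $K$ and hence at $t=0$ as an equality of rational maps $\p^2\dasharrow\p^1$, regardless of what $\varphi$ does at $t=0$. The induction is therefore run on these \emph{variables} $\p^2\dasharrow\p^1$ (Definition~\ref{Defi:variable}) rather than on birational self-maps, and the inductive statement must be strengthened to cover the case where $f(0)\in\overline{\bb_d}\setminus\bb_d$ is non-dominant (Proposition~\ref{proposition: the heart of the proof}). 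This strengthening is essential: the element $g(0)$ produced in the induction is typically non-dominant even when $f(0)$ is birational.

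In short, your ``principal obstacle'' is not bookkeeping; it is a real phenomenon requiring a different invariant (the variable $\Pi\circ f$) that survives both the Jonqui\`eres composition and the limit.
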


As explained before, this implies the same result for automorphisms of $\A^2$, already proven in \cite{Furterlength} when $\car(\k)=0$. It also shows that some degenerations of birational maps are not possible. For instance, it is not possible to have a family of birational maps with homaloidal type $(8;4,3^5,1^2)$ which degenerates to a birational map of homaloidal type $(8;4^3,2^3,3^3)$ as the first homaloidal type has length $2$ and the second has length $3$ (see $\S\ref{SubSec:LengthSmallDegree}$). See \cite{BCM15,BlaCal} for more details on this question.

The authors warmly thank the referees for their detailed reading
and helpful suggestions.

\section{Reminders}
\label{Sec:Reminders}
When we want to decompose a birational
transformation of $\p^2$, we have to study the multiplicities of the linear system at points, and also the position of the points (if one is infinitely near to another, if
they
are on the same line, \ldots). A very fruitful approach consists of looking at the (linear and faithful) action of $\Bir (\p^2)$ on the so called Picard-Manin space. Forgetting the position of the points and studying only the arithmetic part can be done by studying an \emph{infinite Weyl group} $\Weyl$, as done in \cite{BlaCan}. This group still acts on the Picard-Manin space and contains $\Bir (\p^2)$. An analogous Weyl group is used in \cite{BlaCal}, but with a  slightly different definition.

\subsection{The bubble space and the Picard-Manin space}
Let us recall the following classical notions.

\begin{definition}Let $Y$ be a smooth projective rational surface. We denote by $\B(Y)$ the \emph{bubble space} of $Y$. It is the set of points that belong, as proper or infinitely near points to $Y$. More precisely, an element of $\B(Y)$ is the equivalence class of a triple $(p,X,\pi)$, where $X$ is a smooth projective surface, $\pi\colon X\to Y$ is a birational morphism (a sequence of blow-ups) and $p\in X$. Two triples $(p,X,\pi)$ and $(p',X',\pi')$ are equivalent if $(\pi')^{-1} \circ \pi \colon X \dasharrow X'$ restricts to an isomorphism $U\to U'$, where $U\subseteq X$, $U'\subseteq X'$ are two open neighbourhoods of $p$ and $p'$, and if $p$ is sent to $p'$ by this isomorphism.
\end{definition}

\begin{definition}\label{Defi:Order}
There  is a natural order on $\B(Y)$. We say that $(p,X,\pi) \ge (p',X',\pi')$ if $(\pi')^{-1} \circ \pi\colon X\dasharrow X'$ restricts to a morphism $U\to X'$, where $U\subseteq X$ is an open subset containing $p$ and if $p$ is sent on $p'$ by this morphism. 
\end{definition}

\begin{remark}
We have an inclusion $\p^2 \hookrightarrow \B(\p^2)$, that sends a point $p\in \p^2$ onto the equivalence class of $(p,\p^2,\mathrm{id})$. We will also see elements of $\B(\p^2)$ as points, the surfaces and the morphisms being then implicit.

Every birational map $\varphi\colon \p^2\dasharrow \p^2$ has a finite number of base-points. The set of all such points is denoted $\Base(\varphi)\subseteq \B(\p^2)$. Moreover, $\varphi$ induces a bijection $\B(\p^2)\setminus \Base(\varphi)\to \B(\p^2)\setminus \Base(\varphi^{-1})$.
\end{remark}
Let us recall the following classical notions. See for example ~\cite{Alberich} and references there.
\begin{definition}\label{Defi:InfNearProximate}
Let $p,q\in \B(\p^2)$. We say that $p$ is \emph{infinitely near }$q$ if
$p > q$ (for the order defined above). We say that $p$ is in the \emph{first neighbourhood of $q$} if $p>q$ and if there is no $r\in \B(\p^2)$ with $p>r>q$. We say that a point $p\in \B(\p^2)$ is a \emph{proper point} of $\p^2$ if $p$ is minimal. This corresponds to saying that $p\in \p^2\subseteq \B(\p^2)$.
\end{definition}

\begin{definition}
Let $Y$ be a smooth projective rational surface. Its Picard-Manin space $\ZZ_Y$ is defined as the inductive limit of all the Picard groups $\Pic(X)$, where $X$ is a smooth projective rational surface and $X\to Y$ is a birational morphism. 

More precisely, an element $c\in \ZZ_Y$ corresponds to an equivalence class of triples $(C,X,\pi)$, where $X$ is a smooth projective rational surface, $\pi\colon X\to Y$ is a birational morphism and $C\in \Pic(X)$. Two triples $(C_1,X_1,\pi_1)$ and $(C_2,X_2,\pi_2)$ are identified 
if one can find another smooth projective rational surface $X_3$ together with birational morphisms $\pi_1'\colon X_3\to X_1$, $\pi_2'\colon X_3\to X_2$ such that $\pi_1 \circ \pi_1'=\pi_2 \circ \pi_2'$, and such that $(\pi_1')^*(C_1)=(\pi_2')^*(C_2)$.

The $\Z$-module $\ZZ_Y$ is endowed with an intersection form and canonical form $\omega\colon \ZZ_Y\to \Z$. The canonical form sends a triple $(C,X,\pi)$ onto $K_X\cdot C\in \Z$, where $K_X$ is the canonical class of $X$. To intersect two classes, we take representants $(C_1,X,\pi)$ and $(C_2,X,\pi)$ on the same surface by taking a common resolution and compute $C_1\cdot C_2\in \Z$.
\end{definition}
\begin{remark}
If $(C,X,\pi)$ is a triple as in the above definition and $\epsilon\colon X'\to X$ is a birational morphism, then $(\epsilon^*(C),X',\pi\circ\epsilon)$ is equivalent to $(C,X,\pi)$. Moreover, $K_X\cdot C=\epsilon^*(K_X)\cdot \epsilon^*(C)= K_{X'}\cdot \epsilon^*(C)$. 

Using this remark we obtain that the canonical form $\omega\colon \ZZ_Y\to \Z$ defined above is independent of the choice of the triple in the equivalence class of an element of $\ZZ_Y$. The intersection form is also
well-defined since
$\epsilon^*(C)\cdot \epsilon^*(D)=C\cdot D$ for all $C,D\in \Pic(X)$.
\end{remark}
\begin{definition}
Let $Y$ be a smooth projective rational surface. For each point $q\in \B(Y)$, we define an element $e_q\in \ZZ_Y$ as follows: the point $q$ is the class of $(p,X,\pi)$, and $e_q\in \ZZ_Y$ is the class of $(E_p,\hat{X},\pi\circ\epsilon)$, where $\epsilon \colon \hat{X}\to X$ is the blow-up of $p\in X$, and $E_p=\epsilon^{-1}(p)\in \Pic( \hat{X} )$
is the exceptional divisor.
\end{definition}
\begin{lemma}\label{Lem:ZZYbigsum}
Let $Y$ be a smooth projective rational surface. The group $\ZZ_{Y}$ is naturally isomorphic to \[\ZZ_Y\simeq\Pic(Y)\oplus \bigoplus\limits_{p\in \B(Y)} \Z e_{p}.\] Moreover, the restriction of the intersection form of $\ZZ_Y$ on $\Pic(Y)$ is the classical one, and we have \begin{center}
 $C\cdot e_p=0$, $e_{p}^2=-1$, $e_{p}\cdot e_{q}=0$, $\omega(C)=C\cdot K_Y$,  $\omega( e_{p})=-1$.\end{center}
 for all $p,q\in \B(Y)$, $C\in \Pic(Y)$, $p\not=q$.
\end{lemma}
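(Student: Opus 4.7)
The plan is to reduce the lemma to the classical behaviour of $\Pic$, the intersection form and the canonical class under a single blow-up, and then to pass to the inductive limit defining $\ZZ_Y$. Concretely, for the blow-up $\epsilon\colon\hat X\to X$ at a closed point $p\in X$ with exceptional divisor $E_p$, I would invoke the standard identities $\Pic(\hat X)=\epsilon^{*}\Pic(X)\oplus\Z E_p$, $E_p^{2}=-1$, $\epsilon^{*}(D)\cdot E_p=0$, $\epsilon^{*}(C)\cdot\epsilon^{*}(D)=C\cdot D$ and $K_{\hat X}=\epsilon^{*}(K_X)+E_p$. These are exactly what is needed, since every birational morphism between smooth projective rational surfaces factors as a sequence of such blow-ups.

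Next, I would induct on the number of blow-ups in a factorisation of a birational morphism $\pi\colon X\to Y$ as $\epsilon_{1}\circ\cdots\circ\epsilon_{n}$. Writing $p_{i}\in\B(Y)$ for the point blown up by $\epsilon_{i}$ and $\tilde e_{p_{i}}\in\Pic(X)$ for its total transform, the inductive hypothesis together with the blow-up formulas yields $\Pic(X)=\pi^{*}\Pic(Y)\oplus\bigoplus_{i=1}^{n}\Z\tilde e_{p_{i}}$, with intersection and canonical values preserved under further pullbacks. Taking the inductive limit, these decompositions are compatible, so every element of $\ZZ_Y$ is represented on some such $X$ and therefore lies in a finite-rank summand of the claimed form; no collapse occurs because each $\tilde e_{p}$ is non-trivial in whichever $\Pic(X)$ it already appears. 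This gives the asserted isomorphism $\ZZ_Y\simeq\Pic(Y)\oplus\bigoplus_{p\in\B(Y)}\Z e_{p}$.

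Finally, I would verify the intersection and canonical form values by choosing common representatives on a surface $X\to Y$ in which all involved base-points are proper. The identities $C\cdot e_{p}=0$ and $\omega(C)=C\cdot K_Y$ for $C\in\Pic(Y)$ follow directly from $\epsilon^{*}(C)\cdot E_{p}=0$ and the projection formula. The values $e_{p}^{2}=-1$, $e_{p}\cdot e_{q}=0$ for distinct proper points and $\omega(e_{p})=-1$ are then the single-blow-up formulas. The only case requiring an extra calculation is when $q$ lies above $p$, say in the first neighbourhood: blowing up first $p$ and then $q$, the class $e_{p}$ is represented by the total transform $\tilde E_{p}+E_{q}$ and $e_{q}$ by $E_{q}$, which gives $e_{p}\cdot e_{q}=\tilde E_{p}\cdot E_{q}+E_{q}^{2}=1-1=0$ and $e_{p}^{2}=\tilde E_{p}^{2}+2\tilde E_{p}\cdot E_{q}+E_{q}^{2}=-2+2-1=-1$; the value $\omega(e_{p})=-1$ is already visible on the first blow-up. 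An analogous argument handles arbitrary chains of infinitely near points.

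The main obstacle will be organising the inductive limit cleanly: one must carefully separate the abstract class $e_{p}\in\ZZ_Y$ from its varying representatives in $\Pic(X)$, each being a total transform whose shape depends on how many further points have been blown up after $p$. Once this bookkeeping is in place, every assertion of the lemma reduces to a direct computation on a single surface via the formulas for one blow-up.
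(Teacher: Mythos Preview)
Your proposal is correct and follows essentially the same approach as the paper: invoke the standard single blow-up formulas $\Pic(\hat X)=\epsilon^{*}\Pic(X)\oplus\Z E_p$, $E_p^{2}=-1$, $\epsilon^{*}(D)\cdot E_p=0$, $K_{\hat X}=\epsilon^{*}(K_X)+E_p$, and then use that every birational morphism to $Y$ factors as a finite sequence of such blow-ups to pass to the inductive limit. If anything, you are more careful than the paper, which simply states the blow-up formulas and asserts that ``this provides the result''; your explicit check of $e_p\cdot e_q=0$ in the infinitely-near case and your remarks about bookkeeping total transforms versus strict transforms are details the paper leaves implicit.
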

\begin{proof}The map sending $C\in \Pic(Y)$ onto the class of $(C,Y,\mathrm{id})$ yields  an inclusion $\Pic(Y)\hookrightarrow \ZZ_{Y}$. By definition, the restriction of the intersection form and the canonical form of $\ZZ_Y$ on $\Pic(Y)$ are the classical ones.

If $\epsilon\colon \hat{X}\to X$ is the blow-up of a point $p\in X$, then $\Pic(\hat{X})=\epsilon^{*}\Pic(X)\oplus \Z e_{p}$, where
the exceptional divisor $e_{p}\in \hat{X}$
satisfies $e_{p}^2=-1$, $e_{p}\cdot R=0$ for each $R\in \epsilon^{*}(\Pic(X))$. Moreover, $K_{\hat{X}}=\pi^{*}(K_X)+e_{p}$. This provides the result, as every birational morphism
$ X\to Y$, 
where $X$ is a smooth projective rational surface, is a sequence of blow-ups of finitely many points of $\B(Y)$.
\end{proof}
\begin{corollary}
The group $\ZZ_{\p^2}$ is naturally isomorphic to \[\Z e_0\oplus \bigoplus\limits_{p\in \B(\p^2)} \Z e_{p},\]
where $e_0\in \Pic(\p^2)$ is the class of a line and $e_{p}$ corresponds to the exceptional divisor of $p\in \B(\p^2)$. Moreover, we have \begin{center}
$(e_0)^2=1$, $e_{p}^2=-1$, $\omega( e_{p})=-1$, $\omega(e_0)=-3$ and $e_{p}\cdot e_{q}=0$\end{center}
 for all $p,q\in \B(\p^2)$, $p\not=q$.
\end{corollary}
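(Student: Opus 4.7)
The corollary is an immediate specialization of the preceding lemma to the case $Y=\p^2$, so my plan is to simply invoke that lemma and then verify the two statements that are specific to $\p^2$, namely the value of $(e_0)^2$ and of $\omega(e_0)$.

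First I would apply Lemma~\ref{Lem:ZZYbigsum} with $Y = \p^2$. This already gives the direct sum decomposition
\[ \ZZ_{\p^2} \simeq \Pic(\p^2) \oplus \bigoplus_{p\in \B(\p^2)} \Z e_p, \]
together with the assertions $e_p^2 = -1$, $\omega(e_p) = -1$, $e_p \cdot e_q = 0$ for $p \neq q$, and $C \cdot e_p = 0$ for every $C \in \Pic(\p^2)$.

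Next I would use the classical fact that $\Pic(\p^2) = \Z e_0$ where $e_0$ is the class of a line: this substitutes $\Pic(\p^2)$ by $\Z e_0$ in the above decomposition and yields the stated isomorphism. The claim $(e_0)^2 = 1$ then follows from the fact that the restriction of the intersection form of $\ZZ_{\p^2}$ to $\Pic(\p^2)$ is the classical intersection form on $\p^2$ (by the lemma), combined with the elementary computation that two distinct lines in $\p^2$ meet transversally in one point.

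Finally, for $\omega(e_0)$, the lemma gives $\omega(C) = C \cdot K_Y$ for $C \in \Pic(Y)$. Specialising to $Y = \p^2$ and $C = e_0$, and using the standard identity $K_{\p^2} = -3 e_0$, we obtain $\omega(e_0) = e_0 \cdot K_{\p^2} = -3 (e_0)^2 = -3$. There is no real obstacle here; the whole argument is a direct unpacking of Lemma~\ref{Lem:ZZYbigsum} once one recalls the Picard group and canonical class of $\p^2$.
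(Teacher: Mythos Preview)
Your proof is correct and follows exactly the same approach as the paper: invoke Lemma~\ref{Lem:ZZYbigsum} with $Y=\p^2$, then use $\Pic(\p^2)=\Z e_0$, $(e_0)^2=1$, and $K_{\p^2}=-3e_0$ to conclude. The paper's proof is simply a more compressed version of what you wrote.
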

\begin{proof}Follows from Lemma~\ref{Lem:ZZYbigsum} and the fact that $\Pic(\p^2)=\Z e_0$,  $(e_0)^2=1$, $K_{\p^2}=-3e_0$, so $\omega(e_0)=-3$.
\end{proof}
\begin{definition}\label{Defi:DegMulZZP2}
Let $a\in \ZZ_{\p^2}$ and $q\in \B(\p^2)$. We define  the \emph{degree} of $a$ to be $\deg(a)=e_0\cdot a\in \Z$ and the \emph{multiplicity of $a$ at $q$} to be $m_q(a)=e_{q}\cdot a\in \Z$.  We then define the set of \emph{base-points} $\Base(a)$ of $a$ to be $\{q\in \B(\p^2)\mid m_q(a)\not=0\}$.
\end{definition}
\begin{remark}\label{Rem:BasePtsClassical}
Let $\Lambda$ be a linear system on $\p^2$ which we assume of positive dimension and without fixed component. Denote by $p_1, \ldots,p_r$ its base-points  and by $\pi \colon X \to \p^2$ the blow-up of the points $p_i$ on $\p^2$. Then, the strict transform $\tilde{\Lambda}$  of $\Lambda$ on $X$ is a base-point free linear system and we have
\begin{equation} \tag{$\diamondsuit$}\label{tilde-Lambda}
\tilde{\Lambda}=d\pi^*(e_0)-\sum_{i=1}^r m_i e_{p_i} \in \Pic(X), \end{equation}
where $d$ is the degree of $\Lambda$, $m_1,\dots,m_r\ge 1$ are its multiplicities at the points $p_1,\dots,p_r$ and $e_{p_i}
\in 
\Pic(X)$ is the pull-back
(or total transform)
of the exceptional curve produced by blowing-up $p_i$.

Indeed, since we have $\Pic(X)=\pi^*(\Pic(\p^2))\oplus (\bigoplus_{i=1}^r \Z e_{p_i})$, there exist integers $d$, $m_1,\dots,m_r$ for which the equality \eqref{tilde-Lambda} holds. We then compute
\[ d=\pi^*(e_0)\cdot \tilde{\Lambda}=\pi^*(e_0)\cdot \pi^*(\Lambda)=e_0\cdot \Lambda=\deg(\Lambda)\]
and we see that the multiplicity of $\Lambda$ at $p_i$ is  $e_{p_i}\cdot \tilde{\Lambda}=m_i$. Hence, the definitions of base-points, degree and multiplicities coincide with the classical ones.
\end{remark}

\begin{definition}
Let $\varphi\colon Y_1\dasharrow Y_2$ be a birational map between two smooth projective rational surfaces. We define an isomorphism $\varphi_\bullet\colon \ZZ_{Y_1} \to \ZZ_{Y_2}$ in the following way:

An element $c\in \ZZ_{Y_1}$ corresponds to the class of a triple $(C,X,\pi_1)$.
By blowing-up more points if necessary, we may assume that $\pi_1$ is such that $\pi_2:= \varphi\circ \pi_1\colon X\to Y_2$ is a birational morphism.
We then define $\varphi_\bullet(c)\in \ZZ_{Y_2}$ to be the class of $(C,X,\pi_2)$.
\end{definition}
\begin{remark}
If $\varphi\colon Y_1\dasharrow Y_2$ and $\psi\colon Y_2\dasharrow Y_3$ are two birational maps between smooth projective rational surfaces, then $(\psi\circ \varphi)_\bullet=\psi_\bullet\circ \varphi_\bullet$. This implies that $\varphi$ and $\psi$ are isomorphisms of $\Z$-modules. They moreover preserve the intersection form and the canonical form (which can be checked on blowing-ups).
\end{remark}
We then obtain the following result:
\begin{lemma}\label{Lemm:BirP2ZP2}
The group $\Bir(\p^2)$ acts faithfully on $\ZZ_{\p^2}$ and preserves the intersection form and the canonical form. Moreover, if $f\in \Bir(\p^2)$, then $(f_\bullet)^{-1}(e_0)=de_0-\sum_{i=1}^r m_i e_{p_i}$, where $d=\deg f $, $p_1,\dots,p_r\in \B(\p^2)$ are the base-points of $f$ and $m_1,\dots,m_r\ge 1$ are their multiplicities.
\end{lemma}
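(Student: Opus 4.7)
The plan is to combine the observations summarised in the remark immediately preceding the statement. Each $\varphi_\bullet$ is a $\Z$-linear isomorphism preserving the intersection and canonical forms, and $(\psi\circ\varphi)_\bullet = \psi_\bullet\circ\varphi_\bullet$. Applied to $Y_1 = Y_2 = \p^2$, this produces a homomorphism $\Bir(\p^2)\to \Aut(\ZZ_{\p^2})$ with the claimed preservation of forms for free, so the remaining work is to establish the explicit formula for $(f_\bullet)^{-1}(e_0)$ and then to deduce faithfulness from it.

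For the formula, I would let $\pi_1\colon X \to \p^2$ be the birational morphism that blows up the base-points $p_1,\ldots,p_r$ of $f$, and set $\pi_2 := f\circ \pi_1$; by the choice of $\pi_1$, the map $\pi_2$ is itself a morphism. Since pulling back along a birational morphism does not alter the class in $\ZZ_{\p^2}$, one can represent $e_0$ by the triple $(\pi_2^*(e_0), X, \pi_2)$, and the definition of $f_\bullet$ then gives
\[(f_\bullet)^{-1}(e_0) = (\pi_2^*(e_0), X, \pi_1).\]
The computation reduces to writing $\pi_2^*(e_0) \in \Pic(X)$ in the basis $\pi_1^*(e_0), e_{p_1}, \ldots, e_{p_r}$. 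Each exceptional component of $\pi_1$ is sent by $\pi_2$ either to a point or to an irreducible curve of $\p^2$; for a sufficiently general line $L\subseteq \p^2$, no such curve coincides with $L$, so no exceptional component appears in $\pi_2^*(L)$. On the open set where $\pi_1$ is an isomorphism, $\pi_2^*(L)$ coincides with $f^{-1}(L)$, which has degree $d = \deg f$ and multiplicity $m_i$ at $p_i$. Therefore $\pi_2^*(L)$ is precisely the strict transform under $\pi_1$ of $f^{-1}(L)$, and by formula \eqref{tilde-Lambda} in Remark~\ref{Rem:BasePtsClassical} its class equals $d\pi_1^*(e_0) - \sum_i m_i e_{p_i}$. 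Transferring back to $\ZZ_{\p^2}$ via Lemma~\ref{Lem:ZZYbigsum} yields $(f_\bullet)^{-1}(e_0) = d e_0 - \sum_i m_i e_{p_i}$.

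For faithfulness, suppose $f_\bullet = \mathrm{id}$. The formula just derived forces $d = 1$ and $r = 0$, so $f \in \Aut(\p^2) = \PGL_3(\k)$. For any proper point $p \in \p^2$, let $\epsilon\colon \hat{\p^2} \to \p^2$ denote the blow-up of $p$; since $f$ has no base-points, $f\circ\epsilon$ is again a birational morphism with a single exceptional divisor $E_p$ contracted to $f(p)$, which yields $f_\bullet(e_p) = e_{f(p)}$. The hypothesis then forces $f(p) = p$ for every proper point of $\p^2$, and hence $f = \mathrm{id}$.

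The delicate step is the identification $\pi_2^*(e_0) = d\pi_1^*(e_0) - \sum m_i e_{p_i}$ in $\Pic(X)$: one must respect the convention of Remark~\ref{Rem:BasePtsClassical} that the $e_{p_i}$ are total transforms of exceptional divisors, which is what makes the orthogonality relations $e_{p_i}^2 = -1$ and $e_{p_i}\cdot e_{p_j} = 0$ persist in the presence of infinitely near base-points. As a consistency check, the self-intersection of the right-hand side reduces to $d^2 - \sum m_i^2 = 1 = L\cdot L$ via the first Noether equality.
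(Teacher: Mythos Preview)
Your proof is correct and follows essentially the same route as the paper: resolve $f$ via the blow-up $\pi_1$ of its base-points, write $\pi_2^*(e_0)$ in the basis $\pi_1^*(e_0),e_{p_1},\dots,e_{p_r}$ using Remark~\ref{Rem:BasePtsClassical}, and establish faithfulness by tracking the action on $e_p$ for proper points $p$. The only cosmetic difference is that the paper argues faithfulness directly for an arbitrary non-trivial $f$ (picking a general $p$ with $f(p)\neq p$), whereas you first reduce to $f\in\Aut(\p^2)$ via the formula; both arguments are equally short.
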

\begin{proof}
We decompose every $f\in \Bir(\p^2)$ into $f=\eta \circ \pi^{-1}$, where $\eta \colon X \to \p^2$, $\pi \colon X\to \p^2$ are blow-ups of the base-points of $f^{-1}$ and $f$ respectively.
\[ \xymatrix{
& X \ar[dl]_{\pi} \ar[dr]^{\eta} \\ 
\p^2 \ar@{-->}[rr]^{f} & & \p^2} \]
We have $\Pic(X)=\pi^*(\Pic(\p^2))\oplus (\bigoplus_{i=1}^r \Z e_{p_i})$, where $e_{p_1},\dots,e_{p_r}$ are the pull-backs in $\Pic(X)$ of the exceptional divisors of the base-points $p_1,\dots,p_r$ of $f$ (or equivalently of~$\pi$) and can thus write $\eta^*(e_0)\in \Pic(X)$ as $\eta^*(e_0)=d \, \pi^*(e_0)-\sum m_i e_{p_i}$, where $d$ is the degree of the linear system and $m_1,\dots,m_r\ge 1$ are the multiplicities of the linear system at the points $p_1,\dots,p_r$ (see Remark~\ref{Rem:BasePtsClassical}). The fact that for each non-trivial $f\in \Bir(\p^2)$, we can choose a general point $p\in \p^2$, sent by $f$ onto another point $q$, yields $f(e_{p})=e_{q}\not=e_{p}$ and shows that the action is faithful.
\end{proof}

\begin{example}   \label{ExampleSigma}
Let $\sigma\colon [x:y:z]\dasharrow [yz:xz:xy]$ be the standard quadratic transformation of $\p^2$. Its base-points are $p_1=[1:0:0]$, $p_2=[0:1:0]$, $p_3=[0:0:1]$. We then write
\[X=\{([x_0:x_1:x_2],[y_0:y_1:y_2])\mid x_0y_0=x_1y_1=x_2y_2\}\]
and denote by $\pi\colon X\to \p^2$ and $\eta\colon X\to\p^2$ the first and second projections, which are blow-ups of $p_1,p_2,p_3$ and satisfy $\eta=\sigma\circ\pi$. There are six $(-1)$-curves $E_1,E_2,E_3,F_1,F_2,F_3$ on $X$, where $E_i=\pi^{-1}(p_i)$ and $F_i=\eta^{-1}(p_i)$ for $i=1,2,3$. 

The action of $\sigma$ on $\ZZ_{\p^2}$ is given as follows. Firstly we have 
$\sigma_\bullet(e_0)=2e_0-e_{p_1}-e_{p_2}-e_{p_3}$ (Lemma~\ref{Lemm:BirP2ZP2}).
Secondly we have $\sigma_\bullet (e_{p_1})=e_0-e_{p_2}-e_{p_3}$ thanks to the
corresponding equality $E_1 = \eta^* (e_0) - F_2-F_3$ which holds in $\Pic (X)$. The latter
equality holds because $E_1$ is the strict transform  of the line through $p_2,p_3$ by $\eta$. Similarly, we obtain $\sigma_\bullet(e_{p_2})=e_0-e_{p_1}-e_{p_3}$ and $\sigma_\bullet(e_{p_3})=e_0-e_{p_1}-e_{p_2}$.

For all other points $q\in \B(\p^2)\setminus \{p_1,p_2,p_3\}$, we have $\sigma_\bullet(e_{q})=e_{q'}$, for some $q'\in \B(\p^2)$.
\end{example}

In the sequel, the isomorphism $\varphi_\bullet\colon \ZZ_{Y_1} \to \ZZ_{Y_2}$ associated with $\varphi\colon Y_1\dasharrow Y_2$ will be denoted by $\varphi$.

\subsection{The infinite Weyl group}\label{SubSec:InfiniteWeylgroup}
\begin{definition}
Denote by $ \Aut(\ZZ_{\p^2})$ the group of linear automorphisms of the $\Z$-module $\ZZ_{\p^2}$ that preserve the intersection form and define $\Sym_{\p^2}\subseteq \Aut(\ZZ_{\p^2})$ to be the subgroup of elements that fix $e_0$ and permute the $e_{p}$, $p\in \B(\p^2)$.

We define $\Weyl  \subseteq   \Aut(\ZZ_{\p^2})$ to be the \emph{infinite Weyl group} generated by $\Bir(\p^2)$ and the group $\Sym_{\p^2}$.
\end{definition}
\begin{remark}
Note that $\Aut(\p^2)= \Sym_{\p^2}\cap \Bir(\p^2)$. Moreover, the Noether-Castelnuovo theorem yields $\Bir(\p^2)=\langle \Aut(\p^2),\sigma\rangle$, which implies that $\Weyl=\langle \Sym_{\p^2},\sigma\rangle$.
Later on (see Corollary~\ref{corollary:W=SymBirSym}),
we will prove that  $\Weyl = \Sym_{\p^2} \Bir(\p^2) \Sym_{\p^2}$.
\end{remark}

\begin{definition}\label{Defi:DegreeMult}
Let $f\in \Weyl$ and $q\in \B(\p^2)$. We define  the \emph{degree} of $f$ to be $\deg f = e_0\cdot f^{-1}(e_0)\in \Z$ and the \emph{multiplicity of $f$ at $q$} to be $m_q(f)=e_{q}\cdot f^{-1}(e_0)\in \Z$. We denote $\Base(f) \subseteq \B(\p^2)$ the set of points $q$ such that $m_q(f)\not=0$.
\end{definition}

\begin{remark} \label{Remark:relation-between-mutliplicities-in-Z-and-in-the-Weyl-group}
By construction, the degree, base-points and multiplicities of $f\in \Weyl$ are the same as for $f^{-1}(e_0)$ $\in \ZZ_{\p^2}$ (which were defined in Definition~\ref{Defi:DegMulZZP2}). By Lemma~\ref{Lemm:BirP2ZP2}, this definition coincides with the classical definition if $f\in \Bir(\p^2)$.
\end{remark}

\begin{lemma}   \label{Lemm:EasyWeyl}
\item
\begin{enumerate}
\item\label{L1}
Every element of $\Weyl$ preserves the intersection form and the canonical form.
\item\label{L2}
For each $f\in \Weyl$ we have 
\[f^{-1}(e_0) = (\deg f) \cdot e_0-\sum\limits_{q \, \in \,  \Base(f)} m_q(f) \cdot e_{q}\]
and the following equalities hold $($Noether equalities$)$, where $d = \deg f$:
\[\sum\limits_{q \, \in \,  \Base(f)}   m_q(f)=3(d-1),\ \sum\limits_{q \, \in \,  \Base(f)}  (m_q(f))^2=d^2-1.\]
\item\label{L3} For each $f\in \Weyl$, we have $\deg f^{-1} = \deg f$.

\item\label{L4}
$\Sym_{\p^2}=\{f\in \Weyl \mid f(e_0)=e_0\}=\{f\in \Weyl \mid \deg(f)=1\}=\{f\in \Weyl \mid \deg(f)=\pm 1\}.$

\item\label{L5}
For each $\alpha\in \Sym_{\p^2}$, we have $\sigma  \circ \alpha \circ \sigma \in \Sym_{\p^2}$ if and only if $\alpha$ preserves the set $\{e_{[1:0:0]},e_{[0:1:0]},e_{[0:0:1]}\}$. 

\end{enumerate}
\end{lemma}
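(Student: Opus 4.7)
The plan is to derive all five parts from the fact that $\Weyl$ is generated by $\Sym_{\p^2}$ and $\Bir(\p^2)$, together with the direct sum decomposition $\ZZ_{\p^2} = \Z e_0 \oplus \bigoplus_{p\in\B(\p^2)} \Z e_{p}$ and the intersection and $\omega$-values on this basis.

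First, for (1), I would simply check the claim on the two sets of generators: elements of $\Sym_{\p^2}$ preserve both forms because they fix $e_0$ and permute the $e_{p}$, while all intersection numbers and all $\omega$-values on the basis are invariant under such a permutation; elements of $\Bir(\p^2)$ preserve both forms by Lemma~\ref{Lemm:BirP2ZP2} and the remark preceding it. Hence every product of generators does too. Part (2) then follows by writing $f^{-1}(e_0) = a e_0 + \sum_p b_p e_p$ in the direct sum and intersecting with $e_0$ and with $e_q$, which gives $a = \deg f$ and $b_q = -m_q(f)$, producing the claimed formula. Applying (1) to $(e_0)^2 = 1$ yields $d^2 - \sum_q m_q(f)^2 = 1$, and applying it to $\omega(e_0) = -3$ yields $-3d + \sum_q m_q(f) = -3$, which are precisely the two Noether equalities. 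Part (3) is immediate from (1): $\deg f^{-1} = e_0 \cdot f(e_0) = f^{-1}(e_0) \cdot e_0 = \deg f$.

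The main work is in (4), which I would attack by proving the chain of inclusions $\Sym_{\p^2} \subseteq \{f \in \Weyl \mid f(e_0)=e_0\} \subseteq \{\deg f = 1\} \subseteq \{\deg f = \pm 1\} \subseteq \Sym_{\p^2}$. The first two inclusions are immediate from the definitions and from (3). For the last one, the Noether equalities from (2) show that $\deg f = -1$ is impossible (since $\sum m_q^2 = 0$ would force $\sum m_q = 0 \ne -6$), while $\deg f = 1$ forces every $m_q(f)$ to vanish, hence $f^{-1}(e_0)=e_0$, hence also $f(e_0)=e_0$. One then has to show that such an $f$ actually permutes the $e_p$: for each $p$, the intersection $f(e_p) \cdot e_0 = e_p \cdot f^{-1}(e_0) = 0$ places $f(e_p)$ in $\bigoplus_q \Z e_q$, its self-intersection $-1$ forces $f(e_p) = \pm e_q$ for a unique $q$, and the condition $\omega(f(e_p)) = \omega(e_p) = -1$ eliminates the minus sign. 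Applying the same argument to $f^{-1}$ shows that the induced map of $\B(\p^2)$ is a bijection, so $f \in \Sym_{\p^2}$. The sign-elimination via $\omega$ is the only genuinely delicate step and is, in my view, the main obstacle.

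Finally, for (5), I would combine (4) with Example~\ref{ExampleSigma}. Since $\Sym_{\p^2}$ is the stabiliser of $e_0$ in $\Weyl$ by (4), the condition $\sigma \circ \alpha \circ \sigma \in \Sym_{\p^2}$ becomes $\alpha(\sigma(e_0)) = \sigma(e_0)$, i.e.\ $\alpha(2e_0 - e_{p_1} - e_{p_2} - e_{p_3}) = 2e_0 - e_{p_1} - e_{p_2} - e_{p_3}$, where $p_1,p_2,p_3$ are the base-points of $\sigma$. Because $\alpha$ fixes $e_0$ and permutes the $e_{p}$, this reduces by uniqueness of coordinates in the direct sum decomposition to the condition $\alpha(e_{p_1}) + \alpha(e_{p_2}) + \alpha(e_{p_3}) = e_{p_1} + e_{p_2} + e_{p_3}$, which holds if and only if $\alpha$ permutes the set $\{e_{p_1}, e_{p_2}, e_{p_3}\}$.
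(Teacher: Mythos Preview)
Your proof is correct and follows essentially the same route as the paper's: both check (1) on generators, derive (2) by expanding $f^{-1}(e_0)$ in the orthogonal basis and applying (1), obtain (3) from invariance of the intersection form, handle (4) by using the Noether equalities to force $d=1$ and all $m_q=0$ and then pin down $f(e_p)=e_q$ via the self-intersection and $\omega$, and reduce (5) to $\alpha(\sigma(e_0))=\sigma(e_0)$ via (4). The only cosmetic difference is that you spell out the bijectivity of the induced permutation of $\B(\p^2)$ by applying the argument to $f^{-1}$, which the paper leaves implicit.
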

\begin{proof}
\ref{L1}: Follows from the fact that $\Bir(\p^2)$ and $\Sym_{\p^2}$ preserve the intersection form and the canonical form.

\ref{L2}: The first equality follows from Definition~\ref{Defi:DegreeMult} and from the next identity:
\[ \forall \, a \in \ZZ_{\p^2}, \; a = (a \cdot e_0) \, e_0 - \sum_{q \, \in \,   \B(\p^2)} (a \cdot e_q) \, e_q. \] 
The Noether equalities follows from \ref{L1}, since $f^{-1}(e_0)^2=d^2-\sum (m_i)^2=(e_0)^2=1 $ and $\omega(f^{-1} (e_0)) = -3d+\sum m_i=\omega(e_0)=-3$.

\ref{L3}: We have $\deg f = e_0 \cdot f^{-1}(e_0) = f(e_0) \cdot e_0 = \deg f^{-1}$.

\ref{L4}: Let $f\in \Weyl$ be such that $\deg f=d=\pm 1$. It follows successively from the Noether equalities that all multiplicities $m_q(f)$ are zero, $d=1$ and $f(e_0)=e_0$. For each $p\in \B(\p^2)$ we have $f(e_{p})\cdot e_0=0$, so $f(e_{p})=\sum_{i=1}^r a_i e_{q_i}$, for some $q_1,\dots,q_r\in \B(\p^2)$. As $-1=(e_{p})^2=(f(e_{p}))^2=-\sum (a_i)^2$, we find that $f(e_{p})=\pm e_{q_i}$ for some $i$. Since $\omega(e_{p})=\omega(f(e_{p}) )$, we get $f(e_{p})= e_{q_i}$.  Hence, $f\in \Sym_{\p^2}$.

\ref{L5}: By \ref{L4}, we have $\sigma \circ \alpha \circ \sigma \in \Sym_{\p^2}$ if and only $\sigma \circ \alpha \circ \sigma(e_0)=e_0$. Since this corresponds to $\alpha \circ \sigma(e_0) = \sigma(e_0)$, the result follows from the equality $\sigma(e_0)=2e_0-e_{[1:0:0]}-e_{[0:1:0]}-e_{[0:0:1]}$ (Example~\ref{ExampleSigma}). 
\end{proof}

\begin{corollary}   \label{Coro:ExistenceAlpha}
Let $f,g\in \Weyl$. The following conditions are equivalent:
\begin{enumerate}
\item
 $f^{-1}(e_0)=g^{-1}(e_0).$
 \item
 There exists $\alpha\in \Sym_{\p^2}$ such that $g=\alpha \circ f$.
 \end{enumerate}
\end{corollary}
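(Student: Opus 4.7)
The statement is a short equivalence, so the plan is essentially to pick the obvious candidate for $\alpha$ in one direction and apply the characterisation of $\Sym_{\p^2}$ from Lemma~\ref{Lemm:EasyWeyl}\ref{L4} in the other.

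For the direction (2)$\Rightarrow$(1), I would just observe that if $g=\alpha\circ f$ with $\alpha\in \Sym_{\p^2}$, then $\alpha^{-1}\in \Sym_{\p^2}$ as well (since $\Sym_{\p^2}$ is a subgroup of $\Weyl$), so $\alpha^{-1}(e_0)=e_0$. Therefore
\[
g^{-1}(e_0)=(f^{-1}\circ \alpha^{-1})(e_0)=f^{-1}(\alpha^{-1}(e_0))=f^{-1}(e_0).
\]

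For the converse (1)$\Rightarrow$(2), the natural candidate is $\alpha := g\circ f^{-1} \in \Weyl$. I would compute
\[
\alpha^{-1}(e_0)=(f\circ g^{-1})(e_0)=f(g^{-1}(e_0))=f(f^{-1}(e_0))=e_0,
\]
using the hypothesis $g^{-1}(e_0)=f^{-1}(e_0)$ at the penultimate step. By Lemma~\ref{Lemm:EasyWeyl}\ref{L4}, the subgroup $\Sym_{\p^2}$ is characterised inside $\Weyl$ as the stabiliser of $e_0$, so $\alpha^{-1}\in \Sym_{\p^2}$, and hence $\alpha\in \Sym_{\p^2}$. By construction $g=\alpha\circ f$, as required.

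There is no real obstacle here: the entire argument rests on the identification $\Sym_{\p^2}=\{h\in \Weyl \mid h(e_0)=e_0\}$ already established in Lemma~\ref{Lemm:EasyWeyl}\ref{L4}, together with the fact that $\Weyl$ is a group acting linearly on $\ZZ_{\p^2}$. The only mild subtlety is to remember that the notation $f^{-1}(e_0)$ uses the inverse action, which is why taking $\alpha=g\circ f^{-1}$ (rather than $f^{-1}\circ g$) is the correct choice to produce a left factor.
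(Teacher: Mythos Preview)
Your proof is correct and follows essentially the same approach as the paper: both set $\alpha = g\circ f^{-1}$ and appeal to Lemma~\ref{Lemm:EasyWeyl}\ref{L4} to identify $\Sym_{\p^2}$ as the stabiliser of $e_0$ in $\Weyl$. The paper compresses the two directions into a single chain of equivalences, but the content is identical.
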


\begin{proof}
Let us write $\alpha=g \circ f^{-1}$. By Lemma~\ref{Lemm:EasyWeyl}\ref{L4}, $\alpha \in \Sym_{\p^2}$ if and only if $\alpha(e_0)=e_0$. Applying $g^{-1}$, this condition is equivalent to $f^{-1}(e_0)=g^{-1}(e_0)$.
\end{proof}

\begin{corollary} \label{corollary= degree of a composition}
If $f, g \in \Weyl$, we have
\[ \deg f \circ g^{-1} = (\deg f) (\deg g) - \sum_{q \in \B (\p^2) } m_q(f) m_q(g).\]
\end{corollary}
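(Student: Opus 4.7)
The plan is to reduce the claim to a direct intersection-form computation in $\ZZ_{\p^2}$, using the two tools already at our disposal: the formula for $f^{-1}(e_0)$ in terms of degree and multiplicities (Lemma~\ref{Lemm:EasyWeyl}\ref{L2}), and the fact that elements of $\Weyl$ act by isometries on the Picard--Manin space (Lemma~\ref{Lemm:EasyWeyl}\ref{L1}).

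First I would rewrite the degree of $f \circ g^{-1}$ using the definition (Definition~\ref{Defi:DegreeMult}):
\[ \deg(f \circ g^{-1}) = e_0 \cdot (f \circ g^{-1})^{-1}(e_0) = e_0 \cdot (g \circ f^{-1})(e_0). \]
Since $\Weyl$ preserves the intersection form, applying the isometry $g^{-1}$ to both factors turns this into
\[ \deg(f \circ g^{-1}) = g^{-1}(e_0) \cdot f^{-1}(e_0). \]

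Next I would substitute the explicit expansions given by Lemma~\ref{Lemm:EasyWeyl}\ref{L2}, namely
\[ f^{-1}(e_0) = (\deg f) e_0 - \sum_{q \in \Base(f)} m_q(f) e_q, \quad g^{-1}(e_0) = (\deg g) e_0 - \sum_{q \in \Base(g)} m_q(g) e_q, \]
and expand the intersection product. Using the standard relations $e_0^2 = 1$, $e_0 \cdot e_q = 0$, $e_q^2 = -1$ and $e_q \cdot e_{q'} = 0$ for $q \neq q'$ (recalled right after Lemma~\ref{Lem:ZZYbigsum}), the mixed terms vanish and one obtains
\[ g^{-1}(e_0) \cdot f^{-1}(e_0) = (\deg f)(\deg g) - \sum_{q \in \B(\p^2)} m_q(f) m_q(g), \]
where the sum can be extended to all of $\B(\p^2)$ since the non-base-points contribute zero. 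This yields the claimed identity.

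There is no real obstacle here: the statement is essentially a repackaging of the orthogonality relations among the $e_q$'s together with $\Weyl$-invariance of the intersection form. The only point worth a brief sanity check is that $(f \circ g^{-1})^{-1} = g \circ f^{-1}$ in $\Weyl$ (immediate, since $\Weyl$ is a group acting on $\ZZ_{\p^2}$) and that the invariance of the intersection pairing justifies rewriting $e_0 \cdot (g \circ f^{-1})(e_0)$ as $g^{-1}(e_0) \cdot f^{-1}(e_0)$.
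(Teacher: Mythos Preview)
Your proof is correct and follows essentially the same approach as the paper: the paper writes $\deg(f\circ g^{-1}) = e_0 \cdot (f\circ g^{-1})^{-1}(e_0) = f^{-1}(e_0)\cdot g^{-1}(e_0)$ and then invokes Lemma~\ref{Lemm:EasyWeyl}\ref{L2}. You make the isometry step slightly more explicit, but the argument is the same.
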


\begin{proof}
We have $\deg f \circ g^{-1} = e_0 \cdot  (f \circ g^{-1})^{-1}(e_0) = f^{-1}(e_0) \cdot g^{-1} (e_0)$, so that the result follows from Lemma~\ref{Lemm:EasyWeyl}\ref{L2}.
\end{proof}

\begin{lemma}  \label{Lemma:Imagepts}
Let $g\in \Weyl$ and let $q\in \B(\p^2)$. 
\begin{enumerate}
\item \label{ImageBsPt}
If $q\in \Base(g)$, then $g(e_q)=m_q(g) e_0-\sum_{p\in \Base(g^{-1})} a_p e_p$, $a_p\in \Z$.
\item  \label{ImageNotBsPt}
If $q\notin \Base(g)$, then $g(e_q)=e_{\tilde{q}}$ for some $\tilde{q}\in \B(\p^2) \setminus \Base(g^{-1})$.

\end{enumerate} 
In particular, $g$ induces a bijection $\B(\p^2)\setminus \Base(g) \to \B(\p^2)\setminus \Base(g^{-1})$.
\end{lemma}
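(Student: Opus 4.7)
The plan is to use the fact that every element of $\Weyl$ preserves both the intersection form and the canonical form (Lemma~\ref{Lemm:EasyWeyl}\ref{L1}), together with the general decomposition
\[ a = (a \cdot e_0)\, e_0 \;-\; \sum_{p \, \in \, \B(\p^2)} (a \cdot e_p)\, e_p \]
for any $a \in \ZZ_{\p^2}$, which was already used in the proof of Lemma~\ref{Lemm:EasyWeyl}\ref{L2}. The key preliminary observation is that for any $q \in \B(\p^2)$, the coefficient of $e_0$ in $g(e_q)$ equals $g(e_q) \cdot e_0 = e_q \cdot g^{-1}(e_0) = m_q(g)$ by Definition~\ref{Defi:DegreeMult}.

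I would prove part \ref{ImageNotBsPt} first, since it will feed into part \ref{ImageBsPt}. Assuming $q \notin \Base(g)$ means $m_q(g) = 0$, so by the preliminary observation $g(e_q)$ has no $e_0$-component and may be written as $g(e_q) = -\sum_p c_p e_p$ for some integers $c_p$. Preservation of the intersection form then yields $-\sum c_p^2 = (g(e_q))^2 = e_q^2 = -1$, so exactly one $c_p$ is $\pm 1$ and all others vanish. Preservation of the canonical form next gives $-\sum c_p = \omega(g(e_q)) = \omega(e_q) = -1$, which fixes the sign: there exists a unique $\tilde q \in \B(\p^2)$ with $g(e_q) = e_{\tilde q}$. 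Finally, to check that $\tilde q \notin \Base(g^{-1})$, compute
\[ m_{\tilde q}(g^{-1}) \;=\; e_{\tilde q} \cdot g(e_0) \;=\; g(e_q) \cdot g(e_0) \;=\; e_q \cdot e_0 \;=\; 0. \]

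For part \ref{ImageBsPt}, assume $q \in \Base(g)$. By the decomposition and the preliminary observation,
\[ g(e_q) \;=\; m_q(g)\, e_0 \;-\; \sum_{p \, \in \, \B(\p^2)} (g(e_q) \cdot e_p)\, e_p. \]
To see that only indices $p \in \Base(g^{-1})$ contribute, apply part \ref{ImageNotBsPt} to $g^{-1}$: if $p \notin \Base(g^{-1})$, then $g^{-1}(e_p) = e_{\tilde p}$ with $\tilde p \notin \Base(g)$, so $\tilde p \neq q$ and hence $g(e_q) \cdot e_p = e_q \cdot g^{-1}(e_p) = e_q \cdot e_{\tilde p} = 0$. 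This gives the claimed form with coefficients $a_p = g(e_q) \cdot e_p \in \Z$.

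The bijection statement is then immediate: part \ref{ImageNotBsPt} defines a map $q \mapsto \tilde q$ from $\B(\p^2) \setminus \Base(g)$ to $\B(\p^2) \setminus \Base(g^{-1})$, and applying part \ref{ImageNotBsPt} to $g^{-1}$ produces the inverse map, since $g^{-1} \circ g = \id$ on $\ZZ_{\p^2}$. There is no real obstacle here; the only subtlety is the order of the argument: part \ref{ImageNotBsPt} must be established first (for all $g \in \Weyl$, so that it also applies to $g^{-1}$) before part \ref{ImageBsPt} can be completed.
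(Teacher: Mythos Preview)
Your proof is essentially identical to the paper's: the paper also computes the $e_0$-coefficient as $m_q(g)$, proves \ref{ImageNotBsPt} first via the intersection and canonical forms, and then uses \ref{ImageNotBsPt} applied to $g^{-1}$ to deduce \ref{ImageBsPt}. One small sign slip: since $\omega(e_p)=-1$, your expression $g(e_q)=-\sum_p c_p e_p$ gives $\omega(g(e_q))=\sum_p c_p$, not $-\sum_p c_p$; so the correct equation is $\sum_p c_p=-1$, which together with $\sum_p c_p^2=1$ forces one $c_p=-1$ and hence $g(e_q)=e_{\tilde q}$ as you claim.
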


\begin{proof}
We write $g(e_q)=de_0+\sum a_i e_{p_i}$, for some $\{p_1,\dots,p_n\}\subseteq \B(\p^2)$. We then observe that $d=e_0 \cdot g(e_q)=g^{-1}(e_0)\cdot e_q=m_q(g)$.

If $q\notin \Base(g)$, we then obtain $g(e_q)=\sum a_i e_{p_i}$. Since $1=-\omega(e_q)=-\omega(\sum a_i e_{p_i})=\sum a_i$ and $1=-(e_q)^2=\sum (a_i)^2$, we find that $g(e_q)$ is equal to $e_{\tilde{q}}$ for some $\tilde{q}\in \B(\p^2)$. Moreover, $\tilde{q}\not\in \Base(g^{-1})$, since $m_{\tilde{q}}(g^{-1})=e_{\tilde{q}}\cdot g(e_0)=e_q\cdot e_0=0$. This yields~\ref{ImageNotBsPt}.

To get \ref{ImageBsPt}, we consider the case $q\in \Base(g)$ and need to show that  if $p_i\notin \Base(g^{-1})$, then $a_i=0$. This is because $a_i=e_{p_i}\cdot g(e_q)=g^{-1}(e_{p_i})\cdot e_q$ and because $g^{-1}(e_{p_i})$ is equal to $e_{\tilde{p_i}}$ for some $\tilde{p_i}\in \B(\p^2)\setminus \Base(g)$ by~\ref{ImageNotBsPt}.
\end{proof}

\begin{corollary}
For each $g\in \Weyl$ and each $q\in \B(\p^2)$, we have 
\[q \notin \Base(g) \Leftrightarrow g(e_q)=e_{\tilde{q}}\text{ for some }\tilde{q}\in \B(\p^2).\]
\end{corollary}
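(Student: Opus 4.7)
The statement is essentially a reformulation of Lemma~\ref{Lemma:Imagepts}, so the plan is very short: prove the two implications separately, each in one line.

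For the forward direction $(\Rightarrow)$, I would simply cite Lemma~\ref{Lemma:Imagepts}\ref{ImageNotBsPt}, which already asserts that if $q \notin \Base(g)$, then $g(e_q) = e_{\tilde q}$ for some $\tilde q \in \B(\p^2)$ (in fact for some $\tilde q \notin \Base(g^{-1})$, but we do not need that here).

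For the reverse direction $(\Leftarrow)$, I would argue by contraposition using the compatibility of $g \in \Weyl$ with the intersection form. By Lemma~\ref{Lemm:EasyWeyl}\ref{L1}, the element $g$ preserves the intersection form, so
\[
m_q(g) \;=\; e_q \cdot g^{-1}(e_0) \;=\; g(e_q) \cdot e_0.
\]
If $g(e_q) = e_{\tilde q}$ for some $\tilde q \in \B(\p^2)$, then the right-hand side equals $e_{\tilde q} \cdot e_0 = 0$, so $m_q(g) = 0$, i.e.~$q \notin \Base(g)$.

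There is no real obstacle here; the only thing to be careful about is to invoke the intersection-form compatibility (Lemma~\ref{Lemm:EasyWeyl}\ref{L1}) so that the equality $m_q(g) = g(e_q) \cdot e_0$ is justified for arbitrary $g \in \Weyl$, not just for $g \in \Bir(\p^2)$. Everything else is immediate.
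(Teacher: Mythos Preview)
Your proof is correct and takes essentially the same approach as the paper, which simply says the corollary follows from Lemma~\ref{Lemma:Imagepts}. The only minor difference is that for the reverse direction you recompute the identity $e_0\cdot g(e_q)=m_q(g)$ via the intersection form, whereas the paper would have you cite Lemma~\ref{Lemma:Imagepts}\ref{ImageBsPt} directly (which already gives that the $e_0$-coefficient of $g(e_q)$ is $m_q(g)\neq 0$ when $q\in\Base(g)$); the content is the same.
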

\begin{proof}Follows from Lemma~\ref{Lemma:Imagepts}.\end{proof}

\begin{corollary} \label{corollary: inclusion of the base locus of a composition in a special case}
Let $f, g\in \Weyl$ be such that $ \Base (f) \subseteq  \Base (g^{-1} ) $, then we have
\[ \Base (f \circ g) \subseteq \Base (g) .\]
\end{corollary}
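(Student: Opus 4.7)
The plan is to argue by contrapositive: pick an arbitrary $q \in \B(\p^2)\setminus \Base(g)$ and show that $q \notin \Base(f\circ g)$. This immediately yields the inclusion $\Base(f\circ g) \subseteq \Base(g)$.

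The first step is to translate the condition $q \notin \Base(g)$ into a statement about the action of $g$ on $\ZZ_{\p^2}$. Precisely, Lemma~\ref{Lemma:Imagepts}\ref{ImageNotBsPt} provides a point $\tilde q \in \B(\p^2)\setminus \Base(g^{-1})$ such that $g(e_q) = e_{\tilde q}$. The second step is a direct computation of the multiplicity $m_q(f\circ g)$ using Definition~\ref{Defi:DegreeMult} together with the invariance of the intersection form under elements of $\Weyl$ (Lemma~\ref{Lemm:EasyWeyl}\ref{L1}):
\[
m_q(f\circ g) \;=\; e_q \cdot (f\circ g)^{-1}(e_0) \;=\; e_q \cdot g^{-1}\bigl(f^{-1}(e_0)\bigr) \;=\; g(e_q) \cdot f^{-1}(e_0) \;=\; e_{\tilde q} \cdot f^{-1}(e_0) \;=\; m_{\tilde q}(f).
\]
The hypothesis $\Base(f) \subseteq \Base(g^{-1})$ combined with $\tilde q \notin \Base(g^{-1})$ forces $\tilde q \notin \Base(f)$, so $m_{\tilde q}(f) = 0$. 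Therefore $m_q(f\circ g) = 0$, which means $q \notin \Base(f\circ g)$, as desired.

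There is essentially no obstacle: the statement is a short formal consequence of the machinery already developed. The only mild insight is recognising that the bijection $\B(\p^2)\setminus \Base(g) \to \B(\p^2)\setminus \Base(g^{-1})$ furnished by Lemma~\ref{Lemma:Imagepts} lets one transport base-point information through $g$, after which the hypothesis on $\Base(f)$ is precisely what is required to annihilate the remaining multiplicity of $f$ at $\tilde q$.
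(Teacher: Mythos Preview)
Your proof is correct and follows essentially the same approach as the paper: both argue by contrapositive, apply Lemma~\ref{Lemma:Imagepts}\ref{ImageNotBsPt} to a point $q\notin\Base(g)$ to obtain $g(e_q)=e_{\tilde q}$ with $\tilde q\notin\Base(g^{-1})\supseteq\Base(f)$, and then conclude $q\notin\Base(f\circ g)$. The only cosmetic difference is that the paper applies Lemma~\ref{Lemma:Imagepts} a second time to $f$ (yielding $f\circ g(e_q)=e_{\tilde{\tilde q}}$), whereas you compute $m_q(f\circ g)=m_{\tilde q}(f)=0$ directly via the intersection form; both are equally valid.
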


\begin{proof}
Take $q \in \B(\p^2) \setminus \Base (g)$. Then, by Lemma \ref{Lemma:Imagepts}, we have $g(e_q)=e_{\tilde{q}}$ for some $\tilde{q}\in \B(\p^2) \setminus \Base(g^{-1})$. It follows that $\tilde{q}\in \B(\p^2) \setminus \Base(f)$, so that $f (e_{\tilde{q}} ) = e_{ \tilde { \tilde{q} } }$ for some $\tilde { \tilde{q} } \in \B(\p^2)$, i.e.~$f \circ g (e_q) = e_{ \tilde { \tilde{q} } }$, proving that $q \notin \Base (f \circ g)$.
\end{proof}

As explained before, the infinite Weyl group $\Weyl$ contains $\Bir(\p^2)$. In some sense, this corresponds to forgetting the configuration of points. However, several properties of the action of $\Bir(\p^2)$ on $\ZZ_{\p^2}$ extend to $\Weyl$. For instance, the Noether equalities  (Lemma~\ref{Lemm:EasyWeyl}\ref{L2}) are fulfilled by any element of $\Weyl$. A priori, the degree and multiplicities could be negative, but we will show that it is not the case (Lemma~\ref{Lemm:SymBirSim}). Also, there are some elements of $\ZZ_{\p^2}$ which satisfy the Noether equalities but which are not in the orbit of $e_0$. However, there is an algorithm to decide if an element is in this orbit (Algorithm~\ref{Algo:Hudsonalgo} below, corresponding to the classical Hudson test).\\

\begin{lemma}  \label{Lemm:SymBirSim}
Let $f \in \Weyl \smallsetminus \Sym_{\p^2}$. 
\begin{enumerate}
\item\label{LU}
For each finite set $\Delta = \{q_1,\dots,q_s\} \subseteq  \B(\p^2)$ of $s \ge 1$ points there exists a dense open set $U\subseteq (\p^2)^s$ such that for each $(p_1,\dots,p_s) \in U$:
\begin{enumerate}
\item The points $p_1, \ldots,p_s$ are distinct;
\item There exists an element $g\in \Bir(\p^2)$ satisfying
\[ \deg g = \deg f \quad \text{and} \quad m_{p_i}(g)=m_{q_i}(f)  \text{ for } i=1,\dots,s.\]
 \end{enumerate}
\item\label{LPositv}
The degree and multiplicities of $f$ satisfy
 \[ \deg f \ge 2 \quad \text{and} \quad m_q(f) \geq 0  \text{ for each } q\in \B(\p^2).\]
\item\label{LProd}
There exist $\alpha,\beta\in \Sym_{\p^2}$ and $g\in \Bir(\p^2)$, such that $f=\alpha  \circ g  \circ \beta$.
\end{enumerate}
\end{lemma}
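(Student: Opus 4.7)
The plan is to prove (1) first and then deduce (2) and (3) from it via Corollary~\ref{Coro:ExistenceAlpha}.

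I propose to prove (1) by induction on the minimum number $\ell(f) \geq 1$ of $\sigma$'s in a decomposition $f = s_\ell \sigma s_{\ell-1} \sigma \cdots \sigma s_0$ with $s_i \in \Sym_{\p^2}$ (such a decomposition exists because $\Weyl = \langle \Sym_{\p^2}, \sigma \rangle$, and $\ell \geq 1$ since $f \notin \Sym_{\p^2}$). For $\ell = 1$, the element $f = s_1 \sigma s_0$ satisfies $f^{-1}(e_0) = s_0^{-1}(\sigma(e_0))$, a vector of homaloidal type $(2;1,1,1)$; a standard quadratic transformation based at any three non-collinear proper points realises this type. For $\ell \geq 2$, write $f = s_\ell \sigma \tilde f$ with $\ell(\tilde f) = \ell - 1$: by induction, the type of $\tilde f$ is realised by some $\tilde g \in \Bir(\p^2)$ at a generic configuration of proper points in $\p^2$, and one can arrange this configuration to contain three proper points in general linear position corresponding to the coordinate points of $\sigma$. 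Composing $\tilde g$ with a genuine standard quadratic $\tau' \in \Bir(\p^2)$ centred at those three points yields a birational map $g$ whose $g^{-1}(e_0)$ matches $f^{-1}(e_0) = \tilde f^{-1}\sigma(e_0)$ up to the relabelling of points induced by $s_\ell$.

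Given (1), the remaining parts follow quickly. For (2), apply (1) with $\Delta \supseteq \Base(f)$: the resulting $g \in \Bir(\p^2)$ has $\deg g \geq 1$ and $m_{p_i}(g) \geq 0$, so the same numerical data holds for $f$; moreover $\deg f = 1$ is excluded by Lemma~\ref{Lemm:EasyWeyl}\ref{L4}. For (3), set $\Delta = \Base(f) = \{q_1,\ldots,q_s\}$ and use (1) to obtain generic $(p_1,\ldots,p_s) \in (\p^2)^s$ together with $g \in \Bir(\p^2)$ matching the data. Since $\Sym_{\p^2}$ acts as the full symmetric group on $\B(\p^2)$, pick $\beta \in \Sym_{\p^2}$ with $\beta(e_{q_i}) = e_{p_i}$ for each $i$. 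By Lemma~\ref{Lemm:EasyWeyl}\ref{L2} together with the Noether equalities for $g$ (which force $\Base(g) = \{p_1,\ldots,p_s\}$), we have $(f \circ \beta^{-1})^{-1}(e_0) = \beta(f^{-1}(e_0)) = g^{-1}(e_0)$; Corollary~\ref{Coro:ExistenceAlpha} then yields $\alpha' \in \Sym_{\p^2}$ with $g = \alpha' \circ f \circ \beta^{-1}$, i.e.\ $f = (\alpha')^{-1} \circ g \circ \beta$.

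The principal difficulty is the induction step in (1). One must carefully track, via Lemma~\ref{Lemma:Imagepts} and Corollary~\ref{corollary= degree of a composition}, how the coordinate points of $\sigma$ interact with $\tilde f^{-1}$ (in particular whether they lie in $\Base(\tilde f^{-1})$), arrange for the inductively produced generic configuration of $\tilde f$ to include a triple of proper points in general linear position where $\tau'$ can be based, and verify that the resulting set of configurations for $f$ remains a dense open subset of $(\p^2)^s$. Running this carefully should also yield (2) along the way, providing the non-negativity of multiplicities needed to interpret the Noether equalities throughout the induction.
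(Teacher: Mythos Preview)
Your overall plan (induct on the number of $\sigma$'s, then derive (2) and (3) from (1) via Corollary~\ref{Coro:ExistenceAlpha}) is exactly the paper's strategy, and your derivation of (2) and (3) from (1) is correct. The gap is in the induction step for (1): you peel $\sigma$ from the \emph{left}, writing $f=s_\ell\sigma\tilde f$, whereas the paper peels from the \emph{right}, writing $f=f'\circ\sigma$. This difference matters.

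With the left peeling, $f^{-1}(e_0)=\tilde f^{-1}(\sigma(e_0))=2\tilde f^{-1}(e_0)-\tilde f^{-1}(e_{q_1})-\tilde f^{-1}(e_{q_2})-\tilde f^{-1}(e_{q_3})$, where $q_1,q_2,q_3$ are the coordinate points. The induction hypothesis (statement (1) for $\tilde f$) only pins down $\tilde g^{-1}(e_0)$ up to $\Sym_{\p^2}$; it says nothing about $\tilde g^{-1}(e_{r})$ for the three points $r$ at which you base your quadratic $\tau'$. Two elements of $\Weyl$ with the same homaloidal type can send $e_{q_i}$ to completely different places (cf.\ Lemma~\ref{Lemma:Imagepts}), so the type of $f=s_\ell\sigma\tilde f$ is \emph{not} determined by the type of $\tilde f$. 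Consequently there is no reason for $(\tau'\circ\tilde g)^{-1}(e_0)$ (or $(\tilde g\circ\tau')^{-1}(e_0)$) to match $f^{-1}(e_0)$ up to $\Sym_{\p^2}$; the ``carefully track via Lemma~\ref{Lemma:Imagepts}'' step cannot be completed with the information your induction carries.

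The paper's right peeling avoids this: $f^{-1}(e_0)=\sigma\bigl((f')^{-1}(e_0)\bigr)$, so the explicit map $\sigma$ is applied to the vector $(f')^{-1}(e_0)$ that induction already controls. One enlarges $\Delta$ to contain the three coordinate points, transports $\Delta$ through $\sigma$ to obtain $\Delta'$, applies induction to $f'$ with $\Delta'$, and then replaces $\sigma$ by a genuine quadratic $\sigma_{\mathcal T}\in\Bir(\p^2)$ based at the three generic points corresponding to $q_1,q_2,q_3$. The verification that $\beta=\sigma_{\mathcal T}\circ\beta'\circ\sigma\in\Sym_{\p^2}$ (using Lemma~\ref{Lemm:EasyWeyl}\ref{L5}) and that $\beta(e_{q_i})=e_{p_i}$ is the actual content of the step. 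Your argument becomes correct if you switch the side on which you strip $\sigma$ and carry out this bookkeeping.
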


\begin{proof}Let us first observe that $\ref{LPositv}$ and $\ref{LProd}$ follow from $\ref{LU}$. Indeed, take for $\Delta$ the set  $\Base(f)= \{q_1,\dots,q_s\}$ and let $U$ be the corresponding open subset of $(\p^2)^s$ given by $\ref{LU}$. Choose $(p_1,\dots,p_s)\in U$ and choose $g\in \Bir(\p^2)$ such that $\deg g = \deg f $ and $m_{p_i}(g)=m_{q_i}(f)$ for $i=1,\dots,s$. Then choose $\beta   \in \Sym_{\p^2}$ that sends $e_{q_i}$ onto $e_{p_i}$ for each $i$, and note that $\beta \circ f^{-1}(e_0) = g^{-1}(e_0)$. Hence, the element $\alpha=f \circ \beta^{-1} \circ g^{-1}$ belongs to $\Sym_{\p^2}$ by Lemma~\ref{Lemm:EasyWeyl}\ref{L4}.

To prove $\ref{LU}$, we write $f=\alpha_{l} \circ \sigma \circ  \cdots \circ \alpha_1 \circ \sigma \circ \alpha_{0}$ where $l\ge 1$, $\alpha_0,\dots,\alpha_l\in\Sym_{\p^2}$ and prove the result by induction on $l$. As the result does not change under right or
left multiplication
by elements of $\Sym_{\p^2}$, we can moreover assume that $\alpha_0$ and $\alpha_{l}$ are equal to the identity. We can also always enlarge the set $\Delta$.

If $l=1$, then $f=\sigma$, so that $f$ has degree $2$ and three base-points of multiplicity~$1$ (see Example~\ref{ExampleSigma}). We may assume that $q_1,q_2,q_3$ are the base-points of $f$. Then, we can choose for $U$ the open subset of points $(p_1, \ldots,p_s)$ in $(\p^2)^s$ where $p_1,\ldots, p_s$ are distinct and where $p_1,p_2,p_3$ are not collinear. For each $(p_1,\dots,p_s)\in U$, we choose an element $\alpha\in \Aut(\p^2)$ sending $p_i$ onto $q_i$ for $i=1,2,3$ and choose $g=f \circ \alpha$.

For $l\ge 2$, we write $ f= f' \circ \sigma$ and apply the induction hypothesis to $f'$. Up to enlarging $\Delta = \{ q_1, \ldots, q_s \}$, we may assume that $q_1=[1:0:0],q_2=[0:1:0],q_3=[0:0:1]$. For each $i\ge 4$, define $q'_i$ as the unique point of $\B(\p^2)$ such that $\sigma ( e_{q_i})=e_{q_i'}$. For $i=1,2,3$, set $q'_i = q_i$.  One can assume that $\Base(f') \subseteq \{q_1',\dots,q_s'\}$. Let $U' \subseteq (\p^2)^{s}$ be an open subset associated to $f'$ and $\Delta' =\{q_1',\dots,q_s'\}$  via the induction hypothesis.

Let $T \subseteq  (\p^2)^3$ be the open subset 
of triplets $(p_1,p_2,p_3) \in (\p^2)^3$ such that the $p_i$ are in the affine chart $\{ [x:y:z] \in \p^2, \; x \neq 0 \}$ and are not collinear. We have
\[T =\left\{([1:a_1:a_2],[1:a_3:a_4],[1:a_5:a_6])  \mid a_1,\dots,a_6\in \k, \; \det  \left(\begin{array}{rcl}
1& 1& 1\\
a_1 & a_3& a_5\\
a_2 & a_4 & a_6\end{array}\right)  \not=0\right\}. \]
Let
$\rho\colon T \to \Aut(\p^2)= \PGL_3(\k)$ be the morphism defined by
\[ \rho ([1:a_1:a_2],[1:a_3:a_4],[1:a_5:a_6])  = \left(\begin{array}{rcl}
1& 1& 1\\
a_1 & a_3& a_5\\
a_2 & a_4 & a_6\end{array}\right).\]
If $\triple = (p_1,p_2,p_3)$ belongs to $T$, we set
\[ \sigma_{\triple} := \rho ( \triple )  \circ \sigma \circ \rho (\triple )^{-1} \in \Bir (\p^2).\]
Note that $\sigma_{\triple}$ is a quadratic involution having base-points at $p_1,p_2,p_3$.

We then denote by $U \subseteq (\p^2)^{s}$ the dense open subset of $s$-uples $p=(p_1,\dots,p_s)$ such that:
\begin{enumerate}
\item No three of the points $p_i$ are collinear (so that in particular the points $p_i$ are distinct) ;
\item The triple $\triple =(p_1,p_2,p_3)$ belongs to $T$;
\item The $s$-uple $p'=(p_1',\dots,p_s')$ belongs to $U'$, where the elements $p'_i$ are defined by
$p_i' :=p_i$ for $i \le 3$ and by $p'_i := \sigma_{\triple} (p_i) \in \p^2$ for $i \geq 4$.
\end{enumerate}

For each $p\in U$, the corresponding $p'\in U'$ yields an element $g'\in \Bir(\p^2)$ satisfying $\deg g' = \deg f' $ and $m_{q_i'}(f)=m_{p_i'}(f')$ for each $i$. Taking  $\beta'\in \Sym_{\p^2}$ that sends $e_{q_i'}$ onto $e_{p_i'}$ for each $i$, the fact that $\Base(f')\subseteq \{q_1',\dots,q_s \}$ implies as before that $\beta'(f'^{-1}(e_0))={g'}^{-1}(e_0)$, so $f' = \alpha \circ g' \circ \beta'$, for some $\alpha \in \Sym_{\p^2}$.

We write $\nu=\rho( \triple )\in \Aut(\p^2)$, $\sigma_{\triple} =\nu \circ \sigma \circ \nu^{-1}\in \Bir(\p^2)$ as before and obtain
\[f=f' \circ \sigma=\alpha \circ g' \circ \beta' \circ \sigma = \alpha \circ  g \circ \beta,\]
where $g= g' \circ \sigma_{\triple} \in \Bir(\p^2)$ and  $\beta=\sigma_{\triple} \circ \beta' \circ \sigma=\nu \circ \sigma \circ\nu^{-1} \circ \beta' \circ \sigma \in \Weyl$. 
For $i\in\{1,2,3\}$, both $\nu$ and $\beta'$ send  $e_{q_i}$ to $e_{p_i}$, hence $\nu^{-1} \circ \beta'$ fixes $e_{q_i}$. This shows that $\beta \in \Sym_{\p^2}$ (Lemma~\ref{Lemm:EasyWeyl}\ref{L5}), and thus that $\deg g = \deg f $.

It remains to observe that $\beta$ sends $e_{q_i}$ to $e_{p_i}$ for each $i$, to obtain $m_{p_i}(g)=m_{q_i}(f)$ for each $i$. The fact that $\nu^{-1} \circ \beta'$ fixes $e_0-e_{q_1}-e_{q_2}$ implies that $\sigma \circ \nu^{-1} \circ \beta' \circ \sigma$ fixes $\sigma(e_0-e_{q_1}-e_{q_2})=e_{q_3}$ (see Example~\ref{ExampleSigma}) and thus that $\beta$ sends $e_{q_3}$ to $e_{p_3}$. The same works for $e_{q_1},e_{q_2}$. For $i\ge 4$, we have 
\[\beta(e_{q_i}) = \sigma_{\triple} \circ \beta' \circ \sigma(e_{q_i})=\sigma_{\triple} \circ \beta'(e_{q_i'})=\sigma_{\triple} (e_{p_i'})=e_{p_i}. \qedhere \]
\end{proof}

The first two corollaries of Lemma~\ref{Lemm:SymBirSim} are stated for an easier reading. The first one is  \cite[Proposition 2.4]{BlaCal}:

\begin{corollary}  \label{corollary:existence-of-a-Cremona-transformation-with-assigned-homaloidal-type-on-a-dense-open-subset}
For each homaloidal type $(d ; m_1, \ldots,m_s)$, there exists a dense open subset $U \subseteq (\p^2)^s$ such that for each $(p_1, \ldots,p_s) \in U$ the following holds:
\begin{enumerate}
\item The points $p_1, \ldots,p_s$ are distinct;
\item There exists a Cremona transformation $f \in \Bir(\p^2)$ such that
\[ f^{-1} (e_0) = d e_0 - \sum_{i=1}^s m_i e_{p_i}.\]
\end{enumerate}
\end{corollary}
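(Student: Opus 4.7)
The plan is to derive this corollary as an essentially direct application of Lemma~\ref{Lemm:SymBirSim}\ref{LU}, using the convention made just after Remark~\ref{Rem:BasePtsClassical} that any ``homaloidal type'' $(d;m_1,\ldots,m_s)$ is by definition realised by at least one element of $\Bir(\p^2)$.

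First I would fix such a witness $f_0 \in \Bir(\p^2)$ of degree $d$ with base-points $q_1,\ldots,q_s \in \B(\p^2)$ and multiplicities $m_{q_i}(f_0)=m_i$. The case $d=1$ is degenerate: then $s=0$, so $(\p^2)^s$ is a point and one may take $U=(\p^2)^0$, since $\mathrm{id} \in \Bir(\p^2)$ satisfies $\mathrm{id}^{-1}(e_0)=e_0$. If $d \geq 2$, Lemma~\ref{Lemm:EasyWeyl}\ref{L4} guarantees that $f_0$ lies in $\Weyl \setminus \Sym_{\p^2}$, so I may apply Lemma~\ref{Lemm:SymBirSim}\ref{LU} to $f_0$ with $\Delta = \{q_1,\ldots,q_s\} = \Base(f_0)$. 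That lemma then supplies a dense open subset $U \subseteq (\p^2)^s$ such that for each $(p_1,\ldots,p_s) \in U$ the points are pairwise distinct and there exists $g \in \Bir(\p^2)$ with $\deg g = d$ and $m_{p_i}(g) = m_i$ for $i=1,\ldots,s$. This $g$ will be the desired Cremona transformation.

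The only work remaining is to upgrade the statement ``$m_{p_i}(g)=m_i$'' to the equality $g^{-1}(e_0) = d\,e_0 - \sum_{i=1}^{s} m_i e_{p_i}$, i.e.~to rule out extra base-points of $g$. For this I would combine three ingredients: (i)~the formula $g^{-1}(e_0) = d\,e_0 - \sum_{q \in \Base(g)} m_q(g) e_q$ from Lemma~\ref{Lemm:EasyWeyl}\ref{L2}; (ii)~the non-negativity of the multiplicities of $g$, given by Lemma~\ref{Lemm:SymBirSim}\ref{LPositv}; and (iii)~the Noether equality $\sum_{q \in \Base(g)} m_q(g) = 3(d-1)$ from Lemma~\ref{Lemm:EasyWeyl}\ref{L2}. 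Because $(d;m_1,\ldots,m_s)$ is itself a homaloidal type, the prescribed multiplicities $m_{p_i}(g)=m_i$ already sum to $3(d-1)$; together with non-negativity this forces every other multiplicity of $g$ to be zero, hence $\Base(g) = \{p_1,\ldots,p_s\}$.

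There is no genuine obstacle here: the real technical content, namely the fact that a homaloidal type can be realised with base-points in ``general'' position, is Lemma~\ref{Lemm:SymBirSim}\ref{LU}, whose inductive proof on the number of quadratic factors has already been carried out. The corollary is essentially a repackaging of that conclusion as an identity in $\ZZ_{\p^2}$, with the Noether equalities providing the book-keeping that identifies the base locus exactly.
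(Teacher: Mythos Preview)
Your proposal is correct and follows the same approach as the paper, which simply records this corollary as an ``easier reading'' of Lemma~\ref{Lemm:SymBirSim} without spelling out a separate proof. Your Noether-equality bookkeeping to rule out extra base-points is a clean way to make the deduction explicit; alternatively one could read off $g^{-1}(e_0)=\beta(f_0^{-1}(e_0))$ from the $\beta\in\Sym_{\p^2}$ constructed inside the proof of Lemma~\ref{Lemm:SymBirSim}, but your route is just as valid.
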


\begin{corollary}  \label{corollary:W=SymBirSym}
We have $\Weyl = \Sym_{\p^2} \Bir (\p^2)   \Sym_{\p^2}$.
\end{corollary}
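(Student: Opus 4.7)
The plan is to prove both inclusions separately, with the non-trivial direction reduced essentially to part \ref{LProd} of Lemma~\ref{Lemm:SymBirSim}, which has already been established.

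First, the inclusion $\Sym_{\p^2} \Bir(\p^2) \Sym_{\p^2} \subseteq \Weyl$ is immediate from the definition of $\Weyl$, since $\Bir(\p^2) \subseteq \Weyl$ and $\Sym_{\p^2} \subseteq \Weyl$, and $\Weyl$ is a group.

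For the reverse inclusion, let $f \in \Weyl$. I would split into two cases according to whether $f \in \Sym_{\p^2}$ or not. If $f \in \Sym_{\p^2}$, then trivially $f = f \circ \id \circ \id \in \Sym_{\p^2} \cdot \Bir(\p^2) \cdot \Sym_{\p^2}$, as the identity belongs to $\Bir(\p^2)$. If instead $f \in \Weyl \setminus \Sym_{\p^2}$, then Lemma~\ref{Lemm:SymBirSim}\ref{LProd} directly supplies elements $\alpha, \beta \in \Sym_{\p^2}$ and $g \in \Bir(\p^2)$ with $f = \alpha \circ g \circ \beta$, so $f \in \Sym_{\p^2} \Bir(\p^2) \Sym_{\p^2}$, as desired.

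There is no real obstacle here: the entire substance has been absorbed into Lemma~\ref{Lemm:SymBirSim}, where the induction on the number of factors $\sigma$ in a decomposition of $f$, combined with the explicit choice of a generic triple of points $(p_1,p_2,p_3)$ in $(\p^2)^3$ to ``geometrically realize'' an abstract symmetry by a genuine Cremona transformation, did all the work. The corollary itself is just the structural restatement of part~\ref{LProd} of that lemma, together with the trivial observation that the symmetry group $\Sym_{\p^2}$ is already accounted for on its own.
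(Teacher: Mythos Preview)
Your proof is correct and follows exactly the approach the paper intends: the corollary is stated as an immediate consequence of Lemma~\ref{Lemm:SymBirSim}\ref{LProd}, and you have spelled out precisely that reduction together with the trivial case $f\in\Sym_{\p^2}$.
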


In the next two corollaries, we give information about
the orbits of $e_0, e_q$ and $e_0-e_{q}$, where $q$ is a point of $\B(\p^2)$. The first one is the following positivity result on the degree and multiplicities of elements in the orbit of $e_0$, which also follows from \cite[Lemma 5.3]{BlaCan} (with another proof).

\begin{corollary}  \label{Coro:PositivityOrbit}
Each element $a\in \Weyl(e_0)$ can be written as
\[\begin{array}{l}a= (\deg a) \cdot e_0 - \sum_{q \in \Base(a)} m_q(a) \cdot e_q,\end{array}\]
where $\deg a \ge 1$, $m_q(a)\ge 1$ for each $q \in \Base(a)$ and
\[\begin{array}{l}\sum_{q\in \Base(a)} m_q(a)=3(\deg a - 1),\ \sum_{q\in \Base(a)} (m_q(a))^2=\deg(a)^2-1.\end{array}\] 
Moreover, for any two distinct $q,q'\in \B(\p^2)$ we have $m_q(a)+m_{q'}(a) \le \deg a $.
\end{corollary}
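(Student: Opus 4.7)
I would begin by noting that the orbit $\Weyl(e_0)$ contains $e_0$ itself, since every element of $\Sym_{\p^2}$ fixes $e_0$ by definition; in this case $\deg a = 1$, $\Base(a) = \emptyset$, all sums are empty, and the final inequality reads $0 \le 1$, so there is nothing to prove. I therefore assume $a \ne e_0$ and write $a = g(e_0)$ for some $g \in \Weyl$; by Lemma~\ref{Lemm:EasyWeyl}\ref{L4}, such a $g$ necessarily lies in $\Weyl \setminus \Sym_{\p^2}$, and so does $g^{-1}$.

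The expression for $a$ together with the two Noether equalities drops out of the machinery already at hand. Applying Lemma~\ref{Lemm:EasyWeyl}\ref{L2} to $g^{-1}$ and using Remark~\ref{Remark:relation-between-mutliplicities-in-Z-and-in-the-Weyl-group} to match the invariants of $a \in \ZZ_{\p^2}$ with those of $g^{-1} \in \Weyl$ yields
\[
a = \deg(g^{-1}) \, e_0 - \sum_{q \in \Base(g^{-1})} m_q(g^{-1}) \, e_q,
\]
and the identities $\sum m_q(a) = 3(\deg a - 1)$ and $\sum m_q(a)^2 = \deg(a)^2 - 1$ (which in any case also follow directly from the preservation of the intersection and canonical forms in Lemma~\ref{Lemm:EasyWeyl}\ref{L1}, since $a^2 = e_0^2 = 1$ and $\omega(a) = \omega(e_0) = -3$). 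Positivity is then immediate: Lemma~\ref{Lemm:SymBirSim}\ref{LPositv} applied to $g^{-1}$ gives $\deg a \ge 2$ and $m_q(a) \ge 0$ for every $q \in \B(\p^2)$, while $m_q(a) \ge 1$ for $q \in \Base(a)$ is the very definition of $\Base$.

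All the work is therefore in the final inequality $m_q(a) + m_{q'}(a) \le \deg a$ for distinct $q, q' \in \B(\p^2)$. I would split into two cases. If at most one of $q, q'$ is a base-point of $a$, say $m_{q'}(a) = 0$, then the Noether identity $\sum_p m_p(a)^2 = \deg(a)^2 - 1$ gives $m_q(a) < \deg a$ at once, which suffices. Otherwise both $q, q' \in \Base(a)$, and here I would reduce to the statement for an actual Cremona transformation by means of Lemma~\ref{Lemm:SymBirSim}\ref{LProd}: writing $g = \alpha \circ h \circ \beta$ with $\alpha, \beta \in \Sym_{\p^2}$ and $h \in \Bir(\p^2)$ gives $a = \alpha(h(e_0))$. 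Since $\beta$ fixes $e_0$ and $\alpha$ merely permutes the basis $\{e_p\}_{p \in \B(\p^2)}$ of $\ZZ_{\p^2}$, the multiset of multiplicities of $a$ coincides, via Lemma~\ref{Lemm:BirP2ZP2}, with the multiset of classical base-point multiplicities of the Cremona transformation $h^{-1}$, and $\deg a = \deg h^{-1}$.

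The inequality thus reduces to the classical fact that any two distinct base-points $p, p'$ of a Cremona transformation $\varphi \in \Bir(\p^2)$ of degree $d$ satisfy $m_p(\varphi) + m_{p'}(\varphi) \le d$, and this is the main obstacle. My plan is to argue intersection-theoretically on a resolution $\pi \colon X \to \p^2$ of $\Base(\varphi)$: the strict transform $\tilde\Lambda = d\,\pi^*(e_0) - \sum m_i e_{p_i}$ of the homaloidal net is base-point free on $X$, so it meets any effective divisor non-negatively, and pairing it with an effective class of the form $\pi^*(e_0) - e_p - e_{p'}$ gives exactly $d - m_p - m_{p'} \ge 0$. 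When $p, p'$ are proper points of $\p^2$, the strict transform of the line through them (together with some exceptional components) realizes such a class. The delicate case is when $p$ or $p'$ is infinitely near: one must then use a line through the lower of the two points with tangent direction matching the string of infinitely near points leading up to the other, and argue inductively through the successive blow-ups. Since this is a standard result in Cremona theory, I would be content to cite it from \cite{Alberich} rather than redo the argument in detail.
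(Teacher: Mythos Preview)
Your proposal is correct and follows essentially the same route as the paper: reduce to an honest Cremona transformation via the decomposition $g = \alpha \circ h \circ \beta$ of Lemma~\ref{Lemm:SymBirSim}\ref{LProd}, then invoke B\'ezout on the linear system of $h^{-1}$. The one place where the paper is tidier is the infinitely-near case you flag as delicate. Rather than constructing tangent lines and inducting through the tower, the paper simply observes the monotonicity of multiplicities along infinitely-near chains: if $q \ge q'$ in the order of Definition~\ref{Defi:Order}, then $m_q(h(e_0)) \le m_{q'}(h(e_0))$. This lets you replace each of your two points by the proper point of $\p^2$ it lies over (or, if both lie over the same proper point, replace the higher one by the point in the first neighbourhood below it), so that you are always intersecting with the strict transform of an actual line in $\p^2$. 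This avoids both the inductive argument and the external citation.
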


\begin{proof}
Write $a=f(e_0)$ for some $f\in \Weyl$, and decompose $f= \alpha \circ g \circ  \beta$ where $\alpha,\beta\in \Sym_{\p^2}$  and $g \in \Bir(\p^2)$, using Lemma~\ref{Lemm:SymBirSim}\ref{LProd}.
Hence, $a=\alpha \circ g(e_0)$. The description above follows then from Lemmas~\ref{Lemm:BirP2ZP2} and \ref{Lemm:EasyWeyl}. The inequality $m_q(a)+m_{q'}(a)\le \deg(a)$ can be checked for $g(e_0)$, since $\alpha\in \Sym_{\p^2}$. We can moreover assume that $q\in \p^2$ and $q'$ is either in $\p^2$ or in the first blow-up of $q$ (since $m_q(g(e_0)) \le m_{q'}(g(e_0))$ if $q\ge q'$). The result follows then from B\'ezout theorem, by intersecting the line through $q$ and $q'$ with the linear system corresponding to $g(e_0)$.
\end{proof}

The following result is obvious for orbits of $\Bir(\p^2)$ and
is here
generalised to orbits of $\Weyl$. This allows to say that elements of $\Weyl$ have a behaviour ``not too far'' from elements of $\Bir(\p^2)$. 
See also Lemma~\ref{Lemm:SymBirSim}\ref{LProd} for another result in this direction.

\begin{corollary}\label{Cor:IntersectionTwoOrbits}
Let $q\in \B(\p^2)$ and let $a\in \Weyl(e_0)$.
\begin{enumerate}
\item\label{Ge0e0}
For each $b\in \Weyl(e_0)$, we have $a\cdot b\ge 1$.
\item\label{Ge0e1}
For each $b\in \Weyl(e_0-e_q)$, we have $a\cdot b\ge 1$. 
\item\label{Ge0eq}
For each  $b\in \Weyl(e_q)$, we have $a\cdot b\ge 0$. 
\end{enumerate}
\end{corollary}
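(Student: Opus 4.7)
The plan is to exploit the $\Weyl$-invariance of the intersection form in order to reduce each of the three statements to a computation against one of the fixed vectors $e_0$, $e_0-e_q$, or $e_q$. Writing $b = h(v)$ for some $h\in \Weyl$ and $v\in\{e_0,\,e_0-e_q,\,e_q\}$, I would set $a' := h^{-1}(a)\in \Weyl(e_0)$. Since $\Weyl$ preserves the intersection form, $a\cdot b = a'\cdot v$. Applying Corollary~\ref{Coro:PositivityOrbit} to $a'$ yields a positive expansion
\[ a' = d\, e_0 - \sum_{p\in\Base(a')} m_p\, e_p, \qquad d := \deg(a')\ge 1,\ \ m_p := m_p(a') \ge 0. \]

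For~\ref{Ge0e0}, one gets $a'\cdot e_0 = d \ge 1$ directly. For~\ref{Ge0eq}, one gets $a'\cdot e_q = m_q(a') \ge 0$, again directly from Corollary~\ref{Coro:PositivityOrbit}.

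The substantive case is~\ref{Ge0e1}, where I must establish $a'\cdot(e_0-e_q) = d - m_q(a') \ge 1$. If $d=1$, Lemma~\ref{Lemm:EasyWeyl}\ref{L4} forces $a'=e_0$, hence $m_q(a')=0$ and the inequality is trivial. If $d \ge 2$, I would argue by contradiction: assuming $m_q(a') \ge d$, the inequality $m_q(a') + m_{q'}(a') \le d$ from Corollary~\ref{Coro:PositivityOrbit}, valid for every $q' \ne q$, forces $m_{q'}(a') = 0$ for all $q' \ne q$. The Noether equalities of Lemma~\ref{Lemm:EasyWeyl}\ref{L2} then collapse to $m_q(a') = 3(d-1)$ and $m_q(a')^2 = d^2-1$, giving $9(d-1)^2 = (d-1)(d+1)$. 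Dividing by $d-1$ (possible since $d\ge 2$) yields $9(d-1) = d+1$, i.e. $d = 5/4$, absurd.

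The only real obstacle I anticipate is bridging the gap between the weak inequality $m_q + m_{q'} \le d$ supplied by Corollary~\ref{Coro:PositivityOrbit} and the strict inequality $m_q < d$ required in case~\ref{Ge0e1}; the argument above resolves this by using the arithmetic of the Noether equalities to rule out the degenerate situation in which all multiplicity of $a'$ concentrates at a single base-point.
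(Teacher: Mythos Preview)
Your proof is correct and follows the same overall strategy as the paper: use $\Weyl$-invariance of the intersection form to reduce $b$ to $e_0$, $e_0-e_q$, or $e_q$, and then read off the inequalities from the positivity data of Corollary~\ref{Coro:PositivityOrbit}. Cases~\ref{Ge0e0} and~\ref{Ge0eq} are handled identically.

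The only difference is in case~\ref{Ge0e1}. Your contradiction argument via the two-point inequality $m_q + m_{q'} \le d$ and the Noether equalities is valid, but the paper's route is shorter: from the Noether equality $\sum_p m_p(a')^2 = d^2 - 1$ (Corollary~\ref{Coro:PositivityOrbit}) one gets immediately $m_q(a')^2 \le d^2 - 1 < d^2$, hence $m_q(a') \le d-1$ and $a'\cdot(e_0-e_q) = d - m_q(a') \ge 1$. This bypasses the case split on $d$ and the appeal to the pairwise bound entirely.
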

\begin{proof}
We apply an element of $\Weyl$ and assume that $b$ is equal to $e_0$, $e_0-e_q$, $e_q$ respectively. By Corollary~\ref{Coro:PositivityOrbit}, we have $a = (\deg a) \cdot e_0 - \sum_{p\in \Base(a)}m_p(a)\cdot e_p$, where $\deg a \ge 1$, $m_p(a)\ge 1$ for each $p\in \Base(a)$ and $\sum_{p\in \Base(a)} (m_p(a))^2=\deg(a)^2-1$.
We then find that $a \cdot b$ is equal to $\deg a$, $\deg a -m_q(a)$, $m_q(a)$ respectively. Assertions \ref{Ge0e0}, \ref{Ge0e1}, \ref{Ge0eq} are then given by $\deg a \ge 1$,  $(m_q(a))^2\le \deg(a)^2-1$ and $m_q(a)\ge 0$. 
\end{proof}

\subsection{Jonqui\`eres elements viewed in the Weyl group}
We now define the analogue of the groups $\Jonq_p\subseteq \Bir(\p^2)$ in the Weyl group:

\begin{definition}
For each $q\in \B(\p^2)$ we define $\J_q \subseteq \Weyl$ as the subgroup 
\[\J_q=\{\varphi\in \Weyl\mid \varphi(e_0-e_{q})=e_0-e_{q}\}.\]
\end{definition}

\begin{lemma}
For each $q\in \p^2$, we have $\Jonq_q = \J_q \cap \Bir (\p^2)$.
\end{lemma}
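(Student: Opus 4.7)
By Remark~\ref{Rem:BasePtsClassical}, the pencil $\Lambda_q$ of lines through $q$ is the unique linear system on $\p^2$ of degree $1$ with a single base-point of multiplicity $1$ at $q$; its class in $\ZZ_{\p^2}$ is therefore $e_0 - e_q$. The plan is to show that for $f \in \Bir(\p^2)$, preservation of $\Lambda_q$ by $f$ is equivalent to $f_\bullet$ fixing the class $e_0 - e_q$. Since by definition $\Jonq_q$ consists precisely of those $f \in \Bir(\p^2)$ sending $\Lambda_q$ to itself, and $\J_q \cap \Bir(\p^2)$ of those fixing the class $e_0 - e_q$, the equality $\Jonq_q = \J_q \cap \Bir(\p^2)$ follows at once.

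The key step is to extend Lemma~\ref{Lemm:BirP2ZP2} from the pull-back of a general line to the pull-back of an arbitrary pencil: I would show that $(f_\bullet)^{-1}(e_0 - e_q)$ is the class in $\ZZ_{\p^2}$ of the pull-back linear system $f^{*}(\Lambda_q)$ on the source. To do so, one picks a common resolution $\pi_1, \pi_2 \colon X \to \p^2$ of $f$ with $\pi_2 = f \circ \pi_1$, also blowing up $q$ on both sides. The strict transform of $\Lambda_q$ with respect to $\pi_2$ is the base-point-free class $\pi_2^{*}(e_0) - E_q' \in \Pic(X)$, where $E_q'$ is the exceptional divisor over $q$ relative to $\pi_2$; the same class in $\Pic(X)$, viewed via $\pi_1$, is the strict transform of $f^{*}(\Lambda_q)$. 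By the very definition of $f_\bullet$, this is precisely the desired identity, completely analogous to $(f_\bullet)^{-1}(e_0) = d e_0 - \sum m_i e_{p_i}$ of Lemma~\ref{Lemm:BirP2ZP2}.

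Using uniqueness of $\Lambda_q$ as the linear system with class $e_0 - e_q$, one then obtains the chain of equivalences
\[ f \in \Jonq_q \iff f^{*}(\Lambda_q) = \Lambda_q \iff (f_\bullet)^{-1}(e_0 - e_q) = e_0 - e_q \iff f_\bullet(e_0 - e_q) = e_0 - e_q \iff f \in \J_q, \]
where the third equivalence is just the bijectivity of $f_\bullet$.

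The only nontrivial point is the extension of Lemma~\ref{Lemm:BirP2ZP2} from a line to the pencil $\Lambda_q$. This is essentially a formal consequence of the definition of $f_\bullet$ on a resolution, but requires careful bookkeeping of strict transforms and exceptional divisors on $X$; beyond that, the argument is tautological.
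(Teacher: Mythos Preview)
Your proposal is correct and follows essentially the same approach as the paper: both identify $e_0 - e_q$ with the class of the pencil of lines through $q$ (equivalently, the fibre class after blowing up $q$) and conclude that preserving the pencil is equivalent to fixing this class. The paper's proof is much terser --- it simply blows up $q$ and notes that $e_0 - e_q$ is the fibre class of the resulting morphism $X \to \p^1$ --- whereas you spell out the argument via a full common resolution of $f$; this is more explicit but not substantively different.
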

\begin{proof}
Let $\pi\colon \p^2\dasharrow \p^1$ be the projection from $q$, and let $\eta\colon X\to \p^2$ be the blow-up of $q$. The result follows from the fact that $e_0-e_q\in \ZZ_{\p^2}$ corresponds to the divisor of $\Pic(X)$ corresponding to the fibres of the morphism $\pi\circ \eta\colon X\to \p^1$.
\end{proof}

\begin{definition}  \label{Defi:Iota}
For each $q\in \B(\p^2)$ and for each finite set $\Delta \subseteq \B(\p^2) \smallsetminus \{q\}$ of even order $2n$, we define 
$\iota_{q,\Delta}\in \J_q$
to be the involution given by
\[\begin{array}{rclrcl}
\iota_{q,\Delta}(e_0)&=&(n+1)e_0-ne_q-\sum\limits_{r\in \Delta} e_r,&
\iota_{q,\Delta}(e_r)&=& e_0-e_q-e_r, r\in \Delta,\\
\iota_{q,\Delta}(e_q)&=& ne_0-(n-1)e_q-\sum\limits_{r\in \Delta} e_r,&
\iota_{q,\Delta}(e_r)&=&e_r,  r\in \B(\p^2)\setminus( \Delta\cup \{q\}).
\end{array}\]
\end{definition}

\begin{remark} \label{Remark:Iotaproduct}
In order to see that the elements $\iota_{q,\Delta}$ belong to $\J_q \subseteq \Weyl$, we can observe that $\iota_{q,\emptyset}$ is the identity, that $\iota_{q,\Delta}$ is equal to $\sigma$, up to left and right multiplication
by elements of $\Sym_{\p^2}$ when $\Delta$ contains $2$ elements, and that $\iota_{q,\Delta} \circ \iota_{q,\Delta'}=\iota_{q, (\Delta \cup \Delta') \smallsetminus (\Delta\cap \Delta')}$.
\end{remark}

\begin{definition} \label{Defi:sigmap1p2p3}
Let $p_1,p_2,p_3\in \B(\p^2)$ be $3$ distinct points. We define 
$\sigma_{p_1,p_2,p_3 }\in \Weyl$ as the involution given by
\[\begin{array}{lllllll}
\sigma_{p_1,p_2,p_3}(e_0)&=&2e_0-e_{p_1}-e_{p_2}-e_{p_3},&
\sigma_{p_1,p_2,p_3 }(e_{p_1})&=& e_0-e_{p_2}-e_{p_3}\\
\sigma_{p_1,p_2,p_3}(e_{p_2})&=& e_0-e_{p_1}-e_{p_3},&
\sigma_{p_1,p_2,p_3}(e_{p_3})&=& e_0-e_{p_1}-e_{p_2},\\
\sigma_{p_1,p_2,p_3}(e_r)&=&e_r, r\in \B(\p^2)\setminus\{p_1,p_2,p_3\},\
\end{array}\]
We observe that $\sigma_{p_1,p_2,p_3} \in \J_{p_i}$ for $i=1,2,3$, and that $\sigma_{p_1,p_2,p_3}=\tau_{p_2,p_3} \circ \iota_{p_1,\{p_2,p_3\}}$, where $\tau_{p_2,p_3} \in \Sym_{\p^2}$ is the transposition permuting $p_2$ and $p_3$. 
\end{definition}
\begin{remark}
When $p_1=[1:0:0]$, $p_2=[0:1:0]$, $p_3=[0:0:1]$, we observe that  $\sigma_{p_1,p_2,p_3}$ is similar to the standard quadratic involution $\sigma\colon [x:y:z]\dasharrow [yz:xz:xy]$. It is however not realised by an element of $\Bir(\p^2)$ as it fixes all points of $\p^2\setminus \{p_1,p_2,p_3\}$. Moreover, $\sigma_{p_1,p_2,p_3}$ and $\iota_{p_1,\{p_2,p_3\}}$ both belong to $\Sym_{\p^2}\sigma=\{\alpha \circ \sigma\mid \alpha\in \Sym_{\p^2}\}\subseteq \Weyl$.
\end{remark}

\begin{lemma}  \label{LemmJvarphiLin}
\item  
\begin{enumerate}
\item\label{LemmJvarphiLin1}
For each $q\in \B(\p^2)$ and each $\varphi\in \J_q$, we have $m_q(\varphi)=\deg(\varphi)-1$.
\item\label{LemmJvarphiLin2}
For each $q\in \B(\p^2)$ and each $\varphi\in \Weyl$ with $m_q(\varphi)=\deg(\varphi)-1$,
the set $\Delta=\Base(\varphi)\setminus\{q\}$ has even cardinality $2n\ge 0$ and
$$\varphi^{-1}(e_0)=(\iota_{q,\Delta})^{-1}(e_0)=(n+1)e_0-ne_q-\sum_{r\in \Delta}  e_{r}=e_0+\sum_{r\in \Delta} \left(  \frac{e_0-e_q}{2}-e_r \right) .$$
This yields the existence of $\alpha\in \Sym_{\p^2} $ such that $$\varphi=  \alpha \circ \iota_{q,\Delta}.$$
Moreover, $\alpha \in \J_q$ if and only if $\varphi \in \J_q$.
\end{enumerate}
\end{lemma}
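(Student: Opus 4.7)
For \ref{LemmJvarphiLin1}, the plan is to translate the defining relation $\varphi(e_0-e_q)=e_0-e_q$ of $\J_q$ into an intersection number identity. Applying $\varphi^{-1}$ to both sides and using that $\varphi$ preserves the intersection form (Lemma~\ref{Lemm:EasyWeyl}\ref{L1}), I would compute
\[
\deg(\varphi)-m_q(\varphi)=(e_0-e_q)\cdot \varphi^{-1}(e_0)=\varphi(e_0-e_q)\cdot e_0=(e_0-e_q)\cdot e_0=1,
\]
which is exactly the claim.

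For \ref{LemmJvarphiLin2}, the idea is to combine Lemma~\ref{Lemm:EasyWeyl}\ref{L2} with the positivity result Corollary~\ref{Coro:PositivityOrbit}. Set $d=\deg(\varphi)$ and write $\Delta=\Base(\varphi)\setminus\{q\}$ with multiplicities $m_r=m_r(\varphi)\ge 1$ for $r\in\Delta$ (the lower bound coming from Corollary~\ref{Coro:PositivityOrbit} applied to $a=\varphi^{-1}(e_0)\in \Weyl(e_0)$). Subtracting the contribution of $q$ (with multiplicity $d-1$) from the two Noether equalities in Lemma~\ref{Lemm:EasyWeyl}\ref{L2} gives
\[
\sum_{r\in\Delta} m_r=2(d-1)\qquad\text{and}\qquad \sum_{r\in\Delta} m_r^2=2(d-1).
\]
Since each $m_r\ge 1$, the inequality $m_r^2\ge m_r$ is term-wise, so equality of the sums forces $m_r=1$ for every $r\in\Delta$. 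Then $|\Delta|=2(d-1)=:2n$ is even, and substituting back yields
\[
\varphi^{-1}(e_0)=(n+1)e_0-ne_q-\sum_{r\in\Delta} e_r,
\]
which is exactly $(\iota_{q,\Delta})^{-1}(e_0)$ by Definition~\ref{Defi:Iota} (recall $\iota_{q,\Delta}$ is an involution). The rewriting $e_0+\sum_{r\in\Delta}\bigl(\tfrac{e_0-e_q}{2}-e_r\bigr)$ is then an elementary algebraic check.

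Given that $\varphi^{-1}(e_0)=\iota_{q,\Delta}^{-1}(e_0)$, Corollary~\ref{Coro:ExistenceAlpha} produces $\alpha\in\Sym_{\p^2}$ with $\varphi=\alpha\circ \iota_{q,\Delta}$. Finally, I would verify directly from Definition~\ref{Defi:Iota} that $\iota_{q,\Delta}(e_0-e_q)=e_0-e_q$, so $\iota_{q,\Delta}\in\J_q$; since $\J_q$ is a subgroup, $\varphi\in\J_q$ is then equivalent to $\alpha=\varphi\circ\iota_{q,\Delta}^{-1}\in\J_q$. The main conceptual step is the forcing of all multiplicities outside $q$ to be $1$; this is where positivity (Corollary~\ref{Coro:PositivityOrbit}) is essential, since without it the Noether equalities alone would not suffice.
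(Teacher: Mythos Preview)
Your proof is correct and follows essentially the same approach as the paper: the same intersection-number computation for part~\ref{LemmJvarphiLin1}, and for part~\ref{LemmJvarphiLin2} the same combination of the Noether equalities with the positivity of multiplicities (Corollary~\ref{Coro:PositivityOrbit}) to force $m_r=1$, followed by Corollary~\ref{Coro:ExistenceAlpha} and the observation that $\iota_{q,\Delta}\in\J_q$.
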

\begin{proof}\ref{LemmJvarphiLin1}
The fact that $\varphi\in \J_q$ implies that
\[\deg(\varphi)-m_q(\varphi)=(e_0-e_q)\cdot \varphi^{-1}(e_0)=(e_0-e_q)\cdot e_0=1.\]
\ref{LemmJvarphiLin2}
Since $\Delta=\Base(\varphi) \smallsetminus \{q\}$ and $m_q(\varphi)=\deg(\varphi)-1$, we can write
\[\begin{array}{l}\varphi^{-1}(e_0)=(n+1)e_0-n e_q-\sum_{r\in \Delta} m_r e_r\end{array}\]
where $n\ge 0$ and $m_r\ge 1$ for each $r\in \Delta$ (Corollary~\ref{Coro:PositivityOrbit}). The Noether equalities (Lemma~\ref{Lemm:EasyWeyl}\ref{L2}) yield $\sum_{r\in \Delta} m_r=\sum_{r\in \Delta} (m_r)^2=2n$, so $m_r=1$ for each $r\in \Delta$, and thus $\Delta$ contains $2n$ elements.

Since $\varphi^{-1}(e_0) = (\iota_{q,\Delta}) ^{-1} (e_0)$, we have $\varphi=\alpha \circ \iota_{q,\Delta}$ for some $\alpha \in \Sym_{\p^2}$ (Corollary~\ref{Coro:ExistenceAlpha}). Since $\iota_{q,\Delta}\in \J_q$, we have $\alpha\in \J_q$  if and only $\varphi\in \J_q$.
\end{proof}

\begin{corollary} \label{corollary: normal form of the elements of Jq}
Any element $\varphi \in \J_q$ admits an expression
\[ \varphi = \alpha \circ \iota_{q, \Delta},\]
where  $\Delta := \Base (\varphi) \smallsetminus \{ q \}$ has even order and $\alpha \in \Sym_{\p^2} \cap \J_q = \{ \beta \in \Sym_{\p^2}, \; \beta (q) = q \}$.
\end{corollary}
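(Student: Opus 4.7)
The plan is to derive this corollary as an almost immediate consequence of Lemma \ref{LemmJvarphiLin}, together with an unpacking of what $\Sym_{\p^2} \cap \J_q$ looks like.

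First, I would start from an arbitrary $\varphi \in \J_q$ and apply Lemma \ref{LemmJvarphiLin}\ref{LemmJvarphiLin1} to obtain the key numerical fact $m_q(\varphi) = \deg(\varphi) - 1$. This is exactly the hypothesis needed to invoke Lemma \ref{LemmJvarphiLin}\ref{LemmJvarphiLin2}, which directly supplies the desired factorisation: the set $\Delta := \Base(\varphi) \setminus \{q\}$ has even cardinality and there exists $\alpha \in \Sym_{\p^2}$ such that $\varphi = \alpha \circ \iota_{q,\Delta}$. The final sentence of Lemma \ref{LemmJvarphiLin}\ref{LemmJvarphiLin2} moreover yields $\alpha \in \J_q$ from $\varphi \in \J_q$, since $\iota_{q,\Delta}\in \J_q$.

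It then remains to identify $\Sym_{\p^2} \cap \J_q$ with $\{\beta \in \Sym_{\p^2} \mid \beta(q)=q\}$. For $\beta \in \Sym_{\p^2}$, we have $\beta(e_0) = e_0$ by definition of $\Sym_{\p^2}$, so the condition $\beta \in \J_q$, namely $\beta(e_0 - e_q) = e_0 - e_q$, reduces to $\beta(e_q) = e_q$. Since $\beta$ permutes the basis $\{e_p \mid p \in \B(\p^2)\}$, this is nothing but the statement $\beta(q) = q$ (identifying $\beta$ with the induced permutation of $\B(\p^2)$). This completes the proof.

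I do not foresee any real obstacle here: all the content has been concentrated into Lemma \ref{LemmJvarphiLin}, and the only additional point to justify is the explicit description of $\Sym_{\p^2} \cap \J_q$, which is a one-line verification using that elements of $\Sym_{\p^2}$ fix $e_0$ and act by permutation on the $e_p$.
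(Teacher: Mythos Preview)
Your proposal is correct and follows exactly the approach of the paper, whose proof is the single line ``Directly follows from Lemma~\ref{LemmJvarphiLin}.'' You have simply spelled out that one-liner, including the elementary verification of $\Sym_{\p^2}\cap\J_q=\{\beta\in\Sym_{\p^2}\mid \beta(e_q)=e_q\}$, which the paper leaves implicit.
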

\begin{proof}
Directly follows from Lemma~\ref{LemmJvarphiLin}.
\end{proof}

\begin{corollary}\label{Coro:JonqTwopts}
If $q,q'\in\B(\p^2)$ are two distinct points, then $\J_q\cap \J_{q'}$ consists of elements of degree $1$ or $2$.
\end{corollary}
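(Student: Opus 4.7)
The plan is to combine the two consequences of being Jonqui\`eres at two distinct centres with the B\'ezout-style bound on multiplicities already established for the orbit of $e_0$. Specifically, take $\varphi \in \J_q \cap \J_{q'}$. Since $\varphi \in \J_q$, Lemma~\ref{LemmJvarphiLin}\ref{LemmJvarphiLin1} gives $m_q(\varphi) = \deg(\varphi) - 1$, and the same applied to $\J_{q'}$ gives $m_{q'}(\varphi) = \deg(\varphi) - 1$.

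Now I would invoke the inequality $m_q(a) + m_{q'}(a) \le \deg(a)$ from Corollary~\ref{Coro:PositivityOrbit}, applied to $a := \varphi^{-1}(e_0) \in \Weyl(e_0)$ (whose degree and multiplicities coincide with those of $\varphi$ by Definition~\ref{Defi:DegreeMult} and Remark~\ref{Remark:relation-between-mutliplicities-in-Z-and-in-the-Weyl-group}). Adding the two equalities above yields
\[ 2\bigl(\deg(\varphi)-1\bigr) = m_q(\varphi)+m_{q'}(\varphi) \le \deg(\varphi), \]
so $\deg(\varphi) \le 2$. Since $\deg(\varphi) \ge 1$ (again by Corollary~\ref{Coro:PositivityOrbit}), we conclude $\deg(\varphi) \in \{1,2\}$, which is the desired statement.

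There is no real obstacle here: the whole argument is essentially a two-line numerical consequence of the previous lemma and the orbit-positivity corollary; the only thing to be careful about is to apply the multiplicity inequality to the correct element, namely $\varphi^{-1}(e_0)$, and to note that the hypothesis $q \ne q'$ is exactly what makes the inequality $m_q + m_{q'} \le \deg$ available.
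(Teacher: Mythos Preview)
Your proof is correct. The paper takes a slightly different route: rather than invoking the B\'ezout-type inequality $m_q+m_{q'}\le\deg$ from Corollary~\ref{Coro:PositivityOrbit}, it appeals to the full structural description in Lemma~\ref{LemmJvarphiLin}\ref{LemmJvarphiLin2}, which shows that a Jonqui\`eres element of degree $\ge 3$ has a \emph{unique} base-point of multiplicity $\deg-1\ge 2$ (all others having multiplicity $1$); hence $q$ and $q'$ cannot both play that role. Both arguments are two-line consequences of already-established facts; yours is arguably more economical in that it only needs part~\ref{LemmJvarphiLin1} of Lemma~\ref{LemmJvarphiLin} together with the orbit inequality, while the paper's version avoids the orbit inequality by using the explicit shape of the Jonqui\`eres homaloidal type.
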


\begin{proof}
It follows from Lemma~\ref{LemmJvarphiLin} that if $\varphi \in \J_q$ is a Jonqui\`eres element with $\deg \varphi \ge 3$, the multiplicity at $q$ is $\deg \varphi -1 \geq 2$ and $q$ is the unique point having this multiplicity.
\end{proof}

We now give the following definition, which generalise the one of Jonqui\`eres elements of $\Bir(\p^2)$, as Lemma~\ref{Lemm:JoCremWinfty} explains.

\begin{definition} \label{definition: Jonquieres element}
An element $\varphi\in \Weyl$ is said to be a \emph{Jonqui\`eres element} if there exists a point $q\in \B(\p^2)$ such that $m_q(\varphi)=\deg(\varphi)-1$.
\end{definition}
\begin{lemma}\label{Lemm:JoWinfty}
Let $\psi\in \Weyl$. The following conditions are equivalent:
\begin{enumerate}
\item \label{GenJ1}
$\psi$ is a  Jonqui\`eres element of $\Weyl$;
\item \label{GenJ2}
There exist $\alpha,\beta\in \Sym_{\p^2}$, $q\in \B(\p^2)$ and $\varphi\in \J_q$ such that $\psi=\alpha\circ \varphi\circ\beta$;
\item\label{GenJ3}
There exist $\alpha\in \Sym_{\p^2}$, $q\in \B(\p^2)$ and $\varphi\in \J_q$ such that $\psi=\alpha\circ \varphi$;
\item\label{GenJ4}
There exist $\alpha\in \Sym_{\p^2}$, $q\in \B(\p^2)$ and $\varphi\in \J_q$ such that $\psi=\varphi\circ\alpha$;
\item\label{GenJ5}
There exist $\alpha \in \Sym_{\p^2}$, $q\in \B(\p^2)$ and a finite set of even order $\Delta\subseteq \B(\p^2)\setminus \{q\}$  such that $\psi=\alpha\circ \iota_{q,\Delta}$.
\end{enumerate}
\end{lemma}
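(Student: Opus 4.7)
The plan is to combine three ingredients: Lemma~\ref{LemmJvarphiLin} (a normal form for an element of $\Weyl$ whose multiplicity at some point is maximal), Corollary~\ref{corollary: normal form of the elements of Jq} (the normal form inside $\J_q$), and the elementary observation that $\Sym_{\p^2}$ normalises the family $\{\J_q\}_{q\in\B(\p^2)}$. I will deduce the cycle $(1)\Leftrightarrow (5)$, $(5)\Rightarrow (3)\Rightarrow (2)\Rightarrow (3)$ and $(3)\Leftrightarrow (4)$.

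First I would establish $(1)\Leftrightarrow (5)$. If $\psi$ is Jonqui\`eres, pick $q$ with $m_q(\psi)=\deg(\psi)-1$ and apply Lemma~\ref{LemmJvarphiLin}\ref{LemmJvarphiLin2}: it produces $\alpha\in\Sym_{\p^2}$ and $\Delta=\Base(\psi)\setminus\{q\}$ of even cardinality $2n$ with $\psi=\alpha\circ\iota_{q,\Delta}$. Conversely, from $\psi=\alpha\circ\iota_{q,\Delta}$ and $\alpha^{-1}(e_0)=e_0$ one gets $\psi^{-1}(e_0)=\iota_{q,\Delta}^{-1}(e_0)$, so $\deg\psi=n+1$ and $m_q(\psi)=n=\deg\psi-1$, whence $\psi$ is Jonqui\`eres.

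Next, I would record the conjugation observation: for every $\alpha\in\Sym_{\p^2}$ and every $q\in\B(\p^2)$, letting $q'\in\B(\p^2)$ be the unique point with $\alpha(e_q)=e_{q'}$, the subgroup $\alpha\circ\J_q\circ\alpha^{-1}$ equals $\J_{q'}$. Indeed, for $\varphi\in\J_q$ one has
\[
(\alpha\circ\varphi\circ\alpha^{-1})(e_0-e_{q'})=\alpha(\varphi(e_0-e_q))=\alpha(e_0-e_q)=e_0-e_{q'},
\]
and the same computation run backwards yields the reverse inclusion. This immediately gives $(3)\Leftrightarrow(4)$ by writing $\alpha\circ\varphi=(\alpha\circ\varphi\circ\alpha^{-1})\circ\alpha$, and conversely $\varphi\circ\alpha=\alpha\circ(\alpha^{-1}\circ\varphi\circ\alpha)$.

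With these ingredients, the remaining implications are bookkeeping. The implications $(5)\Rightarrow(3)$ and $(3)\Rightarrow(2)$ are trivial, taking $\varphi=\iota_{q,\Delta}$ and $\beta=\id$ respectively. For $(2)\Rightarrow(3)$, given $\psi=\alpha\circ\varphi\circ\beta$ with $\varphi\in\J_q$, I rewrite
\[
\psi=(\alpha\circ\beta)\circ(\beta^{-1}\circ\varphi\circ\beta),
\]
the second factor lying in $\J_{q''}$ where $\beta(e_{q''})=e_q$, by the conjugation observation. Finally, $(3)\Rightarrow(5)$ follows by applying Corollary~\ref{corollary: normal form of the elements of Jq} to $\varphi$ and absorbing the resulting element of $\Sym_{\p^2}\cap\J_q$ into $\alpha$. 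No step is delicate: the entire statement amounts to repackaging Lemma~\ref{LemmJvarphiLin} once one knows that $\Sym_{\p^2}$ permutes the subgroups $\J_q$, and the only item requiring any care is keeping track of which point $q$ labels the Jonqui\`eres subgroup after each conjugation.
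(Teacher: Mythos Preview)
Your proof is correct and uses the same two ingredients as the paper's: Lemma~\ref{LemmJvarphiLin} for the link between condition~(1) and the normal form~(5), and the conjugation identity $\alpha\circ\J_q\circ\alpha^{-1}=\J_{q'}$ (where $\alpha(e_q)=e_{q'}$) to pass among (2), (3), (4). The only difference is organisational: the paper runs a single cycle $(1)\Rightarrow(5)\Rightarrow(3)\Rightarrow(2)\Rightarrow(4)\Rightarrow(1)$, closing it by a direct multiplicity computation for $(4)\Rightarrow(1)$, whereas you establish $(1)\Leftrightarrow(5)$ first and then link the remaining conditions to (3); both arrangements are equally valid and of the same length.
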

\begin{proof}
$\ref{GenJ1}\Rightarrow \ref{GenJ5}$ is given by Lemma~\ref{LemmJvarphiLin}\ref{LemmJvarphiLin2};  $\ref{GenJ5}\Rightarrow \ref{GenJ3}$ is given by the fact that $\iota_{q,\Delta}\in \J_q$ and $\ref{GenJ3}\Rightarrow \ref{GenJ2}$ follows by taking $\beta=\mathrm{id}$.

$\ref{GenJ2}\Rightarrow \ref{GenJ4}$:  Writing $\varphi'=\alpha\circ  \varphi\circ \alpha^{-1}$, we have $\varphi'(e_0-e_p)=e_0-e_p$ where $p\in \B(\p^2)$ is the element such that $\alpha(e_q)=e_p$. Hence $\psi=\varphi'\circ \alpha'$ where $\varphi'\in \J_p$ and $\alpha'=\alpha\circ\beta\in \Sym_{\p^2}$.

$\ref{GenJ4}\Rightarrow \ref{GenJ1}$: Taking $p\in \B(\p^2)$
such that
$\alpha(e_p)=e_q$ we get \[m_p(\psi)=e_p\cdot \psi^{-1}(e_0)=\alpha(e_p)\cdot \varphi^{-1}(e_0)=e_q\cdot \varphi^{-1}(e_0)=m_q(\varphi)\stackrel{\text{Lemma~\ref{LemmJvarphiLin}\ref{LemmJvarphiLin1}}}{=}\deg(\varphi)-1.\]
It remains to observe that 
\[\deg(\psi)= e_0\cdot \alpha^{-1}(\varphi^{-1}(e_0))=\alpha(e_0)\cdot \varphi^{-1}(e_0)=e_0\cdot \varphi^{-1}(e_0)=\deg(\varphi).\qedhere\]
\end{proof}

\begin{lemma}\label{Lemm:JoCremWinfty}
Let $f\in \Bir(\p^2)$. The following conditions are equivalent:\begin{enumerate}
\item\label{GenJBir1}
$f$ is a  Jonqui\`eres element of $\Bir(\p^2)$;
\item\label{GenJBir2}
$f$ is a Jonqui\`eres element of $\Weyl$;
\item\label{GenJBir3}
There exist $\alpha,\beta\in \Aut(\p^2)$, $q\in \p^2$, and $\varphi\in \Jonq_q\subseteq \Bir(\p^2)$ such that $f=\alpha\circ \varphi\circ\beta$;
\item\label{GenJBir4}
There exist $\alpha\in \Aut(\p^2)$, $q\in \p^2$, and $\varphi\in \Jonq_q\subseteq \Bir(\p^2)$ such that $f=\alpha\circ \varphi$;
\item\label{GenJBir5}
There exist $\alpha\in \Aut(\p^2)$, $q\in \p^2$, and $\varphi \in \Jonq_q \subseteq \Bir (\p^2)$ such that $f=\varphi \circ \alpha$.
\end{enumerate}
\end{lemma}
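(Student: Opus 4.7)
The implications \ref{GenJBir3} $\Rightarrow$ \ref{GenJBir1} and \ref{GenJBir4} $\Rightarrow$ \ref{GenJBir3} (take $\beta = \id$) are immediate from the description $\Jonq = \bigcup_{p \in \p^2} \Aut(\p^2) \Jonq_p$ recalled in the introduction, and the equivalence \ref{GenJBir4} $\Leftrightarrow$ \ref{GenJBir5} follows from the classical identity $\alpha \circ \Jonq_q \circ \alpha^{-1} = \Jonq_{\alpha(q)}$ valid for $\alpha \in \Aut(\p^2)$, which allows one to rewrite $\alpha \circ \varphi$ as $(\alpha \varphi \alpha^{-1}) \circ \alpha$ and conversely. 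For \ref{GenJBir1} $\Rightarrow$ \ref{GenJBir2}, I would write $f = \alpha \circ \varphi$ with $\alpha \in \Aut(\p^2) \subseteq \Sym_{\p^2}$ and $\varphi \in \Jonq_p \subseteq \J_p$ for some $p \in \p^2$; since $\alpha$ fixes $e_0$, the degree and the multiplicity at $p$ of $f$ and $\varphi$ coincide, so Lemma~\ref{LemmJvarphiLin}\ref{LemmJvarphiLin1} yields $m_p(f) = \deg f - 1$, i.e.~$f$ is a Jonqui\`eres element of $\Weyl$.

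The real content is the implication \ref{GenJBir2} $\Rightarrow$ \ref{GenJBir4}. Assume $m_q(f) = \deg f - 1$ for some $q \in \B(\p^2)$; the case $\deg f = 1$ is trivial (take $\varphi = \id$ and $\alpha = f$). My first step is to arrange for $q$ to be a proper point of $\p^2$. For $\deg f \geq 3$, I claim $q$ is automatically proper: if $q$ were infinitely near some $p \in \B(\p^2)$, then $p$ would also be a base-point of $f$ (since base-points cluster below proper points), and the standard proximity inequality for homaloidal linear systems would give $m_p \geq m_q = \deg f - 1$, contradicting the Bezout-type bound $m_p + m_q \leq \deg f$ from Corollary~\ref{Coro:PositivityOrbit}. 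For $\deg f = 2$, all base-points of $f$ have multiplicity $1$ and at least one lies in $\p^2$ by the same cluster argument, so I can use such a proper base-point in place of $q$.

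Once a proper point $q \in \p^2$ with $m_q(f) = \deg f - 1$ is in hand, I would conclude by a Bezout computation. A generic line $L$ through $q$ satisfies $L \cdot \Lambda = \deg f$, where $\Lambda$ is the linear system of $f$, and $L$ meets $\Lambda$ at $q$ with multiplicity $\deg f - 1$, leaving exactly one free intersection. Hence $f(L)$ is a rational curve of degree $1$, i.e.~a line, and $f$ sends the pencil of lines through $q$ to a pencil of lines through some $q'' \in \p^2$. Choosing $\alpha \in \Aut(\p^2)$ with $\alpha(q'') = q$, the composition $\alpha \circ f$ belongs to $\Jonq_q$, so $f = \alpha^{-1} \circ (\alpha \circ f)$ is of the form \ref{GenJBir4}.

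The main obstacle I expect is the passage from a possibly infinitely near point $q$ in \ref{GenJBir2} to a proper one in \ref{GenJBir4}; every other step is either a direct manipulation of the description $\Jonq = \Aut(\p^2) \Jonq_p \Aut(\p^2)$ or a direct application of Lemmas~\ref{LemmJvarphiLin} and~\ref{Lemm:JoWinfty} already established.
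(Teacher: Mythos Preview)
Your proof is correct and follows essentially the same route as the paper: establish the equivalences among \ref{GenJBir1}, \ref{GenJBir3}, \ref{GenJBir4}, \ref{GenJBir5} from the description $\Jonq = \Aut(\p^2)\Jonq_p\Aut(\p^2)$, deduce \ref{GenJBir1}$\Rightarrow$\ref{GenJBir2} from Lemma~\ref{LemmJvarphiLin}, and for \ref{GenJBir2}$\Rightarrow$\ref{GenJBir4} pass to a proper base-point of multiplicity $\deg f-1$ and use B\'ezout to show the pencil of lines through it maps to a pencil of lines.

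The one place where the paper is slightly slicker is the passage to a proper point: instead of splitting into the cases $\deg f\ge 3$ (contradiction via $m_p+m_q\le\deg f$) and $\deg f=2$, the paper simply replaces $q$ by the proper point $p$ of $\p^2$ lying below it, observes that the multiplicity can only increase (the same proximity fact you invoke), and combines this with the upper bound $m_p\le\deg f-1$ from Corollary~\ref{Coro:PositivityOrbit} to get $m_p=\deg f-1$ directly. This uniform argument avoids the case distinction but is logically equivalent to yours.
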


\begin{proof}
By definition, $f$ is a Jonqui\`eres element of $\Bir(\p^2)$ if and only if there exist two points $p,q \in \p^2$ such that the pencil of lines through $p$ is sent to the pencil of lines through $q$. Composing at the source or the target with a linear automorphism exchanging $p$ and $q$ yields then an element of $\Jonq_p$ or $\Jonq_q$. This yields the equivalence of  \ref{GenJBir1},\ref{GenJBir3},\ref{GenJBir4},\ref{GenJBir5}. As $\Aut(\p^2)\subseteq \Sym_{\p^2}$ and every Jonqui\`eres element of $\Bir(\p^2)$ is a Jonqui\`eres element of $\Weyl$, we have $\ref{GenJBir3}\Rightarrow \ref{GenJBir2}$ (Lemma~\ref{Lemm:JoWinfty}). It remains then to prove $\ref{GenJBir2}\Rightarrow \ref{GenJBir1}$. Assertion~\ref{GenJBir2} implies that $f$  has a base-point $p$ of multiplicity $\deg(f)-1$. We can moreover assume that $p$ is a proper point of $\p^2$
(replacing $p$ with the proper point $p' \in \p^2$ above which $p$ lies only increases the multiplicity).
The image of the pencil of lines through $p$ is then a pencil of lines, passing thus
through
a point $q\in \p^2$. This achieves the proof.
\end{proof}

\begin{definition} \label{definition: equality modulo the symmetric group}
Two elements $a,a'$ of $\ZZ_{\p^2}$ are said to be \emph{equal modulo $\Sym_{\p^2}$} if there exists an element $\alpha \in \Sym_{\p^2}$ such that $a' = \alpha (a)$. This is written $a\equiv_{\Sym_{\p^2}}\! a'$.
\end{definition}

\begin{remark}\label{Rem:EqualModSymBij}
Two elements $a,a'$ of $\ZZ_{\p^2}$ are equal modulo $\Sym_{\p^2}$ if and only if they have the same degree and if there exists a bijection $t \colon \Base (a) \to \Base (a')$ such that $m_{t(p)}(a') = m_p (a)$ for each $p \in \Base (a)$.

In particular, the set  $\Weyl(e_0) / \Sym_{\p^2} $ of equivalence classes modulo $\Sym_{\p^2}$ in $\Weyl(e_0)$ corresponds to the set
of homaloidal types.
\end{remark}

\begin{lemma} \label{lemma: computations using iota_{q,Delta} are sufficient}
For any element $a \in \ZZ_{\p^2}$ and any Jonqui\`eres element $\varphi \in \Weyl$, the element $\varphi (a)$ is equal to some element $\iota_{q, \Delta} (a)$ modulo $\Sym_{\p^2}$.
\end{lemma}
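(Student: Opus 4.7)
The statement is essentially an immediate consequence of the normal form for Jonqui\`eres elements established in Lemma~\ref{Lemm:JoWinfty}. My plan is simply to invoke that lemma and unwind the definition of equality modulo $\Sym_{\p^2}$.

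More precisely, given a Jonqui\`eres element $\varphi \in \Weyl$, I would apply the implication $\ref{GenJ1}\Rightarrow\ref{GenJ5}$ of Lemma~\ref{Lemm:JoWinfty} to obtain a point $q \in \B(\p^2)$, a finite subset $\Delta \subseteq \B(\p^2)\setminus\{q\}$ of even cardinality, and an element $\alpha \in \Sym_{\p^2}$ with
\[ \varphi = \alpha \circ \iota_{q,\Delta}. \]
Applying both sides to an arbitrary $a \in \ZZ_{\p^2}$ yields $\varphi(a) = \alpha(\iota_{q,\Delta}(a))$, so by Definition~\ref{definition: equality modulo the symmetric group} we get $\varphi(a) \equiv_{\Sym_{\p^2}} \iota_{q,\Delta}(a)$, which is the desired conclusion.

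There is no real obstacle: the only nontrivial input is the factorization $\varphi = \alpha \circ \iota_{q,\Delta}$, and that has already been proven. The element $a$ plays no role in the argument beyond being fed into both sides of the equality of operators; in particular we do not need to track $\Base(a)$, the degree of $a$, or any multiplicity. Thus the proof reduces to a one-line citation of Lemma~\ref{Lemm:JoWinfty}\ref{GenJ5}.
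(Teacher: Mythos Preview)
Your proof is correct and follows essentially the same approach as the paper: both reduce to the factorisation $\varphi = \alpha \circ \iota_{q,\Delta}$ with $\alpha \in \Sym_{\p^2}$, then invoke Definition~\ref{definition: equality modulo the symmetric group}. The only cosmetic difference is that you cite Lemma~\ref{Lemm:JoWinfty}\ref{GenJ5} (which applies directly to an arbitrary Jonqui\`eres element of $\Weyl$), whereas the paper cites Corollary~\ref{corollary: normal form of the elements of Jq}; your citation is arguably the more apt one, but the underlying content is identical.
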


\begin{proof}
This is a direct consequence of Corollary~\ref{corollary: normal form of the elements of Jq} and Definition~\ref{definition: equality modulo the symmetric group}.
\end{proof}

We will use the following easy observation twice in the sequel.
\begin{lemma}\label{Lemm:JonqEven}
Let $\chi=(d;m_0,\dots,m_r)$ be the homaloidal type of a birational transformation of $\p^2$,  and let us assume that $d \ge 2$ and that $m_0\ge \dots \ge m_r\ge 1$. If $m_0+m_r=d$, then
$\chi = (d ; d-1, \underbrace{1, \ldots,1}_{2d-2})$
is the homaloidal type of a Jonqui\`eres element.
In particular, we remark for later use that $r=2d-2$ is even.
\end{lemma}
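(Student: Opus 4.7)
The plan is to use the upper bound on pairwise multiplicities from Corollary~\ref{Coro:PositivityOrbit} to force the shape of the type, and then solve the Noether equalities to pin down the numerical values.

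First I would exploit the inequality $m_i+m_j\le d$ valid for any two distinct base-points. Applied with one index equal to $0$ (the one with the largest multiplicity), this gives $m_i \le d-m_0=m_r$ for every $i\ge 1$, and since the $m_i$ are ordered decreasingly we also have $m_i\ge m_r$. Hence $m_i=m_r$ for all $i\ge 1$ and the type reduces to $(d\,;\,d-m_r,m_r,\ldots,m_r)$ with $r$ copies of $m_r$.

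Next I would substitute $m_0=d-m_r$ into the two Noether equalities of Lemma~\ref{Lemm:EasyWeyl}\ref{L2}. The linear relation $\sum m_i=3(d-1)$ yields $(r-1)m_r=2d-3$, and the quadratic relation $\sum m_i^2=d^2-1$ yields $(r+1)m_r^2-2dm_r+1=0$. Eliminating $d$ by plugging $2d=(r-1)m_r+3$ into the second, the expression collapses to $2m_r^2-3m_r+1=(2m_r-1)(m_r-1)=0$. Since $m_r$ is a positive integer, the only possibility is $m_r=1$, whence $m_0=d-1$ and $(r-1)\cdot 1=2d-3$ gives $r=2d-2$. In particular $r$ is even.

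Finally, the type $(d\,;\,d-1,1,\ldots,1)$ satisfies $m_0=\deg-1$, which is exactly the defining condition of a Jonqui\`eres element in Definition~\ref{definition: Jonquieres element}; conversely Lemma~\ref{LemmJvarphiLin}\ref{LemmJvarphiLin2} confirms that every element of $\Weyl$ realising this type has the form $\alpha\circ\iota_{q,\Delta}$ with $|\Delta|=2d-2$, hence is Jonqui\`eres. I do not foresee a real obstacle here: the only non-routine move is recognising that Corollary~\ref{Coro:PositivityOrbit} immediately equalises $m_1,\dots,m_r$, after which the rest is the elementary algebraic manipulation factoring through $(2m_r-1)(m_r-1)$.
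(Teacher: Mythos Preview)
Your proof is correct and follows essentially the same approach as the paper: use Corollary~\ref{Coro:PositivityOrbit} to force $m_1=\cdots=m_r=d-m_0$, then solve the Noether equalities. The only minor difference is computational: the paper uses just the quadratic Noether equality, rewriting $d^2-1=(d-m_1)^2+rm_1^2$ as $m_1\bigl(2d-(r+1)m_1\bigr)=1$, which immediately gives $m_1=1$ since $m_1$ is a positive integer dividing~$1$; you instead combine both Noether equalities and eliminate $d$ to factor $(2m_r-1)(m_r-1)=0$.
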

\begin{proof}
As $m_0+m_i\le d$ for each $i\ge 1$ (Corollary~\ref{Coro:PositivityOrbit}), we have $m_1=m_2=\dots=m_{r}=d-m_0$. The second Noether equality (Lemma~\ref{Lemm:EasyWeyl}\ref{L2}) then  gives  $d^2-1=(d-m_1)^2+rm_1^2$, whence $m_1(2d-m_1(r+1))=1$, so $m_1=1$ and $r=2d-2$.
\end{proof}

\subsection{Relation between the graph of Wright and $\ZZ_{\p^2}$}

As explained in the introduction, the graph of Wright is associated to the right cosets modulo the three groups \[G_0=\Aut(\p^2),\ G_1=\Jonq_p,\ G_2=\pi^{-1} \Aut(\p^1\times \p^1) \pi,\]
given by two points $p,q\in \p^2$. Looking at the action of $\Bir(\p^2)$ on $\ZZ_{\p^2}$, we can show that $G_0,G_1,G_2$ are the subgroups of $\Bir(\p^2)$ that fix the elements $e_0$, $e_0-e_p$, $2e_0-e_p-e_q$. One can thus see the graph of Wright as a subset of $\ZZ_{\p^2}$.
Here is the announced relation between the length in the Cremona group and the distance in the graph of Wright:

\begin{lemma}\label{Lem:DistanceWright}
Let $\varphi$ be an element of $\Bir(\p^2)$. Then, the distance  between $G_0 \varphi $ and $G_0 \id$ in the graph of Wright is equal to $2\lgth(\varphi)$.
\end{lemma}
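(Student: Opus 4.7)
The plan is to prove the equality $d(G_0\,\id,\,G_0\,\varphi) = 2\lgth(\varphi)$ by establishing the two inequalities separately. The basic tool is that right multiplication by any $g\in\Bir(\p^2)$ induces a distance-preserving automorphism of the graph (it sends $G_i f$ to $G_i(fg)$ and maps triangles to triangles), so all distance computations reduce to distances from the base vertex $G_0\,\id$.

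For the upper bound $d \leq 2\lgth(\varphi)$, I would first show that any Jonqui\`eres element $\psi$ satisfies $d(G_0\,\id,\,G_0\,\psi) \leq 2$. Using $\Jonq = G_0\,\Jonq_p\,G_0 = G_0\,G_1\,G_0$, I write $\psi = \alpha \circ \beta \circ \gamma$ with $\alpha,\gamma\in G_0$ and $\beta\in G_1$; then $G_0\,\id \to G_1\,\gamma \to G_0\,\beta\gamma = G_0\,\psi$ is a valid length-$2$ path (the two edges come from the triangles with apex $\gamma$ and $\beta\gamma$ respectively). Writing $\varphi = \varphi_n \circ \cdots \circ \varphi_1$ with $n = \lgth(\varphi)$ and each $\varphi_i \in \Jonq$, telescoping via right translation invariance then produces a path of length $2n$ from $G_0\,\id$ to $G_0\,\varphi$.

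For the lower bound $\lgth(\varphi) \leq d/2$, I would take a shortest path $v_0,\ldots,v_k$ with $k = d(G_0\,\id,\,G_0\,\varphi)$ and label its visits to $\mathcal{A}_0$ by $0 = i_0 < i_1 < \cdots < i_m = k$, choosing representatives $v_{i_j} = G_0\,f_j$ with $f_0 = \id$ and $f_m = \varphi$. Each sub-path has length $\ell_j = i_j - i_{j-1} \geq 2$ (since no two $\mathcal{A}_0$-vertices are adjacent) and stays in $\mathcal{A}_1 \cup \mathcal{A}_2$ in between, with colours necessarily alternating in $\{1,2\}$. Unrolling the adjacency conditions yields
\[ f_j\,f_{j-1}^{-1} \in G_0 \cdot g^{(j)}_{\ell_j - 1} \circ \cdots \circ g^{(j)}_1 \cdot G_0, \]
where the middle factors alternate between $G_1$ and $G_2$. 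Pairing consecutive factors using the sub-lemma below, this rewrites as a product of at most $\lceil(\ell_j - 1)/2\rceil = \lfloor \ell_j/2 \rfloor$ Jonqui\`eres elements; since pre- and post-composition with elements of $G_0 \subseteq \Aut(\p^2)$ leaves the length unchanged, one obtains $\lgth(f_j\,f_{j-1}^{-1}) \leq \lfloor \ell_j/2 \rfloor$, and subadditivity of $\lgth$ then gives
\[ \lgth(\varphi) \leq \sum_{j=1}^m \lgth(f_j\,f_{j-1}^{-1}) \leq \sum_{j=1}^m \lfloor \ell_j/2 \rfloor \leq \frac{k}{2}, \]
which combined with the upper bound yields the claimed equality (and shows incidentally that this distance is always even).

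The main obstacle will be the sub-lemma that $G_1 G_2$ and $G_2 G_1$ are both contained in $\Jonq$. It relies on the rigid description of $G_2$ as conjugated from $\Aut(\p^1 \times \p^1) = (\PGL_2 \times \PGL_2) \rtimes \Z/2$: any element of $G_2$ either lies in $\Jonq_p \cap \Jonq_q$ (stabilising both pencils of lines through $p$ and $q$) or interchanges these two pencils. In the first case the claim is immediate since $G_2 \subseteq \Jonq_p$; in the second case, for $g_1 \in G_1 = \Jonq_p$, the composition $g_2 \circ g_1$ sends the pencil through $p$ to the pencil through $q$ while $g_1 \circ g_2$ sends the pencil through $q$ to the pencil through $p$, so both remain Jonqui\`eres. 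Without this dichotomy the pairing step would fail, and the lower bound would fall short of matching the upper bound.
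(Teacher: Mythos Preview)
Your proof is correct and follows essentially the same strategy as the paper's. Both arguments rest on the same structural observation about $G_2$ coming from $\Aut(\p^1\times\p^1)=(\PGL_2\times\PGL_2)\rtimes\langle\tau\rangle$: the paper packages it as $G_2\subseteq(G_0G_1)\cap(G_1G_0)$ and then replaces each $G_2$-factor in a minimal decomposition by a two-letter word in $G_0,G_1$ (one of which merges with a neighbour), whereas you package it as $G_1G_2,\,G_2G_1\subseteq\Jonq$ and pair consecutive factors directly. The two formulations are equivalent and the resulting bookkeeping is the same; your version makes the upper/lower bound structure and the evenness of the distance slightly more explicit, while the paper's version is a little more streamlined by treating the whole path uniformly rather than segmenting at $\mathcal A_0$-visits.
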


\begin{proof}
Denote by $d(x,y)$ the distance between two vertices $x,y$ of the graph of Wright.
As $G_0=\Aut(\p^2)$, we have $\lgth(\varphi)=0\Leftrightarrow \varphi\in \Aut(\p^2)\Leftrightarrow G_0=G_0\varphi \Leftrightarrow d(G_0\varphi,G_0)=0$.

We can thus assume that $d(G_0\varphi,G_0)=n>0$. This distance is equal to
the length $n$ of the smallest path
\[v_0=G_0, v_1,\ldots,v_n=G_0\varphi\]
such that $v_0,\dots,v_n$ are vertices of the graph and such that there is an edge between $v_i$ and $v_{i+1}$ for $i=0,\dots,n-1$.

For $i=0,\ldots,n$, we write $s_i\in \{0,1,2\}$ the element such that $v_i\in \mathcal{A}_{s_i}$. We then associate to the vertices elements $\varphi_0,\dots,\varphi_{n-1}\in \Bir(\p^2)$, such that $v_i=G_{s_i}\varphi_i$ and $v_{i+1}=G_{s_{i+1}}\varphi_i$, for $i=0,\dots,n-1$. For $i=1,\ldots,n-1$, we have $v_{i}=G_{s_{i}}\varphi_{i}=G_{s_{i}}\varphi_{i-1}$, so there exists $a_{i}\in G_{s_i}$ such that $\varphi_i = a_i\varphi_{i-1}$. We moreover have
$ G_0\varphi=v_n= G_0 \varphi_{n-1}$,
so there is $a_n\in G_0$ such that $\varphi=a_n\varphi_{n-1}$. Writing $a_0=\varphi_0\in G_0$, we obtain 
\[\varphi=a_n a_{n-1}\cdots a_1a_0.\]
Conversely, every such decomposition provides a path, so $d(G_0\varphi,G_0)$ is the smallest integer $n$ such that $\varphi=a_n a_{n-1}\cdots a_1a_0$, with $a_0,a_n\in G_0$ and $a_1,\ldots,a_{n-1}\in G_0\cup G_1\cup G_2$. Every decomposition of smallest length is such that two consecutive $a_i$ do not lie in the same group (otherwise we replace them by their composition and reduce the length). 

Let us now show that there always exists a decomposition of smallest length involving only $G_0$ and $G_1$. Recall that $\Aut(\p^1\times \p^1)=\Aut^{\circ}(\p^1\times \p^1)\rtimes \langle \tau\rangle$, where $\tau$ is the exchange of the two factors and $\Aut^{\circ}(\p^1\times \p^1)=\PGL_2(\k)\times \PGL_2(\k)$. As $G_2=\pi^{-1}\Aut(\p^1\times \p^1)\pi$ where  $\pi\colon \p^2\dasharrow \p^1 \times \p^1$ is the birational map induced by the projections away from $p$ and $q$,  the relations $\pi^{-1}\Aut^{\circ}(\p^1\times \p^1)\pi\subset G_1$ and $\pi^{-1}\tau \pi \in G_0$ give us the inclusion $G_2 \subset (G_0 G_1) \cap (G_1 G_0)$. We can then replace in any decomposition of smallest length an element of $G_2$ by an element of $G_0G_1$ or $G_1G_0$, and simplify one of the elements with the next or the previous element.

We have then proven  that $d(G_0\varphi,G_0)$ is the smallest integer $n=2m$ such that $\varphi=a_n a_{n-1}\cdots a_1a_0$, with $a_i\in G_0$ if $i$ is even and $a_i\in G_1$ for $i$ odd. This yields $d(G_0\varphi,G_0)=2\lgth(\varphi)$.
\end{proof}

\section{The algorithm that computes the length and the proof of Theorem~\ref{TheMainTheorem}}

In this section, we give the proof of Theorem~\ref{TheMainTheorem}, by first working in the infinite Weyl group introduced in Section~\ref{Sec:Reminders} (in particular in $\S\ref{SubSec:InfiniteWeylgroup}$) and get the analogue of Theorem~\ref{TheMainTheorem} in $\Weyl$, namely Proposition~\ref{Prop:AlgoWeyl}. We then show (in Section~\ref{SubSec:FromWeyltoBir}) that the algorithm given in $\Weyl$ can actually be applied in $\Bir(\p^2)$.

\subsection{Degree, maximal multiplicity and comultiplicity}

\begin{definition}  \label{definition:maximal-multiplicity-and-comultiplicity}
Let $a\in \ZZ_{\p^2}$. We define the \emph{maximal multiplicity of $a$} to be \[m_{\max}(a)=\max\{m_q(a)\mid q\in \B(\p^2)\}\] and say that $a$ \emph{has maximal multiplicity at $q \in  \B(\p^2)$} if $m_q(a) = m_{\max}(a)$.

We define the \emph{comultiplicity} of $a$ to be $\comult a =\deg a -m_{\max}(a)$. 

Following the spirit of Definition~\ref{Defi:DegreeMult} (see also Remark~\ref{Remark:relation-between-mutliplicities-in-Z-and-in-the-Weyl-group})  the maximal multiplicity and comultiplicity of an element $f \in \Weyl$ are defined by
\[ m_{\max}(f)= m_{\max}(f^{-1}(e_0)) \text{ and } \comult f = \comult f^{-1}(e_0) .\]
\end{definition}

\begin{lemma}  \label{Lemm:EasyMultCoMult}
Let $a \in \Weyl (e_0) $. 
\begin{enumerate}
\item\label{EasyMultCoMult1}
If $\deg a =1$, then $a=e_0$, $m_{\max}(a)=0$ and $\comult a =1$.
\item\label{EasyMultCoMult2}
If $\deg a >1$, then $1\le m_{\max}(a)\le \deg a -1$ and $1\le \comult a \le \deg a-1$. 
\item\label{EasyMultCoMult3}
$\deg a =2\Leftrightarrow m_{\max}(a)=1$.
\item\label{EasyMultCoMult4}
$\comult a =1\Leftrightarrow a=\varphi(e_0)\mbox{ for some Jonqui\`eres }\varphi\in \Weyl$.
\end{enumerate}
\end{lemma}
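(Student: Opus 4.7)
The plan is to read the four statements directly off Corollary~\ref{Coro:PositivityOrbit} and the Noether equalities of Lemma~\ref{Lemm:EasyWeyl}\ref{L2}, using the structural results of the preceding subsection only for part~\ref{EasyMultCoMult4}. Throughout it is convenient to write $a = f^{-1}(e_0)$ for some $f \in \Weyl$, so that by Remark~\ref{Remark:relation-between-mutliplicities-in-Z-and-in-the-Weyl-group} the invariants of $a$ and $f$ coincide: $\deg a = \deg f$ and $m_q(a) = m_q(f)$ for every $q \in \B(\p^2)$. For~\ref{EasyMultCoMult1}, $\deg a = 1$ gives $\deg f = 1$, so Lemma~\ref{Lemm:EasyWeyl}\ref{L4} places $f$ in $\Sym_{\p^2}$; since $\Sym_{\p^2}$ fixes $e_0$, one obtains $a = e_0$ and all multiplicities vanish.

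For~\ref{EasyMultCoMult2}, setting $d := \deg a \ge 2$, the first Noether equality of Corollary~\ref{Coro:PositivityOrbit} gives $\sum_q m_q(a) = 3(d-1) \ge 3$, so $\Base(a) \neq \emptyset$ and $m_{\max}(a) \ge 1$; the second gives $m_{\max}(a)^2 \le \sum_q m_q(a)^2 = d^2 - 1 < d^2$, which for an integer multiplicity forces $m_{\max}(a) \le d-1$. The corresponding bounds on $\comult a = d - m_{\max}(a)$ follow by subtraction. Claim~\ref{EasyMultCoMult3} is then immediate: if $d = 2$, the pinch $1 \le m_{\max}(a) \le d-1 = 1$ from~\ref{EasyMultCoMult2} yields $m_{\max}(a) = 1$; conversely, if $m_{\max}(a) = 1$ then $d \ge 2$ by~\ref{EasyMultCoMult1}, every base-point of $a$ has multiplicity exactly one, and both Noether equalities collapse to $|\Base(a)|$, whence $3(d-1) = d^2 - 1$ and $d = 2$.

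For~\ref{EasyMultCoMult4}, the condition $\comult a = 1$ translates via $m_q(a) = m_q(f)$ into the existence of some $q$ with $m_q(f) = \deg f - 1$, i.e.~into $f$ being a Jonqui\`eres element of $\Weyl$ in the sense of Definition~\ref{definition: Jonquieres element}; then $\varphi := f^{-1}$ satisfies $a = \varphi(e_0)$, provided Jonqui\`eres elements are closed under inversion. This closure is the one step of the argument that is not purely cosmetic, and it follows at once from Lemma~\ref{Lemm:JoWinfty}: writing $f = \alpha \circ \iota_{q,\Delta}$ with $\alpha \in \Sym_{\p^2}$ as in~\ref{GenJ5} and using the involutivity of $\iota_{q,\Delta}$, one obtains $f^{-1} = \iota_{q,\Delta} \circ \alpha^{-1}$, which matches form~\ref{GenJ4} of the same lemma. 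The reverse implication of~\ref{EasyMultCoMult4} is obtained by running the same chain backwards. The sole subtlety in the whole proof is thus this symmetry of the Jonqui\`eres condition under inversion, needed to phrase the result as $a = \varphi(e_0)$ rather than the more natural $a = \varphi^{-1}(e_0)$; the rest is bookkeeping with the Noether equalities.
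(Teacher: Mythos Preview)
Your proof is correct and follows essentially the same approach as the paper, reading everything off Corollary~\ref{Coro:PositivityOrbit} and the Noether equalities. The only minor difference is in part~\ref{EasyMultCoMult4}: the paper cites Lemma~\ref{LemmJvarphiLin}\ref{LemmJvarphiLin2} directly, which already hands you the involution $\iota_{q,\Delta}$ as an explicit Jonqui\`eres witness for $a = \iota_{q,\Delta}(e_0)$, whereas you take the (equally valid) detour through Lemma~\ref{Lemm:JoWinfty} to establish closure of Jonqui\`eres elements under inversion.
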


\begin{proof}
If $\deg a =1$, then $a=e_0$ (Corollary~\ref{Coro:PositivityOrbit}), so $m_{\max}(a)=0$ and $\comult a =1$.

If $\deg a > 1$, then $1 \le m_{\max}(a)\le \deg a -1$ follows from Noether equalities and positivity of multiplicities, see Corollary~\ref{Coro:PositivityOrbit}. This yields $1\le \comult a \le \deg a -1$. Moreover, we have $m_{\max}(a)=1$ if and only if $\deg a =2$ (again by Corollary~\ref{Coro:PositivityOrbit}), and $\comult a =1$ if and only if $a=\varphi(e_0)$, where $\varphi$ is Jonqui\`eres (Lemma~\ref{LemmJvarphiLin}\ref{LemmJvarphiLin2}).
\end{proof}

We will often apply quadratic maps in the sequel, and need the following basic lemma.

\begin{lemma}\label{Lemm:sigmadeg}
Let $p_1,p_2,p_3\in \B(\p^2)$ be three distinct points, let $\sigma_{p_1,p_2,p_3}\in \Weyl$ be as in Definition~$\ref{Defi:sigmap1p2p3}$ and let $a\in \ZZ_{\p^2}$. 
Then the following hold:
\begin{enumerate}
\item\label{sigma123aegal}
$\sigma_{p_1,p_2,p_3}(a)=a \Leftrightarrow \deg \sigma_{p_1,p_2,p_3}(a) = \deg a \Leftrightarrow \deg(a)=m_{p_1}(a)+m_{p_2}(a)+m_{p_3}(a)$;
\item\label{sigma123degsmall}
$\deg(\sigma_{p_1,p_2,p_3}(a))<\deg(a)\Leftrightarrow \deg(a)<m_{p_1}(a)+m_{p_2}(a)+m_{p_3}(a)$;
\item\label{sigma123comult}
$\deg(\sigma_{p_1,p_2,p_3}(a))<\deg(a)\Rightarrow \comult(\sigma_{p_1,p_2,p_3}(a))\le \comult(a) .$
\end{enumerate}
\end{lemma}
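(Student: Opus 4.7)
The plan is to reduce everything to a single computation of $\sigma_{p_1,p_2,p_3}(a)$ in the basis of $\ZZ_{\p^2}$. Writing $d = \deg(a)$ and $m_i = m_{p_i}(a)$, I would start from the identity $a = d\, e_0 - \sum_{q \in \B(\p^2)} m_q(a)\, e_q$ and apply $\sigma_{p_1,p_2,p_3}$ term by term, using its action on the basis as given in Definition~\ref{Defi:sigmap1p2p3}. Collecting coefficients yields
\[
\sigma_{p_1,p_2,p_3}(a) = (2d - m_1 - m_2 - m_3)\, e_0 - \sum_{i=1}^{3} \bigl(d - \textstyle\sum_{j \neq i} m_j\bigr) e_{p_i} - \!\!\!\sum_{q \notin \{p_1,p_2,p_3\}} \!\!\! m_q(a)\, e_q.
\]
In particular, $\deg(\sigma_{p_1,p_2,p_3}(a)) = 2d - m_1 - m_2 - m_3$.

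From this explicit formula, parts \ref{sigma123aegal} and \ref{sigma123degsmall} are immediate. The implication $\sigma(a) = a \Rightarrow \deg\sigma(a) = \deg a$ is trivial. The equality $\deg\sigma(a) = d$ is equivalent to $d = m_1 + m_2 + m_3$ by the degree formula, and when this holds the coefficient $d - \sum_{j \neq i} m_j$ of $e_{p_i}$ in $\sigma(a)$ equals $m_i$, so $\sigma(a) = a$. This closes the loop of equivalences for \ref{sigma123aegal}. The inequality version of the same computation gives $\deg\sigma(a) < d \Leftrightarrow d < m_1 + m_2 + m_3$, which is \ref{sigma123degsmall}.

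For \ref{sigma123comult}, assume $d' := \deg(\sigma_{p_1,p_2,p_3}(a)) < d$, set $M := m_{\max}(a)$, and pick $q_0 \in \B(\p^2)$ with $m_{q_0}(a) = M$. I would split into two cases depending on where $M$ is realised. If $q_0 \notin \{p_1,p_2,p_3\}$, then $m_{q_0}(\sigma(a)) = m_{q_0}(a) = M$ from the formula, so $m_{\max}(\sigma(a)) \ge M$ and $\comult(\sigma(a)) \le d' - M < d - M = \comult(a)$. If $q_0 = p_i$ for some $i$, say $i = 1$ after relabelling, then $m_{p_1}(\sigma(a)) = d - m_2 - m_3$, so $m_{\max}(\sigma(a)) \ge d - m_2 - m_3$ and
\[
\comult(\sigma(a)) \le d' - (d - m_2 - m_3) = (2d - m_1 - m_2 - m_3) - d + m_2 + m_3 = d - m_1 = \comult(a).
\]
In either case the desired inequality holds. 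There is no real obstacle: the content is a bookkeeping calculation, and the only mild care needed is to separate the two cases so that one always has a point at which $\sigma(a)$ inherits a large enough multiplicity to offset the drop in degree.
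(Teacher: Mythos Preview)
Your proof is correct and follows essentially the same approach as the paper. The only cosmetic difference is that the paper packages the computation via the reflection formula $\sigma_{p_1,p_2,p_3}(v)=v+(\xi\cdot v)\,\xi$ with $\xi=e_0-e_{p_1}-e_{p_2}-e_{p_3}$, which yields your degree formula in one line and recasts your case split in \ref{sigma123comult} as the single observation $\xi\cdot(e_0-e_q)\in\{0,1\}$; the underlying argument is identical.
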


\begin{proof}Writing $\xi=e_0-e_{p_1}-e_{p_2}-e_{p_3}$, we prove that $\sigma_{p_1,p_2,p_3}(v)=v+ (\xi\cdot v)\cdot \xi$ for each $v\in \ZZ_{\p^2}$. As $v\mapsto (\xi\cdot v)\cdot \xi$ is $\Z$-linear, it suffices to check this for $v=e_0$ and $v=e_q$, $q\in \B(\p^2)$, and this follows directly from the definition given in Definition~$\ref{Defi:sigmap1p2p3}$. We find
\[\deg \sigma_{p_1,p_2,p_3}(a) = e_0 \cdot (a + (\xi\cdot a)\cdot \xi)=\deg(a)+\xi\cdot a=2\deg(a)-m_{p_1}(a)-m_{p_2}(a)-m_{p_3}(a).\]
Hence, $\deg \sigma_{p_1,p_2,p_3}(a) = \deg a $ if and only if $a \cdot \xi=0$, which is equivalent to $\sigma_{p_1,p_2,p_3}(a)=a$. This yields \ref{sigma123aegal}. Assertion~\ref{sigma123degsmall} also follows from the above equalities. To prove \ref{sigma123comult}, we write $b=\sigma_{p_1,p_2,p_3}(a)=a-n  \xi$ where $n = -(\xi \cdot a) >0$ and choose a point $q\in \B(\p^2)$ where $a$ has maximal multiplicity.
We have
$\comult (a) = \deg (a) - m_q (a) = a \cdot (e_0 - e_q)  = b \cdot (e_0 -e_q)+n(\xi\cdot(e_0-e_q))$. As $ b \cdot (e_0 - e_q) = \deg (b) - m_q (b) \geq \comult (b)$, it suffices to observe that $\xi \cdot (e_0-e_q)\in \{0,1\}$.
\end{proof}

\begin{corollary}\label{Cor:sigmadegfix}
If $p_1,p_2,p_3\in \B(\p^2)$ are three distinct points and $a\in \ZZ_{\p^2}$ satisfies $ \deg(a)=m_{p_1}(a)+m_{p_2}(a)+m_{p_3}(a)$, then $\iota_{p_1,\{p_2,p_3\}}(a)=\tau(a)$, where $\tau\in \Sym_{\p^2}$ is the permutation of $p_2$ and $p_3$.
\end{corollary}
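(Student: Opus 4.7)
The plan is to simply combine the factorisation of $\sigma_{p_1,p_2,p_3}$ given in Definition~\ref{Defi:sigmap1p2p3} with the degree criterion of Lemma~\ref{Lemm:sigmadeg}\ref{sigma123aegal}. The hypothesis $\deg(a)=m_{p_1}(a)+m_{p_2}(a)+m_{p_3}(a)$ is precisely the condition that appears in Lemma~\ref{Lemm:sigmadeg}\ref{sigma123aegal}, which characterises when $\sigma_{p_1,p_2,p_3}$ fixes $a$. So I would begin by applying that lemma to conclude
\[ \sigma_{p_1,p_2,p_3}(a) = a. \]

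Next I would invoke the identity $\sigma_{p_1,p_2,p_3} = \tau_{p_2,p_3}\circ \iota_{p_1,\{p_2,p_3\}}$ recorded at the end of Definition~\ref{Defi:sigmap1p2p3}. Substituting this into the previous equality gives
\[ \tau_{p_2,p_3}\bigl(\iota_{p_1,\{p_2,p_3\}}(a)\bigr) = a. \]
Finally, since $\tau=\tau_{p_2,p_3}\in \Sym_{\p^2}$ is the transposition of $p_2$ and $p_3$, it is an involution, so applying $\tau$ to both sides yields $\iota_{p_1,\{p_2,p_3\}}(a)=\tau(a)$, as desired.

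There is essentially no obstacle here: the statement is a direct rephrasing, on the level of individual elements, of the factorisation already built into the definition of $\sigma_{p_1,p_2,p_3}$, once the hypothesis is recognised as the ``fixed-point'' criterion of Lemma~\ref{Lemm:sigmadeg}\ref{sigma123aegal}. The only point worth a brief verification is that the $\tau$ appearing in the statement coincides with the $\tau_{p_2,p_3}$ appearing in Definition~\ref{Defi:sigmap1p2p3}, which is immediate.
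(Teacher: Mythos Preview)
Your proof is correct and essentially identical to the paper's: both use Lemma~\ref{Lemm:sigmadeg}\ref{sigma123aegal} to get $\sigma_{p_1,p_2,p_3}(a)=a$, and then the relation between $\sigma_{p_1,p_2,p_3}$, $\iota_{p_1,\{p_2,p_3\}}$ and $\tau$ from Definition~\ref{Defi:sigmap1p2p3}. The only cosmetic difference is that the paper writes the identity as $\iota_{p_1,\{p_2,p_3\}}=\tau\circ\sigma_{p_1,p_2,p_3}$ directly, whereas you start from $\sigma_{p_1,p_2,p_3}=\tau\circ\iota_{p_1,\{p_2,p_3\}}$ and apply the involution $\tau$.
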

\begin{proof}
Follows from Lemma~\ref{Lemm:sigmadeg} and from the equality $\iota_{p_1,\{p_2,p_3\}}=\tau\circ \sigma_{p_1,p_2,p_3}$.
\end{proof}

The following result is quite old, and was first showed by Max Noether. We give here a proof inspired by \cite[Proposition 2.6.4]{Alberich}.

\begin{lemma}   \label{Lemma:Hudsonalgorithm}
Let $a\in \ZZ_{\p^2}$ be such that $a^2=1$, $\omega(a)=-3$, $\deg(a)\ge 2$ and $m_q(a)\ge 0$ for each $q\in \B(\p^2)$. Then, there exist three distinct points $p_1,p_2,p_3\in \Base(a)$ such that
\[\begin{array}{l}\sum_{i=1}^3 m_{p_i}(a)>\deg(a)\ \ (\text{\emph{Noether inequality}}).\end{array}\]
Moreover, for all $p_1,p_2,p_3$ as above, we have $\deg \sigma_{p_1,p_2,p_3}(a) < \deg a $.
\end{lemma}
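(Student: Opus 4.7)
The plan is to enumerate the base points of $a$ as $p_1, p_2, \dots, p_r$ so that the multiplicities $m_i := m_{p_i}(a)$ form a weakly decreasing sequence of positive integers, and to set $d := \deg(a)$. The hypotheses $a^2 = 1$ and $\omega(a) = -3$ translate (by the same computation as in Lemma~\ref{Lemm:EasyWeyl}\ref{L2}) into the two Noether equalities
\[ \sum_{i=1}^r m_i = 3(d-1), \quad \sum_{i=1}^r m_i^2 = d^2 - 1. \]
A short direct calculation using $d \geq 2$ will rule out $r \leq 2$: for $r = 2$ in particular, these two equalities force $m_1, m_2$ to be the roots of $x^2 - 3(d-1)x + (4d-5)(d-1)$, whose discriminant $(d-1)(11 - 7d)$ is negative when $d \geq 2$.

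Assuming then $r \geq 3$, I would argue by contradiction that the top three multiplicities must satisfy $m_1 + m_2 + m_3 > d$. Suppose instead that $m_1 + m_2 + m_3 \leq d$, and set $u := m_1 + m_2$, $v := m_3$; then $u + v \leq d$, $v \geq 1$, and $u \geq 2v$ by the ordering. Since $m_i \leq v$ for all $i \geq 3$, the estimate $\sum_{i \geq 3} m_i^2 \leq v \sum_{i \geq 3} m_i$ combined with the Noether equalities gives
\[ d^2 - 1 - m_1^2 - m_2^2 \;\leq\; v\bigl(3(d-1) - u\bigr). \]
Combining with the upper bound $m_1^2 + m_2^2 \leq (u-v)^2 + v^2$ (attained when $m_2 = v$ and $m_1 = u-v$) and rearranging produces the key inequality
\[ u^2 - 3uv + 2v^2 + 3(d-1)v \;\geq\; d^2 - 1. \]

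The main obstacle will be showing that this inequality admits no solution on the feasible region $\{(u,v) : u + v \leq d,\; u \geq 2v,\; v \geq 1\}$. Since the left-hand side has partial derivative $2u - 3v \geq v > 0$ with respect to $u$ on this region, it is maximised at $u = d-v$, where it becomes $d^2 - (2d+3)v + 6v^2$; on the interval $v \in [1, d/3]$ this is in turn maximised at $v = d/3$, with value $d^2 - d$. Since $d^2 - d < d^2 - 1$ for $d \geq 2$, the desired contradiction follows, and hence three base points $p_1, p_2, p_3$ with $m_{p_1}(a) + m_{p_2}(a) + m_{p_3}(a) > d$ exist.

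Finally, for the ``moreover'' statement: the inequality $m_{p_1}(a) + m_{p_2}(a) + m_{p_3}(a) > d = \deg(a)$ is exactly the hypothesis of part~\ref{sigma123degsmall} of Lemma~\ref{Lemm:sigmadeg}, from which the conclusion $\deg \sigma_{p_1, p_2, p_3}(a) < \deg(a)$ follows immediately.
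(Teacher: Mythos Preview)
Your proof is correct and takes a genuinely different route from the paper's. The paper gives a slick direct algebraic manipulation: starting from the Noether equalities it derives in two lines the identity
\[(d-1)\bigl(m_1+m_2+m_3-(d+1)\bigr) \;\ge\; (m_1-m_3)(d-1-m_1)+(m_2-m_3)(d-1-m_2)\;\ge\;0,\]
which immediately yields $m_1+m_2+m_3\ge d+1$. Your approach instead assumes $m_1+m_2+m_3\le d$, parametrises by $(u,v)=(m_1+m_2,m_3)$, extracts the quadratic inequality $u^2-3uv+2v^2+3(d-1)v\ge d^2-1$, and rules it out by a continuous optimisation over the feasible region. This is more laborious but entirely valid; the paper's argument is shorter but requires spotting the right algebraic rearrangement, whereas yours is a systematic reduction that would work even without that insight. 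Your treatment of $r\le 2$ via the discriminant is also more explicit than the paper's, which simply observes that $m_i\le d-1$ forces $\sum m_i\le 2(d-1)<3(d-1)$.

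One small cosmetic remark: for $d=2$ your interval $[1,d/3]$ is empty, so the phrase ``maximised at $v=d/3$'' is vacuous there. Of course this just says the feasible region itself is empty, which is already the desired contradiction, but it would read more cleanly to dispose of $d=2$ separately (e.g.\ $m_1+m_2+m_3\ge 3>2=d$ is immediate) before running the optimisation for $d\ge 3$.
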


\begin{proof}
We write $a=de_0-\sum_{i=1}^r m_i q_i$ where $d=\deg a \ge 2$, $p_1,\dots,p_r\in \B(\p^2)$ are distinct points and $m_i=m_{q_i}(a)$ for each $i$, and $m_1\ge m_2\ge \dots \ge m_r\ge 0$. The fact that $a^2=1$ and $\omega(a)=-3$ yield $\sum m_i^2=d^2-1$, $\sum m_i=3(d-1)$. This implies that $m_i< d$ for each $i$, and thus that $r\ge 3$ and $m_3>0$. We then compute
\[\begin{array}{l}(d-1)(3m_3-(d+1))=m_3(\sum m_i)-\sum m_i^2=  
\sum
m_i(m_3-m_i)\ge  \sum_{i=1}^2 m_i(m_3-m_i).\end{array}\]
Adding $(d-1)(m_1+m_2-2m_3)$ to both sides, we get
\[\begin{array}{l}(d-1)(m_1+m_2+m_3-(d+1))\ge (m_1-m_3)(d-1-m_1)+(m_2-m_3)(d-1-m_2).\end{array}\]
The right hand side being non-negative, we obtain $m_1+m_2+m_3>d$, as expected.
 The last part follows from Lemma~\ref{Lemm:sigmadeg}\ref{sigma123degsmall}.
\end{proof}

\begin{corollary}\label{Coro:DecrMaxMult}
Let $a\in \Weyl(e_0)$ be such that $\deg(a)\ge 2$ and let $p\in \B(\p^2)$ be a point of maximal multiplicity of $a$.
Then, there exists $\varphi\in \J_p$ such that $\deg(\varphi(a))<\deg(a)$.
\end{corollary}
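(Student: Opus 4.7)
The plan is to reduce the corollary directly to Lemma~\ref{Lemma:Hudsonalgorithm} (Noether's inequality), after ensuring that $p$ can be included among the three points produced by that lemma. The triple exists thanks to Hudson's algorithm, and the maximality of the multiplicity at $p$ will allow us to force $p$ to be one of the three vertices of the triangle used in the quadratic transformation.

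First I would check that the hypotheses of Lemma~\ref{Lemma:Hudsonalgorithm} apply to $a$. Since $a \in \Weyl(e_0)$, we have $a^2 = e_0^2 = 1$ and $\omega(a) = \omega(e_0) = -3$; by Corollary~\ref{Coro:PositivityOrbit} all multiplicities $m_q(a)$ are non-negative; and $\deg(a) \geq 2$ is given. Lemma~\ref{Lemma:Hudsonalgorithm} therefore provides three distinct base-points $q_1, q_2, q_3 \in \Base(a)$ satisfying the Noether inequality
\[ m_{q_1}(a) + m_{q_2}(a) + m_{q_3}(a) > \deg(a). \]

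Next, I would argue that one may assume $p \in \{q_1,q_2,q_3\}$. Indeed, Lemma~\ref{Lemm:EasyMultCoMult}\ref{EasyMultCoMult2} gives $m_{\max}(a) \geq 1$, so $p \in \Base(a)$. If $p \notin \{q_1,q_2,q_3\}$, then replacing $q_1$ by $p$ yields three distinct base-points $\{p, q_2, q_3\}$, and because $m_p(a) = m_{\max}(a) \geq m_{q_1}(a)$ the sum of multiplicities can only increase, so Noether's inequality still holds. Thus, renaming, we have a triple $(p, q_2, q_3)$ of distinct base-points of $a$ with $m_p(a) + m_{q_2}(a) + m_{q_3}(a) > \deg(a)$.

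Finally, set $\varphi = \sigma_{p, q_2, q_3}$. By Definition~\ref{Defi:sigmap1p2p3}, $\sigma_{p_1,p_2,p_3} \in \J_{p_i}$ for $i = 1,2,3$, so $\varphi \in \J_p$. By the last assertion of Lemma~\ref{Lemma:Hudsonalgorithm} (or equivalently by Lemma~\ref{Lemm:sigmadeg}\ref{sigma123degsmall} applied to our triple), we obtain $\deg \varphi(a) < \deg(a)$, which is precisely the desired conclusion. The only non-routine point is the swap argument ensuring $p$ sits inside the Noether triple, and this is immediate once maximality of $m_p(a)$ is used.
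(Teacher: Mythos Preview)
Your proof is correct and follows essentially the same approach as the paper: invoke Lemma~\ref{Lemma:Hudsonalgorithm} to obtain a Noether triple, use the maximality of $m_p(a)$ to swap $p$ into the triple, and take $\varphi=\sigma_{p,q_2,q_3}\in \J_p$. The paper's version is simply terser, omitting the routine verification of the hypotheses of Lemma~\ref{Lemma:Hudsonalgorithm} and the explicit check that $p\in\Base(a)$.
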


\begin{proof}
By Lemma~\ref{Lemma:Hudsonalgorithm}, there exist three distinct points  $p_1,p_2,p_3\in \Base(a)$ such that $\sum_{i=1}^3 m_{p_i}(a)>\deg(a)$. As $p$ is a point of maximal multiplicity, we can assume $p=p_1$. We then choose $\varphi=\sigma_{p,p_2,p_3}\in \J_p$, which satisfies $\deg(\varphi(a))<\deg(a)$ (Lemma~\ref{Lemma:Hudsonalgorithm}).
\end{proof}

\begin{algorithm}[Hudson Test] \label{Algo:Hudsonalgo}
Lemma~\ref{Lemma:Hudsonalgorithm} yields the following algorithm, that decides whether an element of $\ZZ_{\p^2}$ belongs to $\Weyl(e_0)$ or not. If $a$ belongs to $\Weyl(e_0)$, one first needs to have $a^2=1$, $\omega(a)=-3$ (Noether equalities). If $\deg a =1$, then $a=e_0\in \Weyl(e_0)$. Otherwise, one needs to have $\deg a \ge 2$ and $m_q(a)\ge 0$ for each $q\in \B(\p^2)$
(Corollary~\ref{Coro:PositivityOrbit}).
Then one can apply Lemma~\ref{Lemma:Hudsonalgorithm} to obtain an element $a'\in \Weyl(a)$ of smaller degree, satisfying again the Noether equalities. If $\deg a' \ge 2$ and the multiplicities are again non-negative, one again applies the corollary and decreases the degree. At some moment, either we obtain $e_0$, and then $a\in \Weyl(e_0)$, or we get some negative degree or multiplicity, and then $a\notin \Weyl(e_0)$
(by Corollary~\ref{Coro:PositivityOrbit}).
\end{algorithm}

\begin{example}
Take
$12$ different points $q_1,\dots,q_{12}\in \B(\p^2)$.

The element $a=-7e_0+\sum_{i=1}^{12} 2e_{q_i}\in \ZZ_{\p^2}$ satisfies the Noether equalities, but has negative degree (and negative multiplicities), hence does not belong to $\Weyl(e_0)$.

The element
$a'=3e_0+e_{q_1}-\sum_{i=4}^{10} e_{q_i}$
satisfies the Noether equalities, has positive degree but has one negative multiplicity, hence does not belong to $\Weyl(e_0)$.

By Definition~\ref{Defi:sigmap1p2p3}, the element $\sigma_{q_1,q_2,q_3}(a')$ is equal to $a'' = 7e_0-3e_{q_1}-4e_{q_2}-4e_{q_3}-\sum_{i=4}^{10} e_{q_i}$. This element
satisfies the Noether equalities, has positive degree and non-negative multiplicities but does not belong to $\Weyl(e_0)$, as $a'$ does not.
\end{example}

\subsection{Predecessors}

\begin{lemma}\label{Lemm:Permutationdecreases}
Let $a\in \Weyl(e_0)$, $g\in \Weyl$, and $p_1,p_2\in \B(\p^2)$ be two distinct points. Denote by $\tau\in \Sym_{\p^2}$ the transposition that exchanges $p_1$ and $p_2$. The comparison of the two elements $b=g(a)$ and $c=\tau\circ g\circ \tau(a)$  of $\Weyl (e_0)$ is given as follows:
\begin{equation}\label{EquationProductNeg}\tag{$\clubsuit$}
\deg(c)-\deg(b) = (m_{p_1}(a)-m_{p_2}(a))(m_{p_1}(g)-m_{p_2}(g)). \end{equation} 
Moreover, the following hold:

\begin{enumerate}
\item\label{SameMultEqualSym}
$\deg(b)=\deg(c)\Leftrightarrow b\equiv_{\Sym_{\p^2}}\!c$;
\item\label{NotSameMultDecrease}
$\deg(b)>\deg(c)\Rightarrow \comult(b)\ge \comult(c)$;
\item\label{NotSameMultIncrease}
$\deg(b)<\deg(c)\Rightarrow \comult(b)\le \comult(c)$.
\end{enumerate}
\end{lemma}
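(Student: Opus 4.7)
The plan is to first establish~\eqref{EquationProductNeg} directly, then deduce~\ref{SameMultEqualSym} from it, and finally prove~\ref{NotSameMultDecrease}, from which~\ref{NotSameMultIncrease} follows by the symmetry $(b,c,g)\leftrightarrow(c,b,\tau g\tau)$. For the main equation, I will use that $\tau\in\Sym_{\p^2}$ fixes $e_0$ and that both $\tau$ and $g$ preserve the intersection form (Lemma~\ref{Lemm:EasyWeyl}\ref{L1}) to rewrite
\[\deg c-\deg b=e_0\cdot(c-b)=\bigl[\tau g^{-1}(e_0)-g^{-1}(e_0)\bigr]\cdot a.\]
Expanding $g^{-1}(e_0)=\deg(g)\,e_0-\sum_p m_p(g)\,e_p$ via Lemma~\ref{Lemm:EasyWeyl}\ref{L2}, the action of $\tau$ only swaps the coefficients at $e_{p_1}$ and $e_{p_2}$, so the bracket collapses to $(m_{p_1}(g)-m_{p_2}(g))(e_{p_1}-e_{p_2})$; intersecting with $a$ produces the factor $m_{p_1}(a)-m_{p_2}(a)$, yielding~\eqref{EquationProductNeg}.

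For~\ref{SameMultEqualSym}, the reverse implication is immediate since $\Sym_{\p^2}$ preserves the degree. Conversely, by~\eqref{EquationProductNeg}, $\deg(b)=\deg(c)$ forces $m_{p_1}(a)=m_{p_2}(a)$ or $m_{p_1}(g)=m_{p_2}(g)$. In the first case $\tau(a)=a$, so $c=\tau(b)$. In the second case $\tau g^{-1}(e_0)=g^{-1}(e_0)$, so $g\tau g^{-1}$ fixes $e_0$ and hence lies in $\Sym_{\p^2}$ by Lemma~\ref{Lemm:EasyWeyl}\ref{L4}; writing $\tau':=g\tau g^{-1}$, I get $c=\tau g\tau(a)=\tau\tau'(b)$ with $\tau\tau'\in\Sym_{\p^2}$.

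For~\ref{NotSameMultDecrease}, a short manipulation gives $c-\tau(b)=\tau g(\tau(a)-a)=A\,\xi$ with $A:=m_{p_1}(a)-m_{p_2}(a)$ and $\xi:=\tau g(e_{p_1}-e_{p_2})$. Setting $\eta:=g(e_{p_1}-e_{p_2})=\tau(\xi)$, the preservation of the intersection and canonical forms by $g$ gives $\eta^2=-2$ and $\omega(\eta)=0$. Choose $r_0\in\B(\p^2)$ with $m_{r_0}(b)=m_{\max}(b)$, and set $s_{r_0}:=e_{r_0}\cdot\eta$, $B:=\deg(\eta)=m_{p_1}(g)-m_{p_2}(g)$. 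A direct computation using $\tau^2=\mathrm{id}$ yields
\[c\cdot(e_0-e_{\tau(r_0)})=\comult(b)+A(B-s_{r_0}),\]
so $\comult(c)\leq\comult(b)+A(B-s_{r_0})$. Since $\deg(b)>\deg(c)$ translates to $AB<0$, after possibly swapping $p_1$ and $p_2$ (which negates $A$, $B$ and every $s_p$) I may assume $A>0$ and $B\leq -1$; it then suffices to show $s_{r_0}\geq B$.

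The main obstacle is this last inequality, which I will establish in the stronger form $s_p\geq B$ for \emph{every} $p\in\B(\p^2)$. Writing $\eta=Be_0-\sum_p s_pe_p$, the conditions $\eta^2=-2$ and $\omega(\eta)=0$ translate into the Noether-like identities $\sum_p s_p^2=B^2+2$ and $\sum_p s_p=3B$. If some $p$ satisfied $s_p\leq B-1$, then $|s_p|\geq|B|+1$ (since $B\leq -1$), so $s_p^2\geq B^2+2|B|+1>B^2+2$, contradicting $\sum_p s_p^2=B^2+2$. This completes~\ref{NotSameMultDecrease}, and~\ref{NotSameMultIncrease} then follows by the symmetry $(b,c,g)\leftrightarrow(c,b,\tau g\tau)$ already mentioned.
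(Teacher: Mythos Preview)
Your proof is correct. The arguments for \eqref{EquationProductNeg} and \ref{SameMultEqualSym} are essentially the same as the paper's (the paper packages the key identity as $(\tau(\alpha)-\alpha)\cdot\beta=(m_{p_1}(\alpha)-m_{p_2}(\alpha))(m_{p_1}(\beta)-m_{p_2}(\beta))$ and uses it repeatedly, but the content is identical).

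For \ref{NotSameMultDecrease} you take a genuinely different route. The paper works on the source side: with $q$ a point of maximal multiplicity of $b$ it sets $\Gamma=g^{-1}(e_0-e_q)$, obtains $\comult(c)-\comult(b)\le(\tau(\Gamma)-\Gamma)\cdot a$, and rules out the bad sign by observing that otherwise $\tau(\Gamma)\cdot g^{-1}(e_0)<1$, contradicting Corollary~\ref{Cor:IntersectionTwoOrbits}\ref{Ge0e1} (the positivity $\Weyl(e_0)\cdot\Weyl(e_0-e_q)\ge 1$). That corollary in turn rests on the structural Lemma~\ref{Lemm:SymBirSim} and Corollary~\ref{Coro:PositivityOrbit}. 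Your argument instead works on the target side via $c=\tau(b)+A\,\tau g(e_{p_1}-e_{p_2})$ and reduces everything to the single arithmetic constraint $\sum_p s_p^2=B^2+2$ coming from $\eta^2=-2$; the bound $s_p\ge B$ then drops out in one line. This is more elementary and self-contained: it only uses that $g$ preserves the intersection form (you do not even need $\omega(\eta)=0$, so the identity $\sum_p s_p=3B$ is superfluous). The paper's approach has the advantage of reusing the same ``$(\tau(\alpha)-\alpha)\cdot\beta$'' device throughout, and the positivity input it invokes is of independent interest; your approach avoids that machinery entirely for this lemma.
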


\begin{proof}
For all $\alpha\in \ZZ_{\p^2}$ we have $ \tau(\alpha)-\alpha  = ( m_{p_1}(\alpha)  -m_{p_2} (\alpha) ) (e_{p_1} - e_{p_2} )$. This yields
\begin{equation}\label{EquationGeneral}\tag{\ensuremath{\spadesuit}}
( \tau(\alpha)-\alpha)\cdot \beta=( m_{p_1}(\alpha)  -m_{p_2} (\alpha) ) (m_{p_1}(\beta) - m_{p_2}(\beta) )\text{ for all } \alpha,\beta\in \ZZ_{\p^2}.
\end{equation}

We then write  $\Lambda=g^{-1}(e_0)$, and obtain $m_{p_i}(\Lambda)=m_{p_i}(g)$ for $i=1,2$. As $\deg(c)=\deg(g\circ \tau(a))$, we get
\[\begin{array}{rcl}
\deg(c)-\deg(b) &=& ( g\circ \tau(a)-g(a))\cdot e_0= (\tau(a)-a)\cdot g^{-1}(e_0)=(\tau(a)-a)\cdot \Lambda\\
&\stackrel{\ref{EquationGeneral}}{=}&( m_{p_1}(a)  -m_{p_2} (a) )(m_{p_1}(\Lambda)-m_{p_2}(\Lambda))\\
&=&( m_{p_1}(a)  -m_{p_2} (a) )(m_{p_1}(g)-m_{p_2}(g)),\end{array}\]
which achieves the proof of \eqref{EquationProductNeg}.

\ref{SameMultEqualSym}: If $b\equiv_{\Sym_{\p^2}}\!c$, then $c\cdot e_0=b\cdot e_0$, as $e_0$ is fixed by $\Sym_{\p^2}$, i.e.~$\deg(c)=\deg(b)$. Conversely, we suppose that $\deg(c)=\deg(b)$, which implies that $m_{p_1}(a)=m_{p_2}(a)$ or $m_{p_1}(g)=m_{p_2}(g)$ (by \ref{EquationProductNeg}), and 
we want to prove that $b\equiv_{\Sym_{\p^2}}\!c$. If $m_{p_1}(a)=m_{p_2}(a)$, then $\tau(a)=a$, which yields $c=\tau \circ g \circ \tau(a)=\tau (  g(a))=\tau(b)$. If $m_{p_1}(g)=m_{p_2}(g)$, then $\tau (\Lambda)= \Lambda$, i.e.~$(g\circ\tau)^{-1}(e_0)=g^{-1}(e_0)$. There exists thus $\beta\in \Sym_{\p^2}$ such that  $\beta \circ g=g \circ \tau$ (Corollary~\ref{Coro:ExistenceAlpha}). This yields $c=\tau \circ g  \circ \tau(a) = \tau \circ \beta \circ g(a)=(\tau\circ\beta)(b)\in \Weyl(c)$.

\ref{NotSameMultDecrease}: Assume that $\deg(b)>\deg(c)$. Up to exchanging $p_1$ and $p_2$, we can assume that $m_{p_1}(a)>m_{p_2}(a)$ and $m_{p_1}(g)<m_{p_2}(g)$ (by \ref{EquationProductNeg}).
To show that $\comult b=\comult g(a)\ge\comult \tau \circ g \circ \tau(a)=\comult c$, we denote by $q \in \B(\p^2)$ a point of maximal multiplicity of $b$ and write $\Gamma= g^{-1}(e_0-e_{q})$. This yields
\[\begin{array}{lllllll}
\comult b  &=& (e_0-e_{q})\cdot g(a) &=& g^{-1}(e_0-e_q)\cdot a  &= &\Gamma \cdot a, \\
\comult c   &\le&   \big(e_0-\tau(e_{q}) \big) \cdot c 
&=& (\tau \circ g \circ \tau)^{-1} \big( e_0 - \tau(e_{q}) \big) \cdot a &=&  \tau(\Gamma)\cdot a.
\end{array}\]
We then have
$\comult c-\comult b\le( \tau(\Gamma) -\Gamma) \cdot a  \stackrel{\ref{EquationGeneral}}{=}  ( m_{p_1}(\Gamma)-m_{p_2}(\Gamma) ) \cdot \big( m_{p_1}(a)-m_{p_2}(a) ) ,$
hence to prove the inequality $\comult b \ge \comult c$, it remains to see that $m_{p_1}(\Gamma)-m_{p_2}(\Gamma)>0$ is impossible (as $m_{p_1}(a)>m_{p_2}(a)$). Indeed, this would yield (as $m_{p_1}(\Lambda)<m_{p_2}(\Lambda)$)
\[ ( \tau(\Gamma)-\Gamma ) \cdot \Lambda \stackrel{\ref{EquationGeneral}}{=} ( m_{p_1}(\Gamma)-m_{p_2}(\Gamma) ) \cdot ( m_{p_1}(\Lambda)-m_{p_2}(\Lambda) ) < 0,\]
which implies that $\tau(\Gamma)\cdot \Lambda < \Gamma\cdot \Lambda=g^{-1}(e_0-e_{q})\cdot g^{-1}(e_0)=(e_0-e_{q})\cdot e_0=1$. This is impossible as $\tau(\Gamma)\in \Weyl(e_0-e_q)$ and $\Lambda\in \Weyl(e_0)$ (see Corollary~\ref{Cor:IntersectionTwoOrbits}\ref{Ge0e1}).

Assertion \ref{NotSameMultIncrease} follows from \ref{NotSameMultDecrease} by replacing $g$ with $\tau\circ g \circ\tau $, which exchanges $b$ and~$c$.
\end{proof}

\begin{definition} \label{DefiPredeWinfe0}
Let $a \in \Weyl(e_0)$. A \emph{predecessor of $a$} is an element of \[\{\varphi(a) \mid \varphi \in \Weyl \text{\emph{ is a Jonqui\`eres element}}\}\] of minimal degree.
\end{definition}

If $a \in \Weyl(e_0)$ has degree at least $2$ (i.e.~$a \neq e_0$, see Corollary~\ref{Coro:PositivityOrbit}), it follows from Corollary~\ref{Coro:DecrMaxMult}  that the predecessors of $a$ have degree smaller than $a$. The following fundamental lemma establishes (among
others) 
the uniqueness of a predecessor modulo $\Sym_{\p^2}$, and gives an explicit way to compute a predecessor.

\begin{lemma}   \label{Lemm:AlgoFirstStepS}
Let $a \in \Weyl(e_0)$ be an element of degree $d >1$.
Denote by $(d ; m_0, \ldots, m_r)$ its homaloidal type, where we may assume that $m_0 \geq \cdots \geq m_r \geq 1$. Setting $m_i=0$ for $i > r$, we obtain an infinite non-increasing sequence $(m_i)_{i \ge 0}$.
We will say that an ordering $p_0, \ldots,p_r$ of the base-points of $a$ is \emph{ non-increasing} if the $($finite$)$ sequence of multiplicities $i \mapsto m_{p_i} (a)$ is non-increasing. Equivalently, this means that $m_{p_i}(a) = m_i$ for $0 \leq i \leq r$. Then, the following assertions are satisfied:
\begin{enumerate}

\item \label{atleastonei}
The set $\mathcal{S}=\{s\ge 1\mid m_{0}+m_{2s-1}+m_{2s}\ge d\ge m_{0}+m_{2s+1}+m_{2s+2}\}$ is a non-empty subset of consecutive integers of the interval $[1;\frac{r}{2}]\subseteq \R$ $($whence $r\ge 2)$.

\item\label{ModuloSymP2Onepred}
All predecessors of $a$ are equal modulo $\Sym_{\p^2}$.
\item \label{eachsinS}
Choose any non-increasing ordering $p_0, \ldots,p_r$ of the base-points of $a$. Then, for each integer $s\in [1;\frac{r}{2}]$ and each $\alpha\in \Sym_{\p^2}$, the element $\alpha\circ \iota_{p_0,\{p_1,\dots,p_{2s}\}}(a)$ is a predecessor of $a$ if and only if $s\in \mathcal{S}$.
\item  \label{AllPredLikeThisBaseCont}
If $\varphi\in \Weyl$ is a Jonqui\`eres element
such that $\varphi(a)$ is a predecessor of $a$, then $\varphi$ is equal to $\alpha\circ \iota_{p_0,\{p_1,\dots,p_{2s}\}}$
for some choice of a non-increasing ordering $p_0, \ldots,p_r$ of the base-points of $a$, and for some $\alpha\in \Sym_{\p^2}$, $s\in \mathcal{S}$. In particular, we have $\Base(\varphi)\subseteq \Base(a)$.
\item\label{CoMult}
If $\varphi\in \Weyl$ is a Jonqui\`eres element, and $c$ is a predecessor of $a$, then $\comult(\varphi(a))\ge \comult(c)$. In particular, we have $\comult(a)\ge \comult(c)$.
\end{enumerate}
\end{lemma}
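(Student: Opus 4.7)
The plan is to reduce to the explicit normal form of Jonqui\`eres elements in $\Weyl$ given by Lemma~\ref{Lemm:JoWinfty}: any such $\varphi$ may be written as $\varphi = \alpha \circ \iota_{q,\Delta}$ with $\alpha \in \Sym_{\p^2}$, $q \in \B(\p^2)$ and $\Delta \subseteq \B(\p^2) \setminus \{q\}$ of even cardinality $2s$. Since $\alpha$ fixes $e_0$ and merely permutes the $e_p$, both the degree and the multiset of multiplicities of $\varphi(a)$ coincide with those of $\iota_{q,\Delta}(a)$; in particular $\comult(\varphi(a)) = \comult(\iota_{q,\Delta}(a))$. Pairing $a$ with $\iota_{q,\Delta}^{-1}(e_0) = (s+1)e_0 - se_q - \sum_{r \in \Delta} e_r$ from Definition~\ref{Defi:Iota} then yields the key formula
\[ \deg(\iota_{q,\Delta}(a)) = (s+1)d - s\,m_q(a) - \sum_{r \in \Delta} m_r(a). \]

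For \ref{atleastonei}, I would introduce $f(s) := (s+1)d - s m_0 - \sum_{i=1}^{2s} m_i$ for $s \geq 0$, which as shown in the next paragraph is the minimum of $\deg(\iota_{q,\Delta}(a))$ over all $\Delta$ with $|\Delta| = 2s$. The increment $f(s) - f(s-1) = (d-m_0) - (m_{2s-1} + m_{2s})$ is non-decreasing in $s$ because $(m_i)$ is non-increasing; hence $f$ is ``convex'' and $\mathcal{S}$, coinciding with the set where $f$ attains its minimum, is an interval of consecutive integers. Noether's inequality from Lemma~\ref{Lemma:Hudsonalgorithm} gives $m_0 + m_1 + m_2 > d$, whence $f(1) < f(0)$, so the minimum is attained at some $s \geq 1$. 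For $s > r/2$, at least one of $m_{2s-1}, m_{2s}$ vanishes, and combining with $m_0 \leq d-1$ from Corollary~\ref{Coro:PositivityOrbit} (and using Lemma~\ref{Lemm:JonqEven} in the boundary case where equality threatens) one gets $m_0 + m_{2s-1} + m_{2s} < d$, excluding such $s$ from $\mathcal{S}$.

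For \ref{ModuloSymP2Onepred}, \ref{eachsinS} and \ref{AllPredLikeThisBaseCont}, the crucial step is to maximize $s\,m_q(a) + \sum_{r \in \Delta} m_r(a)$ for fixed $s \geq 1$. Rewriting this as $(s-1)m_q(a) + \sum_{p \in \{q\} \cup \Delta} m_p(a)$, and using both $m_q(a) \leq m_0$ and the fact that the sum of multiplicities over any $2s+1$ distinct points of $\B(\p^2)$ is at most $m_0 + \cdots + m_{2s}$, we see that the maximum equals $s m_0 + m_1 + \cdots + m_{2s}$, with equality if and only if $m_q(a) = m_0$ and the multiplicities of $\{q\} \cup \Delta$ form the multiset $\{m_0, m_1, \ldots, m_{2s}\}$ — equivalently, up to a non-increasing relabeling of $\Base(a)$, $q = p_0$ and $\Delta = \{p_1, \ldots, p_{2s}\}$. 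Combined with the previous paragraph this proves \ref{eachsinS}; and \ref{AllPredLikeThisBaseCont} follows because $\Base(\iota_{q,\Delta}) = \{q\} \cup \Delta$ then consists of base-points of $a$ (they have positive multiplicity as $2s \leq r$). For \ref{ModuloSymP2Onepred}, by Corollary~\ref{Coro:ExistenceAlpha} it suffices to verify that the predecessors arising from two consecutive $s, s+1 \in \mathcal{S}$ have the same homaloidal type: the equality $f(s+1) = f(s)$ forces $m_0 + m_{2s+1} + m_{2s+2} = d$, and a direct computation of the multiplicities shows that the values at $p_{2s+1}$ and $p_{2s+2}$ are merely swapped while all other multiplicities agree.

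The most delicate point is \ref{CoMult}. The plan is to express $\comult(\iota_{q,\Delta}(a)) = \min_p \iota_{q,\Delta}(e_0 - e_p)\cdot a$ as the minimum of three families of quantities, obtained by computing $\iota_{q,\Delta}(e_0 - e_p)$ separately in the cases $p = q$, $p \in \Delta$, and $p \notin \{q\} \cup \Delta$; performing the same computation for the predecessor $c$ then reduces the inequality $\comult(\iota_{q,\Delta}(a)) \geq \comult(c)$ to a term-by-term comparison, using $m_q(a) \leq m_0$, the defining inequalities of $\mathcal{S}$, and the lower bound $\deg(\iota_{q,\Delta}(a)) \geq f(s)$ from the previous paragraph. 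The main obstacle is the ``mixed'' case $p \in \Delta$, where one has to couple the degree bound with fine control on the specific multiplicity of $p$; the cleanest way I see to handle it is to factor $\iota_{q,\Delta} = \iota_{q,\{r_1,r_2\}} \circ \cdots \circ \iota_{q,\{r_{2s-1},r_{2s}\}}$ into quadratic involutions and apply Lemma~\ref{Lemm:sigmadeg}\ref{sigma123comult} iteratively along a sub-decomposition in which each successive application does not increase the degree, so that the comultiplicity is controlled at every step. The ``in particular'' statement is then the special case $\varphi = \iota_{q, \emptyset} = \mathrm{id}$.
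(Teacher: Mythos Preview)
Your arguments for \ref{atleastonei}--\ref{AllPredLikeThisBaseCont} via the convex function $f(s)$ and the direct optimisation of $(s-1)m_q(a)+\sum_{p\in\{q\}\cup\Delta}m_p(a)$ are correct and somewhat more streamlined than the paper's. One small slip: in your ``equality iff'' clause the condition $m_q(a)=m_0$ is not necessary when $s=1$ (the coefficient $s-1$ vanishes), but this is harmless since for $|\Delta|=2$ the involutions $\iota_{q,\{r_1,r_2\}}$ and $\iota_{r_1,\{q,r_2\}}$ differ only by an element of $\Sym_{\p^2}$ on the left (both equal $\sigma_{q,r_1,r_2}$ composed with a transposition), so one can still put $\varphi$ in the required form.

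The genuine gap is in \ref{CoMult}. Your quadratic-factorisation plan, as written, goes in the wrong direction: factoring $\iota_{q,\Delta}$ into degree-non-increasing quadratic steps and applying Lemma~\ref{Lemm:sigmadeg}\ref{sigma123comult} shows $\comult(\iota_{q,\Delta}(a))\le\comult(a)$, not $\comult(\iota_{q,\Delta}(a))\ge\comult(c)$. What you actually need is a chain of \emph{Jonqui\`eres} elements $\varphi=\varphi_0,\varphi_1,\ldots$ with $\deg(\varphi_i(a))$ strictly decreasing and $\comult(\varphi_i(a))$ non-increasing, terminating at a predecessor; your proposal does not explain how to produce $\varphi_{i+1}$ from $\varphi_i$ while staying inside the Jonqui\`eres class. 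The paper supplies exactly this via a single improvement step (their assertion (III)): if $\varphi$ is not already $\alpha\circ\iota_{p_0,\{p_1,\dots,p_{2s}\}}$ with $s\in\mathcal{S}$, then either there exist two points at which the multiplicities of $a$ and of $\varphi$ are oppositely ordered, and conjugating by their transposition $\tau$ gives $\varphi'=\tau\varphi\tau$ (still Jonqui\`eres) with strictly smaller degree \emph{and} no larger comultiplicity by Lemma~\ref{Lemm:Permutationdecreases}\ref{NotSameMultDecrease}; or the orderings already agree and one passes from $s$ to $s\pm 1$ by composing with a single quadratic in $\J_{p_0}$, where Lemma~\ref{Lemm:sigmadeg}\ref{sigma123comult} applies. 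Assertion~\ref{CoMult} then follows by induction on $\deg(\varphi(a))$. The ingredient missing from your sketch is Lemma~\ref{Lemm:Permutationdecreases}, whose comultiplicity clause is precisely what lets the comult bound survive the reduction that your degree-only optimisation bypasses.
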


\begin{proof}
We prove three assertions:

{\bf (I): Proof of \ref{atleastonei}}. 
The inequality $r \geq 2$ follows from Lemma~$\ref{Lemma:Hudsonalgorithm}$.
We now show that the non-increasing sequence $ i\mapsto u_i$ defined by $u_i:= m_0+m_{2i-1}+m_{2i}$ for $i\ge 1$  satisfies
\[\mathrm{(Ia)}: u_1>d, \text{ and } \mathrm{(Ib)}: u_i<d\text{ for }i>r/2.\]

$\mathrm{(Ia)}:$ The inequality $u_1=m_0+m_1+m_2>d$ is Noether inequality (Lemma~\ref{Lemma:Hudsonalgorithm}).

$\mathrm{(Ib)}:$ The inequality $2i>r$ yields $m_{2i}=0$, which gives $u_i=m_0+m_{2i-1}\le m_0+m_{2i-2}\le \dots \le m_0+m_1\le d$ (Corollary~\ref{Coro:PositivityOrbit}). It remains to observe that $m_1=m_2=\dots=m_{2i-1}=d-m_0$ and $r=2i-1$ is impossible (Lemma~\ref{Lemm:JonqEven}).
Therefore, $\mathrm{(Ia)}$ and $\mathrm{(Ib)}$ are proven. These assertions imply \ref{atleastonei} because of the equality $\mathcal{S}=\{s \ge 1\mid u_s \geq d \geq u_{s+1}  \}.$

{\bf (II): For any non-increasing ordering $p_0, \ldots, p_r$ of the base-points of $a$, for any $\alpha\in \Sym_{\p^2}$, and for any $s \in {\mathcal S}$, the elements  $\alpha\circ \iota_{p_0,\{p_1,\dots,p_{2s}\}}(a)$ are all equal modulo $\Sym_{\p^2}$.}

Firstly, we fix a non-increasing ordering $p_0, \ldots, p_r$ of the base-points of $a$.
Define $\iota_s=\iota_{p_0,\{p_1,\dots,p_{2s}\}}$ and $c_s=\iota_s(a)$ for each integer $s\in [1;\frac{r}{2}]$, and show that $c_s\equiv_{\Sym_{\p^2}}\!c_{s'}$ for all $s,s'\in \mathcal{S}$. By \ref{atleastonei}, it suffices to prove this in the case where $s'=s+1$. The fact that $s,s+1\in \mathcal{S}$ implies that $d=u_{s+1}=m_0+m_{2s+1}+m_{2s+2}$, which means that $(e_0-e_{p_0}-e_{p_{2s+1}}-e_{p_{2s+2}})\cdot a=0$ and implies that $\iota_{p_0,\{p_{2s+1},p_{2s+2}\}}(a)=\tau(a)$ where $\tau\in \Sym_{\p^2}$ is the permutation of $p_{2s+1}$ and $p_{2s+2}$ (Corollary~\ref{Cor:sigmadegfix}). We moreover observe that $\iota_{s}=\iota_{p_0,\{p_1,\dots,p_{2s}\}}$ fixes $p_{2s+1}$ and $p_{2s+2}$, and thus commutes with $\tau$. This yields $c_{s'} = \iota_{s+1}(a) = \iota_s (\iota_{p_0, \{p_{2s+1}, p_{2s+2} \}} (a)) = \iota_{s} (\tau(a)) = \tau(\iota_{s}(a)) = \tau(c_s)$ as desired.

Secondly, we observe that the class of $\iota_s(a)$ does not depend on the
non-increasing ordering $p_0, \ldots, p_r$ of the base-points of $a$.
Indeed, two different orderings only differ by some product of
transpositions of two points having the same multiplicity.
The result then follows from Lemma~\ref{Lemm:Permutationdecreases}.

{\bf (III): For each Jonqui\`eres element $\varphi\in \Weyl$, one of the following holds:
\begin{enumerate}
\item[(A)]
$\varphi=\alpha\circ \iota_{p_0,\{p_1,\dots,p_{2s}\}}$ for some non-increasing ordering $p_0, \ldots,p_r$ of the base-points of $a$, for some $\alpha\in \Sym_{\p^2}$, and some $s \in{\mathcal S}$.
\item[(B)]
There exists a Jonqui\`eres element $\varphi' \in \Weyl$ such that $\deg(\varphi'(a))<\deg(\varphi(a))$ and $\comult(\varphi'(a))\le \comult(\varphi(a))$.
\end{enumerate}
Moreover, for any non-increasing ordering $p_0, \ldots,p_r$ of the base-points of $a$, for any $\alpha\in \Sym_{\p^2}$, and any integer $s \in [ 1 ; \frac{r}{2} ] \setminus {\mathcal S}$, 
if we set $\varphi=\alpha\circ \iota_{p_0,\{p_1,\dots,p_{2s}\}}$, then
the assertion (B) is satisfied.
}

We fix a Jonqui\`eres element $\varphi\in \Weyl$ and choose $l+1$ distinct points $p_0,\dots,p_l\in \B(\p^2)$ such that $\{p_0,\dots,p_l\}=\Base(\varphi)
\cup \Base(a)$ (whence $l\ge r$).

Suppose that there exist $i,j\in \{0,\dots,l\}$ such that $(m_{p_i}(a)-m_{p_j}(a))(m_{p_i}(\varphi)-m_{p_j}(\varphi))<0$. We denote by $\tau\in \Sym_{\p^2}$ the permutation of $p_i$ and $p_j$, write $\varphi'=\tau\circ\varphi\circ\tau$ (which is again a Jonqui\`eres element, by Lemma~\ref{Lemm:JoWinfty}), and get 
$\deg(\varphi'(a))-\deg(\varphi(a))\stackrel{\text{Lemma~\ref{Lemm:Permutationdecreases}.\ref{EquationProductNeg}}}{=}(m_{p_i}(a)-m_{p_j}(a))(m_{p_i}(\varphi)-m_{p_j}(\varphi))<0$. Moreover, Lemma~\ref{Lemm:Permutationdecreases}\ref{NotSameMultIncrease} yields $\comult(\varphi'(a))\le \comult(\varphi(a))$. We are then in case (B).

We can therefore assume, after reordering the points $p_i$, that
\[m_{p_0}(a)\ge  m_{p_1}(a)\ge \dots \ge m_{p_l}(a),\quad m_{p_0}(\varphi)\ge  m_{p_1}(\varphi)\ge \dots \ge m_{p_l}(\varphi).\]
In particular, $\Base(a)=\{p_k \mid k\in \{1,\dots,l\}\text{ and }m_{p_k}(a)>0\}=\{p_0,\dots,p_r\}$,
and the base-points $p_0, \ldots, p_r$ of $a$ are given in non-increasing order.
Moreover, $\varphi$ has maximal multiplicity at $p_0$. By Corollary~\ref{Lemm:EasyMultCoMult}\ref{EasyMultCoMult2}, this implies that $1\le m_{p_0} (\varphi) \le \deg(\varphi)-1$.
 Since $\varphi$ is a Jonqui\`eres element of $\Weyl$, it has a point of multiplicity $\deg(\varphi)-1$, so that we have $m_{p_0} (\varphi) =\deg(\varphi)-1$. 
There exists thus $\alpha\in \Sym_{\p^2}$ such that $\varphi=  \alpha \circ \iota_{p_0,\Delta}$, where $\Delta=\Base(\varphi)\setminus\{p_0\}=\{p_k \mid k\in \{1,\dots,l\}\text{ and }m_{p_k}(\varphi)>0\}$ has even cardinality (Lemma~\ref{LemmJvarphiLin}\ref{LemmJvarphiLin2}).
It follows that the set $\Delta$ is equal to $\{p_1,\dots,p_{2s}\}$ for some $s\ge 1$. If $s\in \mathcal{S}$, we are in case (A). 

We now assume that $s\not\in \mathcal{S}$, and show that we are in case (B), with $\varphi'=\varphi_{p_0,\{p_1,\dots,p_{2s'}\}}$, for some $s'\in \{s\pm 1\}$. We then only need to show that $\deg(c_{s'})<\deg(c_{s})$ and $\comult(c_{s'})\le \comult(c_s)$, where $c_s,c_{s'}$ are defined as in the proof of (II).
As $s\not\in \mathcal{S}$, we have either $u_s<d$ or $u_{s+1}>d$ (where $u$ is the sequence defined above, in the proof of
$(\mathrm{I})$).

If $u_s<d$, then $s>1$ by $(\mathrm{Ia})$, and we choose $s'=s-1\ge 1$. As $c_{s'}=\iota_{p_0,\{p_{2s-1},p_{2s}\}}(c_s)$ is equal to
$\sigma_{p_0,p_{2s-1},p_{2s}}(c_s)$ modulo $\Sym_{\p^2}$ (Definition~$\ref{Defi:sigmap1p2p3}$), it suffices to prove that $(e_0-e_{p_0}-e_{p_{2s-1}}-e_{p_{2s}})\cdot c_s<0$ (Lemma~\ref{Lemm:sigmadeg}), which follows from
\[\begin{array}{rcl}
(e_0-e_{p_0}-e_{p_{2s-1}}-e_{p_{2s}})\cdot c_s&=&\iota_{s}(e_0-e_{p_0}-e_{p_{2s-1}}-e_{p_{2s}})\cdot a\\
&=&-(e_0-e_{p_0}-e_{p_{2s-1}}-e_{p_{2s}})\cdot a=u_s-d.\end{array}\]

If $u_{s+1}>d$, we choose $s'=s+1$, which belongs to $[1;\frac{r}{2}]\subseteq[1;\frac{l}{2}]$ by $(\mathrm{Ib})$. As before, it suffices to check that $(e_0-e_{p_0}-e_{p_{2s+1}}-e_{p_{st+2}})\cdot c_s<0$, which follows from
\[\begin{array}{rcl}
(e_0-e_{p_0}-e_{p_{2s+1}}-e_{p_{2s+2}})\cdot c_s & = & \iota_{s}(e_0-e_{p_0}-e_{p_{2s+1}}-e_{p_{2s+2}})\cdot a \\
 & = &(e_0-e_{p_0}-e_{p_{2s-1}}-e_{p_{2s}})\cdot a=d-u_{s+1}.\end{array}\] 
 This achieves the proof of $\mathrm{(III)}$, which gives \ref{AllPredLikeThisBaseCont}. 
Together with  $\mathrm{(II)}$ and since $a$ admits at least one predecessor, this also gives \ref{ModuloSymP2Onepred} and \ref{eachsinS}.
It remains to prove \ref{CoMult}. We do it by induction on $\deg(\varphi(a))$.
The minimal case is when $\varphi(a)$ is a predecessor of $a$,  so
$\varphi(a)\equiv_{\Sym_{\p^2}}c$ by~\ref{ModuloSymP2Onepred}, whence $\comult(\varphi(a))=\comult(c)$. If $\deg(\varphi(a))>\deg(c)$, then,
by $\mathrm{(III)}$, there exists  a Jonqui\`eres element $\varphi'\in \Weyl$ such that $\deg(\varphi(a))> \deg(\varphi'(a))$ and $\comult(\varphi(a))\ge \comult(\varphi'(a))$. 
Since we have $\comult(\varphi'(a))\ge \comult(c)$ by the induction hypothesis, the result follows.
\end{proof}

Here is an example where the set $\mathcal{S}$ of Lemma~\ref{Lemm:AlgoFirstStepS} contains $2$ elements. However, we do not know  whether $\mathcal{S}$ can contain more than $2$ elements or not.

\begin{example}
Take $6$ different points $p_0,\dots, p_5 \in \B(\p^2)$. By Definition~\ref{Defi:sigmap1p2p3}, the element $\sigma_{p_0,p_1,p_2} \circ \sigma_{p_3,p_4,p_5} (e_0)  \in \Weyl(e_0)$ is equal to $a = 4 e_0 -2 e_{p_0} -2 e_{p_1} -2 e_{p_2}  -e_{p_3}-e_{p_4}-e_{p_5}$. Its homaloidal type is $(4 ; 2,2,2,1,1,1)$ and the set $\mathcal{S}$ of Lemma~\ref{Lemm:AlgoFirstStepS}\ref{atleastonei} is equal to $\mathcal{S} = \{ 1,2 \}$.
By Lemma~\ref{Lemm:AlgoFirstStepS}\ref{eachsinS}, the elements $ \iota_{p_0,\{p_1,\dots,p_{2s}\}}(a)$, $s \in \mathcal{S}$, are predecessors of $a$. By Definition~\ref{Defi:Iota}, we get
\[ \iota_{p_0,\{p_1,p_2 \}}(a)= 2 e_0-e_{p_3}-e_{p_4}-e_{p_5} = \iota_{p_0,\{p_1,\dots,p_{4}\}}(a). \]
As claimed by Lemma~\ref{Lemm:AlgoFirstStepS}\ref{ModuloSymP2Onepred}, these two predecessors are equal modulo $\Sym_{\p^2}$ (since they are even equal).
\end{example}

\begin{corollary}\label{Coro:UniquenessPredUpSym}
All predecessors of an element $a \in \Weyl(e_0)$ are equal modulo $\Sym_{\p^2}$ and only depend on the class of $a$ modulo $\Sym_{\p^2}$.
\end{corollary}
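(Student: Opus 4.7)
The first assertion — that all predecessors of a fixed $a\in\Weyl(e_0)$ are equal modulo $\Sym_{\p^2}$ — is already established as Lemma~\ref{Lemm:AlgoFirstStepS}\ref{ModuloSymP2Onepred}, so nothing further is needed there. The only content to prove is the compatibility with the $\Sym_{\p^2}$-action: if $a,a'\in\Weyl(e_0)$ satisfy $a\equiv_{\Sym_{\p^2}}\!a'$, then their predecessors coincide modulo $\Sym_{\p^2}$.

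The plan is to show that the set $\{\varphi(a)\mid \varphi\in\Weyl\text{ Jonqui\`eres}\}$ depends only on the $\Sym_{\p^2}$-orbit of $a$; the conclusion will then follow by minimising the degree. Write $a'=\alpha(a)$ with $\alpha\in\Sym_{\p^2}$. The key input is Lemma~\ref{Lemm:JoWinfty}, which says that the class of Jonqui\`eres elements of $\Weyl$ is stable under both left and right composition with elements of $\Sym_{\p^2}$. Hence the map $\varphi\mapsto \varphi\circ\alpha^{-1}$ is a bijection from Jonqui\`eres elements to Jonqui\`eres elements, and
\[\{\varphi(a)\mid\varphi\text{ Jonqui\`eres}\}=\{(\varphi\circ\alpha^{-1})(a')\mid\varphi\text{ Jonqui\`eres}\}=\{\psi(a')\mid\psi\text{ Jonqui\`eres}\}.\]

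Taking the elements of minimal degree on both sides, the sets of predecessors of $a$ and of $a'$ are literally equal (not merely equal modulo $\Sym_{\p^2}$). Combining this with the first assertion, any predecessor of $a$ is equal modulo $\Sym_{\p^2}$ to any predecessor of $a'$, as required. No real obstacle is expected: the entire argument is a one-line orbit-chasing computation once Lemmas~\ref{Lemm:JoWinfty} and \ref{Lemm:AlgoFirstStepS}\ref{ModuloSymP2Onepred} are in hand.
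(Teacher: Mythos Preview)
Your argument is correct and matches the paper's proof almost verbatim: the paper also invokes Lemma~\ref{Lemm:AlgoFirstStepS}\ref{ModuloSymP2Onepred} for the first assertion and then observes that the sets of predecessors of $a$ and of $\alpha(a)$ coincide for $\alpha\in\Sym_{\p^2}$ (your explicit appeal to Lemma~\ref{Lemm:JoWinfty} just spells out why). The only omission is that Lemma~\ref{Lemm:AlgoFirstStepS} assumes $\deg a>1$, so the trivial case $a=e_0$ (whose unique predecessor is $e_0$ itself) should be mentioned separately, as the paper does.
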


\begin{proof}
We first observe that the predecessors of $a \in \Weyl(e_0)$ are equal modulo $\Sym_{\p^2}$. If $a=e_0$, this is because $e_0$ is the only predecessor of $a$; otherwise, it follows from Lemma~\ref{Lemm:AlgoFirstStepS}\ref{ModuloSymP2Onepred}. We then observe that the sets of predecessors of $a$ and of $\alpha(a)$ are equal, for each $\alpha\in \Sym_{\p^2}$.
\end{proof}

\begin{remark}
By Remark~\ref{Rem:EqualModSymBij}, a homaloidal type may be identified with an element of $\Weyl(e_0) / \Sym_{\p^2}$.  It follows from Corollary~\ref{Coro:UniquenessPredUpSym} that for each homaloidal type $\chi  \in \Weyl(e_0) / \Sym_{\p^2}$, one can define its (unique) predecessor $\chi_1 \in \Weyl(e_0) / \Sym_{\p^2}$ in the following way: 
Choose any representative $a \in  \Weyl(e_0)$ of the coset $\chi$, then $\chi_1$ is defined as the equivalence class modulo $\Sym_{\p^2}$ in $\Weyl(e_0)$ of any predecessor
$a_1\in  \Weyl(e_0)$ of $a$.

Assume that $\chi = (d ; m_0, \ldots, m_r)$ with $d\ge 2$ and $m_0\ge \dots \ge m_r \geq 1$. Choose an integer $s\in [1,\frac{r}{2}]$  such that $m_{0}+m_{2s-1}+m_{2s}\ge d\ge m_{0}+m_{2s+1}+m_{2s+2}$ (this is doable thanks to Lemma~\ref{Lemm:AlgoFirstStepS}\ref{atleastonei}). Then 
\[\chi_1=(d-\varepsilon;m_0-\varepsilon, d-m_0-m_1, \dots, d-m_0-m_{2s},m_{2s+1}, \dots , m_r) \] where $\varepsilon=\sum_{i=1}^s (m_0+m_{2i-1}+m_{2i}-d)$ is positive (the form of $\chi_1$ follows from Lem\-ma~\ref{Lemm:AlgoFirstStepS}\ref{eachsinS}).  
Of course, as usual, we can remove the multiplicities which are zero
 and order the remaining ones in a non-decreasing manner.
\end{remark}

\begin{example}
To simplify the notation, a sequence of $s$ multiplicities $m$ is written $m^s$.
In the following list, the notation $\chi \predecessorLow \chi_1$ means that $\chi_1$ is the predecessor of the homaloidal type $\chi$.
\[\begin{array}{lllll}
(d;d-1,1^{2d-2}) \predecessor (1);& (4;2^3,1^3) \predecessor (2;1^3);& (38;18,13^3,12^4,6)\predecessor (23;12,8^3,7^3,6,3);\\
(16;6^5;5^3)\predecessor (12;5^3,4^4,2);  &(5;2^6)\predecessor (3;2,1^4); &(74;28,27^5,19^2,18)\predecessor (58;27,19^6,18,12).
\end{array}\]
\end{example}

\begin{lemma}  \label{Lemm:CompositionWinftyWithoutCommonBasepts}
Let $f,g\in \Weyl$ be elements such that $\Base(f)\cap \Base(g^{-1})=\emptyset$ and $\deg(g)=d>1$. Defining $h= f \circ g$, the following hold:
\begin{enumerate}
\item\label{CompWinftyWithoutCommonBasepts1}
$\deg(h)=\deg(f)\cdot \deg(g)$ and $\lvert \Base(h)\rvert=\lvert \Base(f)\rvert+\lvert \Base(g)\rvert$.
\item\label{CompWinftyWithoutCommonBasepts2}
If $\varphi\in \Weyl$ is a Jonqui\`eres element such that $\varphi(g^{-1}(e_0))$ is a predecessor of $g^{-1}(e_0)$, then $\varphi( h^{-1}(e_0))$ is a predecessor of $h^{-1}(e_0)$.
\end{enumerate}
\end{lemma}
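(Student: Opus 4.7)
For part (1), I would first apply Corollary~\ref{corollary= degree of a composition} with $g$ replaced by $g^{-1}$, using $\deg g = \deg g^{-1}$ from Lemma~\ref{Lemm:EasyWeyl}\ref{L3}: this gives $\deg(f\circ g) = (\deg f)(\deg g) - \sum_q m_q(f)\, m_q(g^{-1})$, and the sum vanishes by the hypothesis $\Base(f)\cap\Base(g^{-1}) = \emptyset$. For the base-points, I would compute $h^{-1}(e_0) = g^{-1}(f^{-1}(e_0))$ directly by linearity: expanding $f^{-1}(e_0) = (\deg f)e_0 - \sum_{p\in\Base(f)}m_p(f)\,e_p$ and applying $g^{-1}$, each $p\in\Base(f)$ satisfies $p\notin\Base(g^{-1})$, so Lemma~\ref{Lemma:Imagepts}\ref{ImageNotBsPt} yields $g^{-1}(e_p) = e_{\tilde p}$ with $\tilde p\in\B(\p^2)\setminus\Base(g)$. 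Using $g^{-1}(e_0) = (\deg g)e_0 - \sum_{q\in\Base(g)}m_q(g)\,e_q$, one finds
\[ h^{-1}(e_0) = (\deg f)(\deg g)\,e_0 - \sum_{q\in\Base(g)}(\deg f)m_q(g)\,e_q - \sum_{p\in\Base(f)}m_p(f)\,e_{\tilde p}. \]
The injectivity of $p\mapsto\tilde p$ (from the same lemma) and disjointness from $\Base(g)$ then give the claim on $|\Base(h)|$.

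For part (2), the case $\deg f = 1$ is trivial: Lemma~\ref{Lemm:EasyWeyl}\ref{L4} gives $\Base(f)=\emptyset$ and the above formula reduces $h^{-1}(e_0)$ to $g^{-1}(e_0)$. Assume $\deg f \geq 2$; the above description shows that the multiplicities of $b:=h^{-1}(e_0)$ are $\deg(f)m_q(g)$ at $q\in\Base(g)$ (all $\geq \deg f$) and $m_p(f)\leq\deg f - 1$ at $\tilde p$ (from Lemma~\ref{Lemm:EasyMultCoMult}\ref{EasyMultCoMult2}). Consequently, picking any non-increasing ordering $q_0,\ldots,q_r$ of $\Base(a)$ where $a:=g^{-1}(e_0)$, and extending it by the $\tilde p$'s listed in non-increasing order of $m_p(f)$, yields a non-increasing ordering of $\Base(b)$. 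Applying Lemma~\ref{Lemm:AlgoFirstStepS}\ref{AllPredLikeThisBaseCont} to $\varphi$ with respect to $a$, one writes $\varphi = \alpha\circ\iota_{q_0,\{q_1,\ldots,q_{2s}\}}$ with $\alpha\in\Sym_{\p^2}$ and $s\in\mathcal S_a$. By Lemma~\ref{Lemm:AlgoFirstStepS}\ref{eachsinS}, it suffices to check $s\in\mathcal S_b$.

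Writing $D=\deg f\cdot\deg g$, $\mu_i = m_{q_i}(a)$, and $M_i$ the $i$-th multiplicity of $b$ in the fixed ordering, the inequality $M_0 + M_{2s-1} + M_{2s}\geq D$ follows by multiplying the corresponding inequality for $a$ by $\deg f$ (we have $2s\leq r$). For the converse inequality $M_0 + M_{2s+1} + M_{2s+2}\leq D$, I split into cases: if $2s+2\leq r$ it is again just rescaling; if $2s+1 = r$, then $r$ is odd, so by the contrapositive of Lemma~\ref{Lemm:JonqEven} we have $\mu_0 + \mu_r \leq d-1$, and then $M_0+M_{2s+1}+M_{2s+2}\leq \deg(f)(d-1) + (\deg f - 1) < D$; if $2s = r$, then $\mu_0\leq d-1$ gives $M_0\leq \deg(f)(d-1)$ and $M_{r+1}+M_{r+2}\leq \deg f$ by Corollary~\ref{Coro:PositivityOrbit} applied to $f^{-1}(e_0)$ at its two top base-points, so $M_0+M_{2s+1}+M_{2s+2}\leq \deg(f)(d-1) + \deg f = D$.

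The main obstacle is the boundary case $2s=r$ (which occurs notably when $g$ itself is Jonqui\`eres, where $s=d-1$ is forced); here one must avoid the naive bound $m_p(f)\leq\deg f-1$ applied twice, and instead invoke the pairwise bound from Corollary~\ref{Coro:PositivityOrbit} on $\Base(f)$, which is precisely what saves the computation.
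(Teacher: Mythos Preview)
Your proof is correct and follows essentially the same approach as the paper's: both compute $h^{-1}(e_0)$ explicitly via Lemma~\ref{Lemma:Imagepts}, then use Lemma~\ref{Lemm:AlgoFirstStepS}\ref{AllPredLikeThisBaseCont} to write $\varphi=\alpha\circ\iota_{q_0,\{q_1,\dots,q_{2s}\}}$ and verify $s\in\mathcal S_b$ by the same three-case split ($2s+2\le r$, $2s+1=r$, $2s=r$), invoking Lemma~\ref{Lemm:JonqEven} and Corollary~\ref{Coro:PositivityOrbit} at exactly the same spots. The only cosmetic difference is that you cite Corollary~\ref{corollary= degree of a composition} separately for the degree formula, and you phrase the $2s+1=r$ case via the contrapositive of Lemma~\ref{Lemm:JonqEven} rather than by contradiction.
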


\begin{proof}
If $\deg (f) =1$, we get $\Base(f)=\emptyset$,  $f^{-1}(e_0) = e_0$ and $h^{-1}(e_0)=g^{-1}(e_0)$, so that there is nothing to check. We can then assume that $\deg(f)=D>1$.
If $(d; m_0,\ldots,m_r)$ and $(D; \mu_{r+1}, \ldots, \mu_l)$ are the homaloidal types of $g$ and $f$ with $m_0 \ge \cdots \ge m_r \ge 1$ and $\mu_{r+1} \ge \cdots \ge \mu_l \ge 1$, choose orderings $p_0, \ldots,p_r$ and $q_{r+1}, \ldots,q_l$ of the base-points of $g$ and $f$ such that
\[\begin{array}{lll}
g^{-1}(e_0)=d e_0-\sum_{i=0}^r m_ie_{p_i}&\text{ and }&f^{-1}(e_0)=De_0-\sum_{i=r+1}^l \mu_i e_{q_i}. \end{array}\]
Since $\Base(f) \cap \Base(g^{-1}) =\emptyset $,
there exist points $p_{r+1},\dots,p_l\in \B(\p^2)\setminus \Base(g)=\B(\p^2)\setminus \{p_0,\dots,p_r\}$ such that $g^{-1}(e_{q_i})=e_{p_i}$ for $i=r+1,\dots,l$ (Lemma~\ref{Lemma:Imagepts}). We then obtain
\[\begin{array}{rcl}
h^{-1}(e_0)&=&g^{-1}(De_0-\sum_{i=r+1}^l \mu_i e_{q_i})=Dg^{-1}(e_0)-\sum_{i=r+1}^l \mu_i e_{p_i}\\
&=& Dd e_0-\sum_{i=0}^r Dm_i e_{p_i}-\sum_{i=r+1}^l \mu_i e_{p_i}.\end{array}\]
This already proves \ref{CompWinftyWithoutCommonBasepts1}. It then remains to show \ref{CompWinftyWithoutCommonBasepts2}. As $\varphi(g^{-1}(e_0))$ is a predecessor of $g^{-1}(e_0)$,
Lemma~\ref{Lemm:AlgoFirstStepS}\ref{AllPredLikeThisBaseCont} provides (up to a re-ordering of the points $p_0,\ldots,p_r$)  an element $\alpha\in \Sym_{\p^2}$ and an integer $s \in [ 1 ; \frac{r}{2} ]$ satisfying
\begin{equation} \label{Initial-s} \tag{\ensuremath{\heartsuit}} m_{0}+m_{2s-1}+m_{2s}\ge d\ge m_{0}+m_{2s+1}+m_{2s+2}
\end{equation}
(where $m_i=0$ if $i>r$) such that $\varphi=\alpha\circ \iota_{p_0,\{p_1,\dots,p_{2s}\}}$.
Since $\mu_{r+1} \le D-1$  (Corollary~\ref{Coro:PositivityOrbit}), we have
\[m_{p_0}(h)=m_0 D\ge \dots\ge m_{p_r}(h) =m_rD> m_{p_{r+1}}(h)=\mu_{r+1}\ge \dots \ge m_{p_{l}}(h)=\mu_l.\]
According to Lemma~\ref{Lemm:AlgoFirstStepS}\ref{eachsinS}
for showing that $\varphi(h^{-1}(e_0))$ is a predecessor of $h^{-1}(e_0)$ it is sufficient (and necessary) to prove that
\[ m_{p_0} (h) +m_{p_{2s-1}}(h)+m_{p_{2s}}(h) \ge Dd \ge m_{p_0}(h)+m_{p_{2s+1}}(h)+m_{p_{2s+2}}(h). \]
If $2s+2\le r$, this is just \eqref{Initial-s}  multiplied by $D$. Let us therefore now assume that $2s+2>r$, so that we have $2s+2\in \{r+1,r+2\}$.
If $2s+2=r+2$, it suffices to prove that $Dd\ge Dm_0+\mu_{r+1}+\mu_{r+2}$. This follows from the inequalities $\mu_{r+1}+\mu_{r+2}\le D$ and $m_0\le d-1$ (Corollary~\ref{Coro:PositivityOrbit}). If $2s+2=r+1$, it suffices to prove that $Dd\ge Dm_0+Dm_r+\mu_{r+1}$. If $m_0+m_r \le d-1$, this follows from $\mu_{r+1}\le D-1$.
It remains to show that the case $m_0+m_r \ge d$ can not occur.  Indeed, otherwise we would have $m_0+m_r=d$ (Corollary~\ref{Coro:PositivityOrbit}) and then $r$ should be even by Lemma~\ref{Lemm:JonqEven}. A contradiction.
\end{proof}

\begin{algorithm}[Computing the length in $\Weyl$]  \label{Algo:AlgorithInWeyl}
To each $a_0\in \Weyl(e_0)$ we can associate a sequence $a_1,a_2,\dots,$ of elements of $\Weyl(e_0)$ such that $a_i$ is a predecessor of $a_{i-1}$ for each $i\ge 1$. We then say that $a_i$ is a \emph{$i$-th predecessor} of $a_0$.

At some step $n\ge 0$, we have $a_n=e_0$ and then $a_j=e_0$ for all $j\ge n$. This provides then a finite sequence $(a_0,\dots,a_n)$ ending with $e_0$. Corollary~\ref{Coro:UniquenessPredUpSym} shows that
this
sequence is
unique modulo
$\Sym_{\p^2}$, and Lemma~\ref{Lemm:AlgoFirstStepS} provides 
an explicit way to compute it.

The fact that the number $n$ of steps is really the length in $\Weyl$ will be proven in Proposition~\ref{Prop:AlgoWeyl} below. 
\end{algorithm}

\begin{example}\label{Example:SomeSequences}
As before, a sequence of $s$ multiplicities $m$ is written $m^s$. We apply Algorithm~\ref{Algo:AlgorithInWeyl} to a list of homaloidal types:
\begin{itemize}
\item
$(11;6,5,4^2,3^2,2^2,1)\predecessor(5;3,2^3,1^3)\predecessor(2;1^3)\predecessor(1)$;
\item
$(19;7^7,4,1) \predecessor (13;5^6,4,1^2)\predecessor (8;4,3^5,1^2)\predecessor(4;3,1^6)\predecessor(1)$;
\item
$(40;18,17,14^2,12^4,3^2)\predecessor(25;12,10^3,8^2,5,3^3)\predecessor(13;8,5^2,3^6)\predecessor(5;2^6)\predecessor(3;2,1^4)\predecessor(1)$;
\item
$(38;14^6,11^2,5)\predecessor(29;13,11,10^5,5^2)\predecessor(16;6^5,5^3)\predecessor(12;5^3,4^4,2)\predecessor(7;3^4,2^3)$\\ $\predecessor(4;2^3,1^3)\predecessor(2;1^3)\predecessor(1)$;
\item
$(184;75,61^6,60,48)\predecessor(145;60,48^7,36)\predecessor(112;48,37^6,36,27)\predecessor(82;36,27^7,18)$ \\ $\predecessor(58;27,19^6,18,12)\predecessor(37;18,12^7,6)\predecessor(22;12,7^6,6,3)\predecessor(10;6,3^7)\predecessor(4;3,1^6)\predecessor(1)$.
\end{itemize}
\end{example}

\begin{example} \label{Example:SequenceBertini}
Applying Algorithm~\ref{Algo:AlgorithInWeyl} to $(17;6^8)$ yields a sequence of homaloidal types of particular interest:\[(17;6^8)\predecessor(14;6,5^6,3)\predecessor(8;3^7)\predecessor(5;2^6)\predecessor(3;2,1^4)\predecessor(1)\]
It provides all the symmetric homaloidal types (symmetric means here that all multiplicities are the same) except the simplest one, namely $(2;1^3)$ \cite[Lemma 2.5.5]{Alberich}. Note that $(17;6^8)$ and $(8;3^7)$ are the homaloidal types of classical Bertini and Geiser involutions, associated to the blow-ups of $8$, respectively $7$ general points of $\p^2$.
\end{example}

We can now give the following result, which is the analogue of Theorem~\ref{TheMainTheorem} for $\Weyl$.

\begin{proposition}\label{Prop:AlgoWeyl}
Let $n\ge 1$, let  $a_0\in \Weyl(e_0)$ and let $a_n\in \Weyl(e_0)$ be a $n$-th predecessor of $a_0$. For all Jonqui\`eres elements  $\psi_1, \ldots, \psi_n \in \Weyl$, the element $b_n=\psi_n\circ \dots \circ \psi_1(a_0)$ satisfies 
\begin{enumerate}
\item\label{DegAnBn}
$\deg(a_n)\le \deg(b_n)$;
\item\label{ComultAnBn}
$\comult(a_n)\le \comult(b_n)$;
\item\label{AnBnSymp2}
If $\deg(a_n)= \deg(b_n)$, then $a_n$ and $b_n$ are equal modulo $\Sym_{\p^2}$.
\end{enumerate}\end{proposition}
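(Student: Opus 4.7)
The plan is to proceed by induction on $n$, with the inductive step reducing to a single-step comparison that splits into an easy case and a harder technical case.

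\textbf{Base case $n=1$.} The three assertions follow directly from Lemma~\ref{Lemm:AlgoFirstStepS}. Indeed, $a_1$ is by definition of minimal degree among all elements $\psi(a_0)$ with $\psi\in\Weyl$ Jonqui\`eres, giving \ref{DegAnBn}. Assertion \ref{ComultAnBn} is exactly Lemma~\ref{Lemm:AlgoFirstStepS}\ref{CoMult} with $\varphi=\psi_1$ and $c=a_1$. For \ref{AnBnSymp2}, if $\deg(a_1)=\deg(b_1)$, then $b_1=\psi_1(a_0)$ is itself of minimal degree, hence a predecessor of $a_0$, and Corollary~\ref{Coro:UniquenessPredUpSym} gives $b_1\equiv_{\Sym_{\p^2}} a_1$.

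\textbf{Inductive step.} Applied to $\psi_1,\ldots,\psi_{n-1}$ and the predecessor chain up to $a_{n-1}$, the induction hypothesis yields $\deg(a_{n-1})\le\deg(b_{n-1})$, $\comult(a_{n-1})\le\comult(b_{n-1})$, and $a_{n-1}\equiv_{\Sym_{\p^2}} b_{n-1}$ whenever the degrees coincide. Introduce an auxiliary element $c_n\in\Weyl(e_0)$, defined as a predecessor of $b_{n-1}$. Applying the already-established base case to the one-step transition $b_{n-1}\mapsto b_n=\psi_n(b_{n-1})$ furnishes
\[\deg(c_n)\le\deg(b_n),\quad \comult(c_n)\le\comult(b_n),\quad\text{and}\quad c_n\equiv_{\Sym_{\p^2}} b_n\text{ if the degrees agree.}\]
It therefore suffices to compare $a_n$ (a predecessor of $a_{n-1}$) with $c_n$ (a predecessor of $b_{n-1}$).

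\textbf{Easy case: $a_{n-1}\equiv_{\Sym_{\p^2}} b_{n-1}$.} By Corollary~\ref{Coro:UniquenessPredUpSym}, predecessors depend only on the $\Sym_{\p^2}$-class, so $c_n\equiv_{\Sym_{\p^2}} a_n$. All three conclusions are then inherited from the comparison between $c_n$ and $b_n$ above.

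\textbf{Hard case: $\deg(a_{n-1})<\deg(b_{n-1})$.} The main obstacle is a monotonicity property of the predecessor operation: given $a,b\in\Weyl(e_0)$ with $\deg(a)\le\deg(b)$ and $\comult(a)\le\comult(b)$, the predecessors $a^*,b^*$ should satisfy $\deg(a^*)\le\deg(b^*)$, $\comult(a^*)\le\comult(b^*)$, and if $\deg(a^*)=\deg(b^*)$ then $a^*\equiv_{\Sym_{\p^2}} b^*$. My plan to establish this would be a direct combinatorial analysis of the explicit form of a predecessor given in Lemma~\ref{Lemm:AlgoFirstStepS}\ref{eachsinS}: writing the degree drop as $\varepsilon_a=\sum_{i=1}^{s_a}(m_{2i-1}^{(a)}+m_{2i}^{(a)}-\comult(a))$ over the excess pairs at $a$ and similarly for $b$, one needs to verify that $\varepsilon_b-\varepsilon_a\le\deg(b)-\deg(a)$ and a parallel inequality for comultiplicity. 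The comultiplicity hypothesis translates into $m_{\max}(b)-m_{\max}(a)\le\deg(b)-\deg(a)$, and the constraints $m_q+m_{q'}\le\deg$ from Corollary~\ref{Coro:PositivityOrbit}, together with the Noether equalities of Lemma~\ref{Lemm:EasyWeyl}\ref{L2}, should be enough to bound the difference of the excess sums. The trivial sub-case $\deg(a_{n-1})=1$ (where $a_{n-1}=e_0$ forces $a_n=e_0$ and the conclusions are obvious from $b_n\in\Weyl(e_0)$) serves as a useful sanity check; I expect the substantial combinatorial work to be the monotonicity property just described.
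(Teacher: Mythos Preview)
Your inductive scheme has a genuine gap: the ``monotonicity property'' you plan to prove in the hard case is false. The pair of inequalities $\deg(a)\le\deg(b)$ and $\comult(a)\le\comult(b)$ is \emph{not} enough information about $a$ and $b$ to compare their predecessors. Take the homaloidal types
\[
a=(7;3^4,2^3)\quad\text{and}\quad b=(8;4,3^5,1^2),
\]
which satisfy $\deg(a)=7<8=\deg(b)$ and $\comult(a)=4=\comult(b)$. Their predecessors (see the tables in \S\ref{SubSec:LengthSmallDegree}) are
\[
a^*=(4;2^3,1^3)\quad\text{and}\quad b^*=(4;3,1^6),
\]
so $\deg(a^*)=\deg(b^*)=4$ yet $a^*\not\equiv_{\Sym_{\p^2}}b^*$, and moreover $\comult(a^*)=2>1=\comult(b^*)$. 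Thus both your proposed assertions (2) and (3) fail. The difficulty is structural: the pair $(\deg,\comult)$ is far too coarse an invariant to pin down a predecessor's homaloidal type, so no amount of bookkeeping with excess sums will rescue this.

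The paper's proof avoids this by never leaving the common origin $a_0$. It inducts on the triple $(n,\lgth(a_0),\deg(b_1))$ ordered lexicographically and works directly with the composite $\psi=\psi_n\circ\cdots\circ\psi_1$. The crucial move is to use the permutation lemma (Lemma~\ref{Lemm:Permutationdecreases}) to arrange that $a_0$ and $\psi$ share a point $p_1$ of maximal multiplicity; one then applies the induction hypothesis to $\psi^{-1}(e_0)$ (which has $\lgth<\lgth(a_0)$) to replace the $\psi_i$ by a sequence $\theta_i$ with $\theta_1\in\J_{p_1}$. A further case analysis on whether $b_1$ has maximal multiplicity at $p_1$ then reduces $\deg(b_1)$ via a single quadratic $\sigma_{p_1,q,r}$. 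At every step the argument exploits the fact that $a_{n-1}$ and $b_{n-1}$ are both images of the \emph{same} $a_0$ under Jonqui\`eres compositions---information your abstract monotonicity discards.
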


\begin{proof}
We prove the result by induction on the triples $(n,\lgth(a_0),\deg(b_1))$, ordered lexicographically, where $b_1=\psi_1(a_0)$.

If $n=1$, \ref{DegAnBn} is given by the definition of a predecessor, and \ref{ComultAnBn} and \ref{AnBnSymp2} follow respectively from Lemma~\ref{Lemm:AlgoFirstStepS}\ref{CoMult} and Lemma~\ref{Lemm:AlgoFirstStepS}\ref{ModuloSymP2Onepred}.
We will then assume that $n>1$ in the sequel.

We write $\psi=\psi_n\circ \cdots \circ\psi_1\in \Weyl$, and write $\Base(a_0)\cup \Base(\psi)=\{p_1,\dots,p_l\}$, for some distinct points $p_i$ that we can assume to be such that $m_{p_1}(a_0)\ge \dots \ge m_{p_l}(a_0)\ge 0$.
If there exist $i,j\in \{1,\dots,l\}$ such that $i<j$ and  $m_{p_i}(\psi)<m_{p_j}(\psi)$, we denote by $\tau\in \Sym_{\p^2}$ the permutation of $p_i$ and $p_j$, write $b_n'=\tau\circ \psi\circ \tau (a_0)$ and replace $b_n$ with $b_n'$ and $\psi_i$ with $\tau\circ \psi_i\circ\tau$ for $i=1,\dots,n$, which are Jonqui\`eres elements of $\Weyl$ (Lemma~\ref{Lemm:JoWinfty}). To see that this is possible, we use 
 \[
\deg(b_n')-\deg(b_n)\stackrel{\text{Lemma~\ref{Lemm:Permutationdecreases}.\ref{EquationProductNeg}}}{=}(m_{p_i}(a_0)-m_{p_j}(a_0))(m_{p_i}(\psi)-m_{p_j}(\psi))\le 0.\] 
If $\deg(b_n')=\deg(b_n)$ then $b_n$ and $b_n'$ are equal modulo $\Sym_{\p^2}$  (Lemma~\ref{Lemm:Permutationdecreases}\ref{SameMultEqualSym}). And if $\deg(b_n')<\deg(b)$, then $\comult(b_n')\le \comult(b_n)$ (Lemma~\ref{Lemm:Permutationdecreases}\ref{NotSameMultDecrease}). In both cases, proving the result for $b_n'$ gives the result for $b_n$. 
After finitely many steps, we then reduce to the case where 
 \[m_{p_1}(a_0)\ge \dots \ge m_{p_l}(a_0)\ge 0\text{ and }m_{p_1}(\psi)\ge \dots \ge m_{p_l}(\psi)\ge 0.\] 
In particular, both
$a_0$
and $\psi$ have maximal multiplicity at $p_1$.

For $i=1,\dots,n-1$, we define $a_i$ to be a $i$-th predecessor of $a_0$, and write $b_i=\psi_i\circ \dots \circ \psi_1(a_0)$.
If $a_n=e_0$, all assertions hold, so we can assume that $\deg(a_n)\ge 2$, which implies that $\comult(a_{n-1})\ge 2$
(by Lemma~\ref{Lemm:EasyMultCoMult}\ref{EasyMultCoMult4} the equality $\comult(a_{n-1})  = 1$ would give $a_n =e_0$).
By induction hypothesis, we have $\comult(b_{n-1})\ge \comult(a_{n-1})\ge 2$, so $b_n\not=e_0$. As this is true for any choice of Jonqui\`eres elements $\psi_1,\dots,\psi_n$, we find that \[\lgth(a_0)\ge n+1.\]
We now apply the algorithm to $\Lambda_0=\psi^{-1}(e_0)$. As $p_1$ is a point
of maximal multiplicity of $\Lambda_0$,
there exists $\theta_1\in \J_{p_1}$ such that $\Lambda_1=\theta_1(\Lambda_0)$ is a predecessor of $\Lambda_0$ (Lemma~\ref{Lemm:AlgoFirstStepS}\ref{eachsinS}). We then define a sequence $(\Lambda_i)_{i\ge 2}$, and Jonqui\`eres elements $(\theta_i)_{i\ge 2}$ such that $\Lambda_i=\theta_{i}(\Lambda_{i-1})$ is a predecessor of $\Lambda_{i-1}$ for each $i\ge 1$. Since $\psi$ is a product of $n$ Jonqui\`eres elements, we have $\lgth(\Lambda_0)\le n<\lgth(a_0)$. The pair $(n,\lgth(a_0))$ is thus bigger than $(n,\lgth(\Lambda_0))$, so we find that $\deg \Lambda_n \le \deg \psi(\Lambda_0)=\deg e_0 =1$ by induction hypothesis. This yields $\Lambda_n=e_0$, and thus $\theta=\theta_n\circ\cdots \circ \theta_1$ satisfies $e_0=\theta(\Lambda_0)=\theta(\psi^{-1}(e_0))$, which yields $\theta\circ\psi^{-1}\in \Sym_{\p^2}$ (Lemma~\ref{Lemm:EasyWeyl}\ref{L4}). Hence, we can replace $\psi_i$ with $\theta_i$ for $i=1,\dots,n$, since $b_n=\psi(a_0)$ is equal to $\theta(a_0)$ modulo $\Sym_{\p^2}$. This reduces to the case where $\psi_1\in \J_{p_1}$.

 Applying induction hypothesis to $b_1$ and the $n-1$ Jonqui\`eres $\psi_2,\dots,\psi_n$, we can reduce to the case where $b_i$ is a predecessor of $b_{i-1}$ for $i=2,\dots,n$. We can moreover assume that $\psi_2\in \J_q$ where $q\in \B(\p^2)$ is a base-point of $b_1$ of maximal multiplicity (Lemma~\ref{Lemm:AlgoFirstStepS}).
 
If $\deg(a_1)=\deg(b_1)$, then $a_1$ and $b_1$ are equal modulo $\Sym_{\p^2}$ (Lemma~\ref{Lemm:AlgoFirstStepS}), so the result follows, as in this case $b_i$ is a predecessor of $b_{i-1}$ for $i=1,\dots,n$, hence $b_i$ is equal to $a_i$ modulo $\Sym_{\p^2}$ for $i=1,\dots,n$. We can then assume that $\deg(a_1)<\deg(b_1)$. 

If $b_1$ has maximal multiplicity at $p_1$, then we can assume that $\psi_2\in \J_{p_1}$ (since $b_2=\psi_2(b_1)$ is a predecessor of $b_1$), and apply the induction hypothesis to $\psi_2\circ \psi_1,\psi_3,\dots,\psi_n$ to obtain that $ \deg(a_{n-1})\le \deg(b_n)$ and $ \comult(a_{n-1})\le\comult(b_n)$. The result follows in this case from $\deg(a_n)\le \deg(a_{n-1})$ and $\comult(a_n)\le \comult(a_{n-1})$ (Lemma~\ref{Lemm:AlgoFirstStepS}\ref{CoMult}).

Otherwise we can
find a point $q\in \B(\p^2)\setminus \{p_1\}$ such that $b_1$ has maximal multiplicity at $q$ and $\psi_2\in \J_q$. We claim that there exists $r \in \B(\p^2)\setminus \{p_1,q\}$ such that $\deg(b_1)<m_{p_1}(b_1)+m_{q}(b_1)+m_r(b_1)$.
Let us first show why this claim achieves the proof, before proving the claim.
We choose the involution $\sigma_{p_1,q,r}\in \J_{p_1}\cap \J_q$ as in Definition~$\ref{Defi:sigmap1p2p3}$, write $\psi_1'=\sigma_{p_1,q,r}\circ\psi_1\in \J_{p_1}$ and $\psi_2'=\psi_2\circ \sigma_{p_1,q,r}\in \J_q$. We can thus replace $\psi_1$ and $\psi_2$ with the Jonqui\`eres elements $\psi_1'$ and $\psi_2'$ respectively, without changing $b_i$ for $i=2,\dots,n$. This replaces $b_1$ with $b_1'=\sigma_{p_1,q,r}(b_1)$, which satisfies $\deg(b_1')<\deg(b_1)$ (Lemma~\ref{Lemm:sigmadeg}\ref{sigma123degsmall}). The result then follows by induction hypothesis. 

It remains to prove the claim. As $p_1$ is a point of maximal multiplicity of $a_0$, there exists $\varphi_1\in \J_{p_1}$ such that $\varphi_1(a_0)$ is a predecessor of $a_0$,
so that $\varphi_1(a_0)$ is
equal to $a_1$ mod $\Sym_{\p^2}$ (Lemma~\ref{Lemm:AlgoFirstStepS}). We set
$\nu=\varphi_1 \circ (\psi_1)^{-1}\in \J_{p_1}$ and find a set $\Delta \subseteq \B(\p^2) \smallsetminus \{p_1\}$ of even order, such that 
$\nu^{-1}(e_0)=e_0+\sum_{r\in \Delta} \frac{e_0-e_{p_1}}{2}-e_r$
(Lemma~\ref{LemmJvarphiLin}). In particular,
\begin{center}$\deg \varphi_1(a_0) = \deg \nu \circ \psi_1(a) = \nu^{-1}(e_0) \cdot b_1=\deg b_1 +\sum_{r\in \Delta} \left(\frac{\deg b_1-m_{p_1}(b_1)}{2}-m_r(b_1)\right).$\end{center}
Since $\deg \varphi_1(a_0) < \deg b_1$, there exist two distinct points $r_1,r_2\in \Delta$ such that
\[\begin{array}{rcl}0 &>& \left(\frac{\deg b_1-m_{p_1}(b_1)}{2}-m_{r_1}(b_1)\right)+\left(\frac{\deg b_1-m_{p_1}(b_1)}{2}-m_{r_2}(b_1)\right)\\
&=&\deg b_1-m_{p_1}(b_1)-m_{r_1}(b_1)-m_{r_2}(b_2).\end{array}\]
Since  $m_q(b_1) \ge \max\{m_{r_1}(b_1),m_{r_2}(b_1)\}$ and $q\not=p_1$,
we can replace one of the two points $r_1,r_2$ with $q$ and denote by $r$ the other one. This achieves the proof of the claim.
\end{proof}

\subsection{From the Weyl group to the Cremona group}\label{SubSec:FromWeyltoBir}
Starting from an element of $\Bir(\p^2)$, Algorithm~\ref{Algo:AlgorithInWeyl} yields a way to decompose it into a product of Jonqui\`eres elements of $\Weyl$, and the optimality of the algorithm in $\Weyl$ is given by Proposition~\ref{Prop:AlgoWeyl}. We now show that the algorithm also works in $\Bir(\p^2)$. To do this, we first make the following easy observation, which relates the two notions of predecessors
already defined
for elements of $\Bir(\p^2)$ and for elements of $\Weyl(e_0)$ (Definitions~\ref{DefiPredeBirP2} and \ref{DefiPredeWinfe0} respectively). We will then prove that the hypothesis of Lemma~\ref{Lemma:TwoTypesPred} is in fact always satisfied (Proposition~$\ref{Prop:PredecessorsBirP2FromCastelnuovo-predecessor})$, and this will allow us to give a stronger version of Lemma~\ref{Lemma:TwoTypesPred} (Corollary~\ref{Cor:EquiTwoPred} below).

\begin{lemma}\label{Lemma:TwoTypesPred}
Let $f \in \Bir(\p^2)$.
If there exists a Jonqui\`eres element $\varphi\in \Bir(\p^2)$ such that $\varphi(f^{-1}(e_0))\in \ZZ_{\p^2}$ is a predecessor of $f^{-1}(e_0)$ $($in the sense of Definition~$\ref{DefiPredeWinfe0})$, then:
\begin{enumerate}
\item\label{fvarphimPred}
$f\circ \varphi^{-1}\in \Bir(\p^2)$ is a predecessor of $f$ $($in the sense of Definition~$\ref{DefiPredeBirP2})$.
\item\label{gpredgmpred}
$g^{-1}(e_0)$ is a predecessor of $f^{-1}(e_0)$, for each predecessor $g\in \Bir(\p^2)$ of $f$.
\end{enumerate}
\end{lemma}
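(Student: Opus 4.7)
The plan is to reduce both assertions to a single observation about how the two notions of predecessor interact, relying only on three essentially formal facts: (a) for any $h\in\Bir(\p^2)$ we have $\deg h=\deg h^{-1}(e_0)$ (from Definition~\ref{Defi:DegreeMult} and Lemma~\ref{Lemm:BirP2ZP2}); (b) by Lemma~\ref{Lemm:JoCremWinfty}, every Jonqui\`eres element of $\Bir(\p^2)$ is a Jonqui\`eres element of $\Weyl$, and each $\Jonq_q$ is a subgroup, so the set of Jonqui\`eres elements of $\Bir(\p^2)$ is stable under inversion; (c) Definition~\ref{DefiPredeBirP2} for $\Bir(\p^2)$ and Definition~\ref{DefiPredeWinfe0} for $\Weyl$ are both given as minima of degree over a set of the form $\{f\circ\psi\}$, respectively $\{\psi(a)\}$.

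First I would rewrite the two minima on the same footing. Setting $a:=f^{-1}(e_0)$, for any Jonqui\`eres $\psi\in\Bir(\p^2)$ we have $\deg(f\circ\psi)=\deg(f\circ\psi)^{-1}(e_0)=\deg\psi^{-1}(a)$ by (a). Using (b), as $\psi$ ranges over Jonqui\`eres elements of $\Bir(\p^2)$ so does $\psi^{-1}$, so
\[
\mathrm{Bir\text{-}min}(f):=\min_{\psi\text{ Jonq. in }\Bir(\p^2)}\deg(f\circ\psi)=\min_{\theta\text{ Jonq. in }\Bir(\p^2)}\deg\theta(a).
\]
On the other hand, by Definition~\ref{DefiPredeWinfe0},
\[
\mathrm{Weyl\text{-}min}(a):=\min_{\theta\text{ Jonq. in }\Weyl}\deg\theta(a).
\]
The inclusion of Jonqui\`eres sets given by (b) yields the general inequality $\mathrm{Weyl\text{-}min}(a)\le\mathrm{Bir\text{-}min}(f)$.

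Now I would use the hypothesis: the existence of a Jonqui\`eres $\varphi\in\Bir(\p^2)$ with $\varphi(a)$ a Weyl-predecessor of $a$ means that $\deg\varphi(a)=\mathrm{Weyl\text{-}min}(a)$; but this degree also lies in the right-hand set of the above displayed equality, so it is $\ge\mathrm{Bir\text{-}min}(f)$. Combined with the reverse inequality, this forces $\mathrm{Bir\text{-}min}(f)=\mathrm{Weyl\text{-}min}(a)$ and shows that $\varphi$ achieves the Bir-minimum as well. Taking $\psi:=\varphi^{-1}$ (Jonqui\`eres in $\Bir(\p^2)$ by (b)), the element $f\circ\varphi^{-1}=f\circ\psi$ has degree $\deg\psi^{-1}(a)=\deg\varphi(a)=\mathrm{Bir\text{-}min}(f)$, which is exactly the statement~\ref{fvarphimPred}.

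For \ref{gpredgmpred}, let $g\in\Bir(\p^2)$ be any predecessor of $f$; by Definition~\ref{DefiPredeBirP2} we may write $g=f\circ\psi$ with $\psi$ Jonqui\`eres in $\Bir(\p^2)$ and $\deg g=\mathrm{Bir\text{-}min}(f)$. Then $g^{-1}(e_0)=\psi^{-1}(a)$ with $\psi^{-1}$ Jonqui\`eres in $\Weyl$ by (b), and by~(a),
\[
\deg\psi^{-1}(a)=\deg g=\mathrm{Bir\text{-}min}(f)=\mathrm{Weyl\text{-}min}(a),
\]
so $g^{-1}(e_0)$ is a Weyl-predecessor of $a=f^{-1}(e_0)$. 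There is no real obstacle here: once the two minima are identified under the hypothesis, both parts are immediate; the only point to be careful about is keeping track of the inverse between the \emph{right-multiplication} convention of $\Bir(\p^2)$-predecessors and the \emph{direct-action} convention of $\Weyl$-predecessors, which is what (a) and (b) handle.
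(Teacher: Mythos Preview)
Your proof is correct and follows essentially the same approach as the paper: identify $\deg(f\circ\psi)$ with $\deg\psi^{-1}(f^{-1}(e_0))$, use that Jonqui\`eres elements of $\Bir(\p^2)$ are Jonqui\`eres in $\Weyl$, and conclude that the hypothesis forces the two minima to coincide. The paper's version is slightly terser and argues part~\ref{gpredgmpred} by directly comparing $\deg(g)$ with $\deg(f\circ\varphi^{-1})$ via~\ref{fvarphimPred}, but the logical content is the same.
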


\begin{proof}
\ref{fvarphimPred}:
To prove that $f\circ \varphi^{-1}$ is a predecessor of $f$, we only need to show that
$\deg(f\circ \psi^{-1})\ge \deg( f\circ \varphi^{-1})$
(which is equivalent to $\deg ( \varphi(f^{-1}(e_0))) \le \deg (\psi(f^{-1}(e_0)))$)
for each Jonqui\`eres element $\psi\in \Bir(\p^2)$. As $\varphi(f^{-1}(e_0))$ is a predecessor of $f^{-1}(e_0)$ it satisfies $\deg( \varphi(f^{-1}(e_0))) \le \deg (\psi(f^{-1} (e_0)))$ for each Jonqui\`eres element $\psi \in \Weyl$, and thus in particular for each Jonqui\`eres element $\psi \in \Bir(\p^2)$ (Lemma~\ref{Lemm:JoCremWinfty}).

\ref{gpredgmpred}: If $g\in \Bir(\p^2)$ is a predecessor of $f$, then $g=f\circ \kappa$ for some Jonqui\`eres transformation $\kappa\in \Bir(\p^2)$, and $\deg(g)=\deg(f\circ \varphi^{-1})$ by \ref{fvarphimPred}. The element $g^{-1}(e_0)=\kappa^{-1}(f^{-1}(e_0))$ has then the same degree as the predecessor $\varphi(f^{-1}(e_0))$ of $f^{-1}(e_0)$, and is thus also
a predecessor of $f^{-1}(e_0)$ (by Definition~\ref{DefiPredeWinfe0}).
\end{proof}

We first recall the following famous result, corresponding to the algorithm defined in \cite[Chapter 8]{Alberich}, adapting the proof of Castelnuovo \cite{Cas}.

\begin{proposition}[Castelnuovo reduction]\label{Prop:AlgoCas}
Let $f\in \Bir(\p^2)$ be of degree $d>1$, let $p\in \p^2$ be a base-point of
$f$ of maximal multiplicity. We define $M=\{q\in\Base(f)\setminus \{p\} \mid m_{p}+2m_{q}>d\}$. Then, $M$ contains at least two elements, and the following hold:
\begin{enumerate}
\item\label{Meven}
If $\lvert M\rvert$ is even, there is an element $\varphi\in \Jonq_{p}$ such that $\Base(\varphi)=\{p\}\cup M$.
\item\label{Modd}
If $\lvert M\rvert$ is odd, there is $q\in M$ of minimal multiplicity and an element $\varphi\in \Jonq_{p}$ such that $\Base(\varphi)=\{p\}\cup (M\setminus \{q\})$.
\end{enumerate}
For each $\varphi$ as above, we have $\deg (f \circ \varphi^{-1})<\deg (f)$.
\end{proposition}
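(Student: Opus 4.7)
The plan is to split the proposition into three assertions --- the existence of a Jonquières map $\varphi$ with the prescribed base-points, the degree-decrease inequality, and the bound $|M|\ge 2$ --- and to handle them in order of increasing difficulty. I would first treat the degree-decrease: assuming $\varphi\in\Jonq_p$ with $\Base(\varphi)=\{p\}\cup S$ and $|S|=2n$, the identity from Lemma~\ref{LemmJvarphiLin}\ref{LemmJvarphiLin2} gives
\[ \varphi^{-1}(e_0) \;=\; e_0 + \sum_{q\in S}\Bigl(\tfrac{e_0-e_p}{2}-e_q\Bigr), \]
so that
\[ \deg(f\circ\varphi^{-1}) - d \;=\; \bigl(\varphi^{-1}(e_0)-e_0\bigr)\cdot f^{-1}(e_0) \;=\; \sum_{q\in S}\Bigl(\tfrac{d-m_p}{2}-m_q(f)\Bigr). \]
Each summand is strictly negative precisely when $q\in M$, so the degree-decrease will follow automatically once $\varphi$ with $S\subseteq M$ is produced. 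This also explains why one should remove a point of minimal multiplicity in the odd case: doing so keeps the largest multiplicities inside $S$, giving the strongest decrease.

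For $|M|\ge 2$: Noether's inequality (Lemma~\ref{Lemma:Hudsonalgorithm}) yields three distinct base-points of $f$ whose multiplicities sum to more than $d$. By maximality of $m_p$, I may take these to be $p,p_1,p_2$ with $m_{p_1}\ge m_{p_2}$, and the chain $m_p+2m_{p_1}\ge m_p+m_{p_1}+m_{p_2}>d$ places $p_1\in M$. To exclude $|M|=1$, I would suppose $m_q(f)\le (d-m_p)/2$ for every $q\in\Base(f)\setminus\{p,p_1\}$ and combine
\[ \sum_{q\ne p,p_1} m_q(f)^2 \;\le\; \tfrac{d-m_p}{2}\sum_{q\ne p,p_1} m_q(f) \;=\; \tfrac{d-m_p}{2}\bigl(3(d-1)-m_p-m_{p_1}\bigr) \]
with the two Noether identities of Lemma~\ref{Lemm:EasyWeyl}\ref{L2} and the bound $m_p+m_{p_1}\le d$ from Corollary~\ref{Coro:PositivityOrbit}; an elementary polynomial calculation in the variables $d,m_p,m_{p_1}$ then yields a contradiction.

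The main obstacle is the construction of $\varphi$ inside $\Bir(\p^2)$. With $S\subseteq M$ of even size $2n\ge 2$ --- equal to $M$ when $|M|$ is even and to $M\setminus\{q_0\}$ for $q_0\in M$ of minimal multiplicity when $|M|$ is odd --- the Weyl-group model $\iota_{p,S}$ exists unconditionally (Definition~\ref{Defi:Iota}), but descending to a genuine birational map requires the $2n+1$ points $\{p\}\cup S$ to sit in a configuration compatible with a Jonquières transformation of degree $n+1$. I would proceed by induction on $n$: for $n=1$ the map $\varphi$ is a quadratic transformation based at $p$ and the two points of $S$, which exists in $\Bir(\p^2)$ as long as no forbidden collinearity or proximity holds among these three points --- and this is automatic because they lie in the base-locus of the honest birational map $f$. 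For $n\ge 2$, pick two points of $S$ and compose with such a quadratic transformation to reduce $n$ by one, then apply the induction hypothesis. The technical heart of this step --- following the exposition in \cite[Chapter~8]{Alberich} --- is the verification that the proximity and infinite-nearness relations inherited from $\Base(f)$ propagate correctly through each quadratic reduction, and this is the point I expect to demand the most care.
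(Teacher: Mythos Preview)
Your proposal is correct and in fact gives more detail than the paper's own proof, which consists entirely of pointers to \cite[Chapter~8]{Alberich}: the bound $|M|\ge 2$ is cited as \cite[Lemma~8.2.6]{Alberich}, and both the existence of $\varphi$ and the degree drop are cited from the proof of \cite[Theorem~8.3.4]{Alberich}. Your treatment of the degree decrease via the identity $\varphi^{-1}(e_0)=e_0+\sum_{q\in S}\bigl(\tfrac{e_0-e_p}{2}-e_q\bigr)$ from Lemma~\ref{LemmJvarphiLin} is clean and fits the Weyl-group framework of the paper; your argument for $|M|\ge 2$ is a direct re-derivation of Alberich's lemma. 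For the construction of $\varphi$, you end up in the same place as the paper --- deferring to \cite[Chapter~8]{Alberich} for the verification that the proximity relations among the major base-points permit the Jonqui\`eres map to be realised in $\Bir(\p^2)$ --- and you correctly flag this as the genuine technical content. One minor remark: your explanation for why the point of \emph{minimal} multiplicity is removed in the odd case (``keeps the largest multiplicities inside $S$, giving the strongest decrease'') is not the real reason; any removal from $M$ yields a strict degree drop by your own formula. The choice of a minimal-multiplicity point is dictated instead by the inductive construction in Alberich, where the proximity structure forces one to discard the ``topmost'' major point.
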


\begin{remark}
In \cite{Alberich}, the elements of $M$ are called the \emph{major base-points} of $f$ and their number $\lvert M \rvert$ is written $h$ (see \cite[Definition 8.2.1]{Alberich}).
\end{remark}

\begin{proof}
The proof of the proposition lies in Chapter 8 and especially in \S8.3 of \cite{Alberich}. The fact that $h=\lvert M\rvert\ge 2$ is \cite[Lemma 8.2.6]{Alberich}. The existence of $\varphi$ and the fact that $\deg (f \circ \varphi^{-1})<\deg (f)$ is given at page 242, in the proof of \cite[Theorem 8.3.4]{Alberich}.
\end{proof}

\begin{definition} \label{Def:Castelnuovo-predecessor}
Let $f\in \Bir(\p^2)$. If $\deg(f)>1$, we define a \emph{Castelnuovo-predecessor of $f$} to be an element
of the form $f \circ \varphi^{-1}$ where the Jonqui\`eres transformation $\varphi \in \Bir(\p^2)$ has been described in Proposition~\ref{Prop:AlgoCas}. If $\deg(f)=1$, we define $f$ to be its own Castelnuovo-predecessor.
\end{definition}

\begin{lemma}\label{Lemm:CastelnuovopredJonq}
Let $f\in \Bir(\p^2)$ be a Jonqui\`eres  element of degree $d>1$. Then every Castelnuovo-predecessor of $f$ has degree $1$.
\end{lemma}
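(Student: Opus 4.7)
The plan is to exploit the rigidity of Jonquières homaloidal types (Lemma \ref{Lemm:JonqEven}): if $f$ is a Jonquières element of degree $d>1$, its homaloidal type is forced to be $(d;d-1,1^{2d-2})$, and the constructed $\varphi$ in Proposition \ref{Prop:AlgoCas} must in fact coincide with $f$ up to a linear automorphism.

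First I would pin down the base-points. Writing $f^{-1}(e_0)=de_0-(d-1)e_p-\sum_{q\in B}e_q$ where $p$ is the (unique, if $d\ge 3$) base-point of maximal multiplicity $d-1$ and $B=\Base(f)\setminus\{p\}$ has $|B|=2d-2$ elements, each $q\in B$ satisfies $m_p+2m_q=(d-1)+2=d+1>d$. Consequently the set $M$ defined in Proposition \ref{Prop:AlgoCas} is exactly $B$, so $|M|=2d-2$ is even and case \ref{Meven} of the proposition applies: every Castelnuovo-predecessor of $f$ is of the form $f\circ\varphi^{-1}$ with $\varphi\in\Jonq_p$ such that $\Base(\varphi)=\{p\}\cup M=\Base(f)$.

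Now I would identify the linear system of any such $\varphi$. Since $\varphi$ is Jonquières, its homaloidal type is $(D;D-1,1^{2D-2})$ by Lemma \ref{Lemm:JonqEven}, hence $|\Base(\varphi)|=2D-1$; comparing with $|\Base(\varphi)|=|\Base(f)|=2d-1$ gives $D=d$. Moreover $\varphi\in\Jonq_p$ forces $m_p(\varphi)=d-1$ (Lemma \ref{LemmJvarphiLin}\ref{LemmJvarphiLin1}), so all the remaining multiplicities equal $1$. Therefore
\[\varphi^{-1}(e_0)=de_0-(d-1)e_p-\sum_{q\in B}e_q=f^{-1}(e_0).\]

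From this equality the conclusion is immediate: applying $\varphi$ yields $(f\circ\varphi^{-1})^{-1}(e_0)=\varphi(f^{-1}(e_0))=\varphi(\varphi^{-1}(e_0))=e_0$, whence $\deg(f\circ\varphi^{-1})=e_0\cdot e_0=1$. (Equivalently, one can check this by direct computation from Corollary \ref{corollary= degree of a composition}, obtaining $d^2-((d-1)^2+(2d-2))=1$.) There is no real obstacle here — the only point that requires a moment's care is the case $d=2$, where $p$ is not uniquely determined since all three multiplicities equal $1$, but the same argument (with $|M|=2$) goes through unchanged regardless of the chosen $p$.
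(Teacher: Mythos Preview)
Your proof is correct and follows essentially the same approach as the paper: identify the homaloidal type $(d;d-1,1^{2d-2})$, check that the set $M$ of Proposition~\ref{Prop:AlgoCas} is all of $\Base(f)\setminus\{p\}$ with even cardinality $2d-2$, and then compute the degree of $f\circ\varphi^{-1}$. The paper does the last step as a one-line calculation $d^2-(d-1)^2-(2d-2)=1$ via Corollary~\ref{corollary= degree of a composition}, whereas you spell out that $\varphi^{-1}(e_0)=f^{-1}(e_0)$; these are the same observation, and you even note the direct computation as an alternative.
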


\begin{proof}
The homaloidal type of $f$ is $(d ; d-1, 1^{2d -2} )$. If $p \in \p^2$ is a base-point of $f$ of maximal multiplicity, then the set $M$ of Proposition~\ref{Prop:AlgoCas} has even cardinality and satisfies $\{ p \} \cup M = \Base (f)$. If $\varphi\in \Jonq_{p}$ is such that $\Base(\varphi)=\{p\}\cup M$, then we have $\deg ( f \circ \varphi^{-1} ) = d^2 - (d-1)^2 - (2d-2) = 1$.
\end{proof}

\begin{algorithm}[Algorithm of Castelnuovo]\label{Algo:Castelnuovo}
Taking $f_0\in \Bir(\p^2) \setminus \Aut(\p^2)$, Proposition~\ref{Prop:AlgoCas} yields a Jonqui\`eres element
$\varphi_1 \in \Bir(\p^2)$
such that the Castelnuovo-predecessor
$f_1=f_0 \circ \varphi_1^{-1}$
satisfies $\deg (f_1)<\deg (f_0)$. Applying again the result finitely many times, we find a sequence of Castelnuovo-predecessors $f_0,f_1,\dots,f_n$ and
a sequence of Jonqui\`eres elements
$\varphi_1,\varphi_2,\dots,\varphi_{n}$,
which lead to a decomposition of $f_0$ into
$f_n\circ \varphi_{n}\circ \dots \circ \varphi_1$,
where $f_n \in \Aut(\p^2)$.
Since $\varphi_{n}':=f_n\circ \varphi_{n}$ is Jonqui\`eres, this algorithm actually provides a decomposition of $f_0$ into the product of $n$ Jonqui\`eres elements
\[ f_0 = \varphi'_n \circ \varphi_{n-1} \circ \cdots \circ  \varphi_1.\]
As we will show in Corollary~\ref{Cor:CastelnuovoOptimal},
this integer $n$ (which is the integer $n$ for which the algorithm stops, i.e.~for which $\deg f_n = 1$) is the length of $f_0$.
\end{algorithm}

Recall that a predecessor of an element $a \in \Weyl (e_0)$ is an element of minimal degree among all the elements of the form $\varphi (a)$ where $\varphi$ is a Jonqui\`eres element of $\Weyl$. The next fundamental result shows that we can choose $\varphi$ to be in $\Bir(\p^2)$ if
$a=f^{-1}(e_0)$ for some $f\in \Bir(\p^2)$.

\begin{proposition}  \label{Prop:PredecessorsBirP2FromCastelnuovo-predecessor}
Let $f\in \Bir(\p^2)\setminus \Aut(\p^2)$ and let $p_0\in \p^2$ be a base-point of $f$ of maximal multiplicity. 
\begin{enumerate}
\item  \label{PredecessorsBirP21} There exists a Jonqui\`eres element
$\psi \in \Jonq_{p_0}$ such that $\psi(f^{-1}(e_0))\in \ZZ_{\p^2}$ is a predecessor of $f^{-1}(e_0)$.
For each such $\psi$, it follows from Lemma~$\ref{Lemma:TwoTypesPred}\ref{fvarphimPred}$ that the element $f\circ \psi^{-1}\in \Bir(\p^2)$ is a predecessor of~$f$.
\item\label{PredecessorsBirP22}
Let $\varphi\in \Jonq_{p_0}$ be a Jonqui\`eres element such that $f \circ \varphi^{-1}$ is a Castelnuovo-predecessor of $f$ $($see Proposition~$\ref{Prop:AlgoCas}$ and Definition~$\ref{Def:Castelnuovo-predecessor}$$)$. Then, we
can choose $\psi$ as above such that one of the following assertions is satified:
\begin{enumerate}
\item
$\psi=\varphi$;
\item
$\rho = \psi \circ\varphi^{-1}$ is a quadratic map. Moreover there is a unique $($proper$)$
base-point
$p'$ of maximal multiplicity  of $f\circ \varphi^{-1}$. This point is also a
base-point
of  maximal multiplicity of $f\circ \psi^{-1}$ $($but not necessarily unique$)$.  We have $p'\not=p_0$ and $\rho\in \Jonq_{p'}\cap \Jonq_{p_0}$.
\end{enumerate}
\end{enumerate}
\end{proposition}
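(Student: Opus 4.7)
The plan is to reconcile two a priori different characterisations of a Jonqui\`eres reducing $f$: the geometric one from Castelnuovo's Proposition~\ref{Prop:AlgoCas} and the combinatorial one from Lemma~\ref{Lemm:AlgoFirstStepS}. I will write the homaloidal type of $f$ as $(d; m_0, m_1, \ldots, m_r)$ with $m_0 \ge m_1 \ge \cdots \ge m_r \ge 1$ and fix a non-increasing ordering $p_0, p_1, \ldots, p_r$ of its base-points starting with the given point $p_0$. Writing $M = \{p_i \mid m_0 + 2 m_i > d\}$ for the major base-points, monotonicity of the $m_i$ forces $M = \{p_1,\ldots,p_t\}$ for some $t \ge 2$ with $m_t > (d-m_0)/2 \ge m_{t+1}$. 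Setting $2s = t$ if $t$ is even and $2s = t-1$ if $t$ is odd, Proposition~\ref{Prop:AlgoCas} gives $\Base(\varphi) = \{p_0,p_1,\ldots,p_{2s}\}$, and Lemma~\ref{LemmJvarphiLin}\ref{LemmJvarphiLin2} yields $\varphi \equiv \iota_{p_0,\{p_1,\ldots,p_{2s}\}}$ modulo $\Sym_{\p^2}$.

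The first main step will be to test whether this index $s$ belongs to the set $\mathcal{S}$ of Lemma~\ref{Lemm:AlgoFirstStepS}\ref{atleastonei}. The inequality $m_0 + m_{2s-1} + m_{2s} \ge d$ is automatic from $p_{2s} \in M$. For the companion inequality $d \ge m_0 + m_{2s+1} + m_{2s+2}$, I expect three subcases: if $t = 2s$ is even then $m_{2s+1} \le (d-m_0)/2$ so $s \in \mathcal{S}$; if $t = 2s+1$ is odd and $m_0 + m_{2s+1} + m_{2s+2} \le d$ then again $s \in \mathcal{S}$; but if $t = 2s+1$ and $m_0 + m_{2s+1} + m_{2s+2} > d$ (call this \emph{Case B}) then $s \notin \mathcal{S}$, while the bound $m_{2s+2} \le (d-m_0)/2$ forces $m_0 + m_{2s+3} + m_{2s+4} \le d$, so that $s+1 \in \mathcal{S}$. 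In the first two subcases, Lemma~\ref{Lemm:AlgoFirstStepS}\ref{eachsinS} applied at $s$ yields that $\varphi(f^{-1}(e_0))$ is already a predecessor of $f^{-1}(e_0)$, and I can take $\psi = \varphi$; this simultaneously settles part~(1) and alternative~(a) of part~(2).

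For Case B I will set $\psi := \rho \circ \varphi$, where $\rho \in \Bir(\p^2)$ is the quadratic involution with base-points $\{p_0, p_{2s+1}, p_{2s+2}\}$; such an involution exists in $\Bir(\p^2)$ because $p_0$ is proper and a quadratic involution is determined up to $\Aut(\p^2)$ by its three base-points. Modulo $\Sym_{\p^2}$, Remark~\ref{Remark:Iotaproduct} gives $\rho \equiv \iota_{p_0,\{p_{2s+1},p_{2s+2}\}}$, and hence $\psi \equiv \iota_{p_0,\{p_1,\ldots,p_{2s+2}\}}$ by composition; Lemma~\ref{Lemm:AlgoFirstStepS}\ref{eachsinS} applied at $s+1 \in \mathcal{S}$ then shows $\psi(f^{-1}(e_0))$ is a predecessor. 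Since $\rho$ and $\varphi$ both lie in $\Jonq_{p_0}$ (the former because $p_0$ is among the three base-points of the quadratic involution), so does $\psi$, which completes part~(1).

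To finish alternative~(b), I still need to identify $p'$ in Case B. Using the explicit predecessor-formula recalled just after Corollary~\ref{Coro:UniquenessPredUpSym}, the map $f \circ \varphi^{-1}$ has multiplicity $m_0 - \varepsilon$ at $p_0$, $d - m_0 - m_i$ at $p_i$ for $1 \le i \le 2s$, and $m_i$ at $p_i$ for $i \ge 2s+1$, where $\varepsilon = \sum_{i=1}^{s}(m_0 + m_{2i-1} + m_{2i} - d) > 0$. A direct case-by-case comparison, using the strict inequality $m_0 + m_{2s+1} + m_{2s+2} > d$ together with $m_0 + m_j \le d$ from Corollary~\ref{Coro:PositivityOrbit} and the monotonicity $m_{j+1} \le m_j$, will show that $p_{2s+1}$ strictly dominates every other base-point. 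I set $p' := p_{2s+1}$; since $m_0 + 2 m_{2s+1} > d$, we have $p' \ne p_0$, and the proximity inequality combined with uniqueness of the maximum forces $p'$ to be a proper point of $\p^2$ (otherwise the proper point lying below $p'$ would carry at least the same multiplicity). The inclusion $\rho \in \Jonq_{p_0} \cap \Jonq_{p'}$ is then immediate from the base-points of $\rho$. Finally, applying the explicit action of $\sigma_{p_0, p', p_{2s+2}}$ from Definition~\ref{Defi:sigmap1p2p3} directly to the multiplicity vector of $f \circ \varphi^{-1}$ will show that $p'$ still attains the maximal multiplicity in $f \circ \psi^{-1}$, though uniqueness may fail there. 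I expect the main technical obstacle to be this last bookkeeping step: one must carefully rule out that any of $p_0$, $p_1, \ldots, p_{2s}$ (with their new multiplicities $m_0 - \varepsilon$ and $d - m_0 - m_i$) or $p_{2s+2}, \ldots, p_r$ ties with $p_{2s+1}$ in $f \circ \varphi^{-1}$, which is exactly where the strict inequality $m_0 + m_{2s+1} + m_{2s+2} > d$ is needed.
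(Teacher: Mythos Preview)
Your case analysis is correct and matches the paper's: the split into $|M|$ even, $|M|$ odd with $s\in\mathcal{S}$, and $|M|$ odd with $s\notin\mathcal{S}$ (your Case~B, where necessarily $s+1\in\mathcal{S}$) is exactly the right trichotomy, and in the first two cases $\psi=\varphi$ works.

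The gap is in Case~B, in your construction of $\rho$. You write $\psi=\rho\circ\varphi$ with $\rho$ the quadratic involution having base-points $\{p_0,p_{2s+1},p_{2s+2}\}$, and justify the composition ``modulo $\Sym_{\p^2}$''. But $\rho$ acts on the target of $\varphi$, so its base-points must be taken in \emph{that} copy of $\p^2$, i.e.\ at the images of $p_{2s+1},p_{2s+2}$ under $\varphi$. Concretely, writing $\varphi=\alpha\circ\iota_{p_0,\{p_1,\dots,p_{2s}\}}$ with $\alpha\in\Sym_{\p^2}\cap\J_{p_0}$, the relevant points are $q_{2s+1},q_{2s+2}$ with $\alpha(e_{p_i})=e_{q_i}$; your identity $\iota_{p_0,\{p_{2s+1},p_{2s+2}\}}\circ\iota_{p_0,\{p_1,\dots,p_{2s}\}}=\iota_{p_0,\{p_1,\dots,p_{2s+2}\}}$ does not survive conjugation by $\alpha$, so $\rho\circ\varphi$ need not be $\gamma\circ\iota_{p_0,\{p_1,\dots,p_{2s+2}\}}$ with $\gamma\in\Sym_{\p^2}$ unless you base $\rho$ at $p_0,q_{2s+1},q_{2s+2}$. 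Consequently your identification $p'=p_{2s+1}$ is also off: the unique maximal-multiplicity point of $f\circ\varphi^{-1}$ is $q_{2s+1}$, not $p_{2s+1}$.

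Once the base-points of $\rho$ are corrected to $\{p_0,q_{2s+1},q_{2s+2}\}$, you still owe the existence of such a quadratic map in $\Bir(\p^2)$: this is \emph{not} automatic from ``$p_0$ proper''. You need that (i) $q_{2s+1}$ is proper (this follows, as you note, from uniqueness of the maximal multiplicity), (ii) $q_{2s+2}$ is either proper or in the first neighbourhood of one of $p_0,q_{2s+1}$, and (iii) $p_0,q_{2s+1},q_{2s+2}$ are not collinear. The paper extracts (ii) from the numerical fact that $m_{q_{2s+2}}(f\circ\varphi^{-1})>m_{q_i}(f\circ\varphi^{-1})$ for $1\le i\le 2s$ (so $q_{2s+2}$ cannot be infinitely near any $q_i$ with $i\le 2s$), together with a careful choice of $p_{2s+2}$ in the original ordering; and (iii) from B\'ezout via $(e_0-e_{p_0}-e_{q_{2s+1}}-e_{q_{2s+2}})\cdot (f\circ\varphi^{-1})^{-1}(e_0)<0$. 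Without these checks your $\rho$ may simply fail to exist as an element of $\Bir(\p^2)$, and then $\psi\notin\Bir(\p^2)$, which is the entire point of the proposition.
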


\begin{proof}
Let $\varphi\in \Jonq_{p_0}$ be a Jonqui\`eres element provided by the Castelnuovo reduction. 
Write $\Base(f)=\{p_0,\dots,p_r\}$ where the points $p_i$ are distinct and set $m_i=m_{p_i}(f)$ for each $i$. Choose
the order such that $m_0\ge m_1\ge \dots \ge m_r$, and such that for any $i\ge 1$,  either $p_i$ is  a proper point of $\p^2$ or $p_i$ is in the first neighbourhood of some $p_j$ with $j <i$.
We then define $m_i=0$ for each integer $i>r$, and write $M=\{p_i\mid i\ge 1, m_0+2m_i>d\}$ as in Proposition~\ref{Prop:AlgoCas}. 

Suppose first that $\lvert M\rvert$ is even. In this case, $\varphi\in \Jonq_{p_0}$ satisfies $\Base(\varphi)=\{p_0\}\cup M$ (Proposition~\ref{Prop:AlgoCas}\ref{Meven}) and is then equal to $\varphi=\alpha \circ \iota_{p_0,M}$ for some $\alpha\in \Sym_{\p^2}$ (Lemma~\ref{LemmJvarphiLin}\ref{LemmJvarphiLin2}). Writing $2s=\lvert M\rvert$, we find
$m_0+m_{2s-1}+m_{2s}=((m_0+2m_{2s-1})+(m_0+2m_{2s}))/2 > d\ge ((m_0+2m_{2s+1})+(m_0+ 2m_{2s+2}))/2= m_0+m_{2s+1}+m_{2s+2}$,
which implies that $\iota_{p_0,\{p_1,\dots,p_{2s}\}}(f^{-1}(e_0))$ is a predecessor of $f^{-1}(e_0)$
(Lemma~\ref{Lemm:AlgoFirstStepS}\ref{eachsinS}), so the same holds for $\varphi(f^{-1}(e_0))$.  This achieves the proof, by choosing $\psi=\varphi$, whence $\rho=\mathrm{id}$.

Suppose now that $\lvert M\rvert$ is odd. In this case, $\varphi\in \Jonq_{p_0}$ satisfies $\Base(\varphi)=\{p_0\}\cup (M\setminus \{q\})$ for some $q\in M$ of minimal multiplicity (Proposition~\ref{Prop:AlgoCas}\ref{Modd}) and is equal to $\varphi=\alpha \circ \iota_{p_0,M\setminus \{q\}}$ for some $\alpha\in \Sym_{\p^2}$ (Lemma~\ref{LemmJvarphiLin}\ref{LemmJvarphiLin2}). Writing $2s+1=\lvert M\rvert$, we can assume that $p_{2s+1}=q$, which yields $\iota_{p_0,M\setminus \{q\}}=\iota_{p_0,\{p_1,\dots,p_{2s}\}}$,  and find
$m_0+m_{2s-1}+m_{2s}=((m_0+2m_{2s-1})+(m_0+ 2 m_{2s}))/2>d$.
We then obtain two cases:

If $d\ge  m_0+m_{2s+1}+m_{2s+2}$, then $\iota_{p_0,\{p_1,\dots,p_{2s}\}}(f^{-1}(e_0))$ is a predecessor of $f^{-1}(e_0)$ (Lemma~\ref{Lemm:AlgoFirstStepS}\ref{eachsinS}), so the same holds for $\varphi(f^{-1}(e_0))$. We then choose $\psi=\varphi$ as before.

The last case is when $d<m_0+m_{2s+1}+m_{2s+2}$, which implies that $m_{2s+2}>0$ (Corollary~\ref{Coro:PositivityOrbit}) and thus that $2s+2\le r$. We can assume that $p_{2s+2}$ is not infinitely near to a point $p_i$ with $i>2s+2$ (otherwise  we have $m_i=m_{2s+2}$, so we exchange $p_{2s+2}$ with $p_i$).
Since $d\ge m_0+m_{2s+3}+m_{2s+4}$, the element 
$a_{s+1}=\iota_{p_0,\{p_1,\dots,p_{2s+2}\}}(f^{-1}(e_0))$ is a predecessor of $f^{-1}(e_0)$ (Lemma~\ref{Lemm:AlgoFirstStepS}\ref{eachsinS}). Moreover, $a_{s}=\iota_{p_0,\{p_1,\dots,p_{2s}\}}(f^{-1}(e_0))$ is not  a predecessor of $f^{-1}(e_0)$ (Lemma~\ref{Lemm:AlgoFirstStepS}\ref{eachsinS}), which implies that $\deg(a_s)>\deg(a_{s+1})$.  We will prove the following numerical assertions:
\begin{center}\begin{tabular}{ll}
$\mathrm{(I)}$& The point $p_{2s+2}$ is 
a base-point of $a_{s+1}$ of maximal multiplicity; \\
$\mathrm{(II)}$& $m_{p_{2s+2}}(a_s)> m_{p_i}(a_s)$ for $i=1,\dots,2s$;\\
$\mathrm{(III)}$& $(e_0-e_{p_0}-e_{p_{2s+1}}-e_{p_{2s+2}})\cdot a_s=\deg(a_s)-m_{p_{0}}(a_s)-m_{p_{2s+1}}(a_s)-m_{p_{2s+2}}(a_s)
<0$;\\
$\mathrm{(IV)}$& The point $p_{2s+1}$ is the unique base-point of $a_{s}$ of maximal multiplicity. \\
\end{tabular}\end{center}
Before proving these assertions, let us show how they imply the result.

We write $\Lambda=\varphi(f^{-1}(e_0))$, which corresponds to the linear system of $f\circ \varphi^{-1}$.
For $i=1,\dots,r$, we then denote by $q_i\in \B(\p^2)$ the point such that $\alpha(e_{p_i})=e_{q_i}$. As $\varphi=\alpha\circ \iota_{p_0,\{p_1,\dots,p_{2s}\}}$, we find  $\Base(\Lambda)\subseteq\{q_0,\dots,q_r\}$.
Moreover,  $\varphi$ and $\iota_{p_0,\{p_1,\dots,p_{2s}\}}$ belong to $\J_{p_0}$, so that $\alpha$ also belongs to $\J_{p_0}$. This gives us $\alpha ( e_0 - e_{p_0}) = e_0 - e_{p_0}$ and finally $q_0=p_0$ is a proper point of $\p^2$.
Assertion $(\mathrm{IV})$ implies that $p':=q_{2s+1}$ is the unique base-point of $f\circ \varphi^{-1}$ of maximal multiplicity, in particular  $p'$ is a proper point of $\p^2$.
We then observe that $q_{2s+2}$ is either a proper point of $\p^2$ or a point infinitely near $p_0$ or $p'$. Indeed, it cannot be infinitely near $q_i$ if $i\in \{1,\dots,2s\}$ by $(\mathrm{II})$ and if $i>2s+1$, because $p_{2s+1}$ is not infinitely near $p_{i}$. Moreover, $p_0,p',q_{2s+2}$ are not collinear because of $(\mathrm{III})$ and B\'ezout Theorem. Up to change of coordinates, we can thus assume that
$\{p_0,p'\}=\{[1:0:0],[0:1:0]\}$
and that $p_0,p',q_{2s+2}$ are the three base-points of a quadratic involution $\rho\in \Jonq_{p_0}\cap \Jonq_{p'}\subseteq\Bir(\p^2)$ which is one of the two following
\[ [x:y:z]\dasharrow [yz:xz:xy] \text{ or } [x:y:z]\dasharrow [z^2: xy: xz],\]
and satisfies then $\rho=\beta\circ \sigma_{p_0,p',q_{2s+2}}$ for some $\beta\in \Sym_{\p^2}$
(see Definition~\ref{Defi:sigmap1p2p3}).
The result then follows by setting $\psi:=\rho\circ \varphi\in \Jonq_{p_0}$.
Indeed, as $\Base(\beta\circ \sigma_{p_0,p',q_{2s+2}} \circ \alpha)=\{p_0,p_{2s+1},p_{2s+2}\}$, we have $\beta\circ \sigma_{p_0,p',q_{2s+2}} \circ \alpha=\gamma\circ \iota_{p_0,\{p_{2s+1},p_{2s+2}\}}$ for some $\gamma\in \Sym_{\p^2}$, which yields $\psi=\beta\circ \sigma_{p_0,p',q_{2s+2}} \circ \alpha\circ \iota_{p_0,\{p_1,\dots,p_{2s}\}}=\gamma \circ \iota_{p_0,\{p_1,\dots,p_{2s+2}\}}$, and implies that $\psi(f^{-1}(e_0))=\gamma(a_{s+1})$ is a predecessor of $f^{-1}(e_0)$. Moreover, $p'=q_{2s+1}\in \p^2$ is such that $\rho\in \Jonq_{p'}$ and is a point
of maximal multiplicity of $\psi(f^{-1}(e_0))$; this follows from $(\mathrm{I})$ and from $\gamma(e_{p_{2s+2}})=e_{p'}$, which is given by 
$e_0-e_{p'}=\rho (e_0-e_{p'})=\rho \circ \alpha(e_0-e_{p_{2s+1}})=\gamma\circ \iota_{p_0,\{p_{2s+1},p_{2s+2}\}}(e_0-e_{p_{2s+1}})=\gamma(e_0-e_{p_{2s+2}})=e_0-\gamma(e_{p_{2s+2}})$.\\

It remains to prove the assertions $(\mathrm{I})$--$(\mathrm{IV})$. 

$(\mathrm{I})$:  Writing $\mu=\frac{d-m_0}{2}$, we have $m_0\ge \dots \ge m_{2s}\ge m_{2s+1}>\mu\ge m_{2s+2}\ge \dots \ge m_{r}$ and $2\mu=d-m_0<m_{2s+1}+m_{2s+2}$. This yields
$m_{p_{2s+2}}(a_{s+1})=d-m_0-m_{2s+2}=2\mu -m_{2s+2}
\geq  
\mu>2\mu-m_{i}=
 d-m_0 - m_i = m_{p_i}(a_{s+1})$ for each $i\in \{1,\dots,2s+1\}$.
We moreover have $m_{p_{2s+2}}(a_{s+1})
\geq\mu\geq m_i= m_{p_i}(a_{s+1})$ for each $i\in \{2s+3,\dots,r\}$. It then remains to show that $m_{p_{2s+2}}(a_{s+1})>m_{p_0}(a_{s+1})$. This holds, because otherwise $p_0$ would be a point of maximal multiplicity of $a_{s+1}$, which would yield the existence of $\kappa\in \J_{p_0}$ such that $\deg(\kappa(a))<\deg(a)$ (Corollary~\ref{Coro:DecrMaxMult}), contradicting the fact that $a_{s+1}$ is a predecessor of $f^{-1}(e_0)$.

$(\mathrm{II})$: Follows from $m_{p_{2s+2}}(a_s)=m_{2s+2}>d-m_0-m_{2s+1}\ge d-m_0-m_{i}=m_{p_i}(a_s)$, for $i\in \{1,\dots,2s\}$.

$(\mathrm{III})$--$(\mathrm{IV})$: 
Set $\nu=m_{p_{0}}(a_s)+m_{p_{2s+1}}(a_s)+ m_{p_{ 2s+2 }}(a_s)
-\deg(a_s)$. The equality $a_{s+1}=\iota_{p_0,\{p_{2s+1},p_{2s+2}\}}(a_s)$ gives $\deg(a_{s+1})=\deg(a_s)-\nu$, whence $\nu>0$, i.e.~$(\mathrm{III})$. It also provides
\[m_{p_0}(a_s)= m_{p_0}(a_{s+1})+ \nu,
m_{p_{2s+1}}(a_s) = m_{p_{2s+2}}(a_{s+1}) +\nu , m_{p_{2s+2}}(a_{s}) = m_{p_{2s+1}}(a_{s+1} ) + \nu,  \]
and $m_{p_i} (a_s) = m_{p_i} (a_{s+1})$ for $i \neq 0, 2s+1,2s+2$.
Since $p_{2s + 2}$ was a base-point of maximal multiplicity of $a_s$ (by $(\mathrm{I})$), it follows that $p_{2s + 1}$ is a base-point of maximal multiplicity of $a_{s+1}$ and that such base-points of maximal multiplicity of $a_{s+1}$
belong to the set $\{ p_0, p_{2s+1}, p_{2s+2} \}$. However, we have already seen in the proof of $(\mathrm{I})$ that  $m_{p_{2s+2}}(a_{s+1})>m_{p_i}(a_{s+1})$ for $i=0$ or $i=2s+1$. This proves $(\mathrm{IV})$.
\end{proof}

\begin{corollary}\label{Cor:EquiTwoPred}
Let $f,g \in \Bir(\p^2)$. Then, the two following assertions are equivalent:
\begin{enumerate}
\item \label{predecessor-in-Bir(p2)}
$g$ is a predecessor of $f$ $($Definition~$\ref{DefiPredeBirP2})$;
\item \label{predecessor-in-W(infinity)}
$g^{-1}(e_0)$ is a predecessor of $f^{-1}(e_0)$ $($Definition~$\ref{DefiPredeWinfe0})$ and $f^{-1}\circ g\in \Jonq\subseteq\Bir (\p^2)$.
\end{enumerate}
\end{corollary}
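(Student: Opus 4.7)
The plan is to deduce the corollary by combining Lemma~\ref{Lemma:TwoTypesPred} with the existence result of Proposition~\ref{Prop:PredecessorsBirP2FromCastelnuovo-predecessor}\ref{PredecessorsBirP21}. The key conceptual step is to establish that the two notions of ``predecessor'' (in $\Bir(\p^2)$ and in $\Weyl(e_0)$) measure the same minimum.

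More precisely, the first step is to show that
\[ \min \{ \deg(\psi(f^{-1}(e_0))) \mid \psi \in \Weyl \text{ Jonqui\`eres} \} = \min \{ \deg(f \circ \tilde\varphi) \mid \tilde\varphi \in \Bir(\p^2) \text{ Jonqui\`eres} \}, \]
where the left-hand side is the degree of a predecessor of $f^{-1}(e_0)$ in $\Weyl(e_0)$ and the right-hand side is the degree of a predecessor of $f$ in $\Bir(\p^2)$. Since any Jonqui\`eres $\tilde\varphi \in \Bir(\p^2)$ gives rise to a Jonqui\`eres $\tilde\varphi^{-1} \in \Weyl$ (by Lemma~\ref{Lemm:JoCremWinfty} and the fact that $\Jonq_q$ is a subgroup), with $\deg(f \circ \tilde\varphi) = \deg(\tilde\varphi^{-1}(f^{-1}(e_0)))$, the LHS is at most the RHS. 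The reverse inequality is where Proposition~\ref{Prop:PredecessorsBirP2FromCastelnuovo-predecessor}\ref{PredecessorsBirP21} enters: it gives a Jonqui\`eres $\psi \in \Jonq_{p_0} \subseteq \Bir(\p^2)$ such that $\psi(f^{-1}(e_0))$ realises the $\Weyl$-minimum, so the minimum is already attained by an element of $\Bir(\p^2)$.

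For $\ref{predecessor-in-Bir(p2)}\Rightarrow\ref{predecessor-in-W(infinity)}$: if $g$ is a predecessor of $f$ in the sense of Definition~\ref{DefiPredeBirP2}, then by definition $\varphi := f^{-1} \circ g \in \Jonq$, giving the second part of \ref{predecessor-in-W(infinity)} for free. Moreover $\deg(g)$ equals the right-hand minimum above, hence also the left-hand one, and $g^{-1}(e_0) = \varphi^{-1}(f^{-1}(e_0))$ realises it since $\varphi^{-1} \in \Weyl$ is Jonqui\`eres; thus $g^{-1}(e_0)$ is a predecessor of $f^{-1}(e_0)$ in the sense of Definition~\ref{DefiPredeWinfe0}.

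For $\ref{predecessor-in-W(infinity)}\Rightarrow\ref{predecessor-in-Bir(p2)}$: write $\varphi := f^{-1} \circ g \in \Jonq$, so $g = f \circ \varphi$. For any Jonqui\`eres $\tilde\varphi \in \Bir(\p^2)$, the inverse $\tilde\varphi^{-1}$ is Jonqui\`eres in $\Weyl$, so $\deg(f \circ \tilde\varphi) = \deg(\tilde\varphi^{-1}(f^{-1}(e_0))) \ge \deg(g^{-1}(e_0)) = \deg(g)$ by the hypothesis that $g^{-1}(e_0)$ is a $\Weyl$-predecessor of $f^{-1}(e_0)$. This shows $g$ is a predecessor of $f$ in $\Bir(\p^2)$. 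No step is really an obstacle here, since Proposition~\ref{Prop:PredecessorsBirP2FromCastelnuovo-predecessor} has already done the hard work of relating Castelnuovo-predecessors to $\Weyl$-predecessors; the corollary is essentially a bookkeeping consequence, but it is important as it packages these two perspectives into a single equivalence to be used downstream.
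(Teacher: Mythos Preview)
Your proof is correct and follows essentially the same approach as the paper's: both use Proposition~\ref{Prop:PredecessorsBirP2FromCastelnuovo-predecessor}\ref{PredecessorsBirP21} to guarantee that the hypothesis of Lemma~\ref{Lemma:TwoTypesPred} is always met, after which the two implications are immediate. The only cosmetic difference is that the paper cites Lemma~\ref{Lemma:TwoTypesPred}\ref{gpredgmpred} and \ref{fvarphimPred} directly (taking $\varphi=g^{-1}\circ f$ for $\ref{predecessor-in-W(infinity)}\Rightarrow\ref{predecessor-in-Bir(p2)}$), whereas you unfold those short arguments inline via the equality of the two minima.
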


\begin{proof}
By Proposition~\ref{Prop:PredecessorsBirP2FromCastelnuovo-predecessor}\ref{PredecessorsBirP21}, the assumptions of Lemma~\ref{Lemma:TwoTypesPred} are always fulfilled. Therefore, the implication $\ref{predecessor-in-Bir(p2)}\Rightarrow\ref{predecessor-in-W(infinity)}$ follows from Lemma~\ref{Lemma:TwoTypesPred}\ref{gpredgmpred} and the implication $\ref{predecessor-in-W(infinity)}\Rightarrow\ref{predecessor-in-Bir(p2)}$ follows from Lemma~\ref{Lemma:TwoTypesPred}\ref{fvarphimPred} applied with $\varphi =g^{-1}\circ f\in \Jonq$.
\end{proof}

We are now ready to give the proof of Lemma~\ref{Lem:Pred} and Theorem~\ref{TheMainTheorem}.

\begin{proof}[Proof of Lemma~$\ref{Lem:Pred}$]
Let $f\in \Bir(\p^2)$, and let $g\in \Bir(\p^2)$ be a predecessor of $f$, which is then equal to $g=f \circ \varphi$ for some Jonqui\`eres element $\varphi\in \Bir(\p^2)$. 

By Corollary~\ref{Cor:EquiTwoPred}, $g^{-1}(e_0)=\varphi^{-1}(f^{-1}(e_0))$ is a predecessor of $f^{-1}(e_0)$. This implies that $\Base(\varphi^{-1}) \subseteq \Base(f^{-1}(e_0))=\Base(f)$ (Lemma~\ref{Lemm:AlgoFirstStepS}\ref{AllPredLikeThisBaseCont}), and gives \ref{Lem:Pred3}. Moreover, the homaloidal type of $g$, which is the class of $g^{-1}(e_0)$ modulo $\Sym_{\p^2}$, is uniquely determined by the homaloidal type of $f$ (Corollary~\ref{Coro:UniquenessPredUpSym}). This proves \ref{Lem:Pred1}. 

It remains to prove \ref{Lem:Pred2}. The set of predecessors being invariant under right multiplication by elements of $\Aut(\p^2)$, it is infinite. It remains to see that the number of classes modulo $\Aut(\p^2)$ is finite. This corresponds to saying that the number of possibilities for $\Base(\varphi^{-1})$ is finite, and is thus given by \ref{Lem:Pred3}.
\end{proof}

\begin{proof}[Proof of Theorem~$\ref{TheMainTheorem}$]
For each $i\ge 0$, we set $a_i:=(f_i)^{-1}(e_0)\in \Weyl(e_0)$. By Corollary~\ref{Cor:EquiTwoPred}, $a_i$ is a predecessor of $a_{i-1}$ for each $i\ge 1$, so $a_i$ is a $i$-th predecessor of $a_0$ for each $i\ge 1$.

We then write $b_n=g_n^{-1}(e_0)=\varphi_n^{-1}\circ \dots \circ \varphi_1^{-1}(a_0)$. As $\varphi_1,\dots,\varphi_n$ are Jonqui\`eres elements of $\Bir(\p^2)$, they are all the more Jonqui\`eres elements of $\Weyl$ (Lemma~\ref{Lemm:JoCremWinfty}). 
Then, the three assertions~\ref{Degfngn}-\ref{Comultfngn}-\ref{AnBnSymp2} of Theorem~\ref{TheMainTheorem} directly follow from the three corresponding assertions~\ref{DegAnBn}-\ref{ComultAnBn}-\ref{AnBnSymp2} of Proposition~\ref{Prop:AlgoWeyl} that we now recall: \ref{DegAnBn} $\deg(a_n)\le \deg(b_n)$; \ref{ComultAnBn} $\comult(a_n)\le \comult(b_n)$; \ref{AnBnSymp2} if $\deg(a_n)= \deg(b_n)$, then $a_n$ and $b_n$ are equal modulo $\Sym_{\p^2}$.

It remains to observe that $\lgth(f_0)=\min \{n\mid \deg(f_n)=1\}$. To do this, we write 
$\ell=\lgth(f_0)$,
$m=\min \{n\mid \deg(f_n)=1\}$, and prove $\ell\ge m$ and $m\ge \ell$.

The fact that $\ell=\lgth(f)$ yields the existence of Jonqui\`eres elements  $\varphi_1, \ldots, \varphi_\ell \in \Bir(\p^2)$ such that $\deg(f \circ \varphi_1\circ \dots \circ \varphi_\ell)=1.$ By~\ref{Degfngn}, we find $\deg(f_\ell)\le 1$, which yields  $\ell\ge m$.

Writing for each $i$ a Jonqui\`eres element $\psi_i\in \Jonq$ such that $f_{i-1}=f_{i}\circ \psi_i$, we obtain $f_0=f_m\circ \psi_m\circ \dots \circ\psi_1$. As $\deg(f_m)=1$, this implies that $\ell=\lgth(f_0)\le m$.
\end{proof}

\begin{corollary}[Length of predecessors, associated to any point of maximal multiplicity]  \label{Cor:decrdegrlgth}
Let $f\in \Bir(\p^2)\setminus \Aut(\p^2)$. For each point $q \in \p^2$ of maximal multiplicity of $f$, there exists $\varphi \in \Jonq_q\subseteq\Bir(\p^2)$ such that $f\circ \varphi$ is a predecessor of $f$. Moreover, every predecessor $g$ of $f$ satisfies $\lgth(g)=\lgth(f)-1$ and $\deg(g)<\deg(f)$.
\end{corollary}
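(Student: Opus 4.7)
My plan is to extract the corollary piece by piece from the infrastructure already developed, so no new computation should be needed.

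For the existence statement, the plan is to apply Proposition~\ref{Prop:PredecessorsBirP2FromCastelnuovo-predecessor}\ref{PredecessorsBirP21} directly with $p_0 = q$. That proposition produces a Jonqui\`eres element $\psi \in \Jonq_q$ such that $\psi(f^{-1}(e_0)) \in \ZZ_{\p^2}$ is a predecessor of $f^{-1}(e_0)$ in the $\Weyl$-sense. Then Lemma~\ref{Lemma:TwoTypesPred}\ref{fvarphimPred} immediately yields that $f \circ \psi^{-1}$ is a predecessor of $f$ in the $\Bir(\p^2)$-sense. Since $\Jonq_q$ is a group, $\varphi := \psi^{-1}$ still lies in $\Jonq_q$, and we are done with this part.

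For the strict degree drop, I would argue as follows. Let $g$ be any predecessor of $f$. By Corollary~\ref{Cor:EquiTwoPred}, $g^{-1}(e_0)$ is a predecessor of $f^{-1}(e_0)$ in the sense of Definition~\ref{DefiPredeWinfe0}, hence has degree equal to the minimum of $\deg(\varphi(f^{-1}(e_0)))$ over all Jonqui\`eres elements $\varphi \in \Weyl$. Since $f \notin \Aut(\p^2)$ we have $\deg(f^{-1}(e_0)) = \deg f \ge 2$, so $q$ being a point of maximal multiplicity of $f^{-1}(e_0)$, Corollary~\ref{Coro:DecrMaxMult} produces some $\varphi \in \J_q$ with $\deg(\varphi(f^{-1}(e_0))) < \deg(f)$. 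The minimum is therefore strictly less than $\deg(f)$, giving $\deg(g) < \deg(f)$.

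For the length equality, the plan is to prove the two inequalities separately. Writing $g = f \circ \kappa$ with $\kappa \in \Jonq$ (from the definition of predecessor), we get $f = g \circ \kappa^{-1}$ with $\kappa^{-1} \in \Jonq$, so any decomposition of $g$ into $m$ Jonqui\`eres factors yields one of $f$ into $m+1$ such factors. Hence $\lgth(f) \le \lgth(g) + 1$, i.e.\ $\lgth(g) \ge \lgth(f) - 1$. For the converse, build a sequence $(f_i)_{i \ge 0}$ with $f_0 = f$, $f_1 = g$, and $f_i$ any predecessor of $f_{i-1}$ for $i \ge 2$; applying Theorem~\ref{TheMainTheorem} both to $f$ and to $g$ (with the shifted sequence $(f_{i+1})_{i \ge 0}$, which is still a valid predecessor chain starting at $g$), one gets $\lgth(f) = \min\{n \mid \deg(f_n) = 1\}$ and $\lgth(g) = \min\{n \mid \deg(f_{n+1}) = 1\}$. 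Subtracting gives $\lgth(g) = \lgth(f) - 1$ as desired.

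None of these three steps is a genuine obstacle: the main work was already done in Proposition~\ref{Prop:PredecessorsBirP2FromCastelnuovo-predecessor} (existence of a predecessor with $\varphi \in \Jonq_q$ for $q$ a specified point of maximal multiplicity) and in Theorem~\ref{TheMainTheorem} (which pins down $\lgth$ via predecessor chains). The corollary is essentially a clean repackaging of those two results, together with Corollary~\ref{Cor:EquiTwoPred} to bridge the Weyl-group and Cremona-group notions of predecessor.
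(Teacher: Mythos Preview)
Your proof is correct and follows essentially the same approach as the paper: existence via Proposition~\ref{Prop:PredecessorsBirP2FromCastelnuovo-predecessor}, and the length equality via Theorem~\ref{TheMainTheorem} applied to a predecessor chain starting with $f_0=f$, $f_1=g$. The only minor difference is in the degree drop: the paper simply observes that a Castelnuovo-predecessor already has strictly smaller degree (Proposition~\ref{Prop:AlgoCas}), so the minimal-degree predecessor does too, whereas you route through Corollary~\ref{Cor:EquiTwoPred} and Corollary~\ref{Coro:DecrMaxMult}; both arguments are equally short.
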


\begin{proof}
The existence of $\varphi$ is given by Proposition~\ref{Prop:PredecessorsBirP2FromCastelnuovo-predecessor}.
Any predecessor $g$ of $f$ satisfies $\deg(g)<\deg(f)$ because such an inequality already holds for a Castelnuovo-predecessor according to (the well-known) Proposition~\ref{Prop:AlgoCas}.
Finally, we have $\lgth(g)=\lgth(f)-1$ by Theorem~\ref{TheMainTheorem}.
\end{proof}

\begin{corollary}\label{Coro:PredCasPredisPredPred}
Let $f$ be an element of $\Bir(\p^2)\setminus \Aut(\p^2)$.
Take $p \in \p^2$ a proper base-point of maximal multiplicity of $f$, and $\varphi \in \Jonq_{p}$ a Jonqui\`eres element such that $h:=f \circ \varphi^{-1}$ is a Castelnuovo-predecessor of $f$.
Then, the following hold:
\begin{enumerate}
\item\label{CorpsiphiEx}
There exist  a Jonqui\`eres element $\psi \in \Jonq_{p}$ and a point $q \in \p^2$ such that:
\begin{enumerate}
\item $g:=f\circ \psi^{-1}$ is a predecessor of $f$;
\item $q$ is a point of maximal multiplicity of $g$ and of $h$;
\item The element $\rho: = \psi \circ \varphi^{-1}$ belongs to $\Jonq_q$ and has degree $\le  2$.
\end{enumerate}
\item\label{CorpsiphiDiff}
For all $\psi,q,g,\rho$ as in \ref{CorpsiphiEx}, and for each $\kappa\in \Jonq_q$, we have
the following equivalence:
\begin{center}
$h\circ \kappa$ is a predecessor of $h$ $\Leftrightarrow$ $h\circ \kappa$ is a predecessor of $g$.
\end{center}
Furthermore, there always exists an element $\kappa$ satisfying these two equivalent conditions.
\end{enumerate}
\end{corollary}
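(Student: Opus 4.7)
The plan is to derive Part (1) as a packaging of Proposition~\ref{Prop:PredecessorsBirP2FromCastelnuovo-predecessor}(2), and to reduce Part (2), via Corollary~\ref{Cor:EquiTwoPred}, to a comparison of Jonqui\`eres-minimisations in $\Weyl$ that becomes transparent once $\rho$ is placed in $\J_q$.

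For Part (1), I would apply Proposition~\ref{Prop:PredecessorsBirP2FromCastelnuovo-predecessor}(2) with $p_0:=p$; it produces a Jonqui\`eres element $\psi\in\Jonq_p$ such that $\psi(f^{-1}(e_0))$ is a predecessor of $f^{-1}(e_0)$ in $\Weyl(e_0)$, so that $g:=f\circ\psi^{-1}$ is a predecessor of $f$ in $\Bir(\p^2)$ by Lemma~\ref{Lemma:TwoTypesPred}(1). The dichotomy furnished by Proposition~\ref{Prop:PredecessorsBirP2FromCastelnuovo-predecessor}(2) then splits into two cases: either $\psi=\varphi$ (whence $g=h$, $\rho=\mathrm{id}$, and we may take $q$ to be any proper base-point of maximal multiplicity of $h=g$, which exists by the standard proximity inequalities whenever $h\notin\Aut(\p^2)$, the alternative being trivial), or else $\rho$ is quadratic and the proposition directly delivers $q:=p'$ proper, of maximal multiplicity in both $h$ and $g$, with $\rho\in\Jonq_p\cap\Jonq_q$.

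For Part (2), the identity $h=g\circ\rho$ rewrites $h\circ\kappa=g\circ(\rho\circ\kappa)$, and $\rho\circ\kappa$ lies in the subgroup $\Jonq_q$, hence in $\Jonq$. Applying Corollary~\ref{Cor:EquiTwoPred} on both sides and using
\[ (\rho\circ\kappa)^{-1}(g^{-1}(e_0))\;=\;\kappa^{-1}\rho^{-1}\rho(h^{-1}(e_0))\;=\;\kappa^{-1}(h^{-1}(e_0)), \]
both conditions translate to the requirement that the single element $a:=\kappa^{-1}(h^{-1}(e_0))\in\Weyl(e_0)$ be a predecessor in $\Weyl$~--- respectively of $h^{-1}(e_0)$ and of $g^{-1}(e_0)=\rho(h^{-1}(e_0))$. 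These in turn reduce to the equality of the two minimal degrees $M_h$ and $M_g$ of a Jonqui\`eres-image of $h^{-1}(e_0)$ and of $g^{-1}(e_0)$. Since $q$ has maximal multiplicity in both, Lemma~\ref{Lemm:AlgoFirstStepS}\ref{eachsinS} guarantees that each minimum is attained by an element of $\J_q$; and because $\rho\in\J_q$, the bijection $\varphi\mapsto\varphi\circ\rho$ of $\J_q$ identifies the two minimisations via $g^{-1}(e_0)=\rho(h^{-1}(e_0))$, so $M_g=M_h$ and the equivalence follows.

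For the existence of $\kappa$, I would apply Proposition~\ref{Prop:PredecessorsBirP2FromCastelnuovo-predecessor}(1) to $h$ at the proper maximal-multiplicity point $q$ to obtain a Jonqui\`eres $\psi_h\in\Jonq_q$ with $\psi_h(h^{-1}(e_0))$ a predecessor of $h^{-1}(e_0)$; the choice $\kappa:=\psi_h^{-1}\in\Jonq_q$ then satisfies both equivalent conditions (the case $h\in\Aut(\p^2)$ forces case $\psi=\varphi$, where $g=h$ and $\kappa=\mathrm{id}$ works). The main obstacle I expect is the bookkeeping that $q$ is genuinely a proper point of $\p^2$, so that $\Jonq_q$ is defined and Proposition~\ref{Prop:PredecessorsBirP2FromCastelnuovo-predecessor}(1) applies: this relies crucially on the properness of $p'$ in Proposition~\ref{Prop:PredecessorsBirP2FromCastelnuovo-predecessor}(2) in the nontrivial case, and on the proximity inequalities in the trivial case.
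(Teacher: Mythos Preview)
Your proof is correct and follows essentially the same line as the paper's. For Part~(1) both you and the paper simply invoke Proposition~\ref{Prop:PredecessorsBirP2FromCastelnuovo-predecessor}(2). For Part~(2) the paper's argument is the same coset idea packaged slightly more directly: since $\rho\in\Jonq_q$ one has the equality of sets $\{h\circ\kappa\mid\kappa\in\Jonq_q\}=\{g\circ\kappa'\mid\kappa'\in\Jonq_q\}$, and as $q$ has maximal multiplicity in both $h$ and $g$, Proposition~\ref{Prop:PredecessorsBirP2FromCastelnuovo-predecessor}(1) guarantees that this common set contains a predecessor of each, so being a predecessor of either amounts to having minimal degree in that set. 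Your detour through $\Weyl$ via Corollary~\ref{Cor:EquiTwoPred} and Lemma~\ref{Lemm:AlgoFirstStepS}\ref{eachsinS} is logically equivalent (and you handle the edge case $h\in\Aut(\p^2)$ a bit more explicitly), but the paper's formulation stays in $\Bir(\p^2)$ throughout and is therefore shorter.
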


\begin{proof}
Assertion~\ref{CorpsiphiEx} follows from Proposition~\ref{Prop:PredecessorsBirP2FromCastelnuovo-predecessor} (if $\psi=\varphi$ we choose any point $q\in \p^2$ of maximal multiplicity of $g=h$). 

To prove \ref{CorpsiphiDiff}, we first observe that $h=g\circ \rho$, with $\rho\in\Jonq_q$, which implies that the two sets
\[ \mathcal{A}_h=\{ h\circ \kappa \mid \kappa \in \Jonq_q\}\text{ and }\mathcal{A}_g=\{ g\circ \kappa' \mid \kappa' \in \Jonq_q\} \]
are equal. Secondly, since $q$ has maximal multiplicity for $g$ and $h$,
the set $\mathcal{A}_g$ contains a predecessor of $g$ and the set $\mathcal{A}_h$ contains a predecessor of $h$ (Proposition~\ref{Prop:PredecessorsBirP2FromCastelnuovo-predecessor}). Hence, an element of $\mathcal{A}_g=\mathcal{A}_h$ is a predecessor of $g$ (respectively of $h$) if and only if it has minimal degree in $\mathcal{A}_g=\mathcal{A}_h$. This yields \ref{CorpsiphiDiff}. The situation is as follows:
\[\begin{tikzpicture}[scale=0.7]
\node (f) at (0, 3) {$f$};
\node (h) at (-9, 2) {$f \circ \varphi^{-1}=h$};
\node (g) at (9, 1) {$g=f \circ \psi^{-1}=h\circ \rho^{-1}$};
\node (s) at  (0, 0) {$s=h \circ \kappa  = g \circ \kappa'$}; 
\node[draw, rectangle,rounded corners=1.5pt,inner sep=0pt,minimum size=0.5cm,font=\tiny] (f-g) at (3, 2.3) {\ Predecessor\ \ };
\draw (f) -- (f-g); \draw[->,>=latex] (f-g) -- (g);
\node[draw, rectangle,rounded corners=1.5pt,inner sep=0pt,minimum size=0.5cm,font=\tiny] (f-h) at (-4, 2.5) {\ Castelnuovo-predecessor\ \ };
\draw (f) -- (f-h); \draw[->,>=latex] (f-h) -- (h);
\node[draw, rectangle,rounded corners=1.5pt,inner sep=0pt,minimum size=0.5cm,font=\tiny] (h-s) at (-4, 1) {\ Predecessor\ \ };
\draw (h) -- (h-s); \draw[->,>=latex] (h-s) -- (s);
\node[draw, rectangle,rounded corners=1.5pt,inner sep=0pt,minimum size=0.5cm,font=\tiny] (g-s) at (4.5, 0.5) {\ Predecessor\ \ };
\draw (g) -- (g-s); \draw[->,>=latex] (g-s) -- (s);
\node[draw, rectangle,rounded corners=1.5pt,inner sep=0pt,minimum size=0.5cm,font=\tiny] (h-g) at (0,1.6) {$\rho$};
\draw (h) -- (h-g); \draw[->,>=latex] (h-g) -- (g);
\end{tikzpicture}\]
\end{proof}

\begin{corollary}[The Algorithm of Castelnuovo also provides the length]\label{Cor:CastelnuovoOptimal}
For each $f\in \Bir(\p^2)\setminus \Aut(\p^2)$ and each Castelnuovo predecessor $h\in \Bir(\p^2)$  of $f$, we have $\lgth(h)=\lgth(f)-1$.

Hence, writing $f=f_0$ and denoting by $f_1,f_2,\dots$ elements of $\Bir(\p^2)$ such that $f_i$ is a Castelnuovo-predecessor of $f_{i-1}$ for $i\ge 1$, we find $\lgth(f)=\min \{n\mid \deg(f_n)=1\}$.
\end{corollary}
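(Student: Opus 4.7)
The plan is to reduce everything to Corollary~\ref{Cor:decrdegrlgth}, which already tells us that every genuine predecessor (in the sense of Definition~\ref{DefiPredeBirP2}) of $f$ has length $\lgth(f)-1$. The task is therefore to bridge the potential gap between a Castelnuovo predecessor $h$ and an actual minimal-degree predecessor; for this, Proposition~\ref{Prop:PredecessorsBirP2FromCastelnuovo-predecessor} together with Corollary~\ref{Coro:PredCasPredisPredPred} has been written essentially tailor-made. The iterated statement at the end of the corollary will then follow mechanically once the single-step equality $\lgth(h)=\lgth(f)-1$ is established.

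I would first dispose of the trivial case $\lgth(f)=1$, where $f$ is Jonqui\`eres and Lemma~\ref{Lemm:CastelnuovopredJonq} forces $\deg(h)=1$, hence $h\in\Aut(\p^2)$ and $\lgth(h)=0$. For $\lgth(f)\ge 2$, I pick a proper base-point $p_0$ of $f$ of maximal multiplicity with $h=f\circ\varphi^{-1}$ for the Jonqui\`eres transformation $\varphi\in\Jonq_{p_0}$ supplied by the Castelnuovo reduction, and invoke Proposition~\ref{Prop:PredecessorsBirP2FromCastelnuovo-predecessor} to produce $\psi\in\Jonq_{p_0}$ such that $g:=f\circ\psi^{-1}$ is a genuine predecessor of $f$, with either $\psi=\varphi$ (in which case $h=g$ and Corollary~\ref{Cor:decrdegrlgth} gives the result directly), or $\rho:=\psi\circ\varphi^{-1}$ quadratic.

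The nontrivial case is $\rho$ quadratic. Here the crucial input is Corollary~\ref{Coro:PredCasPredisPredPred}\ref{CorpsiphiDiff}, which furnishes $\kappa\in\Jonq_q$ such that $h\circ\kappa$ is \emph{simultaneously} a predecessor of $h$ and of $g$. Before applying Corollary~\ref{Cor:decrdegrlgth} to turn the three predecessor relations ($g$ of $f$, $h\circ\kappa$ of $g$, and $h\circ\kappa$ of $h$) into three length equations, one has to verify that $g$ and $h$ themselves are not automorphisms: for $g$ this is automatic since $\lgth(g)=\lgth(f)-1\ge 1$; for $h$ one observes that $h\in\Aut(\p^2)$ would make $f=h\circ\varphi$ Jonqui\`eres by Lemma~\ref{Lemm:JoCremWinfty}, contradicting $\lgth(f)\ge 2$. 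The three equations then collapse to $\lgth(h)-1=\lgth(g)-1=\lgth(f)-2$, giving the desired $\lgth(h)=\lgth(f)-1$.

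For the second assertion, the sequence $n\mapsto\lgth(f_n)$ strictly decreases by $1$ at each step for as long as $f_{n-1}\notin\Aut(\p^2)$, which is exactly the condition $\deg(f_{n-1})\ge 2$; hence the iteration terminates at the smallest $n$ with $\deg(f_n)=1$, yielding $\lgth(f)=n$. I do not foresee any genuine obstacle here: all the real difficulty has already been absorbed in Proposition~\ref{Prop:PredecessorsBirP2FromCastelnuovo-predecessor} and Corollary~\ref{Coro:PredCasPredisPredPred}, and what remains is essentially careful bookkeeping.
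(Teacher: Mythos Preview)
Your proof is correct and follows essentially the same route as the paper's: both arguments invoke Corollary~\ref{Coro:PredCasPredisPredPred} to produce a genuine predecessor $g$ of $f$ together with a common predecessor $s=h\circ\kappa$ of both $g$ and $h$, then apply Corollary~\ref{Cor:decrdegrlgth} three times to conclude $\lgth(h)-1=\lgth(s)=\lgth(g)-1=\lgth(f)-2$. Your treatment is slightly more explicit in separating the case $\psi=\varphi$ and in justifying $h\notin\Aut(\p^2)$, but the structure and the key inputs are identical.
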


\begin{proof}
By definition of a Castelnuovo-predecessor there exists a proper base-point $p$ of maximal multiplicity of $f$ and a Jonqui\`eres tranformation $\varphi \in \Jonq_{p}$ such that $h=f \circ \varphi^{-1}$. 

By Corollary~\ref{Coro:PredCasPredisPredPred}, there exists $\psi \in \Jonq_{p}$ such that $g=f \circ \psi^{-1}$ is a predecessor of $f$ and an element $s\in \Bir(\p^2)$ which is a predecessor of both $g$ and $h$.

As $f\notin\Aut(\p^2)$, we have $\deg(f)>1$. Corollary~\ref{Cor:decrdegrlgth} yields $\lgth(g)=\lgth(f)-1\ge 0$, because $g$
is a predecessor
of $f$.  It remains to prove that $\lgth(h)=\lgth(g)$.

If $f$ is a Jonqui\`eres element, then $\lgth(f)=1$, whence $\lgth(g)=0$.
By Lemma~\ref{Lemm:CastelnuovopredJonq}, we also have $\lgth(h)=0$.

If $f$ is not a Jonqui\`eres element, then $\deg(h)>1$ and $\deg(g)>1$. We  find  $\lgth(s)=\lgth(h)-1=\lgth(g)-1$, as $s$ is a predecessor of $g$ and $h$ (Corollary~\ref{Cor:decrdegrlgth}).\end{proof}

\begin{corollary}\label{Cor:ErrorQuadratic}
Let $f = f_0$ be an element of $\Bir(\p^2)\setminus \Aut(\p^2)$.
Let $f_1,\ldots,f_n$ be elements of  $\Bir(\p^2)$
such that $f_i$ is a Castelnuovo-predecessor of $f_{i-1}$ for $i=1,\ldots,n$.
Then, setting $g_0 = f$, there exist $g_1,g_2,\ldots,g_n\in \Bir(\p^2)$
such that $g_i$ is a predecessor of $g_{i-1}$ for $i=0,\ldots,n$, and such that $\deg((g_i)^{-1}\circ f_i)\le 2$ for $i=0,\dots,n$.
\end{corollary}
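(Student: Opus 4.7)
The plan is to proceed by induction on $i$, constructing $g_i$ as a predecessor of $g_{i-1}$ (with $g_0=f_0$) and verifying $\deg(g_i^{-1}\circ f_i)\le 2$ at each step. The base case $i=0$ is immediate, since $g_0^{-1}\circ f_0=\mathrm{id}$. The main tool is Corollary~\ref{Coro:PredCasPredisPredPred} applied to $g_{i-1}$; to make the induction go through smoothly, I would strengthen the inductive hypothesis to remember that whenever the error $\rho_{i-1}:=g_{i-1}^{-1}\circ f_{i-1}$ has degree exactly $2$, it lies in $\Jonq_q$ for some proper base-point $q$ of maximal multiplicity simultaneously for $g_{i-1}$ and $f_{i-1}$. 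This extra structural information is exactly what Corollary~\ref{Coro:PredCasPredisPredPred}(1)(b)(c) delivers at each application, so it propagates automatically.

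For the inductive step I would split into cases on $\deg\rho_{i-1}$. If $\rho_{i-1}\in\Aut(\p^2)$, then rewriting $f_i=f_{i-1}\circ\varphi_i^{-1}=g_{i-1}\circ(\varphi_i\circ\rho_{i-1}^{-1})^{-1}$ and transporting the base-points of $\varphi_i$ by the automorphism $\rho_{i-1}^{-1}$ shows that $f_i$ is still a Castelnuovo-predecessor of $g_{i-1}$ (the major base-points transform correctly under $\rho_{i-1}^{-1}$). A direct application of Corollary~\ref{Coro:PredCasPredisPredPred} to $g_{i-1}$ then produces $g_i:=g_{i-1}\circ\psi_i^{-1}$ with $g_i^{-1}\circ f_i=\psi_i\circ\rho_{i-1}\circ\varphi_i^{-1}$ of degree $\le 2$, and the new error again lies in a Jonqui\`eres group at a common maximal base-point. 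If $\rho_{i-1}$ is quadratic, let $q$ be the common maximal base-point from the strengthened hypothesis; by Proposition~\ref{Prop:PredecessorsBirP2FromCastelnuovo-predecessor}(2) we can arrange the Castelnuovo reduction of $f_{i-1}$ at $q$, so that $\varphi_i\in\Jonq_q$ and hence $\tilde\varphi_i:=\varphi_i\circ\rho_{i-1}^{-1}\in\Jonq_q$ with $f_i=g_{i-1}\circ\tilde\varphi_i^{-1}$. One then verifies that $\tilde\varphi_i$ realises $f_i$ as a Castelnuovo-predecessor of $g_{i-1}$ at $q$, and applies Corollary~\ref{Coro:PredCasPredisPredPred} as before to obtain $g_i$.

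The hard part will be the quadratic case: checking that the base-points other than $q$ of $\tilde\varphi_i=\varphi_i\circ\rho_{i-1}^{-1}$ match the major base-points of $g_{i-1}$ required by the Castelnuovo reduction of Proposition~\ref{Prop:AlgoCas}. This is a detailed base-point tracking argument relying on the fact that the three base-points of the quadratic involution $\rho_{i-1}$ are, by the previous step's output of Corollary~\ref{Coro:PredCasPredisPredPred}, among the maximal-multiplicity base-points common to $g_{i-1}$ and $f_{i-1}$, so the cancellations between $\varphi_i$ and $\rho_{i-1}^{-1}$ leave precisely the major base-points of $g_{i-1}$. In any edge case where the direct identification is delicate, one can fall back on Corollary~\ref{Coro:PredCasPredisPredPred}(2), which guarantees a common predecessor of two candidate elements in the same $\Jonq_q$, and absorb the discrepancy into the single quadratic error $\rho_i$.
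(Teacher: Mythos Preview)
Your plan has the right flavour but the ``hard part'' you flag is a genuine gap, and the paper avoids it by reversing the logic. You try to show that $f_i$ is a \emph{Castelnuovo}-predecessor of $g_{i-1}$ (so that Corollary~\ref{Coro:PredCasPredisPredPred} applies to $g_{i-1}$). But being a Castelnuovo-predecessor is a rigid condition: $\tilde\varphi_i$ must have base-points exactly $\{p\}\cup M$ (or $\{p\}\cup M\setminus\{q\}$) where $M$ is the set of \emph{major} base-points of $g_{i-1}$, as in Proposition~\ref{Prop:AlgoCas}. After twisting $\varphi_i$ by the quadratic $\rho_{i-1}^{-1}$, there is no reason the resulting base-points should match this precise set for $g_{i-1}$; your claim that the three base-points of $\rho_{i-1}$ are ``among the maximal-multiplicity base-points common to $g_{i-1}$ and $f_{i-1}$'' is not what Proposition~\ref{Prop:PredecessorsBirP2FromCastelnuovo-predecessor}\ref{PredecessorsBirP22}(b) actually gives (only one of the three base-points of $\rho$ is asserted to be of maximal multiplicity). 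So the verification you postpone is not just delicate --- it need not hold.

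The paper's proof sidesteps this entirely. Instead of working with $g_{i-1}$, it applies Proposition~\ref{Prop:PredecessorsBirP2FromCastelnuovo-predecessor} to $f_i$ itself (which \emph{is} a Castelnuovo-predecessor of $f_{i-1}$ by hypothesis), producing $\psi_i\in\Jonq_{p_i}$ with $g_{i+1}:=f_i\circ\psi_i^{-1}$ a predecessor of $f_i$ and $\deg(g_{i+1}^{-1}\circ f_{i+1})=\deg(\psi_i\circ\varphi_i^{-1})\le 2$. The remaining task is to show $g_{i+1}$ is also a predecessor of $g_i$. Here is where Corollary~\ref{Coro:PredCasPredisPredPred}\ref{CorpsiphiDiff} enters, but applied at the \emph{previous} index: the error $\rho_{i-1}=\psi_{i-1}\circ\varphi_{i-1}^{-1}$ lies in $\Jonq_{p_i}$ (this is the content of Proposition~\ref{Prop:PredecessorsBirP2FromCastelnuovo-predecessor}\ref{PredecessorsBirP22}(b), since the unique maximal base-point of $f_i$ must be the $p_i$ used in the next Castelnuovo step), so $g_i=f_i\circ\rho_{i-1}^{-1}$ and $g_{i+1}=f_i\circ\psi_i^{-1}$ both lie in $f_i\cdot\Jonq_{p_i}$, and the equivalence in Corollary~\ref{Coro:PredCasPredisPredPred}\ref{CorpsiphiDiff} says a predecessor of one is a predecessor of the other. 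Thus the paper uses $f_i$ as the pivot and never needs to check any Castelnuovo condition for the $g_i$'s.
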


\begin{proof}
Note that if $\deg(f_i)=1$ for some $i<n$, then for each $m\in \{i+1,\ldots,n\}$ we have $f_{m}=f_{i}$ (Definition~\ref{Def:Castelnuovo-predecessor}), so we can simply choose $g_m=f_m$. It suffices then to do the case where $\deg(f_i)>1$ for each $i\in \{0 ,\ldots,n-1\}$.

For each $i \in \{0,\ldots,n-1\}$, let $p_i \in \p^2$ be a proper  base-point of maximal multiplicity of $f_i$, such that $f_{i+1}=f_{i}\circ (\varphi_{i})^{-1}$ for some Jonqui\`eres element $\varphi_{i}\in \Jonq_{p_{i}}\subseteq\Bir(\p^2)$ as in Proposition~\ref{Prop:AlgoCas}. 

For $i=0, \ldots,n-1$, we define inductively $g_{i+1}\in \Bir(\p^2)$.
To do this, we apply Proposition~\ref{Prop:PredecessorsBirP2FromCastelnuovo-predecessor}, and find $\psi_i\in \Jonq_{p_i}$ such that $g_{i+1}=f_{i}\circ (\psi_i)^{-1}$ is a predecessor of $f_{i}$, and that either $\psi_i=\varphi_i$, or $\psi_i \circ(\varphi_i)^{-1}$ is a quadratic map that belongs to $\Jonq_{q}$, where $q\in \p^2$ is the unique base-point of maximal multiplicity of $f_{i+1}$
(so that $q = p_{i+1}$ if $i < n$).
We then find that $(g_{i+1})^{-1}\circ f_{i+1}=\psi_{i}\circ (\varphi_{i})^{-1}$ has degree at most $2$. 

It remains to show that $g_{i+1}$ is a predecessor of $g_{i}$ for $i=0,\ldots,n-1$. If $f_i=g_i$, this is true, since $g_{i+1}$ is a predecessor of $f_i$. Otherwise, we have $i \geq 1$, $f_{i}=f_{i-1}\circ (\varphi_{i-1})^{-1}$,  $g_{i}=f_{i-1}\circ (\psi_{i-1})^{-1}$, and  $\psi_{i-1} \circ(\varphi_{i-1})^{-1}\in \Jonq_{p_i}$. As $g_{i+1}=f_i\circ (\psi_{i})^{-1}$ is a predecessor of $f_i$ and $\psi_i\in \Jonq_{p_i}$, it is also a predecessor of $g_i$ (Corollary~\ref{Coro:PredCasPredisPredPred}\ref{CorpsiphiDiff}).
\end{proof}

\section{Examples and applications}
\subsection{Length of birational maps of small degree}\label{SubSec:LengthSmallDegree}
The next table gives all homaloidal types of degree $\le 6$. 
The homaloidal types are given in the second column (as already said before, a sequence of $s$ multiplicities $m$ is written $m^s$). In the first column ($\#$) a label is associated to each homaloidal type: This is the degree ``{\it $d$ }''  if the type is Jonqui\`eres, or ``{\it $d$.$i$}'' for the others (the order, for each degree, being the anti-lexicographic order according to the multiplicities). Then, the third column ($\ell$) gives the length and the fourth (pr.) gives the predecessor (designated by its label). If the Castelnuovo-predecessor is different from the predecessor, it is also given, but in parenthesis. We see that the lengths are not directly related to the ordering of homaloidal types that we use.
\[\begin{array}{llll}
\begin{array}[t]{|l|l|l|l|}
\hline \#\! & \text{h.~type}& \ell& \text{pr.}\!\\\hline
\lb{1} &1&{\bf 0}&\\\hline
\lb{2} &1^3&{\bf 1}&\lb{1}\\\hline
\lb{3}& 2,1^4&{\bf 1}&\lb{1}\\ \hline 
\lb{4}& 3,1^6&{\bf 1}&\lb{1}\\ \hline\end{array}&
\begin{array}[t]{|l|l|l|l|}
\hline \# & \text{h.~type}& \ell& \text{pr.}\!\\\hline
\lbd{4}{1}& 2^3,1^3&{\bf 2}&\lb{2}\\\hline
\lb{5}&4,1^8&{\bf 1}&\lb{1}\\
\lbd{5}{1}&3,2^3,1^3&{\bf 2}& \lb{2} (\!\lb{3})\\
\lbd{5}{2}&2^6&{\bf 2}&\lb{3}\\\hline
 \end{array} &
\begin{array}[t]{|l|l|l|l|l|}
\hline \# & \text{h.~type}& \ell& \text{pr.}\!\\\hline
\lb{6}&5,1^{10}&{\bf 1}&\lb{1}\\
\lbd{6}{1}&4,2^4,1^3&{\bf 2}&\lb{2}\\
\lbd{6}{2}&3^3,2,1^4&{\bf 2}&\lb{3}\\
\lbd{6}{3}&3^2,2^4,1&{\bf 2}&\lb{3}\\\hline
\end{array}\end{array}\]
The types of degree $7$, $8$, $9$ are given below, and provide the first types
of length $3$:
\[\begin{array}{llll}
\begin{array}[t]{|l|l|l|l|l|}
\hline \# & \text{hom.~type}& \ell& \text{pr.}\!\\\hline
\lb{7}&6,1^{12}&{\bf 1}&\lb{1}\\
\lbd{7}{1}&5,2^5,1^3&{\bf 2}&\lb{2} (\!\lb{3})\\
\lbd{7}{2}&4,3^3,1^5&{\bf 2}&\lb{3} (\!\lb{4})\\
\lbd{7}{3}&4,3^2,2^3,1^2&{\bf 2}&\lb{3}\\
\lbd{7}{4}&3^4,2^3&{\bf 3}&\lbd{4}{1} (\!\lbd{5}{1})\\\hline
\lb{8}&7,1^{14}&{\bf 1}&\lb{1}\\
\lbd{8}{1}&6,2^6,1^3&{\bf 2}&\lb{2}\\
\lbd{8}{2}&5,3^3,2^2,1^3&{\bf 2}&\lb{3}\\\hline
\end{array}&
\begin{array}[t]{|l|l|l|l|l|}
\hline \# & \text{hom.~type}& \ell& \text{pr.}\!\\\hline
\lbd{8}{3}&5,3^2,2^5&{\bf 2}&\lb{3}\\
\lbd{8}{4}&4^3,3,1^6&{\bf 2}&\lb{4}\\
\lbd{8}{5}&4^3,2^3,1^3&{\bf 3}&\lbd{4}{1}\\
\lbd{8}{6}&4^2,3^2,2^3,1&{\bf 3}&\lbd{4}{1} (\!\lbd{5}{1})\!\\
\lbd{8}{7}&4,3^5,1^2&{\bf 2}&\lb{4}\\
\lbd{8}{8}&3^7&{\bf 3}&\lbd{5}{2}\\\hline
\lb{9}&8,1^{16}&{\bf 1}&\lb{1}\\
\lbd{9}{1}&7,2^7,1^3&{\bf 2}&\lb{2} (\!\lb{3})\\\hline
\end{array} &
\begin{array}[t]{|l|l|l|l|l|}
\hline \# & \text{hom.~type}& \ell& \text{pr.}\!\\\hline
\lbd{9}{2}&6,3^4,2,1^4&{\bf 2}&\lb{3}\\
\lbd{9}{3}&6,3^3,2^4,1&{\bf 2}&\lb{3}\\
\lbd{9}{4}&5,4^3,1^7&{\bf 2}&\lb{4} (\!\lb{5})\\
\lbd{9}{5}&5,4^2,3,2^3,1^2&{\bf 3}&\lbd{4}{1} (\!\lbd{5}{1})\!\\
\lbd{9}{6}&5,4,3^4,1^3&{\bf 2}&\lb{4}\\
\lbd{9}{7}&5,4,3^3,2^3&{\bf 3}&\lbd{4}{1}\\
\lbd{9}{8}&4^4,2^4&{\bf 3}&\lbd{5}{1} (\!\lbd{6}{1})\!\\
\lbd{9}{9}&4^3,3^3,2,1&{\bf 3}&\lbd{5}{1}\\ \hline
\end{array}\end{array}\]
There are then $17$ types of degree $10$ and $19$ types of degree $11$, each of length $\le 3$.
\[\begin{array}{lll}
\begin{array}[t]{|l|l|l|l|l|}
\hline \# & \text{hom.~type}& \ell& \text{pr.}\!\\\hline
\lb{1\!0}&9,1^{18}&{\bf 1}&\lb{1}\\
\lbd{1\!0}{1}&8,2^8,1^3&{\bf 2}&\lb{2}\\
\lbd{1\!0}{2}&7,3^5,1^5&{\bf 2}&\lb{3}(\!\lb{4})\\
\lbd{1\!0}{3}&7,3^4,2^3,1^2&{\bf 2}&\lb{3}\\
\lbd{1\!0}{4}&6,4^3,2^3,1^3&{\bf 3}&\lbd{4}{1} (\!\lbd{6}{1})\!\\
\lbd{1\!0}{5}&6,4^2,3^3,1^4&{\bf 2}&\lb{4}\\ \hline \end{array} & 
\begin{array}[t]{|l|l|l|l|l|}
\hline \# & \text{hom.~type}& \ell& \text{pr.}\!\\\hline
\lbd{1\!0}{6}&6,\!4^2\!,\!3^2\!,\!2^3\!,\!1&{\bf 3}&\lbd{4}{1}\\
\lbd{1\!0}{7}&6,3^7&{\bf 2}&\lb{4}\\
\lbd{1\!0}{8}&5^3,4,1^8&{\bf 2}&\lb{5}\\
\lbd{1\!0}{9}&5^3\!,\!3,\!2^3\!,\!1^3&{\bf 3}&\lbd{5}{1}\\
\lbd{1\!0}{1\!0}&5^3,2^6&{\bf 3}&\lbd{5}{2}\\
\lbd{1\!0}{1\!1}&5^2\!,\!4^2\!,\!2^4\!,\!1&{\bf 3}&\lbd{5}{1} (\!\lbd{6}{1})\!\\ \hline
\end{array} & 
\begin{array}[t]{|l|l|l|l|l|}
\hline \# & \text{hom.~type}& \ell& \text{pr.}\!\\\hline
\lbd{1\!0}{1\!2}&5^2\!,\!4,\!3^3\!,\!2,\!1^2&{\bf 3}&\lbd{5}{1}\\
\lbd{1\!0}{1\!3}&5^2,3^5,2&{\bf 3}&\lbd{5}{2}\\
\lbd{1\!0}{1\!4}&5,4^3,3^2,2^2&{\bf 3}&\lbd{5}{1}\\
\lbd{1\!0}{1\!5}&4^6,1^3&{\bf 3}&\lbd{6}{1}\\
\lbd{1\!0}{1\!6}&4^5,3^2,1&{\bf 3}&\lbd{6}{3}\\\hline\end{array}\end{array}\]
\[\begin{array}{lll}\begin{array}[t]{|l|l|l|l|l|}
\hline \# & \text{hom.~type}& \ell& \text{pr.}\!\\\hline
\lb{1\!1}& 10,1^{20}&{\bf 1}&\lb{1}\\
\lbd{1\!1}{1}&9,2^9,1^3&{\bf 2}&\lb{2}(\!\lb{3})\\
\lbd{1\!1}{2}&8,3^5,2^2,1^3&{\bf 2}&\lb{3}\\
\lbd{1\!1}{3}&8,3^4,2^5&{\bf 2}&\lb{3}\\
\lbd{1\!1}{4}&7,4^3,3^2,1^5&{\bf 2}&\lb{4}\\
\lbd{1\!1}{5}&7,\!4^3\!,3,\!2^3,\!1^2&{\bf 3}&\lbd{4}{1}\\
\lbd{1\!1}{6}&7,4^2,3^3,2^3&{\bf 3}&\lbd{4}{1} (\!\lbd{5}{1})\!\\ \hline
\end{array} & 
\begin{array}[t]{|l|l|l|l|l|}
\hline \# & \text{hom.~type}& \ell& \text{pr.}\!\\\hline
\lbd{1\!1}{7}&7,4,3^6,1&{\bf 2}&\lb{4}\\
\lbd{1\!1}{8}&6,5^3,1^9&{\bf 2}&\lb{5}(\!\lb{6})\\
\lbd{1\!1}{9}&6,5^2\!,4,2^4\!,\!1^2&{\bf 3}&\text{\it 5\!.\!1}(\!\text{\it 6\!.\!1})\!\\
\lbd{1\!1}{1\!0}&6,5^2\!,3^3\!,2,\!1^3&{\bf 3}&\lbd{5}{1}\\
\lbd{1\!1}{1\!1}&6,5^2\!,3^2\!,2^4&{\bf 3}&\lbd{5}{2}\!\\
\lbd{1\!1}{1\!2}&6,\!5,\!4^2\!,\!3^2\!,\!2^2\!,\!1&{\bf 3}&\lbd{5}{1}\\
\lbd{1\!1}{1\!3}&6,4^5,1^4&{\bf 2}&\lb{5}\\ \hline
\end{array} & 
\begin{array}[t]{|l|l|l|l|l|}
\hline \# & \text{hom.~type}& \ell& \text{pr.}\!\\\hline
\lbd{1\!1}{1\!4}&6,4^3,3^4&{\bf 3}&\lbd{5}{1}\\
\lbd{1\!1}{1\!5}& 5^4,2^5&{\bf 3}&\text{\it 6\!.\!1}(\!\text{\it7\!.\!1})\!\\
\lbd{1\!1}{1\!6}&5^3,4,3^3,1^2&{\bf 3}&\text{\it 6\!.\!2}(\!\text{\it7\!.\!2})\!\\
\lbd{1\!1}{1\!7}&5^2,4^4,2,1^2&{\bf 3}&\lbd{6}{1}\\
\lbd{1\!1}{1\!8}&5^2,4^3,3^2,2&{\bf 3}&\lbd{6}{3}\\ \hline\end{array}\end{array}\]

There are $29$ types of degree $12$, each of length $\le 4$.

\[\begin{array}{lll}\begin{array}[t]{|l|l|l|l|l|}
\hline \# & \text{hom.~type}& \ell& \text{pr.}\!\\\hline
\lb{1\!2}&11,1^{22}&{\bf 1}&\lb{1}\\
\lbd{1\!2}{1\!}&10,2^{10},1^3&{\bf 2}&\lb{2}\\
\lbd{1\!2}{2\!}&9,3^6,2,1^4&{\bf 2}&\lb{3}\\
\lbd{1\!2}{3}&9,3^5,2^4,1&{\bf 2}&\lb{3}\\
\lbd{1\!2}{4}&8,4^4,3,1^6&{\bf 2}&\lb{4}\\
\lbd{1\!2}{5}&8,4^4,2^3,1^3&{\bf 3}&\lbd{4}{1}\\
\lbd{1\!2}{6}&8,\!4^3,3^2\!,2^3\!,1&{\bf 3}&\text{\it 4\!.\!1}(\!\text{\it 5\!.\!1})\!\\
\lbd{1\!2}{7}&8,4^2,3^5,1^2&{\bf 2}&\lb{4}\\
\lbd{1\!2}{8}&7,5^3,2^4,1^3&{\bf 3}&\text{\it 5\!.\!1}(\!\text{\it 7\!.\!1})\!\\
\lbd{1\!2}{9}&7,\!5^2\!,\!4,\!3^2\!,\!2^2\!,\!1^{\!2}\!&{\bf 3}&\lbd{5}{1}\\ \hline
\end{array} & 
\begin{array}[t]{|l|l|l|l|l|}
\hline \# & \text{hom.~type}& \ell& \text{pr.}\!\\\hline
\lbd{1\!2}{1\!0}&7,5^2,3^4,2^2&{\bf 3}&\lbd{5}{2}\\
\lbd{1\!2}{1\!1}&7,5,4^4,1^5&{\bf 2}&\lb{5}\\
\lbd{1\!2}{1\!2}&7,5,4^3\!,\!3,\!2^3&{\bf 3}&\lbd{5}{1}\\
\lbd{1\!2}{1\!3}&7,5,4^2\!,\!3^4\!,\!1&{\bf 3}&\lbd{5}{1}\\
\lbd{1\!2}{1\!4}&6^3,5,1^{10}&{\bf 2}&\lb{6}\\
\lbd{1\!2}{1\!5}& 6^3,4,2^4,1^3&{\bf 3}&\lbd{6}{\!1}\\
\lbd{1\!2}{1\!6}& 6^3,3^3,2,1^4&{\bf 3}&\lbd{6}{\!2}\\
\lbd{1\!2}{1\!7}&6^3,3^2,2^4,1&{\bf 3}&\lbd{6}{\!3}\\
\lbd{1\!2}{1\!8}&6^2\!,5^2\!,2^5\!,1&{\bf 3}&\text{\it 6\!.\!1}(\!\text{\it 7\!.\!1})\!\\
\lbd{1\!2}{1\!9}&6^2\!,5,4,3^3\!,\!1^3&{\bf 3}&\text{\it 6\!.\!2}(\!\text{\it 7\!.\!2})\!\\ \hline
\end{array} & 
\begin{array}[t]{|l|l|l|l|l|}
\hline \# & \text{hom.~type}& \ell& \text{pr.}\!\\\hline
\lbd{1\!2}{2\!0}&6^2\!,5,4,3^2\!,2^3&{\bf 3}&\text{\it 6\!.\!3}(\!\text{\it 7\!.\!3})\!\\
\lbd{1\!2}{2\!1}&6^2\!,4^4\!,2,1^3&{\bf 3}&\lbd{6}{1}\\
\lbd{1\!2}{2\!2}&6^2\!,4^3\!,3^2,2,1&{\bf 3}&\lbd{6}{3}\\
\lbd{1\!2}{2\!3}&6,5^3,3^3,2,1&{\bf 3}&\text{\it 6\!.\!2}(\!\text{\it 8\!.\!2})\\\lbd{1\!2}{2\!4}&6,5^2\!,4^3\!,2^2\!,1&{\bf 3}&\lbd{6}{1}\\
\lbd{1\!2}{2\!5}&6,5,4^4,3^2&{\bf 3}&\text{\it 6\!.\!3}(\!\text{\it 7\!.\!3})\\
\lbd{1\!2}{2\!6}&5^4,4^2,3,1^2&{\bf 3}&\lbd{7}{3}\\
\lbd{1\!2}{2\!7}&5^4,4,3^3&{\bf 4}&\lbd{7}{4}\\
\lbd{1\!2}{2\!8}&5^3,4^4,2&{\bf 4}&\lbd{7}{4}\\\hline\end{array}\end{array}\]
We could of course continue like this but the number of homaloidal types grows very quickly. We give below the homaloidal types of length $\ell\in \{2,\dots, 7\}$ of smallest degree:
\[\begin{array}{llll}
\begin{array}[t]{|l|l|l|}
\hline
\ell & d& \text{mult.} \\
\hline
2&4&2^3,1^3\\
3&7&3^4,2^3\\
\hline
\end{array} & \begin{array}[t]{|l|l|l|}
\hline
\ell & d& \text{mult.} \\
\hline
4&12&5^4,4,3^3\\
4&12&5^3,4^4,2\\
\hline
\end{array}& \begin{array}[t]{|l|l|l|}
\hline
\ell & d& \text{mult.} \\
\hline
5&16&6^5,5^3\\
6&27&11^4,10,6^4\\
\hline
\end{array}& \begin{array}[t]{|l|l|l|}
\hline
\ell & d& \text{mult.} \\
\hline
6&27&10^4,9^4,2\\
7&38&14^6,11^2,5\\
\hline
\end{array}\end{array}\]

\subsection{Automorphisms of the affine plane}

As explained before, there is a natural length in the group $\Aut(\A^2)$, since
this group is an amalgamated product of $\Aff_2 = \Aut (\p^2) \cap \Aut(\A^2)$ and $\Jonq_{p, \A^2}=\Jonq_p \cap \Aut(\A^2)$, where we fix a linear embedding $\A^2 \hookrightarrow \p^2$, and a point $p\in \p^2$ outside the image. By construction, the length of an element of $\Aut(\A^2)$, viewed in the amalgamated product, is at least equal to its length in $\Bir(\p^2)$. We show in Proposition~\ref{Prop:LengthThesameinA2BirP2} that the two lengths are in fact equal, using the following result.

\begin{lemma}  \label{Lemm:CompositionWithoutCommonBasepts}
Let $f,g\in \Bir(\p^2)$ be elements such that $\Base(f)\cap \Base(g^{-1})=\emptyset$. Then,
\[\begin{array}{l}
\deg(f \circ g)=\deg(f) \cdot \deg(g), \quad \lvert \Base(f \circ g)\rvert=\lvert \Base(f)\rvert+\lvert \Base(g)\rvert  \quad \text{and}\\
\lgth(f \circ g) = \lgth(f)+\lgth(g).\end{array}\]
\end{lemma}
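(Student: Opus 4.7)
The first two equalities are not really new: since $f, g \in \Bir(\p^2) \subseteq \Weyl$, both $\deg(f \circ g) = \deg(f) \deg(g)$ and $\lvert \Base(f \circ g)\rvert = \lvert \Base(f)\rvert + \lvert \Base(g)\rvert$ are already given by Lemma~\ref{Lemm:CompositionWinftyWithoutCommonBasepts}\ref{CompWinftyWithoutCommonBasepts1} (the case $\deg g = 1$, where $g \in \Aut(\p^2)$, being trivial). The work is therefore concentrated in the length equality $\lgth(f \circ g) = \lgth(f) + \lgth(g)$.

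My plan is to prove this by induction on $n = \lgth(g)$. The case $n=0$ is immediate since then $g \in \Aut(\p^2)$ and post-composing with an automorphism does not change the length. Assume now $n \ge 1$, so that $\deg(g) \ge 2$ and hence $\deg(f \circ g) \ge 2$ by the first part. Using Corollary~\ref{Cor:decrdegrlgth}, choose a predecessor $g_1 = g \circ \varphi^{-1}$ of $g$, where $\varphi^{-1}$ is the corresponding Jonqui\`eres element. By Lemma~\ref{Lem:Pred}\ref{Lem:Pred3} applied to the predecessor $g_1$ of $g$, we have $\Base(\varphi) \subseteq \Base(g)$. Then Lemma~\ref{Lemm:CompositionWinftyWithoutCommonBasepts}\ref{CompWinftyWithoutCommonBasepts2} tells us that $\varphi \big( (f \circ g)^{-1}(e_0) \big)$ is a predecessor of $(f \circ g)^{-1}(e_0)$ in the sense of Definition~\ref{DefiPredeWinfe0}; since $(f \circ g)^{-1} \circ (f \circ g_1) = \varphi^{-1}$ is Jonqui\`eres, Corollary~\ref{Cor:EquiTwoPred} yields that $f \circ g_1$ is a predecessor of $f \circ g$ in $\Bir(\p^2)$.

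To apply the induction hypothesis to $f$ and $g_1$, I need to verify that $\Base(f) \cap \Base(g_1^{-1}) = \emptyset$. Write $g_1^{-1} = \varphi \circ g^{-1}$; since $\Base(\varphi) \subseteq \Base(g) = \Base((g^{-1})^{-1})$, Corollary~\ref{corollary: inclusion of the base locus of a composition in a special case} (applied to $\varphi$ and $g^{-1}$) gives $\Base(g_1^{-1}) \subseteq \Base(g^{-1})$, so that the disjointness hypothesis is inherited. Since $\lgth(g_1) = \lgth(g) - 1$ by Corollary~\ref{Cor:decrdegrlgth}, the induction hypothesis yields $\lgth(f \circ g_1) = \lgth(f) + \lgth(g) - 1$, and applying Corollary~\ref{Cor:decrdegrlgth} once more (to the predecessor $f \circ g_1$ of $f \circ g$) gives $\lgth(f \circ g) = \lgth(f \circ g_1) + 1 = \lgth(f) + \lgth(g)$.

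The only genuine point to check is that the disjoint-base-points condition propagates through the predecessor step for $g$, and this is exactly what Lemma~\ref{Lem:Pred}\ref{Lem:Pred3} combined with Corollary~\ref{corollary: inclusion of the base locus of a composition in a special case} provides; the rest of the argument is a direct unwinding of Theorem~\ref{TheMainTheorem} via the characterisation of predecessors in Corollary~\ref{Cor:EquiTwoPred}.
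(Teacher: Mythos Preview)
Your proof is correct and follows essentially the same approach as the paper's own proof: induction on $\lgth(g)$, taking a predecessor $g_1$ of $g$, using Lemma~\ref{Lemm:CompositionWinftyWithoutCommonBasepts}\ref{CompWinftyWithoutCommonBasepts2} together with Corollary~\ref{Cor:EquiTwoPred} to deduce that $f\circ g_1$ is a predecessor of $f\circ g$, and propagating the disjoint-base-points hypothesis via Lemma~\ref{Lem:Pred}\ref{Lem:Pred3} and Corollary~\ref{corollary: inclusion of the base locus of a composition in a special case}. The only cosmetic difference is that you cite Corollary~\ref{Cor:decrdegrlgth} for the length drop where the paper cites Theorem~\ref{TheMainTheorem}, and you leave implicit the passage from ``$g_1$ is a predecessor of $g$'' to ``$\varphi(g^{-1}(e_0))$ is a predecessor of $g^{-1}(e_0)$'' (which is exactly one direction of Corollary~\ref{Cor:EquiTwoPred}).
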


\begin{proof}
The two first equalities follow from Lemma~\ref{Lemm:CompositionWinftyWithoutCommonBasepts}\ref{CompWinftyWithoutCommonBasepts1}. We
prove the third one by induction on $\lgth(g)$, the case where $\lgth(g)=0$ being obvious, since $g\in \Aut(\p^2)$ in that case.

We then consider the case where $d=\deg(g)=\deg(g^{-1})>1$, and take $\varphi\in \Jonq\subseteq \Bir(\p^2)$ such that $g_1=g\circ \varphi^{-1}$ is a predecessor of $g$. Then, $g_1^{-1}(e_0)=\varphi(g^{-1}(e_0))$ is a predecessor of $g^{-1}(e_0)$ (Corollary~\ref{Cor:EquiTwoPred}). This implies that 
$\varphi((f\circ g)^{-1}(e_0))$ is a predecessor of $(f\circ g)^{-1}(e_0)$ (Lemma~\ref{Lemm:CompositionWinftyWithoutCommonBasepts}) and thus that  $f\circ g_1=f\circ g\circ \varphi^{-1}$ is a predecessor of $f\circ g$ (Corollary~\ref{Cor:EquiTwoPred}). Hence, we obtain  $\lgth(g_1)=\lgth(g)-1$ and $\lgth(f\circ g_1)=\lgth(f\circ g)-1$ (Theorem~\ref{TheMainTheorem}).

Since $\Base(\varphi)\subseteq \Base(g)$ (Lemma~\ref{Lem:Pred}),
we have 
$\Base(g_1^{-1})=\Base ( \varphi \circ g^{-1} )  \subseteq   \Base(g^{-1} )$  (Corollary~\ref{corollary: inclusion of the base locus of a composition in a special case}),
and thus $\Base(f)\cap \Base(g_1^{-1})=\emptyset$.
We can thus apply the induction hypothesis to get $\lgth(f\circ g_1)=\lgth(f)+\lgth(g_1)$. This achieves the proof.
\end{proof}

\begin{proposition}\label{Prop:LengthThesameinA2BirP2}
Let $f\in \Aut(\A^2)$. Taking an inclusion $\Aut(\A^2) \hookrightarrow \Bir(\p^2)$ given by a linear embedding $\A^2 \hookrightarrow \p^2$, the length of $f$ in $\Bir(\p^2)$ is equal to its length in the amalgamated product $\Aut(\A^2)$.
\end{proposition}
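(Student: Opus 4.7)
The plan is to prove this by induction on the amalgamated length $n := \lgth_{\Aut(\A^2)}(f)$, using Lemma~\ref{Lemm:CompositionWithoutCommonBasepts} to peel off one Jonqui\`eres factor at a time. The inequality $\lgth_{\Bir(\p^2)}(f) \le \lgth_{\Aut(\A^2)}(f)$ is immediate: any reduced expression $f = \beta_n \circ \psi_n \circ \cdots \circ \beta_1 \circ \psi_1 \circ \beta_0$ with $\beta_i \in \Aff_2 \subseteq \Aut(\p^2)$ and $\psi_i \in \Jonq_{p, \A^2} \subseteq \Jonq$ exhibits $f$ as a composition of $n$ Jonqui\`eres transformations. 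The case $n = 0$ is trivial since then $f \in \Aff_2$.

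For $n \ge 1$, fix a reduced amalgamated expression as above and set $g := f \circ \beta_0^{-1} \circ \psi_1^{-1} = \beta_n \circ \psi_n \circ \cdots \circ \beta_1 \in \Aut(\A^2)$, which inherits a reduced expression of amalgamated length $n - 1$. By the inductive hypothesis $\lgth_{\Bir(\p^2)}(g) = n - 1$. The key step is to verify
\[ \Base(g) \cap \Base(\psi_1^{-1}) = \emptyset \text{ in } \B(\p^2), \]
so that Lemma~\ref{Lemm:CompositionWithoutCommonBasepts} gives $\lgth_{\Bir(\p^2)}(g \circ \psi_1) = (n-1) + 1 = n$; since $\beta_0 \in \Aut(\p^2)$ does not change the Bir-length, we conclude $\lgth_{\Bir(\p^2)}(f) = n$.

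Establishing the disjointness is the main obstacle. Observe first that $H := \Aff_2 \cap \Jonq_{p, \A^2}$ equals $\{\beta \in \Aff_2 : \beta(p)=p\}$: a direct matrix calculation shows that an affine automorphism fixes the point $p \in L_\infty$ if and only if it preserves the vertical pencil. For $n = 1$, $g = \beta_1 \in \Aut(\p^2)$ has $\Base(g) = \emptyset$ and the claim is immediate. For $n \ge 2$, the reduced form forces $\beta_1 \notin H$, hence $\beta_1^{-1}(p) \neq p$. All base-points of $\psi_1^{-1}$ lie at or infinitely near $p$, so it suffices to show that every element of $\Base(g)$ lies at or infinitely near the proper point $\beta_1^{-1}(p) \neq p$.

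To prove this, compute $g^{-1}(e_0) = \beta_1^{-1}\circ \psi_2^{-1}\circ \beta_2^{-1}\circ \cdots \circ \psi_n^{-1}(e_0)$ step by step. The crucial geometric fact is that every non-affine element of $\Jonq_{p, \A^2}$ contracts $L_\infty$ to $p$ as a map $\p^2 \dashrightarrow \p^2$; indeed, the elementary form $(x,y) \mapsto (ax+c, \alpha y + P(x))$ with $\deg P \ge 2$ sends every $[x:1:0]$ with $x \ne 0$ to $[0:1:0] = p$. Consequently, at each innermost $\psi_i^{-1}$-step, any incoming proper base-point $q \in L_\infty \setminus \{p\}$ is carried by Lemma~\ref{Lemma:Imagepts} to a point infinitely near $p$ (which cannot be $p$ itself since $p \in \Base(\psi_i)$), while $\psi_i^{-1}$ also contributes its own proper base-point at $p$; any intermediate $\beta_j^{-1}$ (interior in the reduced form) then shifts this proper base-point to $\beta_j^{-1}(p) \neq p$, and the next $\psi_{j-1}^{-1}$ brings everything back above $p$. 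An induction on the depth of the expression shows that after processing all $\psi_i^{-1}$'s the unique proper base-point is $p$ and every other base-point is infinitely near $p$. The final application of $\beta_1^{-1}$ then translates all of $\Base(g)$ to points at or infinitely near $\beta_1^{-1}(p) \neq p$, yielding the required disjointness.
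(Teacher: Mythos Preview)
Your proof is correct and follows essentially the same approach as the paper: both arguments reduce to showing that at each step the relevant base-loci are disjoint, then invoke Lemma~\ref{Lemm:CompositionWithoutCommonBasepts}. The paper organises this as a bottom-up induction on the partial products $a_i\circ\varphi_i\circ\cdots\circ a_0$, tracking geometrically where $L_\infty$ is contracted (namely to $a_i(p)\neq p$), whereas you organise it as a top-down induction on $n$, tracking base-points algebraically in $\ZZ_{\p^2}$; these are equivalent packagings of the same idea, and your bubble-space verification that all base-points of $g$ lie above $\beta_1^{-1}(p)$ is sound.
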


\begin{proof}
We write $f=a_n \circ \varphi_n \circ  \cdots \circ a_1 \circ \varphi_1 \circ a_0$, where each $a_i$ is an element of $\Aff_2$ and each $\varphi_i$ is an element of $\Jonq_{p, \A^2}$. If $f$ belongs to $A$, then its length is $0$ in $\Bir(\p^2)$ and in the amalgamated product, so we can assume that $n\ge 1$, that $\varphi_i\in \Jonq_{p, \A^2}\setminus \Aff_2$ for $i=1,\dots,n$ and that $a_i\in \Aff_2\setminus \Jonq_{p, \A^2}$ for $i=1,\dots,n-1$. We then need to prove that $\lgth(f)=n$. To do this, we first observe that $\varphi_i$ and $(\varphi_i)^{-1}$ contract exactly one curve of $\p^2$, namely the line $L_\infty=\p^2\setminus \A^2$. It implies that $(\varphi_i)^{-1}$ and $\varphi_i$ have only one proper base-point, and since both preserve lines through $p$, then $p$ is the unique proper base-point of $\varphi_i$ and $(\varphi_i)^{-1}$. Moreover, each $a_i$ is an automorphism of $\p^2$ that does not fix $p$, for $i=1,\dots,n-1$. Hence by induction the element $a_i \circ  \varphi_i  \circ \cdots \circ a_1 \circ \varphi_1 \circ a_0$ contracts the line $L_\infty$ onto $a_i(p)\not=p$, which is the unique proper base-point of $(a_i \circ  \varphi_i  \circ \cdots \circ a_1 \circ \varphi_1 \circ a_0)^{-1}$. In particular, $(a_i \circ  \varphi_i  \circ \cdots \circ a_1 \circ \varphi_1 \circ a_0)^{-1}$ and $\varphi_{i+1}$ do not have any common base-point, so $\lgth( a_{i+1} \circ \varphi_{i+1} \circ \cdots \circ a_1 \circ \varphi_1 \circ a_0) = \lgth(a_i \circ  \varphi_i \circ \cdots \circ a_1 \circ \varphi_1 \circ a_0 )+1$ for each $i$. This provides the result.
\end{proof}

\subsection{Decreasing the length and increasing the degree via a single Jonqui\`eres element}
In this section, we mainly provide an example of a Cremona transformation $f$ and a Jonqui\`eres element $\varphi$ such that $\lgth (f \circ \varphi^{-1} ) = \lgth (f) -1$ and $\deg (f \circ \varphi^{-1} ) >  \deg (f)$:

\begin{proposition}  \label{Proposition:Bertiniexample}
Fixing $8$ general points $p_0,\dots,p_7\in \p^2$, the following hold:
\begin{enumerate}
\item \label{Existence-of-the-Bertini-involution}
There exists a birational involution $f\in \Bir(\p^2)$
of homaloidal type $(17;6^8)$
such that 
\[\begin{array}{c}f(e_0)=17e_0-\sum\limits_{j=0}^7 6e_{p_j} \text{ and } f(e_i)=6e_0-e_{p_i}-2\sum\limits_{j=0}^7 e_{p_{j}}\text{ for }i=0,\dots,7.\end{array}\]
\item \label{Bertiniexample1}
For each general point $q \in \p^2$, there exists a Jonqui\`eres element
$\varphi_q$ such that
\[\varphi_q^{-1}(e_0)=5e_0-4e_{p_0}-e_{p_1}-\dots-e_{p_7}-e_q.\]
\item \label{Bertiniexample2}
For all $f$ and $\varphi_q$ as in \ref{Existence-of-the-Bertini-involution} and \ref{Bertiniexample1}, the birational map $f_q=f\circ \varphi_q^{-1}$ satisfies:
\[ \lgth(f_q) = 4 < 5 =\lgth(f)  \quad  \text{and} \quad \deg(f_q) = 19 > 17 = \deg (f).\]
\item\label{Bertiniexample3}
For any two distinct general points $q,q'\in \p^2$, and all choices of $f_q,f_q'$ as in \ref{Bertiniexample2}, we have 
\[f_{q'}\not\in f_{q}\Aut(\p^2).\]
\end{enumerate} 
\end{proposition}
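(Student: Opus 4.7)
The plan is to treat the four assertions in order, relying on the length algorithm of Theorem~\ref{TheMainTheorem} and the Picard--Manin arithmetic developed in Section~\ref{Sec:Reminders}.

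For~(1), I invoke the classical Bertini involution attached to $p_0,\dots,p_7$: blowing them up produces a del Pezzo surface of degree~$1$, whose anticanonical structure yields a birational involution of $\p^2$ of homaloidal type $(17;6^8)$, as recorded in Example~\ref{Example:SequenceBertini}. The prescribed formulas for $f(e_0)$ and $f(e_{p_i})$ are then verified by a direct finite check on the sublattice $\Z e_0\oplus\bigoplus_{i=0}^7 \Z e_{p_i}$: the proposed linear map preserves the intersection form and the canonical class and satisfies $f^2=\id$. For~(2), the tuple $(5;4,1^8)$ satisfies the Noether equalities, so it is a genuine homaloidal type; Corollary~\ref{corollary:existence-of-a-Cremona-transformation-with-assigned-homaloidal-type-on-a-dense-open-subset} produces a dense open subset $U\subseteq(\p^2)^9$ of configurations realising it, and since $(p_0,\dots,p_7)$ is already general, the fibre of $U$ over this $8$-tuple is a dense open subset of $\p^2$ from which~$q$ may be taken.

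The core computation is~(3). First, Corollary~\ref{corollary= degree of a composition} yields
\[\deg(f_q)=17\cdot 5 - (6\cdot 4 + 7\cdot 6\cdot 1) = 19.\]
Next, using $f^{-1}=f$ I write $f_q^{-1}(e_0)=\varphi_q(f(e_0))=17\,\varphi_q(e_0) - 6\sum_{j=0}^{7}\varphi_q(e_{p_j})$, and I choose $\varphi_q\in\Jonq_{p_0}$ so that $\varphi_q^{-1}$ is also Jonqui\`eres of type $(5;4,1^8)$ with its quadruple point at~$p_0$; this gives $\varphi_q(e_0)=5e_0-4e_{p_0}-\sum_{k=1}^{8}e_{r_k}$ for eight auxiliary base-points $r_k$ of $\varphi_q^{-1}$, and $\varphi_q(e_{p_0})=\varphi_q(e_0)-(e_0-e_{p_0})$. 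For each $j\ge 1$, the class $\varphi_q(e_{p_j})$ is a $(-1)$-class orthogonal to $e_0-e_{p_0}$ with canonical degree~$-1$; these constraints force
\[\varphi_q(e_{p_j})=e_0-e_{p_0}-e_{r_{\tau(j)}}\]
for a bijection $\tau\colon\{p_1,\dots,p_7,q\}\to\{r_1,\dots,r_8\}$. Collecting terms yields
\[f_q^{-1}(e_0)=19e_0-8e_{p_0}-11\,e_{r_{k_0}}-5\sum_{k\ne k_0}e_{r_k},\qquad k_0:=\tau(q),\]
so the homaloidal type of $f_q$ is $(19;11,8,5^7)$. Applying Lemma~\ref{Lemm:AlgoFirstStepS} with $s=4$ shows that its predecessor has type $(8;3^7)$; appending the tail $(8;3^7)\predecessor(5;2^6)\predecessor(3;2,1^4)\predecessor(1)$ of Example~\ref{Example:SequenceBertini} and invoking Theorem~\ref{TheMainTheorem} gives $\lgth(f_q)=4$, while the full chain of that example records $\lgth(f)=5$.

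For~(4), if $f_{q'}=f_q\circ\alpha$ for some $\alpha\in\Aut(\p^2)$, then left-cancelling $f$ yields $\alpha=\varphi_q\circ\varphi_{q'}^{-1}$, so $\deg\alpha=1$. But $\varphi_q$ and $\varphi_{q'}$ share the base-points $p_0,\dots,p_7$ with multiplicities $4,1,\dots,1$, while their remaining base-points $q$ and $q'$ are distinct and not among these; therefore Corollary~\ref{corollary= degree of a composition} gives $\deg(\varphi_q\circ\varphi_{q'}^{-1})=25-(16+7)=2$, a contradiction. The principal technical step is the explicit determination of the classes $\varphi_q(e_{p_j})$ in~(3); the rest of the proof reduces to straightforward bookkeeping with degree and multiplicity formulas.
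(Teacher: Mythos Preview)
Your proof is correct, but the route you take in~(3) differs from the paper's, and it is worth comparing the two.

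The paper computes the homaloidal type of $f_q^{-1}$ rather than of $f_q$: since $f=f^{-1}$, one has $f_q(e_0)=f(\varphi_q^{-1}(e_0))$, and the formulas for $f(e_0)$ and $f(e_{p_i})$ already stated in~(1) immediately give $f_q(e_0)=19e_0-4e_{p_0}-7\sum_{j=1}^{7}e_{p_j}-f(e_q)$, so the type of $f_q^{-1}$ is $(19;7^7,4,1)$. The predecessor chain for this type appears verbatim in Example~\ref{Example:SomeSequences} and yields length~$4$. Your approach instead computes the homaloidal type of $f_q$ itself as $(19;11,8,5^7)$, which requires determining $\varphi_q(e_{p_j})$ for each $j$; you do this correctly via the constraints $(\varphi_q(e_{p_j}))^2=-1$, $\omega(\varphi_q(e_{p_j}))=-1$, and $\varphi_q(e_{p_j})\cdot(e_0-e_{p_0})=0$, together with Lemma~\ref{Lemma:Imagepts}. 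The paper in fact remarks that one could check the type $(19;11,8,5^7)$, but deems it unnecessary. Both routes are valid; the paper's is shorter precisely because~(1) has already done the hard work for $f$, whereas you effectively redo the analogous computation for $\varphi_q$. One small point: since~(3) is stated \emph{for all} $\varphi_q$ as in~(2), you should note that assuming $\varphi_q\in\Jonq_{p_0}$ is harmless because replacing $\varphi_q$ by $\alpha\circ\varphi_q$ with $\alpha\in\Aut(\p^2)$ does not change the homaloidal type of $f_q$.

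For~(4), the paper argues that $f_{q'}=f_q\alpha$ forces $f_q(e_0)=f_{q'}(e_0)$, hence $f(e_q)=f(e_{q'})$, hence $q=q'$ by faithfulness of the action. Your degree computation $\deg(\varphi_q\circ\varphi_{q'}^{-1})=2$ is an equally clean alternative.
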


\begin{proof}
\ref{Existence-of-the-Bertini-involution}-\ref{Bertiniexample1}:
Let $U_D\subseteq (\p^2)^8$ be the subset of $8$-uples $(p_0,\dots,p_7)$ such that the points $p_i$ are pairwise distinct and the blow-up $\pi\colon X\to \p^2$ of $p_0,\dots,p_7$ is a Del Pezzo surface. By Lemma~\ref{Lemm:DPopen} below, $U_D$ is a dense open subset of $(\p^2)^8$.
For each $(p_0,\dots,p_7)\in U_D$, we can then define $\hat{f}\in \Aut(X)$ to be the \emph{Bertini involution} $($see \cite[\S 8.8.2]{DolgachevBook}$)$ and obtain that $f=\pi\circ \hat{f}\circ\pi^{-1}\in \Bir(\p^2)$ has degree $17$ and satisfies the conditions given in \ref{Existence-of-the-Bertini-involution}.
Since $(5; 4, 1^8)$ is a homaloidal type, Corollary \ref{corollary:existence-of-a-Cremona-transformation-with-assigned-homaloidal-type-on-a-dense-open-subset} yields a dense open subset $V_9\subseteq(\p^2)^9$ such that for all  each $(p_0,\dots,p_7,q)\in V_9$ there exists $\varphi_q \in \Bir(\p^2)$ satisfying the condition given in \ref{Bertiniexample1}.
The element $\varphi_q$ is  Jonqui\`eres by Lemma~\ref{Lemm:JoCremWinfty}.
We denote by $V$ the open set $V=V_9\cap (U_D\times \p^2)\subseteq(\p^2)^9$, by $\kappa \colon (\p^2)^9\to (\p^2)^8$  the projection on the first eight factors, and by $U\subseteq(\p^2)^8$ the open set given by $U=\kappa(V)$.
For each $(p_0,\dots,p_7)\in U$, the assertions \ref{Existence-of-the-Bertini-involution} and \ref{Bertiniexample1} are then satisfied.

\ref{Bertiniexample2}: We take $f$ and $\varphi_q$ as in \ref{Existence-of-the-Bertini-involution} and \ref{Bertiniexample1}, write $f_q=f\circ \varphi_q^{-1}$, and observe that
\[f_q(e_0)=f(5e_0-4e_{p_0}-e_{p_1}-\dots-e_{p_7}-e_q)=19e_0-4e_{p_0}-7e_{p_1}-\dots-7e_{p_7}-f(e_q),\]
whence the homaloidal type of $f_q^{-1}$ is $(19;7^7,4,1)$. In particular, we have $\deg(f_q)=\deg(f_q^{-1})=19$. Recall that we also have $\lgth (f_q)=\lgth(f_q^{-1})$. 
Therefore, to show that $\lgth (f) = 5$ and $\lgth(f_q)=4$, it suffices to look at Examples~\ref{Example:SomeSequences} and~\ref{Example:SequenceBertini}, where we already observed that Algorithm~\ref{Algo:AlgorithInWeyl} applied to $(19;7^7,4,1)$ and $(17;6^8)$ yields the following sequences of homaloidal types:
\[\begin{array}{lllllllllll}
(17;6^8)&\predecessor &(14;6,5^6,3) &\predecessor &(8;3^7) &\predecessor &(5;2^6)& \predecessor &(3;2,1^4)& \predecessor&(1);
\\
(19;7^7,4,1) &\predecessor& (13;5^6,4,1^2)&\predecessor &(8;4,3^5,1^2)&\predecessor&(4;3,1^6)&\predecessor&(1).\end{array}\]
One could also check that the homaloidal type of $f_q$ is $(19;11,8,5^7)$. However, it is useless for the proof we propose.

\ref{Bertiniexample3}: Take two general points $q,q'$ and suppose that $f_{q'}= f_{q}\circ \alpha$ for some $\alpha\in \Aut(\p^2)$. Then, $f_q(e_0)=f_{q'}(e_0)$, which implies that $f(e_q)=f(e_{q'})$, whence $q=q'$.
\end{proof}

The following result is classical:

\begin{lemma}  \label{Lemm:DPopen}
Let $\pi\colon X\to \p^2$ be the blow-up of  $r$ distinct proper points, with $1\le r\le 8$. Then, $X$ is a Del Pezzo surface if and only if the following conditions are satisfied: no $3$ of the points are collinear, no $6$ lie on the same conic, and no $8$ of the points lie on the same cubic singular at one of the points. Moreover, these conditions correspond to a dense open subset of $(\p^2)^r$, and are thus satisfied for sufficiently general points.
\end{lemma}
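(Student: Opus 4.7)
The plan is to use the Nakai--Moishezon criterion: $X$ is a Del Pezzo surface (i.e.\ $-K_X$ is ample) if and only if $(-K_X)^2 > 0$ and $-K_X \cdot C > 0$ for every irreducible curve $C \subset X$. Writing $-K_X = 3 \pi^*(e_0) - \sum_{i=1}^r E_{p_i}$, I would first observe that $(-K_X)^2 = 9 - r \geq 1$ (since $r \leq 8$), and that $-K_X \cdot E_{p_i} = 1 > 0$. Hence the question reduces to checking $-K_X \cdot \tilde{C} > 0$ for the strict transform $\tilde{C}$ of every irreducible curve $C \subset \p^2$ of degree $d$ passing through the $p_i$ with multiplicities $m_i \geq 0$, and this intersection number equals $3d - \sum_i m_i$.

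The ``only if'' direction is easy and numerical: if three of the points lie on a line $L$, then $-K_X \cdot \tilde{L} = 3 - 3 = 0$; if six lie on a (necessarily irreducible, and then smooth) conic $Q$, then $-K_X \cdot \tilde{Q} = 6 - 6 = 0$; and if eight lie on a cubic $E$ which is singular at one of them, then $-K_X \cdot \tilde{E} = 9 - (2+7) = 0$. In each case $-K_X$ is not ample.

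For the ``if'' direction, I would assume the three conditions and show $3d > \sum m_i$ for every irreducible plane curve $C$ of degree $d$ with multiplicities $m_i$ at the $p_i$. The cases $d \in \{1,2,3\}$ are immediate from the hypotheses: an irreducible line passes through at most two of the $p_i$, an irreducible (hence smooth) conic through at most five, and an irreducible cubic singular at one of the $p_i$ through at most seven more, while an irreducible smooth-at-the-$p_i$ cubic contributes $9 - r' \ge 1$. For $d \geq 4$, I would invoke the genus inequality $\sum m_i(m_i-1)/2 \leq (d-1)(d-2)/2$ for irreducible plane curves, which, combined with the Cauchy--Schwarz-type bound $(\sum m_i)^2 \leq r \sum m_i^2 \leq 8 \sum m_i^2$ and the inequality $r \leq 8$, yields $\sum m_i < 3d$ by a direct estimate. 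This is where the assumption $r \leq 8$ is essential and constitutes the main obstacle; an alternative is to invoke the classical classification of $(-1)$-classes on $X$ and verify the claim only against these finitely many explicit classes.

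Finally, for the openness and density statement: each of the three exclusion conditions (3 collinear, 6 on a conic, 8 on a cubic singular at one of the points) is cut out by vanishing of a determinant in the coordinates of $(p_1, \ldots, p_r) \in (\p^2)^r$, hence is a proper Zariski-closed condition. Their union is therefore closed and proper (an explicit generic 8-tuple avoids all three), so the Del Pezzo locus is a dense open subset of $(\p^2)^r$.
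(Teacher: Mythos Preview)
The paper does not actually prove this lemma: its proof is the single line ``Follows from \cite[Proposition 8.1.25]{DolgachevBook}.'' Your proposal, by contrast, supplies a self-contained argument via the Nakai--Moishezon criterion and the genus bound for irreducible plane curves, which is the standard route and is essentially what Dolgachev does. So your approach is genuinely more informative than the paper's citation, and both the ``only if'' direction and the openness/density argument are correct as written.

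There is one small gap in the ``if'' direction. Your estimate for $d\ge 4$ combines the genus inequality $\sum m_i^2 \le (d-1)(d-2)+\sum m_i$ with Cauchy--Schwarz $(\sum m_i)^2\le 8\sum m_i^2$ to get $s^2-8s-8(d-1)(d-2)\le 0$, where $s=\sum m_i$. Solving gives $s\le 4+\sqrt{8d^2-24d+32}$, and comparing with $3d$ one finds the strict inequality $s<3d$ holds only for $d\ge 5$; at $d=4$ the bound gives exactly $s\le 12=3d$. You therefore need one extra sentence: equality at $d=4$ would force equality in Cauchy--Schwarz, hence $r=8$ and all $m_i=12/8=3/2$, which is not an integer. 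Alternatively, simply treat $d=4$ by hand alongside $d\le 3$: an irreducible quartic has at most three double points (or one triple point) by the genus bound, so $\sum m_i\le 11$. Either fix is immediate, but as stated ``a direct estimate'' does not quite close the case.
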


\begin{proof}
Follows from \cite[Proposition 8.1.25]{DolgachevBook}.
\end{proof}

\subsection{The number of predecessors is not uniformly bounded}
Let $f \in \Bir(\p^2)$ be a Cremona transformation.
If $\deg (f) \leq 4$, one can check that $f$ admits a unique predecessor up to right multiplication by an element of $\Aut(\p^2)$.
If $\deg (f) \leq 5$, then $f$ may admit more than one predecessor modulo $\Aut(\p^2)$
(but all having the same homaloidal type, by Lemma~\ref{Lem:Pred}). Example~\ref{Example5} gives an example of degree $5$ with two predecessors having distinct configurations of base-points, and Lemma~ \ref{lemm:UnboundedNumberPred}
shows that the number of
predecessors modulo $\Aut(\p^2)$ is not uniformly bounded.

\begin{example}  \label{Example5}
Let us consider the birational involution $g\in \Bir(\p^2)$ given by 
\[g\colon [x:y:z]\mapsto \left[\frac{x}{-\frac{1}{x}+\frac{1}{y}+\frac{1}{z}}: \frac{y}{\frac{1}{x}-\frac{1}{y}+\frac{1}{z}}: \frac{z}{\frac{1}{x}+\frac{1}{y}-\frac{1}{z}}\right] = [xvw\!:\! yuw\!:\! zuv]\]
where $u =-yz + xz+ xy$, $v =yz - xz+ xy$, and $w= yz + xz - xy$. We
see that $g$ is of degree $5$. It has moreover $6$ base-points of multiplicity $2$, namely the $3$ points $p_1=[1:0:0]$, $p_2=[0:1:0]$, $p_3=[0:0:1]$, and $3$ other points $q_1,q_2,q_3$, where each $q_i$ is infinitely near to $p_i$.
The homaloidal type  of $g$ is therefore $(5 ; 2^6)$.

The algorithm consists of applying a cubic birational transformation whose linear system consists of cubics singular at one of the $p_i$ and passing through
$4$ of the remaining $5$ base-points.
The predecessors of $g$ are thus of degree $3$. However, we get distinct classes up to automorphism of $\p^2$, depending on
our choice of the $4$ points.

Denoting by $\rho_1,\rho_2\in \Bir(\p^2)$ the birational maps of degree $3$ given by
\[\begin{array}{rrcl}
\rho_1\colon& [x:y:z]&\mapsto& \left[\frac{z}{x(\frac{1}{x}+\frac{1}{y}-\frac{1}{z})}: y: z \right] = [yz^2: yw: zw]  \text{ and }   \vspace{0.15cm}\\
\rho_2 \colon& [x:y:z]&\mapsto& \left[\frac{1}{-\frac{1}{x}+\frac{1}{y}+\frac{1}{z}}: y: z\right]= [xyz: yu: zu], \end{array}\]
we observe that $\alpha_1=\rho_1g{\rho_1}^{-1}$ and $\alpha_2=\rho_2 g{\rho_2}^{-1}$ are the two linear involutions 
\[\alpha_1\colon [x:y:z]\mapsto [x:y:2x-z]\text{ and }\alpha_2\colon [x:y:z]\mapsto [x:2x-y:2x-z].\]

For each $i \in \{ 1,2\}$ the map $\rho_i\in \Bir(\p^2)$ is a Jonqui\`eres map, preserving a general line through $[1:0:0]$. This implies that $\psi_i = {\rho_i}^{-1} \circ \alpha_i=g \circ {\rho_i}^{-1}$ is a predecessor of $g$.

The linear system of $\rho_1$ consists of cubics singular at $p_1$ and passing through $p_2,p_3,q_1,q_2$. The linear system of $\rho_2$ consists of cubics singular at $p_1$ and passing through $p_2,p_3,q_2,q_3$. The configuration of the points being different (for $\rho_2$, there is a tangent direction fixed at the singular point, contrary to $\rho_1$), this shows that $\psi_1 \notin \Aut(\p^2) \circ \psi_2 \circ \Aut(\p^2)$. 
\end{example}

\begin{lemma}  \label{lemm:UnboundedNumberPred}
For each integer $i\ge 1$, there exists $f\in \Bir(\p^2)$ which has
at least $i$ predecessors up to left and right composition with
elements of $\Aut(\p^2)$.
\end{lemma}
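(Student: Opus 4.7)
For each $i \ge 1$, I will exhibit a single $f \in \Bir(\p^2)$ with at least $i$ pairwise inequivalent predecessors modulo composition on the left and right with $\Aut(\p^2)$, by strengthening Proposition~\ref{Proposition:Bertiniexample}\ref{Bertiniexample3} from a right-coset statement to a double-coset statement. Fix $8$ sufficiently general points $p_0, \dots, p_7 \in \p^2$ satisfying both: (i) their blow-up is a Del Pezzo surface of degree $1$, cf.~Lemma~\ref{Lemm:DPopen}; and (ii) the only element $\alpha \in \Aut(\p^2)$ fixing $p_0$ and preserving the set $\{p_1, \dots, p_7\}$ is the identity. Both conditions cut out a dense open subset of $(\p^2)^8$. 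Let $f$ be the Bertini involution attached to these points, as in Proposition~\ref{Proposition:Bertiniexample}\ref{Existence-of-the-Bertini-involution}, and for each generic $q \in \p^2$ set $f_q := f \circ \varphi_q^{-1}$ with $\varphi_q$ the Jonqui\`eres element from parts~\ref{Bertiniexample1}--\ref{Bertiniexample2}. I would then choose any $i$ distinct generic points $q_1, \dots, q_i \in \p^2$ and claim that $f_{q_1}, \dots, f_{q_i}$ lie in pairwise distinct double cosets of $\Aut(\p^2)$ in $\Bir(\p^2)$; this immediately yields the lemma.

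To verify this, I would reproduce the computation of Proposition~\ref{Proposition:Bertiniexample}\ref{Bertiniexample3}: using the given formulas for $f(e_0)$ and $f(e_{p_j})$, together with $f(e_q) = e_{f(q)}$ for $q \notin \Base(f)$ (Lemma~\ref{Lemma:Imagepts}), one finds
\[ f_q(e_0) \;=\; 19\,e_0 \;-\; 4\,e_{p_0} \;-\; 7 \sum_{j=1}^7 e_{p_j} \;-\; e_{f(q)}. \]
Suppose now that $f_{q'} = \alpha \circ f_q \circ \beta$ for some $\alpha, \beta \in \Aut(\p^2)$. Evaluating the induced action on $\ZZ_{\p^2}$ at $e_0$ and using $\beta(e_0) = e_0$ yields $\alpha(f_q(e_0)) = f_{q'}(e_0)$. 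Since $\alpha \in \Sym_{\p^2}$ fixes $e_0$ and permutes the $e_p$'s, matching coefficients (grouped by the three distinct multiplicities $4$, $7$, $1$) forces $\alpha(p_0) = p_0$, $\alpha(\{p_1, \dots, p_7\}) = \{p_1, \dots, p_7\}$ and $\alpha(f(q)) = f(q')$. Condition~(ii) then forces $\alpha = \mathrm{id}$, so $f(q) = f(q')$, and the injectivity part of Lemma~\ref{Lemma:Imagepts} yields $q = q'$, contradicting $q \neq q'$.

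The main technical point, and the only nontrivial obstacle I expect, is the verification of genericity condition~(ii). This is a routine dimensional count: the pointwise stabiliser of $p_0$ in $\Aut(\p^2) \cong \PGL_3$ has dimension $6$, whereas requiring $\alpha$ to permute an additional $7$-tuple of points imposes $14$ independent polynomial conditions. The locus of $(p_1, \dots, p_7) \in (\p^2)^7$ admitting a non-trivial such $\alpha$ (with $p_0$ fixed) therefore has codimension at least $8$, hence is a proper closed subset. Its complement, intersected with the Del Pezzo open subset of Lemma~\ref{Lemm:DPopen}, provides the required dense open locus in $(\p^2)^8$, completing the argument.
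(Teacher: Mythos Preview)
Your approach has a fundamental gap: the elements $f_q$ from Proposition~\ref{Proposition:Bertiniexample} are \emph{not} predecessors of $f$. Recall that by Definition~\ref{DefiPredeBirP2}, a predecessor of $f$ is an element of \emph{minimal degree} among all $f\circ\varphi$ with $\varphi$ Jonqui\`eres. But Proposition~\ref{Proposition:Bertiniexample}\ref{Bertiniexample2} explicitly states $\deg(f_q)=19>17=\deg(f)$, whereas Example~\ref{Example:SequenceBertini} shows that the actual predecessors of the Bertini involution have homaloidal type $(14;6,5^6,3)$ and hence degree $14$. The whole point of Proposition~\ref{Proposition:Bertiniexample} is precisely to exhibit Jonqui\`eres compositions that \emph{decrease the length while increasing the degree}; such elements are never predecessors. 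So even though your double-coset argument for distinguishing the $f_q$ is plausible, it proves nothing about predecessors.

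The paper's proof takes a completely different route: it constructs, for each $n$, an element $f$ of homaloidal type $(n^2+1;\,n^2-n+1,\,n^{2n-1},\,1^{2n-1})$ whose predecessors are Jonqui\`eres of degree $n$. The $2n-1$ simple base-points of $f$ give rise to $2n-1$ genuinely different predecessors $f_j$, and one checks they are inequivalent under $\Aut(\p^2)\times\Aut(\p^2)$ by analysing $\Base(f_j^{-1})\subseteq\Base(f^{-1})$ and using that $2n-1\ge 5$ general points of $\p^2$ admit no nontrivial projective symmetry. The key structural feature you are missing is that one must work with an $f$ whose predecessors retain enough base-points (here, $2n-1$) to distinguish them projectively; the symmetric type $(17;6^8)$ does not provide this flexibility.
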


\begin{proof}
We choose $n$ such that $2n-1\ge i$ and $2n-1\ge 5$, and then observe that
$\chi=(n^2+1;n^2-n+1,n^{2n-1},1^{2n-1})$ is a homaloidal type, whose predecessor is $\chi_1=(n;n-1,1^{2n-2})$. We take $4n-1$ general points $p_0,p_1,\dots,p_{2n-1},q_1,\dots,q_{2n-1}\in \p^2$ and choose $f\in \Bir(\p^2)$ such that $f^{-1}(e_0)=(n^2+1)e_0-(n^2-n+1)e_{p_0}-\sum_{i=1}^{2n-1} n e_{p_i}-\sum_{i=1}^{2n-1} e_{q_i}$ (which exists by Lemma~\ref{Lemm:SymBirSim}). 

For each $j\in \{1,\dots,2n-1\}$, there exists a Jonqui\`eres element $\varphi_j\in \Bir(\p^2)$ such that $(\varphi_j)^{-1}(e_0)=ne_0-(n-1)e_{p_0}-\sum_{i=1}^{2n-1} e_{p_i}-e_{q_j}$ (again by Lemma~\ref{Lemm:SymBirSim}). Then, $f_j=f\circ \varphi_j^{-1}$ is a predecessor of $f$ (follows from Lemma~\ref{Lemm:AlgoFirstStepS}\ref{eachsinS} and Lemma~\ref{Lemma:TwoTypesPred}).

As $\Base(\varphi_j)\subseteq \Base(f)$, we get $\Base(f_j^{-1})=\Base(\varphi_j\circ f^{-1})\subseteq \Base(f^{-1})$ (Corollary~\ref{corollary: inclusion of the base locus of a composition in a special case}). Moreover, $f_j$ is Jonqui\`eres of degree $n$ (because of the type of $\chi_1$), so the same holds for $f_j^{-1}$. 

It remains to see that if $j,k\in \{1,\dots,2n-1\}$ are such that $j\not=k$, then there are no elements $\alpha,\beta\in \Aut(\p^2)$ such that $f_j=\alpha\circ f_k\circ \beta$. Indeed, otherwise we would have $f_j(e_0)=\alpha(f_k(e_0))$, so $\alpha$ sends the $2n-1$ base-points of $f_k^{-1}$ onto those of $f_j^{-1}$, respecting the multiplicities. Note that $f_j(e_0)\not=f_k(e_0)$, because $\varphi_j^{-1} (e_0) \not= \varphi_k^{-1} (e_0)$.
The map $\alpha\in \Aut(\p^2)$ has then to send a sequence of $2n-1$ points of  $\Base(f^{-1})$ onto another sequence of $2n-1$ points of $\Base(f^{-1})$. This is impossible, as the points are general points and $2n-1 \ge 5 $.
\end{proof}

\begin{remark}\label{Rem:BoundPredDegree}
It directly follows from Lemma~\ref{Lem:Pred} that the number of predecessors of $f\in \Bir(\p^2)$ is bounded by some number depending only on $\deg(f)$. This follows from Lemma~\ref{Lem:Pred}\ref{Lem:Pred3} and from the fact that the number of base-points of $f$ is at most $\deg(f)+2$ if $f$ is not Jonqui\`eres (\cite[Lemma 39]{BCM15}). Giving a meaningful bound does not seem so easy. The number of points of maximal multiplicity is at most $8$ (this follows from the Noether inequality of Lemma~\ref{Lemma:Hudsonalgorithm} and  the Noether equalities of Lemma~\ref{Lemm:EasyWeyl}), and the number of predecessors for each base-point of maximal multiplicity is bounded by the choice of the base-points of multiplicity one (for the Jonqui\`eres element) among the remaining base points of $f$. This choice seems to be smaller when the number of base-points of maximal multiplicity is large.
\end{remark}

\subsection{Reduced decompositions of arbitrary lengths}
Recall that a reduced decomposition of an element $f\in \Bir(\p^2)$ is a product $f=\varphi_n \circ \cdots \circ \varphi_1$ of Jonqui\`eres elements such that $\varphi_{i+1}\circ \varphi_{i}$ is not Jonqui\`eres for $i=1,\dots,n-1$. One can of course always obtain a reduced decomposition by starting with any decomposition, simply replacing $\varphi_i$ and $\varphi_{i+1}$ with their product if this one is Jonqui\`eres. 
Proposition~\ref{Prop:Unboudedlengthreduceddec} shows that the length of reduced decompositions is unbounded.

We begin with the following classical result:

\begin{lemma}\label{Lemm:Quadraticstd}
Let $p_0,p_1,p_2$ be three non-collinear points of $\p^2$. There is a quadratic birational involution $\nu\in \Bir(\p^2)$ preserving the pencil of lines through any of the three base-points and satisfying $\Base(\nu)=\{p_0,p_1,p_2\}$. 
\end{lemma}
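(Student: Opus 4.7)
The plan is to reduce the statement to the case of the standard quadratic involution $\sigma : [x:y:z] \dashrightarrow [yz:xz:xy]$ by conjugating with a suitable automorphism of $\p^2$. Recall from Example~\ref{ExampleSigma} that $\sigma$ has base-points exactly $\{[1:0:0], [0:1:0], [0:0:1]\}$, three non-collinear points of $\p^2$.

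First I would check directly that $\sigma \in \Jonq_{[1:0:0]} \cap \Jonq_{[0:1:0]} \cap \Jonq_{[0:0:1]}$. For instance, a line through $[1:0:0]$ has equation $ay + bz = 0$ for some $[a:b]\in \p^1$, and its pull-back by $\sigma$ is $axz + bxy = x(az + by) = 0$; removing the contracted component $x=0$ yields the line $az+by=0$, which again passes through $[1:0:0]$. The same computation works at the other two base-points by symmetry, so $\sigma$ preserves the pencil of lines through each of its three base-points.

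Next, since $p_0, p_1, p_2$ are non-collinear, there exists (by elementary linear algebra in $\PGL_3(\k)$) an element $\alpha \in \Aut(\p^2)$ such that $\alpha([1:0:0]) = p_0$, $\alpha([0:1:0]) = p_1$, $\alpha([0:0:1]) = p_2$. Define
\[\nu := \alpha \circ \sigma \circ \alpha^{-1} \in \Bir(\p^2).\]
Then $\nu$ is a quadratic involution (degree and the involution property are preserved under conjugation), and its base-points are exactly $\{\alpha([1:0:0]), \alpha([0:1:0]), \alpha([0:0:1])\} = \{p_0, p_1, p_2\}$. Finally, using the identity $\alpha \circ \Jonq_q \circ \alpha^{-1} = \Jonq_{\alpha(q)}$ recalled in the introduction, the inclusion $\sigma \in \Jonq_{[1:0:0]} \cap \Jonq_{[0:1:0]} \cap \Jonq_{[0:0:1]}$ immediately yields $\nu \in \Jonq_{p_0} \cap \Jonq_{p_1} \cap \Jonq_{p_2}$, which is the desired conclusion.

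There is no real obstacle here: the lemma is a direct transport-of-structure argument, with the only substantive content being the routine verification that the standard quadratic involution preserves the pencil of lines through each of its three base-points.
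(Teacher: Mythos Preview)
Your proof is correct and follows exactly the same approach as the paper: reduce to the standard quadratic involution $\sigma\colon [x:y:z]\dasharrow [yz:xz:xy]$ by a linear change of coordinates. The paper's proof is a one-line version of yours, simply stating that it suffices to change coordinates to put $\{p_0,p_1,p_2\}=\{[1:0:0],[0:1:0],[0:0:1]\}$ and take $\nu=\sigma$; you have spelled out the verification that $\sigma$ preserves each pencil and that conjugation transports the Jonqui\`eres property, which the paper leaves implicit.
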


\begin{proof}
It suffices to change coordinates to have $\{p_0,p_1,p_2\}=\{[1:0:0],[0:1:0],[0:0:1]\}$ and to choose $\nu\colon [x:y:z]\dasharrow [yz:xz:xy]$.
\end{proof}

\begin{lemma}   \label{Lemm:Prodpsi}
Let $p_0,\dots,p_5\in \p^2$ be six distinct points such that no $3$ of them are collinear and not all lie on the same conic. Then, there exist three quadratic birational involutions $\psi_1,\psi_2,\psi_3\in \Bir(\p^2)$, each having
$($three$)$
proper base-points, such that the following hold:\begin{enumerate}
\item \label{Prodpsis1}
$\Base(\psi_1)=\{p_0,p_1,p_2\}$;
\item \label{Prodpsis2}
$\Base(\psi_2)\cap \Base(\psi_1)=\Base(\psi_3)\cap \Base(\psi_2)=\emptyset$;
 \item\label{Prodpsis3} $(\psi_3\circ \psi_2\circ \psi_1)^{-1}(e_0) =5e_0 -2 \sum_{i=0}^5 e_{p_i}$.
\end{enumerate}
\end{lemma}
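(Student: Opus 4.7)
The plan is to define the three involutions iteratively, propagating the base-points of one step through the previous one, and then to verify the final identity by a direct computation in $\ZZ_{\p^2}$.

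First, I would let $\psi_1$ be the quadratic involution with $\Base(\psi_1)=\{p_0,p_1,p_2\}$ provided by Lemma~\ref{Lemm:Quadraticstd} (the three points are non-collinear by hypothesis). For $i\in\{3,4,5\}$, set $q_i:=\psi_1(p_i)$; since no three of $p_0,\dots,p_5$ are collinear, $p_i$ lies on none of the three lines $\overline{p_j p_k}$ ($\{j,k\}\subset\{0,1,2\}$) contracted by $\psi_1$, so $q_i$ is a well-defined proper point of $\p^2$, distinct from $p_0,p_1,p_2$ and from the other $q_j$. Moreover, $q_3,q_4,q_5$ are non-collinear: a line through them would pull back under the quadratic involution $\psi_1$ to a conic passing through $p_0,p_1,p_2,p_3,p_4,p_5$, contradicting the hypothesis that the six $p_i$ do not lie on a common conic. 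Apply Lemma~\ref{Lemm:Quadraticstd} again to produce a quadratic involution $\psi_2$ with $\Base(\psi_2)=\{q_3,q_4,q_5\}$; by construction $\Base(\psi_2)\cap\Base(\psi_1)=\emptyset$.

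Next, set $r_i:=\psi_2(p_i)$ for $i\in\{0,1,2\}$. I would show that each $p_i$ lies outside the three lines $\overline{q_j q_k}$ contracted by $\psi_2$, by repeating the same type of geometric argument: if, say, $p_0\in\overline{q_4 q_5}$, then $\psi_1(p_0)=\overline{p_1 p_2}$ is a component of $\psi_1(\overline{q_4 q_5})$, which is a conic through $p_0,p_1,p_2,p_4,p_5$; the residual line then contains $p_0,p_4,p_5$, violating the no-three-collinear hypothesis. Hence each $r_i$ is a proper point of $\p^2$ and an analogous argument (now pulling back a putative line through $r_0,r_1,r_2$ successively by $\psi_2$ and $\psi_1$) shows that $r_0,r_1,r_2$ are distinct and non-collinear. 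Moreover, $\{r_0,r_1,r_2\}\cap\{q_3,q_4,q_5\}=\emptyset$: $r_i=q_j$ would mean $p_i\in\overline{q_k q_l}$ where $\{j,k,l\}=\{3,4,5\}$, which we just excluded. Applying Lemma~\ref{Lemm:Quadraticstd} one last time yields a quadratic involution $\psi_3$ with $\Base(\psi_3)=\{r_0,r_1,r_2\}$; by the previous sentence $\Base(\psi_3)\cap\Base(\psi_2)=\emptyset$, proving \ref{Prodpsis1} and \ref{Prodpsis2}.

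Finally, for \ref{Prodpsis3}, I would compute $(\psi_3\circ\psi_2\circ\psi_1)^{-1}(e_0)=\psi_1\circ\psi_2\circ\psi_3(e_0)$ step by step using the formulas of Example~\ref{ExampleSigma}. Starting from $\psi_3(e_0)=2e_0-e_{r_0}-e_{r_1}-e_{r_2}$, and using that $\psi_2(e_{r_i})=e_{p_i}$ (since $r_i\notin\Base(\psi_2)$ and $\psi_2(r_i)=p_i$), one obtains $\psi_2\circ\psi_3(e_0)=4e_0-2e_{q_3}-2e_{q_4}-2e_{q_5}-e_{p_0}-e_{p_1}-e_{p_2}$. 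Applying $\psi_1$, using $\psi_1(e_{q_i})=e_{p_i}$ for $i\geq 3$ and $\psi_1(e_{p_i})=e_0-e_{p_j}-e_{p_k}$ for $\{i,j,k\}=\{0,1,2\}$, the $e_0$-coefficient becomes $8-3=5$ and the $e_{p_0},e_{p_1},e_{p_2}$ coefficients become $-4+2=-2$, while the $e_{p_3},e_{p_4},e_{p_5}$ coefficients stay at $-2$, giving exactly $5e_0-2\sum_{i=0}^5 e_{p_i}$.

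The main obstacle is the geometric verification in step 2, namely that the induced points $r_0,r_1,r_2$ are distinct, non-collinear, and disjoint from $\{q_3,q_4,q_5\}$; each of these assertions has to be reduced to a forbidden collinearity or conic-configuration among the original six points via the two layers $\psi_1,\psi_2$, and it is there that both hypotheses (no three collinear, not all six on a conic) are used in an essential way. The final linear-algebra computation is then mechanical.
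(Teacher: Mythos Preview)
Your construction and the final computation in~\ref{Prodpsis3} coincide with the paper's. The difference lies in how the non-degeneracy of the intermediate point configurations is established.

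The paper avoids your case-by-case pullback arguments by a single structural observation: the blow-up $\pi_1\colon X\to\p^2$ of $p_0,\dots,p_5$ is a Del Pezzo surface (Lemma~\ref{Lemm:DPopen}), and $\psi_1$ lifts to an automorphism of the intermediate blow-up $Y$ of $p_0,p_1,p_2$, so $\psi_1\circ\pi_1\colon X\to\p^2$ is again a blow-up of six distinct proper points, namely $p_0,p_1,p_2,q_3,q_4,q_5$. Since $X$ is Del Pezzo, Lemma~\ref{Lemm:DPopen} immediately gives that no three of $p_0,p_1,p_2,q_3,q_4,q_5$ are collinear (and not all six lie on a conic). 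The same trick applied once more shows that $\psi_2\circ\psi_1\circ\pi_1$ exhibits $X$ as the blow-up of $q_0,\dots,q_5$, hence no three of these are collinear. All the conditions you verify by hand --- that $q_3,q_4,q_5$ are non-collinear, that each $p_i$ avoids the lines $\overline{q_jq_k}$, that $r_0,r_1,r_2$ are distinct and non-collinear and disjoint from $\{q_3,q_4,q_5\}$ --- fall out at once.

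Your direct approach is correct, but as you yourself note, the second layer of verifications (for the $r_i$) needs care: pulling a putative line through $r_0,r_1,r_2$ back by $\psi_2$ gives a conic through $q_3,q_4,q_5,p_0,p_1,p_2$, and pulling that back by $\psi_1$ lands on a line through $p_3,p_4,p_5$ in the generic case, but one has to treat separately the cases where the intermediate line or conic meets base-points. Each case does reduce to a forbidden collinearity, so your sketch goes through; the Del Pezzo argument simply packages all these cases uniformly.
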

 
\begin{proof}
The blow-up $\pi_1\colon X\to \p^2$ of the six points $p_0,\dots,p_5$ is a Del Pezzo surface (Lemma~\ref{Lemm:DPopen}) and there exists a quadratic birational involution $\psi_1\in \Bir(\p^2)$ with $\Base(\psi_1)=\{p_0,p_1,p_2\}$ (Lemma~\ref{Lemm:Quadraticstd}). We then write $q_i=\psi_1(p_i)\in \p^2$ for $i=3,4,5$. We now prove that $\pi_2=\psi_1\circ \pi_1\colon X\to \p^2$ is the blow-up of $p_0,p_1,p_2,q_3,q_4,q_5$: we have a commutative diagram
\[ \xymatrix@R=0.4cm@C=1.5cm{
&& X\ar@/_1.5pc/[ddll]_{\pi_1}\ar@/^1.7pc/[ddrr]^{\pi_2}\ar[dl]_{\tau_1}\ar[dr]^{\tau_2}\\
 &Y \ar[dl]_{\eta}\ar[rr]^{\hat\psi_1}_{\simeq} &&Y\ar[dr]^{\eta} \\ 
\p^2 \ar@{-->}[rrrr]^{\psi_1} & &&& \p^2} \]
where $\eta$ is the blow-up of $p_0,p_1,p_2$, $\hat\psi_1\in \Aut(Y)$ is an automorphism of order $2$, $\tau_1$ is the blow-up of $\{\eta^{-1}(p_i)\mid i=3,4,5\}$, and $\tau_2$ is the blow-up of $\{\eta^{-1}(q_i)=\hat\psi_1(\eta^{-1}(p_i))\mid i=3,4,5\}$.

Because $X$ is a Del Pezzo surface, the points $q_3,q_4,q_5$ are not collinear (Lemma~\ref{Lemm:DPopen}), so there is a quadratic birational involution $\psi_2\in \Bir(\p^2)$ with $\Base(\psi_2)=\{q_3,q_4,q_5\}$ (Lemma~\ref{Lemm:Quadraticstd}). We then write $q_i=\psi_2(p_i)\in \p^2$ for $i=0,1,2$ and obtain similarly that $q_0,\dots,q_5\in \p^2$ are such that no $3$ of them are collinear
(since $\psi_2 \circ \psi_1 \circ \pi_1 \colon X \to \p^2$ is the blow-up of $q_0, \ldots,q_5$).

We now choose a quadratic birational involution $\psi_3\in \Bir(\p^2)$ with $\Base(\psi_3)=\{q_0,q_1,q_2\}$ (again by Lemma~\ref{Lemm:Quadraticstd}). It remains to calculate
\[\begin{array}{rcl}
 (\psi_3\circ \psi_2\circ \psi_1)^{-1}(e_0)&=&\psi_1(\psi_2(2e_0-e_{q_0}-e_{q_1}-e_{q_2}))\\
&=&\psi_1(4e_0-e_{p_0}-e_{p_1}-e_{p_2}-2e_{q_3}-2e_{q_4}-2e_{q_5} 
)\\
&=&5e_0-2 \sum_{i=0}^5e_{p_i},
\end{array}\]
where we have used the fact that $\psi_1 (e_{p_0}) =e_0 - e_{p_1} - e_{p_2}$, $\psi_1 (e_{p_1}) =e_0 - e_{p_0} - e_{p_2}$, and $\psi_1 (e_{p_2}) =e_0 - e_{p_0} - e_{p_1}$ (see Example~\ref{ExampleSigma}).
\end{proof}

\begin{corollary}  \label{Coro:SixQuad}
There exist
quadratic birational maps $\varphi_1,\dots,\varphi_6$, each having $($three$)$ proper base-points, such that
\begin{enumerate}
\item
$\varphi_6\circ \varphi_5\circ \varphi_4\circ \varphi_3\circ \varphi_2\circ \varphi_1=\mathrm{id}$;
\item
$\Base(\varphi_i^{-1})\cap \Base(\varphi_{i+1})=\emptyset$ for $i=1,\dots,5$.
\end{enumerate}
\end{corollary}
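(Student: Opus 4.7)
Plan: The strategy is to apply Lemma~\ref{Lemm:Prodpsi} twice and glue the two halves together via an automorphism of $\p^2$. Start with six sufficiently general proper points $p_0,\ldots,p_5\in\p^2$ and let $\psi_1,\psi_2,\psi_3$ be the involutions produced by Lemma~\ref{Lemm:Prodpsi}. I will set $\varphi_i:=\psi_i$ for $i=1,2,3$, so that $g:=\varphi_3\circ\varphi_2\circ\varphi_1$ is a Cremona transformation of homaloidal type $(5;2^6)$. The first two disjointness conditions follow from Lemma~\ref{Lemm:Prodpsi}\ref{Prodpsis2} together with the fact that each $\psi_i$ is an involution, so $\Base(\psi_i^{-1})=\Base(\psi_i)$.

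Next, I would compute $g(e_0)$: imitating the final calculation in the proof of Lemma~\ref{Lemm:Prodpsi} yields
\[g(e_0)=5e_0-2\bigl(e_{q_0}+e_{q_1}+e_{q_2}+e_{r_3}+e_{r_4}+e_{r_5}\bigr),\]
where $\{q_0,q_1,q_2\}=\Base(\psi_3)$ and $r_i:=\psi_3(q_i)$ for $i=3,4,5$, so these six points are exactly $\Base(g^{-1})$. The lift of $g$ to an isomorphism between blow-ups identifies the blow-up of $\p^2$ at $\{q_0,q_1,q_2,r_3,r_4,r_5\}$ with the Del~Pezzo blow-up of $\p^2$ at $p_0,\ldots,p_5$; by the converse half of Lemma~\ref{Lemm:DPopen}, these six points are proper, pairwise distinct, and satisfy the general-position hypothesis of Lemma~\ref{Lemm:Prodpsi}. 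I would then apply Lemma~\ref{Lemm:Prodpsi} a second time to these six points, choosing the ordering so that $\{r_3,r_4,r_5\}$ plays the role of $\{p_0,p_1,p_2\}$---this choice is crucial, as it produces $\Base(\psi_4')=\{r_3,r_4,r_5\}$, disjoint from $\Base(\varphi_3)=\{q_0,q_1,q_2\}$. This yields involutions $\psi_4',\psi_5',\psi_6'$, each with three proper base-points, with the usual intermediate disjointness and $(\psi_6'\circ\psi_5'\circ\psi_4')^{-1}(e_0)=g(e_0)=(g^{-1})^{-1}(e_0)$.

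From the last equality, Corollary~\ref{Coro:ExistenceAlpha} produces some $\alpha\in\Sym_{\p^2}$ with $g^{-1}=\alpha\circ\psi_6'\circ\psi_5'\circ\psi_4'$; since this $\alpha$ lies in $\Bir(\p^2)\cap\Sym_{\p^2}=\Aut(\p^2)$ (Lemma~\ref{Lemm:EasyWeyl}\ref{L4}), I set $\varphi_4:=\psi_4'$, $\varphi_5:=\psi_5'$, and $\varphi_6:=\alpha\circ\psi_6'$. Each $\varphi_i$ is then quadratic with three proper base-points, since left multiplication by an element of $\Aut(\p^2)$ preserves degree and base-points. The identity $\varphi_6\circ\cdots\circ\varphi_1=g^{-1}\circ g=\mathrm{id}$ is immediate, and the remaining three disjointness conditions hold by construction: $\Base(\varphi_3^{-1})\cap\Base(\varphi_4)=\{q_0,q_1,q_2\}\cap\{r_3,r_4,r_5\}=\emptyset$, $\Base(\varphi_4^{-1})\cap\Base(\varphi_5)=\emptyset$ by Lemma~\ref{Lemm:Prodpsi}\ref{Prodpsis2}, and $\Base(\varphi_5^{-1})\cap\Base(\varphi_6)=\Base(\psi_5')\cap\Base(\psi_6')=\emptyset$ (again using that left multiplication by $\alpha$ does not affect base-points). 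The main technical point is the Del~Pezzo verification ensuring that Lemma~\ref{Lemm:Prodpsi} applies to the second triple of points.
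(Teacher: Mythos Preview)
Your proof is correct and follows the same overall strategy as the paper: apply Lemma~\ref{Lemm:Prodpsi} twice and glue with an automorphism of $\p^2$. The execution differs in one point worth noting. You apply the lemma the second time to $\Base(g^{-1})=\{q_0,q_1,q_2,r_3,r_4,r_5\}$, which obliges you to check that these six points are proper, distinct, and in general position; the Del~Pezzo argument you sketch does work (a $(-2)$-curve on $X$ would contradict ampleness of $-K_X$, forcing the six exceptional curves of $g\circ\pi_1$ to be disjoint $(-1)$-curves), though Lemma~\ref{Lemm:DPopen} as stated assumes properness rather than deducing it. The paper instead applies Lemma~\ref{Lemm:Prodpsi} a second time to the \emph{same} six points $p_0,\dots,p_5$, simply reordered as $p_3,p_4,p_5,p_0,p_1,p_2$. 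This immediately yields $\Base(\psi_1')=\{p_3,p_4,p_5\}$ disjoint from $\Base(\psi_1)=\{p_0,p_1,p_2\}$, and since both $\psi_3\circ\psi_2\circ\psi_1$ and $\psi_3'\circ\psi_2'\circ\psi_1'$ send $e_0$ (under the inverse) to the same element $5e_0-2\sum e_{p_i}$, the gluing automorphism comes for free with no further verification. Your route reaches the same conclusion but with an avoidable detour through the geometry of $\Base(g^{-1})$.
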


\begin{proof}
Let $p_0,\dots,p_5\in \p^2$ be six distinct points such that no $3$ are collinear and not all lie on the same conic. Then, choose $\psi_1,\psi_2, \psi_3$ as in Lemma~\ref{Lemm:Prodpsi}. Recall that we have in particular $(\psi_3\circ \psi_2\circ \psi_1)^{-1}(e_0) =5e_0 -2 \sum_{i=0}^5 e_{p_i}$.
Applying Lemma~\ref{Lemm:Prodpsi} to the same points, but taken in the order $p_3,p_4,p_5, p_0,p_1,p_2$,
we get quadratic birational involutions $\psi_1',\psi_2',\psi_3'\in \Bir(\p^2)$, each having three proper base-points, such that $\Base(\psi_1')=\{p_3,p_4,p_5\}$,  $\Base (\psi_2') \cap \Base (\psi_1') = \Base (\psi_3') \cap \Base (\psi_2') = \emptyset$ and 
$(\psi_3' \circ \psi_2' \circ \psi_1')^{-1}(e_0) = 5e_0-2 \sum_{i=0}^5 e_{p_i}. 
$ 
  
The birational map $\alpha=\psi_3'\circ \psi_2'\circ \psi_1' \circ \psi_1\circ \psi_2\circ \psi_3$ satisfies then $\alpha^{-1}(e_0)=e_0$, so that it is
an automorphism of $\p^2$. It remains to choose $(\varphi_6,\dots,\varphi_1):=(\psi_3',\psi_2',\psi_1',\psi_1,\psi_2,\psi_3\circ \alpha^{-1})$ to obtain the result.
\end{proof}

\begin{proposition}\label{Prop:Unboudedlengthreduceddec}
For each $f\in \Bir(\p^2)$ the set of reduced decompositions of $f$ has unbounded length.
\end{proposition}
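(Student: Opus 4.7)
The plan is to produce reduced decompositions of $f$ of arbitrary length by iteratively inserting the length-six factorization of the identity provided by Corollary~\ref{Coro:SixQuad}. Starting from any reduced decomposition $f=\psi_n\circ\cdots\circ\psi_1$ of length $n\ge 1$ (existence follows from Noether--Castelnuovo; length $1$ suffices when $f\in\Jonq$), one applies Corollary~\ref{Coro:SixQuad} to six points of $\p^2$ chosen in sufficiently general position to obtain quadratic birational maps $\varphi_1,\ldots,\varphi_6$ with three proper base-points each, satisfying $\varphi_6\circ\cdots\circ\varphi_1=\mathrm{id}$ and $\Base(\varphi_i^{-1})\cap\Base(\varphi_{i+1})=\emptyset$. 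The insertion then yields
\[
f=\psi_n\circ\varphi_6\circ\varphi_5\circ\varphi_4\circ\varphi_3\circ\varphi_2\circ\varphi_1\circ\psi_{n-1}\circ\cdots\circ\psi_1,
\]
a product of $n+6$ Jonqui\`eres elements. Iterating the insertion $k$ times with a fresh generic 6-tuple at each step will produce, once reducedness is established, a reduced decomposition of $f$ of length at least $n+6k$.

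The core numerical input for reducedness is the following direct consequence of Lemmas~\ref{Lemm:CompositionWithoutCommonBasepts} and~\ref{Lemm:CompositionWinftyWithoutCommonBasepts}: if $g_1,g_2\in\Bir(\p^2)$ both have degree $\ge 2$ and $\Base(g_2)\cap\Base(g_1^{-1})=\emptyset$, then $g_2\circ g_1$ has degree $\deg(g_1)\deg(g_2)\ge 4$ and its maximal multiplicity equals $\deg(g_2)\cdot m_{\max}(g_1)\le \deg(g_2)(\deg(g_1)-1)<\deg(g_1)\deg(g_2)-1$, so $g_2\circ g_1$ is not Jonqui\`eres. Applied with $g_1=\varphi_i,\ g_2=\varphi_{i+1}$ this handles the six internal junctions; applied with $g_1=\varphi_6,\ g_2=\psi_n$ and with $g_1=\psi_{n-1},\ g_2=\varphi_1$ it handles the two external junctions, provided $\deg(\psi_n),\deg(\psi_{n-1})\ge 2$. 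The latter condition is automatic whenever $n\ge 2$: in a reduced decomposition no factor can be linear, since by Lemma~\ref{Lemm:JoCremWinfty} one has $\Aut(\p^2)\cdot\Jonq\subseteq\Jonq$ and $\Jonq\cdot\Aut(\p^2)\subseteq\Jonq$, so a linear factor would merge with its neighbour into a single Jonqui\`eres element, contradicting reducedness.

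The step I expect to be the main (if mild) obstacle is the genericity: one needs to choose the 6-tuple so that all base-points of $\varphi_1,\ldots,\varphi_6$ and of $\varphi_1^{-1},\ldots,\varphi_6^{-1}$ avoid any prescribed finite subset of $\p^2$ (namely those of $f$, of the $\psi_j^{\pm 1}$, and of all quadratics introduced in previous iterations). This is essentially immediate from the construction in Corollary~\ref{Coro:SixQuad}: for instance $\Base(\varphi_6)$ is literally three of the chosen points, and the remaining base-loci depend algebraically and non-constantly on the 6-tuple, so they can be perturbed within a dense open subset to avoid any fixed finite obstruction. Two boundary cases with $n=1$ complete the argument: if $\deg(f)\ge 2$, the same insertion gives a length-$7$ reduced decomposition, since only the single junction $f\circ\varphi_6$ has to be verified; if $f\in\Aut(\p^2)$, then $f\circ\varphi_6$ is itself quadratic Jonqui\`eres, and merging these two factors produces a length-$6$ reduced decomposition whose reducedness at the new junction reduces to $\Base(\varphi_5)\cap f(\Base(\varphi_6^{-1}))=\emptyset$, again a generic condition. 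Thus iterating yields reduced decompositions of $f$ of arbitrarily large length.
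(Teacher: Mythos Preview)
Your proof is correct and follows essentially the same strategy as the paper's: insert the six-factor identity from Corollary~\ref{Coro:SixQuad} into an existing reduced decomposition and use base-point disjointness to certify that consecutive products are not Jonqui\`eres. Two small points: (i) your claim that ``$\Base(\varphi_6)$ is literally three of the chosen points'' is not quite right (in the construction of Corollary~\ref{Coro:SixQuad} it is $\varphi_3,\varphi_4$ whose base-points are among the six chosen $p_i$; the base-points of $\varphi_6$ are images under earlier maps), and (ii) the paper handles genericity more cleanly by conjugating the whole six-tuple $\varphi_1,\ldots,\varphi_6$ by a single general $\alpha\in\Aut(\p^2)$, which moves all proper base-points off any prescribed finite set in one stroke---you may want to adopt this instead of perturbing the six points directly.
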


\begin{proof}
To prove the result, we start with a reduced decomposition $f=\varphi_n \circ \cdots \circ \varphi_1$, and construct another one, with length $\ge n+5$.  To do this, we take quadratic birational maps $\psi_1,\dots,\psi_6\in \Bir(\p^2)$, each having three proper base-points, such that $\psi_6 \circ \dots \circ \psi_1= \mathrm{id}$ and $\Base(\psi_i^{-1})\cap \Base(\psi_{i+1})=\emptyset$ for $i=1,\dots,5$ (which exist by Corollary~\ref{Coro:SixQuad}). For $i=1,\dots,5$, we observe that $\lgth(\psi_{i+1}\circ \psi_i)=2$ (Lemma~\ref{Lemm:CompositionWithoutCommonBasepts}), so $\psi_{i+1}\circ \psi_i$ is not Jonqui\`eres.  Replacing all $\psi_i$ with $\alpha \circ \psi_i \circ \alpha^{-1}$ for some general $\alpha\in \Aut(\p^2)$, we can assume that $\Base(\varphi_n^{-1})\cap \Base(\psi_1)=\emptyset$, which implies that $\lgth(\psi_1\circ \varphi_n)=\lgth(\varphi_n)+1$ (Lemma~\ref{Lemm:CompositionWithoutCommonBasepts}) and is thus not Jonqui\`eres if $\varphi_n\not\in \Aut(\p^2)$. In this latter case, we obtain a reduced decomposition of $f$ of length $n+6$ as $f=\psi_6\circ \dots \circ \psi_1\circ \varphi_n \circ \cdots \circ \varphi_1$. The last case is when $\varphi_n\in \Aut(\p^2)$. This implies that $n=1$, as otherwise $\varphi_{n}\circ \varphi_{n-1}$ would be Jonqui\`eres. Hence, $f=\varphi_n\in \Aut(\p^2)$. In this case, it suffices to write $f=  \psi_6'\circ \psi_5 \dots \circ \psi_1$ with $\psi_6'=f\circ \psi_6$, to get a reduced decomposition of
length $6=n+5$.
\end{proof}

\subsection{Examples of dynamical lengths}

\begin{lemma}  \label{Lemma:AutA2}
The element $\kappa\in \Bir(\p^2)$ given by $[x:y:z]\dasharrow [yz+x^2:xz:z^2]$ satisfies
\[\lgth(\kappa^a)=a,\quad \deg(\kappa^a)=2^a,\quad \lvert \Base(\kappa^a) \lvert =3a\] for each integer $a\ge 1$.
 In particular, we have
  \[\dlgth(\kappa^m)=m,\quad \lambda(\kappa^m)=2^m,\quad \mu(\kappa^m)=3m\] for each $m\ge 1$.
\end{lemma}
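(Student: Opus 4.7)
The plan proceeds by induction on $a$, using Lemma~\ref{Lemm:CompositionWithoutCommonBasepts} applied to $f = \kappa^a$ and $g = \kappa$. First, I would handle the base case $a=1$. Restricted to the affine chart $\{z\neq 0\}=\A^2$, the map $\kappa$ is the polynomial automorphism $(x,y)\mapsto (x^2+y,x)$, so $\kappa\in\Aut(\A^2)$. A short blow-up computation of the linear system $\langle x^2+yz,\,xz,\,z^2\rangle$ shows that it has a unique proper base-point at $p_0=[0:1:0]$, with two base-points $p_1,p_2$ infinitely near $p_0$, all of multiplicity~$1$. Hence $\kappa$ has homaloidal type $(2;1,1,1)$, giving $\deg(\kappa)=2$, $|\Base(\kappa)|=3$, and (being Jonqui\`eres) $\lgth(\kappa)=1$. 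A symmetric computation shows that $\kappa^{-1}\colon[x:y:z]\dasharrow[yz:xz-y^2:z^2]$ has all three of its base-points in the bubble tree above $q_0=[1:0:0]$.

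For the inductive step, the crux is to verify that $\Base(\kappa^a)\cap \Base(\kappa^{-1})=\emptyset$ for every $a\ge 1$. Because $\kappa$ is an automorphism of $\A^2$, the map $\kappa^a$ restricts to a morphism on $\A^2$, so its proper base-points all lie on $L_\infty=\{z=0\}$. On the other hand, $\kappa$ is defined on $L_\infty\setminus\{p_0\}$ and contracts it onto $[1:0:0]$, which is not a base-point of $\kappa$; a straightforward induction on $a$ then shows that the only proper base-point of $\kappa^a$ is $p_0$. Since in $\B(\p^2)$ every infinitely near point lies above a unique proper point, $\Base(\kappa^a)$ is contained in the bubble tree above $p_0$, while $\Base(\kappa^{-1})$ lies in the bubble tree above $q_0\neq p_0$, giving the desired disjointness.

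With disjointness in hand, Lemma~\ref{Lemm:CompositionWithoutCommonBasepts} yields the three recursions $\deg(\kappa^{a+1})=2\deg(\kappa^a)$, $|\Base(\kappa^{a+1})|=|\Base(\kappa^a)|+3$, and $\lgth(\kappa^{a+1})=\lgth(\kappa^a)+1$. Combined with the base case, these give the three claimed equalities for all $a\ge 1$ by induction. The dynamical statements follow immediately: for any $m\ge 1$ and $n\ge 1$ we have $\lgth((\kappa^m)^n)=\lgth(\kappa^{mn})=mn$, so $\dlgth(\kappa^m)=m$; and similarly $\deg(\kappa^{mn})=2^{mn}$ gives $\lambda(\kappa^m)=2^m$ while $|\Base(\kappa^{mn})|=3mn$ gives $\mu(\kappa^m)=3m$.

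The main obstacle is the disjointness claim $\Base(\kappa^a)\cap\Base(\kappa^{-1})=\emptyset$, which must be argued carefully in the bubble space (not merely on $\p^2$) since the infinitely near base-points must also be accounted for. Everything else is then a mechanical application of Lemma~\ref{Lemm:CompositionWithoutCommonBasepts} together with the elementary definitions of $\dlgth$, $\lambda$, and $\mu$.
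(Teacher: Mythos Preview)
Your proposal is correct and follows essentially the same route as the paper: establish disjointness of base-points and invoke Lemma~\ref{Lemm:CompositionWithoutCommonBasepts} to get the three recursions. The only cosmetic difference is that the paper proves $\Base(\kappa)\cap\Base(\kappa^{-a})=\emptyset$ (by observing that $\kappa^a$ contracts $L_\infty$ onto $q=[1{:}0{:}0]$, so $q$ is the unique proper base-point of $\kappa^{-a}$), whereas you prove the symmetric statement $\Base(\kappa^a)\cap\Base(\kappa^{-1})=\emptyset$; both lead to the same application of the lemma.
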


\begin{proof}
As $\kappa$ is a Jonqui\`eres element of degree $2$, we have $\lgth(\kappa)=1$, $\deg(\kappa)=2$ and $\lvert \Base(\kappa^a) \lvert =3$ (this last assertion follows from Noether equalities, see Lemma~\ref{Lemm:EasyWeyl}). 

Denoting by $L\subset \p^2$ the line given by $z=0$, the restriction of $\kappa$ is automorphism of  $\p^2\setminus L\simeq \A^2$, so the same holds for $\kappa^a$, for each $a\in \Z$. There can then be at most one proper base-point of $\kappa^a$, namely the image by $\kappa^{-a}$ of the line  $z=0$. We check that $\kappa$ contracts $L$ onto $q=[1:0:0]$, which is then the unique proper base-point of $\kappa^{-1}$, and that $p=[0:1:0]$ is the unique proper base-point of $\kappa$. This implies that $\kappa^a$ contracts the line $L$ onto $q$ for each $a\ge 1$, and thus $q$ is the unique proper base-point of $\kappa^{-a}$ for each $a\ge 1$. We obtain, for each $a\ge 1$, that $\Base(\kappa)\cap \Base(\kappa^{-a})=\emptyset$. Lemma~\ref{Lemm:CompositionWithoutCommonBasepts} then yields 
$\deg(\kappa^{a+1})=\deg(\kappa)\cdot \deg(\kappa^a)$, $\lvert \Base(\kappa^{a+1})\rvert=\lvert \Base(\kappa)\rvert+\lvert \Base(\kappa^a)\rvert$ and $\lgth(\kappa^{a+1})=\lgth(\kappa)+ \lgth(\kappa^a)$. This provides the result, by induction over $a$.
\end{proof}

\begin{lemma}\label{Lemma:1213} Choosing $\sigma,\alpha_2,\alpha_3\in \Bir(\p^2)$ as $\sigma\colon [x:y:z]\dasharrow [yz:xz:xy]$,
\[\alpha_2\colon [x:y:z]\mapsto [z+y:x+z:y]\text{ and }\alpha_3 \colon [x:y:z]\mapsto [y+z: x: y]\]
we obtain
$\dlgth(\alpha_2\sigma)=\frac{1}{2}$ and $\dlgth(\alpha_3\sigma)=\frac{1}{3}$.
\end{lemma}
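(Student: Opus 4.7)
The plan is to prove the two identities in parallel, since the arguments are structurally identical, by separately establishing upper and lower bounds for each $i\in\{2,3\}$.

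For the upper bound $\dlgth(\alpha_i\sigma)\le 1/i$, I will compute $(\alpha_i\sigma)^i$ explicitly as a birational map. From $\alpha_2\sigma[x:y:z]=[x(y+z):y(x+z):xz]$, composing once more and cancelling the common factor $x$ gives $(\alpha_2\sigma)^2[x:y:z]=[(y+z)(xy+yz+xz):y(x+z)(y+2z):xz(y+z)]$, of degree $3$. From $\alpha_3\sigma[x:y:z]=[x(y+z):yz:xz]$, three iterations (with appropriate cancellations) yield $(\alpha_3\sigma)^3[x:y:z]=[(x+y)(xy+yz+xz):xyz:x(y+z)(x+y)]$, also of degree $3$. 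An explicit analysis of the base-points in each case—examining local orders of vanishing at each proper base-point and identifying infinitely-near ones via fixed tangent directions—shows that the homaloidal type is $(3;2,1^4)$. By Lemma~\ref{Lemm:JoCremWinfty}, $(\alpha_i\sigma)^i$ is then a Jonqui\`eres element of length $1$, so subadditivity gives $\lgth((\alpha_i\sigma)^{in})\le n$, whence $\dlgth(\alpha_i\sigma)\le 1/i$.

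For the lower bound $\dlgth(\alpha_i\sigma)\ge 1/i$, set $h_i:=(\alpha_i\sigma)^i$. Since $\dlgth(f^k)=k\cdot\dlgth(f)$ for every $f\in\Bir(\p^2)$ and every $k\ge 1$ (a direct consequence of the definition of $\dlgth$ as a limit), it is enough to show $\dlgth(h_i)\ge 1$, equivalently $\lgth(h_i^n)=n$ for all $n\ge 1$. I will prove this by induction on $n$. The base case is the computation above, giving $\lgth(h_i)=1$. For the inductive step, writing $h_i^{n+1}=h_i\circ h_i^n$ and applying Theorem~\ref{TheMainTheorem}, it suffices to verify that $h_i^n$ itself (modulo $\Sym_{\p^2}$) is a predecessor of $h_i^{n+1}$: then Corollary~\ref{Coro:UniquenessPredUpSym} guarantees $\lgth(h_i^{n+1})=\lgth(h_i^n)+1=n+1$. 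This verification relies on computing the homaloidal type of $h_i^n$ inductively, using the linear action of $\Bir(\p^2)$ on $\ZZ_{\p^2}$ described in Lemma~\ref{Lemm:BirP2ZP2}, and then applying Lemma~\ref{Lemm:AlgoFirstStepS} to identify the predecessor explicitly.

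The main obstacle is this inductive identification of the predecessor. Since $\Base(h_i)\cap\Base(h_i^{-1})$ is nonempty (for $i=3$ both points $p_1,p_2$ lie in the intersection), Lemma~\ref{Lemm:CompositionWithoutCommonBasepts} does not apply directly to give $\lgth(h_i^{n+1})=\lgth(h_i)+\lgth(h_i^n)$, and one must instead work with the explicit multiplicity profile of $h_i^n$. The key geometric fact making the induction work is that $h_i\in\Jonq_{p,q}$ is a Jonqui\`eres element with \emph{distinct} source and target points of maximal multiplicity ($p\ne q$), so that the pencil through $p$ is sent to a different pencil. This prevents $h_i^n$ from being collapsible to a single Jonqui\`eres element for $n\ge 2$: the maximal-multiplicity base-point of $h_i^n$ shifts in a controlled way under iteration, and Algorithm~\ref{Algo:AlgorithInWeyl} unwinds exactly one factor of $h_i$ per step. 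Carrying out this tracking explicitly for small $n$ and formalising the pattern by induction completes the proof that $\lgth(h_i^n)=n$ and hence $\dlgth(\alpha_i\sigma)=1/i$.
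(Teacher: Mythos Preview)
Your approach is correct and follows essentially the same strategy as the paper: show that $h_i:=(\alpha_i\sigma)^i$ is a Jonqui\`eres element of degree $3$, and then prove $\lgth(h_i^n)=n$ by checking that $h_i^{n-1}$ is a predecessor of $h_i^n$, which yields $\dlgth(h_i)=1$ and hence $\dlgth(\alpha_i\sigma)=1/i$.

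The one place where the paper's execution differs from yours is precisely the step you flag as the ``main obstacle''. Rather than tracking base-points geometrically under iteration, the paper observes that $\alpha_i\sigma$ preserves a finite-dimensional subquotient $V_i/T_i$ of $\ZZ_{\p^2}$ spanned by $e_0,e_{p_1},e_{p_2},e_{p_3}$ (where $T_i$ collects the classes of points that escape to infinitely-near points of $p_4,p_5$), and writes the induced linear map as an explicit $4\times4$ matrix $M_i$. The matrices $A=M_2^2$ and $B=M_3^3$ then encode $\deg(h_i^n)$ and the relevant multiplicity of $h_i^n$ as entries of $A^n$ (resp.\ $B^n$), governed by the recurrence $d_n=3d_{n-1}-d_{n-2}$. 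One then reads off that the maximal multiplicity of $h_i^n$ equals $d_n-d_{n-1}$ (using $m+m'\le d$ from Corollary~\ref{Coro:PositivityOrbit}), so $\comult(h_i^n)=d_{n-1}=\deg(h_i^{n-1})$; since any predecessor has degree at least the comultiplicity, $h_i^{n-1}$ is a predecessor. This matrix-quotient bookkeeping is a clean substitute for your proposed explicit multiplicity tracking and avoids having to identify the shifting infinitely-near base-points individually.
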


\begin{proof}
The birational involution $\sigma=\sigma^{-1}$ is quadratic with base-points $p_1=[1:0:0]$, $p_2=[0:1:0]$, $p_3=[0:0:1]$ and its action on $\ZZ_{\p^2}$ satisfies
\[\sigma(e_0)=2e_0-e_{p_1}-e_{p_2}-e_{p_3},\ \sigma(e_{p_i})=e_0-e_{p_1}-e_{p_2}-e_{p_3}+e_{p_i}, i=1,2,3,\]
as in Example~\ref{ExampleSigma}. Writing $p_4=[1:0:1]$, $p_5=[1:1:0]$, one gets 
\[\alpha_2(p_1)=p_2,\ \alpha_2(p_2)=p_4,\ \alpha_2(p_3)=p_5\text{ and }\alpha_3(p_1)=p_2,\ \alpha_3(p_2)=p_4 ,\ \alpha_3(p_3)=p_1.\]
Since $p_4$ and $p_5$ are general points of the lines contracted by $\sigma$ onto respectively $p_2$ and $p_3$, we find $\sigma(e_{p_4})=e_{q_2}$ and $\sigma(e_{p_5})=e_{q_3}$, where $q_2,q_3$ are points infinitely near to $p_2$ and $p_3$ respectively. In particular, writing 
\[\begin{array}{rcll}
T_2&=&\left\{\bigoplus \Z e_q\mid q\in \B(\p^2),\ q\text{ is infinitely near or equal to }p_4 \text{ or }p_5\right\}&\subseteq \ZZ(\p^2),\\
T_3&=&\left\{\bigoplus  \Z e_q\mid q\in \B(\p^2),\ q\text{ is infinitely near or equal to }p_4\right\}&\subseteq \ZZ(\p^2),\end{array}\] and writing $V_i=\Z e_0 \oplus \Z e_{p_1} \oplus \Z e_{p_2} \oplus \Z e_{p_3} \oplus T_i$, one observes that 
\[\alpha_i\sigma(T_i)\subseteq T_i\text{ and }\alpha_i\sigma(V_i)\subseteq V_i\text{ for }i=2,3.\]
We then get, for $i=2,3$, a linear map $V_i/T_i\to V_i/T_i$ given by a matrix $M_i$ with respect to the basis $e_0,e_{p_1},e_{p_2},e_{p_3}$, as follows :
\[M_2=\left(\begin{array}{rrrr}
2 & 1 & 1 & 1\\
0 & 0 & 0 & 0\\
-1 & 0 & -1 & -1 \\
0 & 0 & 0& 0\end{array}\right)\text{ and }M_3=\left(\begin{array}{rrrr}
2 & 1 & 1 & 1\\
-1 & -1 & -1 & 0\\
-1 & 0 & -1 & -1 \\
0 & 0 & 0& 0\end{array}\right).\]
We then compute 
\[ A:= (M_2)^2=\left(\begin{array}{rrrr}
3 & 2 & 1 & 1\\
0 & 0 & 0 & 0\\
-1 & -1 & 0 & 0 \\
0 & 0 & 0& 0\end{array}\right)\text{ and }
B:= (M_3)^3 = \left(\begin{array}{rrrr}
3 & 1 & 1 & 2 \\
-1 & 0 & -1 & -1 \\
-1 & 0 & 0 & -1 \\
0 & 0 & 0& 0\end{array}\right).\]
Let us check that $f:= (\alpha_2 \sigma)^2$ and $g:= (\alpha_3 \sigma)^3$ have dynamical lengths equal to $1$.

The expression of $A$ shows us that $\deg f = 3$ and $\comult (f) \leq 3 -2 =1$, so that $f$ is a Jonqui\`eres transformation. Set $d_{-1} =0$, $d_0=1$ and $d_n = 3 d_{n-1} - d_{n-2}$ for $n \geq 1$. A straightforward induction on $n$ would show that for each
non-negative
integer $n$, the coefficients $(1,1)$ and $(1,3)$ of $A^n$ satisfy:
\[ (A^n)_{1,1} = d_n, \quad (A^n)_{1,3} = d_n - d_{n-1}.\]
Since $2 (d_n- d_{n-1}) +1 > d_n$, it follows from Corollary~\ref{Coro:PositivityOrbit} that $d_n-d_{n-1}$ is the highest multiplicity of $f^n$. Therefore,
a predecessor $g_n$ of $f^n$ satisfies
\[ \deg g_n \geq \comult (f^n)  = d_n - (d_n-d_{n-1}) = d_{n-1} = \deg f^{n-1}.\]
This proves that $f^{n-1}$ is a predecessor of $f^n$, so that $\lgth (f^n) =\lgth (f^{n-1}) + 1$, proving that $\lgth (f^n) = n$ and $\dlgth(f) = 1$.

One would prove analogously that $g$ has dynamical degree $1$, since the coefficients $(1,1)$ and $(1,4)$ of $B^n$ satisfy:
\[ (B^n)_{1,1} = d_n, \quad (B^n)_{1,4} = d_n - d_{n-1}. \qedhere \] 
\end{proof}

\begin{corollary}\label{Cor:1213LengthSpectrum}
We have
\[\frac{1}{2}\Z_{ \geq 0} \cup \frac{1}{3}\Z_{ \geq 0}\subseteq \dlgth(\Bir(\p^2))=\{\dlgth(f)\mid f\in \Bir(\p^2)\}\]
\end{corollary}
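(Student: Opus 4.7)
The plan is elementary given Lemma~\ref{Lemma:1213}: the corollary will follow by combining it with the general multiplicativity fact that $\dlgth(f^m)=m\cdot \dlgth(f)$ for every $f\in \Bir(\p^2)$ and every integer $m\ge 0$. The case $m=0$ is handled separately by observing that $\dlgth(\mathrm{id})=0$, so $0\in \dlgth(\Bir(\p^2))$.

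First I would prove the multiplicativity formula $\dlgth(f^m)=m\cdot \dlgth(f)$. Starting from Definition~\ref{defintion: dynamical length}, we write
\[\dlgth(f^m)=\lim_{n\to\infty}\frac{\lgth((f^m)^n)}{n}=\lim_{n\to\infty}\frac{\lgth(f^{mn})}{n}=m\cdot\lim_{n\to\infty}\frac{\lgth(f^{mn})}{mn}.\]
Since the full sequence $\bigl(\frac{\lgth(f^k)}{k}\bigr)_{k\ge 1}$ converges to $\dlgth(f)$, its subsequence indexed by $k=mn$ has the same limit, yielding $\dlgth(f^m)=m\cdot\dlgth(f)$.

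Next, applying this to $f=\alpha_2\circ \sigma$, for which Lemma~\ref{Lemma:1213} gives $\dlgth(f)=\frac{1}{2}$, produces elements $(\alpha_2\circ\sigma)^m\in \Bir(\p^2)$ with $\dlgth((\alpha_2\circ\sigma)^m)=\frac{m}{2}$ for each integer $m\ge 1$; together with $m=0$ this shows $\frac{1}{2}\Z_{\ge 0}\subseteq \dlgth(\Bir(\p^2))$. Applying the same argument to $f=\alpha_3\circ\sigma$, for which Lemma~\ref{Lemma:1213} gives $\dlgth(f)=\frac{1}{3}$, yields $\frac{1}{3}\Z_{\ge 0}\subseteq \dlgth(\Bir(\p^2))$, concluding the proof.

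There is essentially no obstacle in this step: all the substantive work lies in the explicit computation of Lemma~\ref{Lemma:1213}, and the remaining content of the corollary reduces to the standard fact that the dynamical length behaves linearly on iterates.
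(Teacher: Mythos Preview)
Your proof is correct and follows essentially the same approach as the paper: invoke Lemma~\ref{Lemma:1213} to obtain elements of dynamical length $\tfrac{1}{2}$ and $\tfrac{1}{3}$, then use the multiplicativity $\dlgth(f^m)=m\cdot\dlgth(f)$ to produce all of $\tfrac{1}{2}\Z_{\ge 0}\cup\tfrac{1}{3}\Z_{\ge 0}$. The only difference is that you spell out the (straightforward) justification of the multiplicativity formula, which the paper leaves implicit.
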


\begin{proof}
Lemma~\ref{Lemma:1213} yields elements $f_2 ,f_3 \in \Bir(\p^2)$ such that 
$\dlgth(f_2)=\frac{1}{2}$ and $\dlgth(f_3)=\frac{1}{3}$. We then get $\dlgth({f_2}^m)=\frac{m}{2}$ and $\dlgth({f_3}^m)=\frac{m}{3}$ for each $m\ge 0$.
\end{proof}

\subsection{Length of monomial transformations}\label{SubSec:LengthMonomial}
Recall that the group $\GL_2(\Z)$ can be viewed as
the subgroup of monomial transformations of $\Bir(\p^2)$:
a matrix $\left( \begin{array}{rr} a & b \\ c & d \end{array} \right)$ corresponds to
the transformation $[x:y:1]\dasharrow [x^ay^b:x^cy^d:1]$.  In this
section, we give an algorithm to compute the length and  dynamical length in $\Bir(\p^2)$ of all monomial transformations.

In $\S$\ref{SubSec:LengthSL2p}, we first introduce the submonoids $S_R \subseteq \SL_2(\Z)_{\ge 0}$ of $\SL_2 (\Z)$ (see Lemma~\ref{Lem:DecSL2p}) and explain how to compute the lengths of their elements. In $\S\ref{SubSec:OrderedFracCont}$, we deal with the particular case of \emph{ordered} elements (see Definition~\ref{Def:ordered}) and relate the computation or their lengths with continued fractions (this relation is not needed in the sequel).  In $\S\ref{SubSec:LenghtGL2}$, we give the length of every element of $\GL_2(\Z)$ by reducing to the case of elements of $S_R$ (Lemma~\ref{lemma: reduction to a matrix with non-negative coefficient for computing the length}). The dynamical length of every element of $\GL_2(\Z)$ is then computed in $\S\ref{SubSec:DynLenghtGL2}$.

\subsubsection{The length of elements of $\SL_2(\Z)_{\ge 0}$} \label{SubSec:LengthSL2p}\label{Subsec:NonnegativeSL2}
In our first lemma we introduce some piece of notation and recall basic results:

\begin{lemma} \label{Lem:DecSL2p}
Writing $\SL_2(\Z)_{\ge 0}=\left.\left\{\left( \begin{array}{rr} a & b \\  c & d \end{array}\right) \in \SL_2(\Z)\right| 
a,b,c,d\ge 0\right\}$, we get:

\begin{enumerate}
\item\label{SL2Dec}
$\SL_2(\Z)_{\ge 0}=S_L \uplus S_R \uplus \{ \mathrm{id}\}$ $($disjoint union$)$, where 
\[\begin{array}{cccccccc}
S_L&=&\left.\left\{\left( \begin{array}{rr} a & b \\  c & d \end{array}\right) \in \SL_2(\Z)\right| \begin{array}{ll}
a\ge b\ge 0\\
 c\ge d\ge 0\end{array}\right\}&=&\SL_2(\Z)_{\ge 0}\cdot L,\vspace{0.1cm}\\ 
S_R&=&\left.\left\{\left( \begin{array}{rr} a & b \\  c & d \end{array}\right) \in \SL_2(\Z)\right| \begin{array}{ll}
0\le a\le b\\
0\le c\le d\end{array}\right\}&=&\SL_2(\Z)_{\ge 0}\cdot R.\end{array}\]
\item\label{SL2Monoid}
$\SL_2(\Z)_{\ge 0}$ is the free monoid generated by $L = \left( \begin{array}{rr} 1 & 0 \\  1 & 1 \end{array} \right)$ and $R= \left( \begin{array}{rr} 1 & 1 \\  0 & 1 \end{array} \right)$.
\item \label{LR}
$L\cdot\SL_2(\Z)_{\ge 0}\cdot R=\left.\left\{\left( \begin{array}{rr} a & b \\  c & d \end{array}\right) \in \SL_2(\Z)\right| \begin{array}{ll}
0\le a\le b\le d\\
 0\le a\le c\le d \end{array}\right\}.$
\end{enumerate}
\end{lemma}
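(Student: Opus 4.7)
The plan is to prove the three parts in order, since (2) depends on (1) and (3) is logically independent. All arguments boil down to straightforward $2\times 2$ matrix manipulations together with the determinant relation $ad-bc=1$.

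For the equalities in (1), writing $M=\begin{pmatrix} a & b \\ c & d \end{pmatrix}\in\SL_2(\Z)_{\ge 0}$, one computes $ML=\begin{pmatrix} a+b & b \\ c+d & d \end{pmatrix}$ and $ML^{-1}=\begin{pmatrix} a-b & b \\ c-d & d \end{pmatrix}$. The inclusion $\SL_2(\Z)_{\ge 0}\cdot L\subseteq S_L$ is immediate, and for the converse, non-negativity of the entries of $ML^{-1}$ is exactly the defining condition of $S_L$. The argument for $S_R$ is symmetric. For the disjoint union, I first check $S_L\cap S_R=\emptyset$ (a matrix in the intersection has $a=b,\ c=d$, hence determinant $0$), and that $\mathrm{id}$ lies in neither (its entry $c=0<1=d$). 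The main content of (1) is the inclusion $\SL_2(\Z)_{\ge 0}\setminus\{\mathrm{id}\}\subseteq S_L\cup S_R$: assuming $a>b$ and $c<d$, we get $a\ge b+1$ and $d\ge c+1$, so $ad-bc\ge (b+1)(c+1)-bc=b+c+1=1$, which forces $b=c=0$ and then $a=d=1$, i.e.~$M=\mathrm{id}$. The symmetric case $a<b,\ c>d$ is immediately impossible since it yields $bc-ad\ge a+d+1\ge 1$, so $ad-bc\le -1$.

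For (2), the existence part follows from (1) by induction on $a+b+c+d$: any $M\ne\mathrm{id}$ lies in $S_L$ or $S_R$, so $M=M'\cdot L$ or $M=M'\cdot R$ with $M'\in\SL_2(\Z)_{\ge 0}$ and strictly smaller entry-sum (the decrease is $b+d$ or $a+c$, which is positive since the determinant would otherwise vanish). For uniqueness, two expressions $w_1\cdots w_n=v_1\cdots v_m$ must end with the same letter, since $S_L\cap S_R=\emptyset$ and neither contains $\mathrm{id}$; after cancellation we induct on word length.

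For (3), the direct inclusion is routine: for $N=\begin{pmatrix} a & b \\ c & d \end{pmatrix}\in\SL_2(\Z)_{\ge 0}$, one expands $LNR=\begin{pmatrix} a & a+b \\ a+c & a+b+c+d \end{pmatrix}$, and the required inequalities $0\le a\le a+b\le a+b+c+d$ and $0\le a\le a+c\le a+b+c+d$ are visible. The main obstacle is the converse: given $M=\begin{pmatrix} a & b \\ c & d \end{pmatrix}$ satisfying $0\le a\le b\le d$ and $0\le a\le c\le d$, one computes $L^{-1}MR^{-1}=\begin{pmatrix} a & b-a \\ c-a & a+d-b-c \end{pmatrix}$, and three of the four entries are clearly non-negative; showing $a+d-b-c\ge 0$ is the key point. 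Here I would use the observation that $(a-b)(a-c)\ge 0$ (product of two non-positive integers), which expands via $ad-bc=1$ to $a\,(a+d-b-c)=(a-b)(a-c)+1\ge 1$. This forces $a\ge 1$ and hence $a+d-b-c\ge 1>0$, yielding $L^{-1}MR^{-1}\in\SL_2(\Z)_{\ge 0}$ and so $M\in L\cdot\SL_2(\Z)_{\ge 0}\cdot R$.
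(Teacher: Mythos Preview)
Your proof is correct. Parts~(1) and~(2) follow essentially the same route as the paper (your inequality $ad-bc\ge (b+1)(c+1)-bc$ in the case $a>b,\ c<d$ is a minor variant of the paper's identity $1=(a-b)d-b(c-d)$, and your explicit induction for~(2) fills in what the paper leaves as ``(1) implies~(2)'').

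The genuine difference is in part~(3). The paper argues via symmetry: using the free monoid structure from~(2) it shows $L\cdot\SL_2(\Z)_{\ge 0}\cdot R=(L\cdot\SL_2(\Z)_{\ge 0})\cap(\SL_2(\Z)_{\ge 0}\cdot R)$, then observes that transposition exchanges $L$ and $R$ and fixes $\SL_2(\Z)_{\ge 0}$, so this intersection equals ${}^t S_R\cap S_R$, from which the inequality description is read off. Your approach is instead a direct computation of $L^{-1}MR^{-1}$, with the algebraic identity $a(a+d-b-c)=(a-b)(a-c)+1$ handling the only nontrivial entry. The paper's argument is slicker and explains \emph{why} the answer is symmetric in $b,c$; on the other hand your argument is self-contained (it does not rely on~(2), so your claim that~(3) is logically independent is justified for your proof but not for the paper's), and it yields $a\ge 1$ as a byproduct rather than as a separate remark.
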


\begin{proof}
We write \[S_L=\left.\left\{\left( \begin{array}{rr} a & b \\  c & d \end{array}\right) \in \SL_2(\Z)\right| \begin{array}{ll}
a\ge b\ge 0\\
 c\ge d\ge 0\end{array}\right\}, S_R=\left.\left\{\left( \begin{array}{rr} a & b \\  c & d \end{array}\right) \in \SL_2(\Z)\right| \begin{array}{ll}
0\le a\le b\\
0\le c\le d\end{array}\right\}\]
and obtain $S_L\cup S_R\subseteq \SL_2(\Z)_{\ge 0}$. The sets $S_L$, $S_R$ and $\{ \mathrm{id}\}$ are pairwise disjoint. To show $\SL_2(\Z)_{\ge 0}=S_L\cup S_R\cup\{ \mathrm{id}\}$, we take $M=\left(\begin{array}{rr} a & b \\  c & d \end{array}\right)\in \SL_2(\Z)_{\ge 0}\setminus \{\mathrm{id}\}$ and show that $M\in S_L\cup S_R$. As $a,b,c,d\ge 0$ and $ad-bc=1$, we have $a,d>0$. If $b=0$, then $a=d=1$ and $c>0$, so $M=L^c\in S_L$. Similarly, if $c=0$, then $M=R^b\in S_R$.
We can thus assume that $a,b,c,d>0$. The equality $1=ad-bc=(a-b)d-b(c-d)$ shows us that if $a-b$ is positive (resp.~negative), then $c-d$ is non-negative (resp. non-positive). We have therefore $(a-b)\cdot (c-d)\ge 0$, which yields $M\in S_L\cup S_R$.

We then observe that $\SL_2(\Z)_{\ge 0}\cdot L\subseteq S_L$ and $S_L\cdot L^{-1}\subseteq \SL_2(\Z)_{\ge 0}$, which yield $\SL_2(\Z)_{\ge 0}\cdot L= S_L$. We similarly obtain $\SL_2(\Z)_{\ge 0}\cdot R= S_R$. This yields \ref{SL2Dec}, which implies \ref{SL2Monoid}. Assertion~\ref{LR} follows from
\[L\cdot\SL_2(\Z)_{\ge 0}\cdot R= ( L\cdot\SL_2(\Z)_{\ge 0} ) \cap ( \SL_2(\Z)_{\ge 0}\cdot R ) =\vphantom{)}^t(\SL_2(\Z)_{\ge 0}\cdot R)\cap (\SL_2(\Z)_{\ge 0}\cdot R).\qedhere\]
\end{proof}

\begin{definition}For each sequence $(s_1,\dots,s_n)$ of positive integers with $n\ge 1$, we denote by
$M(s_1,\dots,s_n) \in S_R \subseteq \SL_2(\Z)_{\geq 0}$
the element given by 
\[M(s_1,\dots,s_n)=\left\{\begin{array}{ll}
 R^{s_n}L^{s_{n-1}} \cdots R^{s_3} L^{s_2} R^{s_1}&\text{ if $n$ is odd,}\\
 L^{s_n}R^{s_{n-1}} \cdots R^{s_3} L^{s_2} R^{s_1}&\text{ if $n$ is even,}\end{array}\right.\]
where $L = \left( \begin{array}{rr} 1 & 0 \\  1 & 1 \end{array} \right)$, $R= \left( \begin{array}{rr} 1 & 1 \\  0 & 1 \end{array} \right)\in \SL_2(\Z)$.\\
The length of $M(s_1,\dots,s_n)$ in $\Bir (\p^2)$ is denoted by $\ell(s_1,\dots,s_n)$.
\end{definition}

\begin{remark}
A matrix belongs to $S_R$ if and only if it is of the form $M(s_1, \ldots, s_n)$ where $n$ and $s_1,\ldots,s_n$ are positive integers.
\end{remark}

The next proposition gives the length of an element of $S_R$:

\begin{proposition} \label{Prop:LR}
Let $n\ge 1$. For each sequence $(s_1,\dots,s_n)$ of positive integers, $M(s_1,\dots,s_n)$ is a product of $\ell(s_1,\dots,s_n)$ elements of length $1$ that are of the form $R^s,L^s,LR^s$ or $RL^s$ for some $s\ge 1$ and thus belong to $\SL_2(\Z)_{\ge 0}$. Moreover, we have:
\[\ell (s_1, \ldots,s_n)=\left\{\begin{array}{ll} 
1 & \text{ if } n=1,\\
1 & \text{ if } n=2 \text{ and } s_2=1,\\
2 & \text{ if } n=2 \text{ and } s_2\not=1,\\
\ell(s_2-1, s_3, \ldots,s_n) + 1 & \text{ if } n\ge 3, s_2\ge 2,\\
\ell(s_3, \ldots,s_n) + 1 & \text{ if } n\ge 3, s_2=1.\end{array}
\right.\]
\end{proposition}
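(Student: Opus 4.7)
The plan is to establish the formula by induction on $n$, proving matching upper and lower bounds for $\lgth(M(s_1, \ldots, s_n))$.

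First I would verify that each of the four matrix types $R^s, L^s, LR^s, RL^s$ (for $s \geq 1$) represents a Jonqui\`eres transformation of $\p^2$, by exhibiting the preserved or sent pencils of lines. For instance, $R^s$ corresponds to the affine map $(x,y)\mapsto(xy^s,y)$ and preserves the pencil $\{y = \mathrm{const}\}$ through $[1:0:0]$; the matrix $LR^s$ corresponds to $(x,y)\mapsto(xy^s, xy^{s+1})$ and sends this pencil to the pencil of lines $\{Y = cX\}$ through $[0:0:1]$. The cases $L^s$ and $RL^s$ are symmetric under swapping the roles of $x$ and $y$. None of these is an element of $\Aut(\p^2)$ when $s\ge 1$, so each has length exactly~$1$.

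For the \emph{upper bound}, I would exhibit the decomposition by induction on $n$. The base cases $n=1,2$ are direct from the definition of $M$. For $n \geq 3$ the induction step uses the matrix identities
\begin{align*}
M(s_1, 1, s_3, \ldots, s_n) &= M(s_3, \ldots, s_n) \cdot LR^{s_1}, \\
M(s_1, s_2, s_3, \ldots, s_n) &= \widetilde M(s_2 - 1, s_3, \ldots, s_n) \cdot LR^{s_1} \quad (s_2 \geq 2),
\end{align*}
where $\widetilde M$ denotes the analogous product with its leading (leftmost) letter $L$ instead of $R$ (or vice-versa), the swap being forced by the change of parity from $n$ to $n-1$. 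These identities are verified by a direct matrix calculation, using the fact that $n$ and $n-2$ have the same parity in the first case. Since conjugation by $[x:y:z]\mapsto[y:x:z] \in \Aut(\p^2)$ interchanges $L$ and $R$ and preserves length, we have $\lgth(\widetilde M(s_2-1, s_3, \ldots, s_n)) = \lgth(M(s_2-1, s_3, \ldots, s_n))$, and the upper bound follows from the induction hypothesis together with $\lgth(fg) \leq \lgth(f)+\lgth(g)$.

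The \emph{lower bound} is the main obstacle: showing that the right factor $LR^{s_1}$ peeled off above cannot be improved. By Theorem~\ref{TheMainTheorem} combined with Corollary~\ref{Cor:EquiTwoPred}, it suffices to show that $M(s_3, \ldots, s_n)$ (respectively $\widetilde M(s_2-1, s_3, \ldots, s_n)$) is a \emph{predecessor} of $M(s_1, \ldots, s_n)$ in the sense of Definition~\ref{DefiPredeBirP2}. The Jonqui\`eres condition on the quotient is immediate from the matrix identities of the previous paragraph; what remains is to verify that the corresponding element of $\Weyl(e_0)$ has the homaloidal type prescribed by the algorithm of Lemma~\ref{Lemm:AlgoFirstStepS}. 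This requires an inductive computation of the degree and base-point multiplicities of $M(s_1, \ldots, s_n)$: the base points are supported on the three coordinate points $[1:0:0], [0:1:0], [0:0:1]$ and their infinitely near points, and the multiplicities propagate multiplicatively through the matrix product, with the factor $LR^{s_1}$ precisely stripping off the contribution of $s_1$. The technical heart of the argument is tracking these infinitely near multiplicities with enough precision to apply the predecessor criterion of Lemma~\ref{Lemm:AlgoFirstStepS}; alternatively, in many cases one can verify directly that $\Base(M(s_3, \ldots, s_n)) \cap \Base((LR^{s_1})^{-1}) = \emptyset$ after an appropriate conjugation and invoke Lemma~\ref{Lemm:CompositionWithoutCommonBasepts} to conclude additivity of the length.
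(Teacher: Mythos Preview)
Your upper-bound argument matches the paper's: peel off the rightmost factor $LR^{s_1}$ and induct, using that $R^s$, $L^s$, $LR^s$, $RL^s$ are Jonqui\`eres of length~$1$.

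The lower bound is where you diverge, and you are making it much harder than necessary. You propose to track the full tower of infinitely-near multiplicities in order to feed them into the predecessor criterion of Lemma~\ref{Lemm:AlgoFirstStepS}, and you concede this is ``the technical heart'' that you have not carried out; your fallback via Lemma~\ref{Lemm:CompositionWithoutCommonBasepts} is only claimed ``in many cases''. Neither is needed.

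The paper's argument is a two-line computation with no infinitely-near bookkeeping. Write $M=\left(\begin{smallmatrix}a&b\\c&d\end{smallmatrix}\right)\in S_R$, so $0\le a\le b$ and $0\le c\le d$. In homogeneous form
\[
M\colon[x:y:z]\dasharrow[x^ay^bz^{D-a-b}:x^cy^dz^{D-c-d}:z^D],\qquad D=\max\{a+b,\,c+d\},
\]
one reads off that there are exactly two \emph{proper} base points, $p_1=[1:0:0]$ and $p_2=[0:1:0]$ (the point $[0:0:1]$ is \emph{not} a base point, contrary to what you wrote: the component $z^D$ does not vanish there), with multiplicities $m_{p_1}=D-\max\{a,c\}$ and $m_{p_2}=D-\max\{b,d\}$. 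Since $a\le b$ and $c\le d$ we get $m_{p_1}\ge m_{p_2}$, so $p_1$ has maximal multiplicity and
\[
\comult(M)=D-m_{p_1}=\max\{a,c\}.
\]
Now for \emph{any} Jonqui\`eres element $\varphi$ one has $\deg(M\circ\varphi)\ge\comult(M)$: writing $\varphi=\alpha\,\iota_{q,\Delta}$ with $\alpha\in\Sym_{\p^2}$ (Lemma~\ref{Lemm:JoWinfty}), the relation $\iota_{q,\Delta}(e_0-e_q)=e_0-e_q$ gives $\deg(\varphi(M^{-1}(e_0)))\ge\deg(M)-m_q(M)\ge\comult(M)$. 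Finally, the matrix $M'=M(LR^{s_1})^{-1}$ lies in $\SL_2(\Z)_{\ge 0}$ with row sums $a$ and $c$, so $\deg(M')=\max\{a,c\}=\comult(M)$. Hence $M'$ realises the minimum and is a predecessor of $M$; Corollary~\ref{Cor:decrdegrlgth} gives $\lgth(M')=\lgth(M)-1$.

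So the whole lower bound reduces to the single identity $\comult(M)=\deg(M')$, read directly from the matrix entries.
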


\begin{proof} We write $M=M(s_1,\dots,s_n)=\cdots  L^{s_2}R^{s_1}$. For each $s\ge 1$,   $R^s, LR^s$ are
\[\begin{array}{llll}
R^s\colon & [x:y:z]\mapsto [xy^s:yz^s:z^{s+1}],& LR^s\colon &[x:y:z]\dasharrow [xy^sz:xy^{s+1}:z^{s+2}],
\end{array}
\]
and thus have length equal to $1$. The same
holds for $L^s, RL^s$ (by conjugating with $\tau$, see Remark~\ref{Rem:ExchangeLR}). This gives the proof when $n=1$ or $(n,s_2) = (2,1)$.
We  thus assume  $n\ge 3$, or $n=2$ and $s_2\ge 2$. Since $\lgth(LR^{s_1})=1$, we have $\lgth(M)\le \lgth(M')+1$, where $M'=M(LR^{s_1})^{-1}$. It remains to show that equality holds to obtain the result (using Remark~\ref{Rem:ExchangeLR}). Applying Lemma~\ref{Lem:DecSL2p}, we can  write 
\[M =  \left( \begin{array}{rr} a & b \\  c & d \end{array} \right), M'=  \left( \begin{array}{rr} a' & b' \\  c' & d' \end{array} \right)=  \left( \begin{array}{rr} a+(as_1-b) & b-as_1 \\  c+(cs_1-d) & d-cs_1 \end{array} \right)\]
 with  $b\ge a\ge 0, d\ge c\ge 0$ and $a',b',c',d'\ge 0$. The degrees of $M$ and $M'$, as birational maps of $\p^2$, are respectively $D=\max\{a+b,c+d\}\ge 2$ and $D'=\max\{a'+b',c'+d'\}=\max\{a,c\}$. The element $M$ corresponds to the birational map
\[M\colon[x:y:z]\dasharrow [x^ay^bz^{D-a-b}:x^cy^dz^{D-c-d}:z^D],\]
which has degree $D$ and exactly two proper base-points, namely $p_1=[1:0:0]$ and $p_2=[0:1:0]$, having multiplicity $m_1=D-\max\{a,c\}$ and $m_2=D-\max\{b,d\}$ respectively. Hence, $p_1$ is a base-point of maximal multiplicity and every predecessor of $M$ has degree at least $D-m_1=D'$.
\end{proof}

\begin{corollary}  \label{Cor:EllIndFirst}
Let $n\ge 1$. If $s_1, \ldots,s_n$ and $s'_1$ are positive integers, we have
\[ \ell(s_1,\dots,s_n)=\ell(s'_1,s_2,\dots,s_n). \]
\end{corollary}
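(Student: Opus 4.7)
The plan is to simply apply the recurrence established in Proposition~\ref{Prop:LR}, observing that in every case the expression for $\ell(s_1,\ldots,s_n)$ produced by the recurrence does not involve $s_1$ on the right-hand side.

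Concretely, I would split into cases according to $n$ and $s_2$, exactly matching the cases of Proposition~\ref{Prop:LR}. If $n=1$, then $\ell(s_1)=1$ and $\ell(s'_1)=1$, so the equality is trivial. If $n=2$, then $\ell(s_1,s_2)$ equals $1$ when $s_2=1$ and $2$ when $s_2\neq 1$; in both situations $s_1$ plays no role, and the same holds for $\ell(s'_1,s_2)$. If $n\ge 3$ and $s_2\ge 2$, the recurrence yields
\[\ell(s_1,s_2,\ldots,s_n)=\ell(s_2-1,s_3,\ldots,s_n)+1=\ell(s'_1,s_2,\ldots,s_n),\]
and if $n\ge 3$ and $s_2=1$ it yields
\[\ell(s_1,s_2,\ldots,s_n)=\ell(s_3,\ldots,s_n)+1=\ell(s'_1,s_2,\ldots,s_n).\]

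There is no real obstacle here: once Proposition~\ref{Prop:LR} is available, the independence of the length from $s_1$ is visible by direct inspection of its statement. The only thing to note is that in writing down the proof one must go through each of the cases (in particular treating $n=2$ separately, since the recurrence itself only kicks in for $n\ge 3$) so that nothing is swept under the rug.
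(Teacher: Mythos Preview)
Your proposal is correct and takes essentially the same approach as the paper, which simply states that the result ``directly follows from Proposition~\ref{Prop:LR}.'' You have merely written out the case analysis explicitly, which is fine.
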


\begin{proof}
Directly follows from Proposition~\ref{Prop:LR}.
\end{proof}

\begin{remark} \label{Rem:ExchangeLR} The conjugation by $\tau=\left( \begin{array}{rr} 0 & 1 \\  1 & 0 \end{array} \right)\in \GL_2(\Z)$ 
exchanges $L$ and $R$, so $\ell(s_1,\dots,s_n)$ is also the length of
\[\tau M(s_1,\dots,s_n)\tau=\left\{\begin{array}{ll}
L^{s_n}R^{s_{n-1}} \cdots L^{s_3} R^{s_2} L^{s_1}&\text{ if $n$ is odd,}\\
R^{s_n}L^{s_{n-1}} \cdots L^{s_3} R^{s_2} L^{s_1}&
\text{ if $n$ is even.}
\end{array}\right.\]
Hence, Proposition~\ref{Prop:LR} allows to compute the length of any element of $\SL_2(\Z)_{\ge 0} $.
\end{remark}

\subsubsection{Ordered elements and continued fractions}\label{SubSec:OrderedFracCont}

\begin{definition}\label{Def:ordered}
We say that an element
$M=\left(\begin{array}{rr} a & b \\  c & d \end{array}\right)$  of $\SL_2(\Z)_{\ge 0}$ is \emph{ordered} if we have $0\le a\le b\le d$ and $0\le a\le c\le d$.
Equivalently, this means that $M$ belongs to $L\cdot\SL_2(\Z)_{\ge 0}\cdot R$ (see Lemma~\ref{Lem:DecSL2p}\ref{LR}).
\end{definition}

\begin{remark}
In the above definition, as $ad-bc=1$, we have $a>0$, so that all coefficients of $M$ are positive.
\end{remark}

We recall the following very classical result, whose proof is easy and well-known. We keep it as it is short, and for
self-containedness. See also
\cite[Equation (22), page~102]{Frame}
or \cite[\S 2.1]{BPSZ2014}.

\begin{proposition}\label{Prop:Ordered}
A matrix $M\in \SL_2(\Z)=\left(\begin{array}{rr} a & b \\  c & d \end{array}\right)$ is ordered if and only if it may be written
in the form
$M=L^{s_n}R^{s_{n-1}} \cdots R^{s_3} L^{s_2} R^{s_1}$ for some integers $s_1,\dots,s_n\ge 1$ with $n\ge 2$ even. In this case, the integers $s_1,\dots,s_n$ and $a,b,c,d$ are linked by the continued fractions \[ \frac{b}{a} = s_1 + \frac{1}{s_2 +   {\displaystyle \frac{1}{ \;  \ddots \, {\displaystyle   + \frac{1}{s_{n-1}}   } }    }  }\hspace{0.5cm}\text{ and }\hspace{0.5cm} \frac{d}{c} = s_1 + \frac{1}{s_2 +   {\displaystyle \frac{1}{ \;  \ddots \, {\displaystyle   + \frac{1}{s_n}   } }    }  }      .\]
\end{proposition}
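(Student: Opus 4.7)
The plan splits into two parts: first the equivalence of ``ordered'' with admitting a decomposition of the form $L^{s_n}R^{s_{n-1}}\cdots L^{s_2}R^{s_1}$ with $n$ even and each $s_i\ge 1$, and then the extraction of the two continued fractions.

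For the equivalence I would combine parts \ref{SL2Monoid} and \ref{LR} of Lemma~\ref{Lem:DecSL2p}: the latter asserts that $M$ is ordered iff $M\in L\cdot\SL_2(\Z)_{\ge 0}\cdot R$, while the former says $\SL_2(\Z)_{\ge 0}$ is the free monoid on $\{L,R\}$. An ordered $M$ is therefore a nontrivial word in $L,R$ whose leftmost letter is $L$ and whose rightmost letter is $R$; grouping maximal runs of equal letters gives the required normal form with $s_i\ge 1$, and the alternation of $L$- and $R$-blocks together with the fact that the end-blocks are of different types ($L$ on the left, $R$ on the right) forces the number $n$ of blocks to be even. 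The converse inclusion is immediate.

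For the continued fractions I would introduce $\tau:=\begin{pmatrix}0&1\\ 1&0\end{pmatrix}$ and $A_s:=R^s\tau=\begin{pmatrix}s&1\\ 1&0\end{pmatrix}$, and first establish the classical matrix identity
\[ A_{s_1}A_{s_2}\cdots A_{s_n}=\begin{pmatrix}p_n & p_{n-1}\\ q_n & q_{n-1}\end{pmatrix}, \]
where $p_k/q_k=[s_1;s_2,\ldots,s_k]$, by a short induction on $k$ using the standard convergent recurrences $p_{k+1}=s_{k+1}p_k+p_{k-1}$ and $q_{k+1}=s_{k+1}q_k+q_{k-1}$. Since $n$ is even and $\tau R^s\tau=L^s$, this product rewrites as $R^{s_1}L^{s_2}R^{s_3}\cdots R^{s_{n-1}}L^{s_n}$. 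On the other hand, transposing $M=L^{s_n}\cdots R^{s_1}$ (using $L^T=R$, $R^T=L$, and reversing the order of the factors) yields $M^T=L^{s_1}R^{s_2}\cdots L^{s_{n-1}}R^{s_n}$, so that $\tau M^T\tau$ matches exactly the alternating pattern above. Equating the two and transposing back gives
\[ M=\begin{pmatrix}q_{n-1} & p_{n-1}\\ q_n & p_n\end{pmatrix}, \]
from which the two identifications $b/a=p_{n-1}/q_{n-1}=[s_1;\ldots,s_{n-1}]$ and $d/c=p_n/q_n=[s_1;\ldots,s_n]$ are immediate. The main subtlety here is purely bookkeeping: one easily gets confused by the interplay of transposition (which both reverses the order of factors and swaps $L\leftrightarrow R$), conjugation by $\tau$ (which only swaps $L\leftrightarrow R$) and the parity of $n$, so I would first check the alignment on the base case $n=2$, where $M=\begin{pmatrix}1&s_1\\ s_2&1+s_1s_2\end{pmatrix}$ gives $b/a=s_1$ and $d/c=s_1+\tfrac1{s_2}$, before running the general argument.
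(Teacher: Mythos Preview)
Your argument is correct. The first half (the equivalence of ``ordered'' with the alternating $L/R$ normal form) is exactly what the paper does: it too just cites Lemma~\ref{Lem:DecSL2p}.

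For the continued-fraction identities, however, your route differs from the paper's. The paper argues by a direct induction on $n$: it checks the base case $n=2$ by hand, and for $n>2$ writes $M = M'\,L^{s_2}R^{s_1}$ with $M'=\left(\begin{smallmatrix}a'&b'\\c'&d'\end{smallmatrix}\right)$ ordered of shorter length, then computes $\frac{d}{c}=s_1+\frac{1}{s_2+c'/d'}$ and $\frac{b}{a}=s_1+\frac{1}{s_2+a'/b'}$ directly from the $2\times 2$ multiplication, invoking the induction hypothesis for $d'/c'$ and $b'/a'$. Your approach instead imports the classical convergent identity $A_{s_1}\cdots A_{s_n}=\left(\begin{smallmatrix}p_n&p_{n-1}\\q_n&q_{n-1}\end{smallmatrix}\right)$ (itself proved by the same one-line induction), and then does the bookkeeping $\tau M^{T}\tau=A_{s_1}\cdots A_{s_n}$ to read off $M=\left(\begin{smallmatrix}q_{n-1}&p_{n-1}\\q_n&p_n\end{smallmatrix}\right)$. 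This is correct and has the virtue of making the link to the standard continued-fraction matrices explicit; the price is the extra layer of transpose/$\tau$-conjugation gymnastics that you yourself flag as a potential source of confusion. The paper's version avoids that overhead by keeping everything in terms of the ratios $b/a$ and $d/c$ from the start.
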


\begin{proof}
The fact that a matrix $M\in \SL_2(\Z)$ is ordered if and only if it can be written $M=L^{s_n}R^{s_{n-1}} \cdots R^{s_3} L^{s_2} R^{s_1}$ with $n\ge 2$ even and $s_1,\dots,s_n\ge 1$ follows from Lemma~\ref{Lem:DecSL2p}. We then prove the equalities given by the continued fractions by induction on $n$. 

If $n=2$, then $L^{s_2} R^{s_1}=\left( \begin{array}{cc} 1 & s_1 \\  s_2 & s_1s_2+1 \end{array} \right)$, so $\frac{b}{a}=s_1$ and $\frac{d}{c}=\frac{s_1s_2+1}{s_2}=s_1+\frac{1}{s_2}$.

If $n>2$, then $\left( \begin{array}{cc} a& b \\  c & d \end{array} \right)=\left( \begin{array}{cc} a'& b' \\  c' & d' \end{array} \right) L^{s_2} R^{s_1}$, where  $\frac{b'}{a'} =s_3 +   { \frac{1}{ \;  \ddots \, {   + \frac{1}{s_{n-1}}   } }    }$ and $\frac{d'}{c'} =s_3 +   { \frac{1}{ \;  \ddots \, {   + \frac{1}{s_n}   } }    }$. We replace these in $\frac{c}{d}=\frac{c's_1+d'(s_1s_2+1)}{c'+d's_2}=s_1+\frac{1}{s_2+\frac{c'}{d'}}$ and $\frac{b}{a}=s_1+\frac{1}{s_2+\frac{b'}{a'}}$.
\end{proof}

\begin{remark}
The above result, together with Proposition~\ref{Prop:LR}, gives a way to compute the length of an ordered element $A=\left(\begin{array}{rr} a & b \\  c & d \end{array}\right)$ of  $\SL_2(\Z)$ by writing $\frac{d}{c}$ as a continued fraction with an even number of terms. Let us for example take $A=\left(\begin{array}{rr} 36 & 115 \\  41 & 131 \end{array}\right)$. Since $\frac{\displaystyle 131}{\displaystyle 41} = 3 + \frac{\displaystyle 1}{\displaystyle 5 +   {\displaystyle \frac{1}{ \;  8 }    }  }  = 3 + \frac{\displaystyle 1}{\displaystyle 5 +   {\displaystyle \frac{1}{ \;  7 \, {\displaystyle   + \frac{1}{1}   } }    }  } $, we have $A=M(3,5,7,1)$ by Proposition~\ref{Prop:Ordered}. In particular, the length of $A$ is equal to $\ell(3,5,7,1)=\ell(4,7,1)+1=\ell(6,1)+2=3$ by Proposition~\ref{Prop:LR}.
\end{remark}

\subsubsection{The length of elements of $\GL_2(\Z)$}\label{SubSec:LenghtGL2}

We now give a way to compute the length of any element of $\GL_2(\Z)$ by reducing to
the case of elements of $S_R$, i.e.~elements of the form $M(s_1,\dots,s_n)$.

\begin{lemma}   \label{lemma: reduction to a matrix with non-negative coefficient for computing the length}
For each $M\in \GL_2(\Z)$, the following hold:
\begin{enumerate}
\item\label{GL2lgth0}
$\lgth(M)=0$ $\Leftrightarrow$ $M\in \GL_2(\Z)\cap \Aut(\p^2)=\left\langle\left( \begin{array}{rr} 0 & 1 \\ 1 & 0 \end{array} \right), \left( \begin{array}{rr} 1 & -1 \\ 0 & -1 \end{array} \right)\right\rangle\simeq \Sym_3$.
\item\label{ABMSL2big}
There exist $A,B\in \GL_2(\Z)\cap \Aut(\p^2)$ such that either $AMB$ or $-AMB$ belongs to $\SL_2(\Z)_{\ge 0}$.
\item\label{Lgthbig1ordered}
If $\lgth(M)\ge 1$, there exist $A,B\in \GL_2(\Z)\cap \Aut(\p^2)$ such that either $AMB$ or $-AMB$ is equal to $M'=M(s_1,\dots,s_n)$ for some $n\ge 1$ and some positive integers $s_1,\dots,s_n\ge 1$. We then have $\lgth(M)=\lgth(M')$ $($which can be computed directly by Proposition~$\ref{Prop:LR})$.
\end{enumerate}
\end{lemma}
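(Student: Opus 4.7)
For (1), the equivalence $\lgth(M)=0\Leftrightarrow M\in\Aut(\p^2)$ is simply the definition of length, so the task is to identify $\GL_2(\Z)\cap\Aut(\p^2)$ explicitly. I would compute the Cremona degree of a monomial transformation $M=\begin{pmatrix}a&b\\c&d\end{pmatrix}$ via the formula $\deg M=\max(0,-a,-c)+\max(0,-b,-d)+\max(0,a+b,c+d)$ and enforce $\deg M=1$: combined with $ad-bc=\pm 1$, this leaves only six solutions. Verifying directly that $\tau$ and $\rho$ realise respectively the coordinate transpositions $[x\!:\!y\!:\!z]\mapsto[y\!:\!x\!:\!z]$ and $[x\!:\!y\!:\!z]\mapsto[x\!:\!z\!:\!y]$ then produces the isomorphism with the group $\Sym_3$ of permutations of the three coordinate axes.

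For (2), I would normalise the sign pattern of the entries of $M$ using the action of $\Sym_3\times\Sym_3\times\{\pm 1\}$. Two elementary moves suffice: left (respectively right) multiplication by $\tau$ swaps rows (respectively columns) of $M$, while right multiplication by $\rho$ replaces the second column $\binom{b}{d}$ by $-\binom{a+b}{c+d}$ (and there is an analogous left version). These operations transport the columns of $M$ through the six ``root vectors'' $\pm\binom{a}{c},\pm\binom{b}{d},\pm\binom{a+b}{c+d}$, which form an $A_2$ root system. A short case analysis on the signs of $(a,b,c,d)$ (using $ad-bc=\pm 1$, in particular that $a$ and $d$ cannot both vanish) shows that one can always steer both columns into the closed first quadrant, up to a global sign, yielding $AMB\in\SL_2(\Z)_{\ge 0}$ or $-AMB\in\SL_2(\Z)_{\ge 0}$.

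For (3), I would combine (2) with Lemma~\ref{Lem:DecSL2p}. Applying (2), fix $A,B\in\Sym_3$ and $\varepsilon\in\{\pm 1\}$ with $N:=\varepsilon AMB\in\SL_2(\Z)_{\ge 0}$. Since the left and right actions of $\Aut(\p^2)$ preserve the length, $\lgth(AMB)=\lgth(M)\geq 1$, which rules out $AMB=\mathrm{id}$; the degenerate sub-case $AMB=-\mathrm{id}$ (i.e.\ $M\in -\Sym_3$, corresponding to $\sigma$ up to linear change of coordinates) gives $\lgth(M)=1$ and can be handled ad hoc against $M'=R$. Otherwise $N\in S_L\uplus S_R$ by Lemma~\ref{Lem:DecSL2p}\ref{SL2Dec}. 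If $N\in S_R$, set $M'=N$. If $N\in S_L$, apply Remark~\ref{Rem:ExchangeLR} (conjugation by $\tau$ swaps $L$ and $R$) and replace $A,B$ with $\tau A,B\tau\in\Sym_3$, so that $\varepsilon(\tau A)M(B\tau)=\tau N\tau\in S_R$. By Lemma~\ref{Lem:DecSL2p}\ref{SL2Monoid}, $M'=M(s_1,\dots,s_n)$ for some $n\ge 1$ and positive integers $s_i$. In the $\varepsilon=+1$ case $\lgth(M)=\lgth(M')$ is immediate from $\Aut(\p^2)$-invariance of length; in the $\varepsilon=-1$ case it reduces to the statement $\lgth(N)=\lgth(-N)$ for $N\in S_R$, which is verified by showing that the homaloidal types of $N$ and $-N$ coincide (the base-point multiplicities of a monomial Cremona map depend only on the absolute values of the exponent combinations), so that Algorithm~\ref{Algo:AlgorithInWeyl} produces predecessor sequences of the same length.

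The main obstacle is precisely this last equality $\lgth(N)=\lgth(-N)$ when the normalisation in (2) forces a global sign flip. Establishing it requires a careful comparison of the homaloidal data of $N$ and $-N$: one must show that the involution $M\mapsto -M$ (equivalently, post-composition with the standard quadratic $\sigma=-\mathrm{Id}$) preserves the homaloidal type up to permutation of base-points, so that Proposition~\ref{Prop:AlgoWeyl} delivers the same sequence of predecessors (up to $\Sym_{\p^2}$) on both.
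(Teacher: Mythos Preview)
Your overall plan is sound and you correctly isolate the one genuine difficulty, namely the sign $\varepsilon=-1$ in part~\ref{Lgthbig1ordered}. However, your proposed resolution of that difficulty is where the argument diverges most from the paper, and it is also where your sketch is weakest.

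\textbf{Part \ref{ABMSL2big}.} Your combinatorial normalisation via row/column moves is a legitimate alternative, but the paper's argument is rather different and worth knowing: it considers the action of $\GL_2(\Z)$ on $\p^1(\R)$, where the three points $[1{:}0],[0{:}1],[1{:}1]$ cut the circle into three arcs $I_1,I_2,I_3$ permuted by $\Sym_3$. One shows (by a short parity/determinant argument) that $M(\{[1{:}0],[0{:}1],[1{:}1]\})$ always lies in the union of two of the $I_j$; after left-multiplying by a suitable element of $\Sym_3$ this union becomes the non-negative quadrant $I_1\cup I_2$, and a further right multiplication and sign adjustment force all entries non-negative. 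This avoids the case analysis you allude to.

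\textbf{Part \ref{Lgthbig1ordered}, the sign issue.} Here the paper does \emph{not} argue via homaloidal types. Instead it exploits the full strength of Proposition~\ref{Prop:LR}: not only does that proposition compute $\lgth(M')$, it exhibits $M'$ as a product of exactly $\lgth(M')$ factors, each of the very specific form $R^s$, $L^s$, $LR^s$ or $RL^s$. The paper then writes down $-R^s$ and $-LR^s$ explicitly as plane birational maps,
\[
-R^s\colon[x{:}y{:}z]\dasharrow[z^{s+1}:xy^{s-1}z:xy^s],\qquad -LR^s\colon[x{:}y{:}z]\dasharrow[z^{s+1}:yz^s:xy^s],
\]
reads off that they are Jonqui\`eres, and concludes (using $\tau$-conjugation for $-L^s,-RL^s$). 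Negating one factor in the product for $M'$ then gives $-M'$ as a product of $\lgth(M')$ Jonqui\`eres elements, whence $\lgth(M)=\lgth(-M')\le\lgth(M')$.

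Your alternative route---showing that $\psi_N$ and $\psi_{-N}$ have the same homaloidal type---would indeed suffice by Theorem~\ref{TheMainTheorem}, but the justification you offer (``multiplicities depend only on the absolute values of the exponent combinations'') is not a proof. The map $N\mapsto-N$ is post-composition with $\sigma$, and in general $\sigma\circ f$ does \emph{not} share the homaloidal type of $f$; any argument must use that $\psi_N$ is monomial. Proving the equality of types requires either a careful toric computation of all (proper and infinitely near) base-point multiplicities for $\psi_{\pm N}$, or an explicit symmetry of the associated fans, neither of which you supply. The paper's approach sidesteps this entirely by staying at the level of the four explicit building blocks.
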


\begin{proof}
\ref{GL2lgth0}: We observe that the group $\GL_2(\Z)\cap \Aut(\p^2)$ corresponds to the group $\Sym_3$ of permutations of the coordinates, generated by $[x:y:z]\mapsto [y:x:z]$ and $[x:y:z]\mapsto [x:z:y]$, which correspond to $\tau=\left( \begin{array}{rr} 0 & 1 \\ 1 & 0 \end{array} \right)$ and $\nu=\left( \begin{array}{rr} 1 & -1 \\ 0 & -1 \end{array} \right)$.

\ref{ABMSL2big}: We consider the natural action of $\GL_2(\Z)$ on the circle $\p^1(\R)\simeq \mathbb{S}^1$, via $\GL_2(\Z)\to \PGL_2(\R)$.
This action induces an isomorphism between $\Sym_3= \GL_2(\Z)\cap \Aut(\p^2)$ and
the group of permutations of the set $\Delta=\{[1:0],[0:1],[1:1]\}$. These three points delimit the three closed intervals of $\p^1(\R)$ given by
\[I_1=\{[\alpha:\beta]\mid \alpha\ge \beta\ge 0\}, I_2=\{[\alpha:\beta]\mid 0\le \alpha\le \beta\}, I_3=\{[\alpha:\beta]\mid \alpha\ge0,\beta\le 0\}.\]
\[
\begin{tikzpicture}
\def \n {3} 
\def \bigradius {6mm}  
\def \smallradius {0.5mm} 
\def \margin {3} 

\foreach \s in {1,...,\n}
{
\draw ({360/\n * (\s - 1)}:\bigradius) circle (\smallradius);
\draw[-, >=latex] ({360/\n * (\s - 1)+\margin}:\bigradius) 
arc ({360/\n * (\s - 1)+\margin}:{360/\n * (\s)-\margin}:\bigradius);
}
\fill[black]   ({360/\n * (1 - 1)}:\bigradius) circle (\smallradius); 
\fill[black]   ({360/\n * (2 - 1)}:\bigradius) circle (\smallradius); 
\fill[black]   ({360/\n * (3 - 1)}:\bigradius) circle (\smallradius); 
\node at (1.2,  0) {$[ 1 : 1 ]$};
\node at (-0.8,  0.75) {$[ 0 : 1 ]$};
\node at (-0.8,  -0.75) {$[ 1 : 0 ]$};
\node at (0.4, -0.75) {$I_1$};
\node at (0.4, 0.7) {$I_2$};
\node at (-0.85,  0) {$I_3$};
\end{tikzpicture}
\]
Suppose first that $M(\Delta)$ is contained in the union of two of these three intervals. Replacing $M$ with $AM$ where $A\in \Sym_3$, we can assume that $M(\Delta)$ is contained in the interval $I_1\cup I_2=\{[\alpha:\beta]\mid \alpha,\beta\in \R_{\ge 0}\}$.

The open interval $\mathring{I}_3$ being infinite, $M^{-1}(\mathring{I}_3)$ contains elements of $\p^1(\R)\setminus \Delta$. Replacing then $M$ with $MB$, with $B \in \Sym_3$, we can assume that $M^{-1}( \mathring{I}_3) \cap \mathring{I}_3 \neq \emptyset $ or equivalently $M( \mathring{I}_3) \cap \mathring{I}_3 \neq \emptyset $.

We finish by replacing $M$ with $\pm M$ or $\pm \tau M$, to assume moreover that $\det(M)=1$ and that the first column of $M$ has non-negative coefficients. It remains to observe that $M\in \SL_2(\Z)_{\ge 0}$. Indeed, we have $M=\left( \begin{array}{rr} a & b \\ c & d \end{array} \right)\in \SL_2(\Z)$ with $a,c\ge 0$ and $bd\ge 0$ since $[b:d]\in I_1\cup I_2$. If $b,d\ge 0$, we are done. Otherwise $b,d\le 0$, which yields $M(I_3)\subseteq I_1\cup I_2$, contradicting $M( \mathring{I}_3) \cap \mathring{I}_3 \neq \emptyset $.

To finish the proof of \ref{ABMSL2big}, we suppose that $M(\Delta)$ is not contained in the union of two of the three intervals $I_1,I_2,I_3$ and derive a contradiction. This implies that the three points of $M(\Delta)$ are in the interiors of three distinct intervals. Replacing $M$ with $\pm AM$, with $A\in \Sym_3$, we can assume that $M([0:1])\in I_1, M([1:0])\in I_2, M([1:1])\in I_3$, and that the coefficients of the first column of $M$ are positive. The second column has then negative coefficients. We get $M=\left( \begin{array}{rr} a & -b \\ c & -d \end{array} \right)$ with $0<a<c$ and $b>d>0$. This yields $\det(M)=-ad+bc=a(b-d)+b(c-a)\ge 2$, a contradiction.

\ref{Lgthbig1ordered}: Using \ref{ABMSL2big}, we find $A,B\in \GL_2(\Z)\cap \Aut(\p^2)$ such that $M'=\pm AMB$ belongs to $\SL_2(\Z)_{\ge 0}$. Since $\lgth(M)\ge 1$, then $M'$ is not the identity. We can thus replace $M'$ with $\tau M'\tau$ if needed and assume that $M'\in S_R=\SL_2(\Z)_{\ge 0}\cdot R$ is an ordered matrix (follows from Lemma~\ref{Lem:DecSL2p}). This implies that $M'$ has the desired form. It remains to prove that $\lgth(M)=\lgth(M')$. If $M'=ABM$, this is because $\lgth(A)=\lgth(B)=0$. If $M'=-ABM$, we observe that $-M'$ is a product of $\lgth(-M')$ elements of length $1$ of the form $R^s,L^s,LR^s$ or $RL^s$, $s\ge 1$ (Proposition~\ref{Prop:LR}). Since
\[\begin{array}{llll}
-R^s\colon & [x:y:z]\mapsto [z^{s+1}:xy^{s-1}z:xy^s],& -LR^s\colon &[x:y:z]\dasharrow [z^{s+1}:yz^s:xy^{s}]
\end{array}\]
have length $1$, the same hold for $-L^s$, $-RL^s$ (using conjugation by $\tau$ as in Remark~\ref{Rem:ExchangeLR}). We thus get $\lgth(M')=\lgth(-M')=\lgth(M)$.
\end{proof}

\subsubsection{The dynamical length of elements of $\GL_2(\Z)$} \label{SubSec:DynLenghtGL2}

We begin to compute the dynamical length of an ordered element (see Corollary~\ref{Cor:Ordereddynlength} and Remark~\ref{remark:cyclic-permutation}), then extend to the case of a general  element of $\SL_2(\Z)$ (see Proposition~\ref{proposition: Three cases for a unimodular matrix}). This provides the dynamical length of every element of $\GL_2(\Z)$, as $\dlgth(M)=\frac{1}{2}\dlgth(M^2)$ for each $M\in \GL_2(\Z)$.

It will follow from our computation that  $\dlgth(\SL_2(\Z) ) = \Z_+$ and $\dlgth(\GL_2(\Z) ) = \frac{1}{2}\Z_+$.

Finally, at the end of the section, we prove in Corollary~\ref{Coro:Smalldynamiclength} that an element of $\GL_2(\Z)$ has dynamical length $\frac{1}{2}$ if and only if it is conjugate in $\GL_2(\Z)$ to $\pm \left(\begin{array}{rr}0 & 1 \\  1 & 1 \end{array} \right)$.

\begin{lemma}\label{Lem:LengthComposition}
Let $m,n\ge 1$ and let $(s_1,\dots,s_n)$, $(t_1,\dots,t_m)$ be two sequences of positive integers, such that $t_1\ge 2$ and $m\ge 2$. We then have 
\[\ell(s_1,\dots,s_n,t_1,\dots,t_m)=\ell(s_1,\dots,s_n)+\ell(t_1,\dots,t_m).\]
\end{lemma}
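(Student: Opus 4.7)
The plan is to proceed by induction on $n \geq 1$, combining the recursive formula of Proposition~\ref{Prop:LR} with Corollary~\ref{Cor:EllIndFirst} (the length does not depend on the first entry). Note first that the inequality $\ell(s_1,\dots,s_n,t_1,\dots,t_m) \leq \ell(s_1,\dots,s_n) + \ell(t_1,\dots,t_m)$ is automatic from subadditivity of the length under composition, so the real content is the reverse inequality (which will come out of the computation as an equality).

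In the base case $n = 1$, the combined sequence has length $m + 1 \geq 3$, so that the recursion of Proposition~\ref{Prop:LR} applies. Since $t_1 \geq 2$, it yields $\ell(s_1, t_1, \dots, t_m) = \ell(t_1 - 1, t_2, \dots, t_m) + 1$. Since $t_1 - 1 \geq 1$ is a positive integer, Corollary~\ref{Cor:EllIndFirst} gives $\ell(t_1 - 1, t_2, \dots, t_m) = \ell(t_1, t_2, \dots, t_m)$, and together with $\ell(s_1) = 1$ this is precisely the desired identity.

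For the inductive step $n \geq 2$, I apply the recursion of Proposition~\ref{Prop:LR} to the combined sequence and split on the value of $s_2$. If $s_2 \geq 2$, the recursion gives $\ell(s_1, \dots, s_n, t_1, \dots, t_m) = \ell(s_2 - 1, s_3, \dots, s_n, t_1, \dots, t_m) + 1$; the inductive hypothesis applies to the shortened first sequence $(s_2 - 1, s_3, \dots, s_n)$ of length $n - 1 \geq 1$, and combined with $\ell(s_1, \dots, s_n) = \ell(s_2 - 1, s_3, \dots, s_n) + 1$ (this uses Corollary~\ref{Cor:EllIndFirst} when $n = 2$ to reduce to $\ell(s_2 - 1) = \ell(s_2) - 1 = 1$) it yields the equality. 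If $s_2 = 1$, then for $n \geq 3$ the recursion gives $\ell(s_1, \dots, s_n, t_1, \dots, t_m) = \ell(s_3, \dots, s_n, t_1, \dots, t_m) + 1$, to which the induction hypothesis applies directly; for $n = 2$ one verifies directly that $\ell(s_1, 1, t_1, \dots, t_m) = \ell(t_1, \dots, t_m) + 1$ matches $\ell(s_1, 1) + \ell(t_1, \dots, t_m) = 1 + \ell(t_1, \dots, t_m)$.

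The argument is essentially bookkeeping: the main technical point is ensuring that at each reduction step the resulting sequence still satisfies the hypotheses of the inductive hypothesis (the second block always starts with $t_1 \geq 2$ and the first block retains positive length), together with a small amount of care in the boundary cases $n = 2$. No genuine obstacle is anticipated since the block structure of the recursion in Proposition~\ref{Prop:LR} acts only locally on the left, leaving the second sequence $(t_1, \dots, t_m)$ untouched throughout.
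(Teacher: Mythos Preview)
Your proof is correct and follows essentially the same approach as the paper's: induction on $n$, with the base case $n=1$ handled via Proposition~\ref{Prop:LR} and Corollary~\ref{Cor:EllIndFirst} (using $t_1\ge 2$), and the inductive step obtained by applying the same recursion to both the combined sequence and the first block, splitting on whether $s_2=1$ or $s_2\ge 2$. One small slip: in the parenthetical for the case $n=2$, $s_2\ge 2$, the expression ``$\ell(s_2-1)=\ell(s_2)-1=1$'' is garbled (note $\ell(s_2)-1=0$); what you need there is simply $\ell(s_2-1)=1$ and $\ell(s_1,s_2)=2$, so that $\ell(s_1,s_2)=\ell(s_2-1)+1$ holds, matching the general pattern.
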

\begin{proof}
We prove the result by induction on $n$.

If $n=1$, then Proposition~\ref{Prop:LR} yields $\ell(s_1,t_1,\dots,t_m)=\ell(t_1-1,t_2,\dots,t_m)+1\stackrel{\text{Corollary~\ref{Cor:EllIndFirst}}}{=}\ell(t_1,t_2,\dots,t_m)+1=\ell(s_1)+\ell(t_1,\dots,t_m).$

If $n= 2$ and $s_2=1$, then Proposition~\ref{Prop:LR} yields $\ell (s_1,s_2,t_1,\dots,t_m)=\ell(t_1,\dots,t_m) + 1=\ell(t_1,\dots,t_m)+\ell(s_1,s_2)$. If $n=2$ and $s_2\ge 2$, then Proposition~\ref{Prop:LR} yields $\ell (s_1,s_2,t_1,\dots,t_m)=\ell(s_2-1,t_1,\dots,t_m) + 1$, which is equal to $\ell(t_1,\dots,t_m) + 2$ by induction hypothesis. This achieves the proof since $\ell(s_1,s_2)=2$ by Proposition~\ref{Prop:LR}.

If $n\ge 3$, then Proposition~\ref{Prop:LR} yields
\[\begin{array}{rcl}
\ell (s_1,\dots,s_n,t_1,\dots,t_m)&=&\left\{\begin{array}{ll} 
\ell(s_2-1, s_3, \ldots,s_n,t_1,\dots,t_m) + 1 & \text{ if } s_2\ge 2,\\
\ell(s_3, \ldots,s_n,t_1,\dots,t_m) + 1 & \text{ if }s_2=1.\end{array}
\right.\\
\ell (s_1,\dots,s_n)&=&\left\{\begin{array}{ll} 
\ell(s_2-1, s_3, \ldots,s_n) + 1 & \text{ if } s_2\ge 2,\\
\ell(s_3, \ldots,s_n)+ 1 & \text{ if }s_2=1,\end{array}
\right.\end{array}
\]
so the result follows by induction.
\end{proof}

\begin{corollary} \label{Cor:Ordereddynlength} Let $n\ge 2$ be an even integer and let $(s_1,\dots,s_n)$ be a sequence of positive integers such that either $s_1\ge 2$ or $s_1=\dots=s_n=1$. 
Then, the ordered element $M=M(s_1,\dots,s_n)\in \SL_2(\Z)$ satisfies \[\dlgth(M)=\lgth(M).\]
\end{corollary}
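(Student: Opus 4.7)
The plan is to establish $\dlgth(M)=\lgth(M)$ by showing $\lgth(M^k)=k\cdot\lgth(M)$ exactly; the reverse inequality $\dlgth(M)\le\lgth(M)$ is automatic from the definition of the dynamical length. So everything reduces to computing $\lgth(M^k)$ explicitly.

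First I would put $M^k$ into $S_R$ normal form. Since $n$ is even, the word $M=L^{s_n}R^{s_{n-1}}\cdots L^{s_2}R^{s_1}$ starts (on the right) with $R^{s_1}$ and ends (on the left) with $L^{s_n}$, so the concatenation of $k$ copies is still alternating in $R$'s and $L$'s, with $R^{s_1}$ on the right and $L^{s_n}$ on the left (using that $kn$ is again even). Hence
\[M^k=M(\underbrace{s_1,\ldots,s_n,\,s_1,\ldots,s_n,\,\ldots,\,s_1,\ldots,s_n}_{k\text{ copies of }(s_1,\ldots,s_n)}),\]
where the sequence inside has length $kn$.

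Then I would split according to the two hypotheses. In the case $s_1\ge 2$, Lemma~\ref{Lem:LengthComposition} applies with the first block $(s_1,\ldots,s_n)$ and the remaining $(k-1)n\ge 2$ terms on the right (whose first entry is $s_1\ge 2$), so that
\[\lgth(M^k)=\lgth(M)+\lgth(M^{k-1}).\]
An immediate induction on $k$ gives $\lgth(M^k)=k\,\lgth(M)$, hence $\dlgth(M)=\lgth(M)$. In the case $s_1=\cdots=s_n=1$, Lemma~\ref{Lem:LengthComposition} does not apply directly, so instead I would compute $\ell(\underbrace{1,\ldots,1}_{m})$ by induction on $m$ using the recursion in Proposition~\ref{Prop:LR}: from $\ell(1,1,s_3,\ldots,s_m)=\ell(s_3,\ldots,s_m)+1$ one gets $\ell(\underbrace{1,\ldots,1}_{m})=\lceil m/2\rceil$ for every $m\ge 1$. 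Since $n$ is even, this yields $\lgth(M)=n/2$ and $\lgth(M^k)=kn/2=k\,\lgth(M)$, and again $\dlgth(M)=\lgth(M)$.

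The only mild subtlety is the bookkeeping needed to recognise $M^k$ as a standard element $M(t_1,\ldots,t_{kn})$ of $S_R$; the parity assumption that $n$ is even is exactly what makes this work. Once this identification is made, the equality $\lgth(M^k)=k\,\lgth(M)$ and therefore $\dlgth(M)=\lgth(M)$ follow from Lemma~\ref{Lem:LengthComposition} (Case~1) or from the direct computation of $\ell(1,\ldots,1)$ via Proposition~\ref{Prop:LR} (Case~2), with no further difficulty.
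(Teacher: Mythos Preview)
Your proof is correct and follows essentially the same approach as the paper: in the case $s_1\ge 2$ you apply Lemma~\ref{Lem:LengthComposition} to get $\lgth(M^k)=k\,\lgth(M)$, and in the case $s_1=\cdots=s_n=1$ you compute $\ell(1,\ldots,1)$ directly via the recursion of Proposition~\ref{Prop:LR}. Your presentation is in fact slightly cleaner, since you make the identification $M^k=M(s_1,\ldots,s_n,\ldots,s_1,\ldots,s_n)$ explicit and state the closed formula $\ell(1,\ldots,1)=\lceil m/2\rceil$.
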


\begin{proof}
If $s_1=2$, then Lemma~\ref{Lem:LengthComposition} yields $\lgth(M^m)=m\cdot\lgth(M)$ for each $m\ge 1$, which yields $\dlgth(M)=\lgth(M)$.

If $s_1=\dots=s_n=1$, then $M^m=((LR)^n)^m$. It then suffices to show that $\lgth((LR)^n)=n$ for each $n\ge 1$. For $n=1$, this is directly given by Proposition~\ref{Prop:LR}. For $n\ge 1$, we also apply Proposition~\ref{Prop:LR} and get $\lgth((LR)^n)=\lgth((LR)^{n-1})+1$, which yields the result by induction.
\end{proof}

\begin{remark}  \label{remark:cyclic-permutation}
Note that $M(s_1, \ldots,s_n)$ is conjugate in $\SL_2(\Z)$ to $M(s_2, \ldots,s_n, s_1)$. Hence, each element $M(s_1,\dots,s_n)$ admits a conjugate which satisfies the hypotheses of Corollary~\ref{Cor:Ordereddynlength}.
\end{remark}

\begin{proposition} \label{proposition: Three cases for a unimodular matrix}
Let $M\in \SL_2(\Z)$.
\begin{enumerate}
\item\label{Trace01}
If $\trace(M)\in \{0,\pm 1\}$ then $M$ has order $m\in \{3,4,6\}$ so $\dlgth(M)=0$.
\item \label{Trace2}
If $\trace(M)\in \{\pm 2\}$ then $M$ is conjugate to $\pm\left( \begin{array}{rr} 1 & a \\  0 & 1 \end{array} \right)$ for some $a\in \Z$, so $\dlgth(M)=0$.
\item  \label{Trace3}
If $\lvert\trace(M)\rvert\ge 3$ then $\pm M$ is conjugate to an ordered element $M'$, so $\dlgth(M)=\dlgth(M')$ is a positive integer.
\end{enumerate}
\end{proposition}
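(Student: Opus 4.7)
The plan is to handle the three cases by trace separately, reducing each to a structural statement about $M$ from which the value of $\dlgth(M)$ follows easily.

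For case~\ref{Trace01}, Cayley--Hamilton gives $M^2 = tM - I$ with $t = \trace(M) \in \{0, \pm 1\}$, and a direct computation yields $M^4 = I$ when $t=0$, $M^3 = -I$ (hence $M^6 = I$) when $t = 1$, and $M^3 = I$ when $t = -1$. Thus $M$ has finite order, the sequence $n \mapsto \lgth(M^n)$ is bounded, and $\dlgth(M) = 0$.

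For case~\ref{Trace2}, after replacing $M$ by $-M$ if necessary (which does not change $\dlgth$, since $(-M)^{2k} = M^{2k}$), I may assume $\trace(M) = 2$. Then Cayley--Hamilton gives $(M - I)^2 = 0$, so $\ker(M - I)$ contains a primitive vector $v \in \Z^2$. Completing $v$ to a $\Z$-basis of $\Z^2$ conjugates $M$ in $\SL_2(\Z) \subseteq \Bir(\p^2)$ to the matrix $R^a$ for some $a \in \Z$. By Proposition~\ref{Prop:LR}, $R^a$ is a Jonqui\`eres transformation, so $\lgth((R^a)^n) = \lgth(R^{na}) \le 1$ for every $n$, which gives $\dlgth(M) = 0$.

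For case~\ref{Trace3}, after replacing $M$ by $-M$ if $\trace(M) \le -3$, I may assume $\trace(M) = t \ge 3$. The central step is the claim that $M$ is conjugate in $\SL_2(\Z)$ to some ordered element $M(s_1, \ldots, s_n)$ with $n$ even. Granting this, Remark~\ref{remark:cyclic-permutation} provides a cyclic rotation satisfying the hypotheses of Corollary~\ref{Cor:Ordereddynlength}, which yields $\dlgth(M) = \lgth(M(s_1, \ldots, s_n))$, a positive integer by Proposition~\ref{Prop:LR}.

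The hard part will be proving the claim, which rests on classical continued-fraction theory. Since $t^2 - 4$ lies strictly between $(t - 1)^2$ and $t^2$ for $t \ge 3$, it is not a perfect square, so $M$ has two distinct irrational real eigenvalues $\lambda > 1 > 1/\lambda > 0$, and the two fixed points $\alpha$ (attracting) and $\beta$ (repelling) of $M$ on $\p^1(\R)$ are irrational. Using the $\SL_2(\Z)$-action on $\p^1(\R)$ generated by $x \mapsto x + 1$ and $x \mapsto -1/x$, a Euclidean-algorithm-style reduction conjugates $M$ into a form where $\alpha > 1$ and $-1 < \beta < 0$, i.e., where $\alpha$ is a \emph{reduced} quadratic irrational. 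By the Lagrange--Galois theorem, such $\alpha$ then admits a purely periodic continued fraction expansion $\alpha = [\overline{c_1, \ldots, c_k}]$, and $M$ must coincide with the alternating product $L^{c_k} R^{c_{k-1}} \cdots L^{c_2} R^{c_1}$. If this product has an odd number of blocks, one further cyclic conjugation merges the first and last block into one, producing an ordered form with $n$ even.
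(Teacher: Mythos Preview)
Your treatment of cases~\ref{Trace01} and~\ref{Trace2} matches the paper's essentially verbatim. The interesting divergence is in case~\ref{Trace3}.

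The paper argues directly with the slopes $\xi_1,\xi_2$ of the two real eigenlines: when $\xi_1\xi_2<0$ one checks by hand that all four entries of $M$ are positive, so $M\in\SL_2(\Z)_{\ge 0}$ and a cyclic conjugation makes it ordered; otherwise one conjugates by powers of $L$ and by $\left(\begin{smallmatrix}0&1\\-1&0\end{smallmatrix}\right)$, showing that $|\xi_1-\xi_2|$ at least doubles at each step until the signs separate. Your route via the Lagrange--Galois theorem is the classical alternative: reduce $\alpha$ to a \emph{reduced} quadratic irrational, invoke pure periodicity, and read off a word in $L,R$. Both arguments land at the same place (an element of $\SL_2(\Z)_{\ge 0}$, then cyclically rotated to be ordered); yours imports a standard theorem and is conceptually cleaner, while the paper's is self-contained and makes the termination explicit.

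One genuine imprecision in your sketch: the sentence ``$M$ must coincide with the alternating product $L^{c_k}R^{c_{k-1}}\cdots L^{c_2}R^{c_1}$'' is not justified and is generally false. The stabiliser of $\alpha$ in $\SL_2(\Z)$ is $\{\pm I\}\times\langle P\rangle$ with $P$ the period matrix, so a priori $M=\pm P^{j}$ for some $j\in\Z$. You should argue that the sign is $+$ (trace is positive) and that $j>0$ (compare which fixed point is attracting), and then observe that any positive power of a word in $L,R$ is again such a word, hence lies in $\SL_2(\Z)_{\ge 0}$. From there the cyclic-rotation step to an ordered form goes through exactly as in the paper (and your parity remark handles the case where the word begins and ends with the same letter). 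With this correction your argument is complete.
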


\begin{proof}
Writing $\lambda=\trace(M)$, the characteristic polynomial of $M$ is equal to $\chi_M=X^2-\lambda X+1$. If $\lambda\in \{0,\pm 1\}$, we obtain then orders $3,4,6$, yielding \ref{Trace01}. If $\lambda=\pm 2$, then $\chi_M=(X\pm 1)^2$, so there is an eigenvector of eigenvalue $\pm 1$, which can be choosen in $\Z^2$ with coprime coefficients. This yields \ref{Trace2}.

In case \ref{Trace3} we can replace $M$ with $-M$ if needed and assume that $\lambda\ge 3$. We will then show that $M$ is conjugate
to an ordered matrix. The fact that $\lambda=\trace(M)\ge 3$ implies that $M$ has distinct  positive real eigenvalues
$\mu, \mu^{-1}$ with $\max \{ \mu, \mu^{-1} \} = \frac{\lambda+\sqrt{\lambda^2-4}}{2}>2$.
Write $M= \left(\begin{array}{cc} a & b \\  c & d \end{array} \right)$.
The two eigenspaces are spanned by  two vectors $(1, \xi_1)$ and $(1, \xi_2)$, where $\xi_1, \xi_2$ are nonzero reals (note that $(1,0)$ and $(0,1)$ are not eigenvectors of $M$ since $bc \neq 0$).

$(a)$ If $\xi_1\xi_2<0$, then we may assume without loss of generality that $\xi_1 >0$ and $\xi_2 <0$ (by exchanging the names of $\xi_1$ and $\xi_2$). Up to replacing $M$ with $M^{-1}$, we may furthermore assume that we have $\mu > 1$, where we have used the two following facts:
\begin{enumerate}[(i)]
\item We have $\trace(M^{-1}) = \trace(M)$;
\item The inverse of an ordered matrix is conjugate to an ordered matrix since the matrix $P= \left(\begin{array}{rr} 0 & 1 \\  -1 & 0 \end{array} \right)$ satisfies $PR^{-1}P^{-1} = L$ and $PL^{-1}P^{-1} = R$.
\end{enumerate}
Then, we have
$\left( \begin{array}{l} a +b\xi_1 = \mu \\ c+ d \xi_1 = \mu \xi_1 \\ a + b \xi_2 = \mu^{-1} \\ c +d \xi_2 = \mu^{-1} \xi_2 \end{array} \right.$ which yields $\left( \begin{array}{l} b (\xi_1 - \xi_2) = \mu - \mu^{-1} \\ c (\xi_1^{-1} - \xi_2^{-1}) = \mu - \mu^{-1} \\ a =\mu^{-1} - b \xi_2 \\  d = \mu^{-1} - \xi_2^{-1} c  \end{array} \right.$ which in turn proves that $b,c>0$, and then $a,d>0$. Therefore, $M$ belongs to the monoid generated by $L$ and $R$ (Lemma~\ref{Lem:DecSL2p}). As $\trace(M)\not=2$, $M$ is not conjugate
to a matrix of the form $L^s$ or $R^s$ for some $s\in \Z$
and is thus conjugate  to an element which starts with $L$ and ends with $R$, hence to an ordered element (Lemma~\ref{Lem:DecSL2p}). The result then follows from Corollary~\ref{Cor:Ordereddynlength}.

$(b)$ If there exists $s\in \Z$ such that $(\xi_1+s)(\xi_2+s)<0$, we conjugate $M$ with $L^s$. This replaces $\xi_i$ with $\xi_i+s$ and reduces to case $(a)$.

$(c)$ If $(b)$ is not possible, there exists $s\in \Z$ such that $0< \xi_i+s<1$ for $i=1,2$. Replacing $s$ with $s-1$ if needed, we can rather assume that $\lvert \xi_i+s\rvert<1$ for both $i$ and that $\lvert \xi_i +s \rvert <\frac{1}{2}$ for at least one $i$, which we will assume to be $1$ (by exchanging the names of $\xi_1$ and $\xi_2$ if needed).
Therefore, by conjugating $M$ with $L^s$ we may assume that $\lvert \xi_1\rvert<\frac{1}{2}$ and $\lvert \xi_2 \rvert<1$.
We then conjugate $M$ with $\left(\begin{array}{rr}0 & 1 \\  -1 & 0 \end{array} \right)$. This replaces $\xi_i$ with  $\xi_i'=-\frac{1}{\xi_i}$ for $i=1,2$
and we have
$\lvert\xi_1'-\xi_2'\rvert=\lvert\xi_1-\xi_2\rvert\cdot \lvert\frac{1}{\xi_1}\rvert\cdot \lvert\frac{1}{\xi_2}\rvert>2\cdot \lvert\xi_1-\xi_2\rvert$.
 After finitely many such steps we obtain $\lvert\xi_1-\xi_2\rvert>1$, which then gives case $(b)$.
\end{proof}

\begin{example}
The ordered matrices $M(5,1) = \left( \begin{array}{rr} 1 & 5 \\  1& 6 \end{array} \right)$ and $M(1,1,1,1) = \left( \begin{array}{rr} 2 & 3 \\  3 & 5 \end{array} \right)$ have lengths $\lgth ( M(5,1) ) =1$ and $\lgth(M(1,1,1,1))=2$  (Proposition~\ref{Prop:LR}), and then dynamical lengths
\[ \dlgth(M(5,1)) = \lgth ( M(5,1) ) =1 \text{ and }  \dlgth(M(1,1,1,1))=\lgth(M(1,1,1,1)) =2 \]
(Corollary~\ref{Cor:Ordereddynlength}). In particular, the matrices $M(5,1),M(1,1,1,1)$ have different dynamical lengths, even if they have the same trace and determinant, and thus the same eigenvalues and 
dynamical degrees.
\end{example}

\begin{corollary} \label{Coro:Smalldynamiclength}
For each $M \in \GL_2(\Z)$, the following conditions are equivalent:
\begin{enumerate}
\item  \label{Dynamical lenght is one half} We have  $\dlgth(M) = \frac{1}{2}$.
\item  \label{M is conjugate to plus or minus A} The matrix $M$ is conjugate in $\GL_2(\Z)$ to $\pm \left(\begin{array}{rr}0 & 1 \\  1 & 1 \end{array} \right)$.
\item \label{Determinant is minus one and Trace is plus or minus one} We have $\det (M) = -1$ and  $\trace(M)\in \{\pm 1 \}$.
\end{enumerate}
\end{corollary}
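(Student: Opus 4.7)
The plan is to establish the cycle $(\ref{M is conjugate to plus or minus A}) \Rightarrow (\ref{Determinant is minus one and Trace is plus or minus one}) \Rightarrow (\ref{Dynamical lenght is one half}) \Rightarrow (\ref{M is conjugate to plus or minus A})$, using throughout the identity $\dlgth(M^2) = 2\dlgth(M)$ so as to reduce everything to $\SL_2(\Z)$, where Proposition~\ref{proposition: Three cases for a unimodular matrix} is available. The first implication is a trivial check on $A := \left(\begin{smallmatrix}0 & 1\\1 & 1\end{smallmatrix}\right)$ and its negative.

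For $(\ref{Determinant is minus one and Trace is plus or minus one}) \Rightarrow (\ref{Dynamical lenght is one half})$, I would compute $\trace(M^2) = \trace(M)^2 - 2\det(M) = 1 + 2 = 3$ and apply Proposition~\ref{proposition: Three cases for a unimodular matrix}(3) to produce an ordered matrix $N \in \SL_2(\Z)$ conjugate to $M^2$. A direct inspection of $a + d = 3$, $ad - bc = 1$, $0 \le a \le b,c \le d$ reveals that the only ordered matrix of trace $3$ is $LR = M(1,1)$, so that $M^2$ is conjugate to $LR$. Corollary~\ref{Cor:Ordereddynlength} then gives $\dlgth(M^2) = \lgth(LR) = 1$, whence $\dlgth(M) = 1/2$.

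The main content is $(\ref{Dynamical lenght is one half}) \Rightarrow (\ref{M is conjugate to plus or minus A})$. Assume $\dlgth(M) = 1/2$, so that $\dlgth(M^2) = 1$. Since $M \in \SL_2(\Z)$ would force $\dlgth(M) \in \Z_{\geq 0}$ by Proposition~\ref{proposition: Three cases for a unimodular matrix}, I get $\det M = -1$. Writing $t = \trace M \in \Z$, one has $\trace(M^2) = t^2 + 2$, and the cases $\trace(M^2) \in \{0,\pm 1, \pm 2\}$ of Proposition~\ref{proposition: Three cases for a unimodular matrix} all give $\dlgth(M^2) = 0 \neq 1$, so $t \neq 0$ and $\trace(M^2) \geq 3$; in particular $M^2$ itself (not its negative) is $\SL_2(\Z)$-conjugate to an ordered matrix $N$. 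The cyclic normal form in Corollary~\ref{Cor:Ordereddynlength}, combined with the enumeration of length-one ordered matrices in Proposition~\ref{Prop:LR}, forces the cyclic word of $N$ to be $[[s,1]]$ for some $s \geq 1$; hence $M^2$ is $\SL_2(\Z)$-conjugate to $LR^s$, and matching traces gives $s = t^2$.

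After replacing $M$ by a suitable $\SL_2(\Z)$-conjugate I may assume $M^2 = LR^{t^2}$. The Cayley--Hamilton identity $M^2 - tM - I = 0$ then yields
\[ tM \;=\; M^2 - I \;=\; \left(\begin{array}{cc}0 & t^2\\ 1 & t^2\end{array}\right), \]
so $M = \left(\begin{smallmatrix}0 & t\\ 1/t & t\end{smallmatrix}\right)$. Integrality of the $(2,1)$-entry forces $t \in \{\pm 1\}$, and reading off the matrix one obtains $M = \pm A$. The main obstacle will be the normal-form step where one must ensure that the cyclic class of $M^2$ is genuinely of the form $[[s,1]]$ (using Proposition~\ref{Prop:LR} to exclude $n \geq 4$); after that hurdle, the characteristic-polynomial trick elegantly converts the dynamical condition into the arithmetic divisibility $t \mid 1$.
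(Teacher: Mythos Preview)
Your proof is correct and follows the same cycle of implications as the paper, with identical arguments for $(\ref{M is conjugate to plus or minus A}) \Rightarrow (\ref{Determinant is minus one and Trace is plus or minus one}) \Rightarrow (\ref{Dynamical lenght is one half})$. For $(\ref{Dynamical lenght is one half}) \Rightarrow (\ref{M is conjugate to plus or minus A})$ both you and the paper reduce to the situation where (a conjugate of) $M$ satisfies $M^2 = \varepsilon LR^s$, but the endgames diverge: the paper expands $M^2$ in the entries $a,b,c,d$ of $M$, reads off $c(a+d)=\varepsilon$ to get $\trace(M)=\pm 1$, then deduces $\varepsilon=s=1$ from $\trace(M^2)=3$ and solves the remaining equations for $a,b,c,d$; you instead match traces first to obtain $s=t^2$ (and $\varepsilon=+1$ since $\trace(M^2)=t^2+2>0$), then apply Cayley--Hamilton $tM=M^2-I$ to read off $M$ in one stroke and extract $t\mid 1$ from integrality. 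Your finish is cleaner and more conceptual; the paper's is a direct entry-by-entry computation. One small remark: the ``cyclic normal form'' you invoke is really the combination of Remark~\ref{remark:cyclic-permutation} (cyclic permutation) with Corollary~\ref{Cor:Ordereddynlength}, so it would be clearer to cite both.
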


\begin{proof}
\ref{M is conjugate to plus or minus A} $\Rightarrow$  \ref{Determinant is minus one and Trace is plus or minus one} is obvious.

\ref{Determinant is minus one and Trace is plus or minus one} $\Rightarrow$ \ref{Dynamical lenght is one half}. If $M$ satisfies \ref{Determinant is minus one and Trace is plus or minus one}, we have $\trace(M^2)= (\trace(M)) ^2 - 2 \det (M) = 3$,
so that $M^2$ is conjugate to an ordered element of trace $3$ by Proposition~\ref{proposition: Three cases for a unimodular matrix}\ref{Trace3}. The unique ordered element of trace $3$ being the matrix  $\left(\begin{array}{rr} 1 & 1 \\  1 & 2 \end{array} \right) = LR = M(1,1)$ of dynamical length $1$, this proves \ref{Dynamical lenght is one half}.

\ref{Dynamical lenght is one half} $\Rightarrow$ \ref{M is conjugate to plus or minus A}. If $M$ satisfies \ref{Dynamical lenght is one half}, we necessarily have $\det (M) = -1$ and $\dlgth(M^2) = 1$ (Proposition~\ref{proposition: Three cases for a unimodular matrix}). By Proposition~\ref{proposition: Three cases for a unimodular matrix} and up to conjugation of $M$ into $\GL_2(\Z)$, there exists an element $\varepsilon \in \{ \pm 1 \}$ such that $M' := \varepsilon M^2$ is an ordered matrix satisfying $\dlgth(M') = \lgth (M') = 1$. This yields the existence of an integer $s \geq 1$ such that $M' = LR^s = \left(\begin{array}{cc} 1 & s \\  1 & s+1 \end{array} \right)$ . Writing $M =  \left(\begin{array}{cc} a & b \\  c & d \end{array} \right)$, we obtain
\[  M^2 = \left(\begin{array}{cc} a^2 + bc & b(a+d) \\  c(a+d) & d^2 +bc \end{array} \right) = \varepsilon  \left(\begin{array}{cc} 1 & s \\  1 & s+1 \end{array} \right) . \]
The equality $c(a+d) = \varepsilon$ gives us $\trace(M) = a+d = \pm 1$, so that (as above) $\trace(M^2)= ( \trace(M) ) ^2 - 2 \det (M) = 3$. This implies $\varepsilon (s+2) =3$, so that $\varepsilon = s = 1$. This proves that $b= c = a+d \in \{ \pm 1 \}$ and since $a^2 + bc =1$, we finally obtain $a= 0$ and $b=c=d \in \{ \pm 1 \}$, proving that $M = \pm \left(\begin{array}{rr}0 & 1 \\  1 & 1 \end{array} \right)$.
\end{proof}

\subsection{Dynamical length of regularisable elements and the proof of Theorem~\ref{Thm:DistAutP2}}  \label{Subsection:Dynamical-length-of-regularisable-elements}

Recall the two following definitions:

\begin{definition}
An element $f\in \Bir(\p^2)$ is said to be \emph{regularisable} if there exists a birational map $\eta \colon X \dasharrow \p^2$,  where $X$ is a smooth projective surface, such that $\eta^{-1}\circ f \circ \eta\in \Aut(X)$.
By \cite[Theorem~B]{BlaDes}, this is equivalent to $\mu(f) = 0$, where $\mu$ denotes the dynamical number of base-points, as explained in the introduction. (The statement of \cite[Theorem~B]{BlaDes} is made over $\C$ but its proof works over any algebraically closed field).
\end{definition}

\begin{definition}
An element $f\in \Bir(\p^2)$ is said to be \emph{loxodromic} if  $\log(\lambda(f)) >0$ (where $\lambda(f)=\lim\limits_{n\to \infty} (\deg(f^n))^{1/n}$ is the dynamical degree of $f$, as explained in the introduction).
\end{definition}

It follows from \cite{GizHalphen,DillerFavre,BlaDes} that
a Cremona transformation $f \in \Bir (\p^2)$ belongs to exactly one of the five following categories:

\begin{enumerate}
\item
{\bf Algebraic elements} ;
\item
{\bf Jonqui\`eres twists:} $f\in \Bir(\p^2)$ is \emph{a Jonqui\`ere twist} if
the sequence $n \mapsto \deg(f^n)$ grows linearly, 
i.e.~if the sequence $n \mapsto \frac{\deg (f^n)}{n}$ admits a nonzero limit when $n$ goes to infinity. Equivalently, $f$ preserves a rational fibration $\p^2\dasharrow \p^1$ and is not algebraic;
\item
{\bf Halphen twists:} $f\in \Bir(\p^2)$ is \emph{a Halphen twist} if
the sequence $n \mapsto \deg(f^n)$
grows quadratically, i.e.~if the sequence $n \mapsto \frac{\deg (f^n)}{n^2}$ admits a nonzero limit when $n$ goes to infinity. Equivalently, $f$ preserves an elliptic fibration $\p^2\dasharrow \p^1$ and is not algebraic;

\item
 {\bf Regularisable loxodromic elements:} In this case, $\lambda(f)$ is a Salem number or a reciprocical quadratic integer (see \cite{DillerFavre,BlaCan});

\item
{\bf Non-regularisable loxodromic elements:} In this case, $\lambda(f)$ is a Pisot number by \cite{BlaCan}.
\end{enumerate}

If $f$ is a Halphen twist or a regularisable loxodromic element, we will prove that the dynamical length $\dlgth(f) $ is positive in Corollary~\ref{Cor:Halphentwist} and Proposition~\ref{Prop:RegLox}. In Lemma~\ref{Lemma:AutA2}, an example of non-regularisable loxodromic element $f\in \Bir(\p^2)$ whose dynamical length $\dlgth(f) $ is positive was given. These results are summarised in Figure~\ref{FigureSubadditive} and achieve the proof of Theorem~\ref{Thm:DistAutP2}.

The following result follows from the proof of \cite[Lemma 5.10]{BlaCan}.

\begin{lemma}\label{Lemm:9ptssqrt}
Let $f_1,f_2\in \Weyl$ be two elements such that $\Base(f_1)\cup \Base(f_2)$ contains at most $9$ points. Then, 
\[\sqrt{\deg(f_1\circ f_2^{-1})}\le\sqrt{\deg(f_1)}+\sqrt{\deg(f_2)}\qedhere\]
\end{lemma}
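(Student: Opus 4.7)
The plan is to recast everything as an intersection-product estimate in the Picard-Manin lattice $\ZZ_{\p^2}$, and then exploit the nine-point hypothesis through a Lorentzian orthogonal decomposition built around a distinguished isotropic vector. Setting $a_i := f_i^{-1}(e_0)$ and $d_i := \deg f_i$, Lemma~\ref{Lemm:EasyWeyl}\ref{L2} gives $a_i^2 = 1$, $\omega(a_i) = -3$, and $e_0 \cdot a_i = d_i$, while Corollary~\ref{corollary= degree of a composition} yields $\deg(f_1 \circ f_2^{-1}) = a_1 \cdot a_2$. The statement to prove is therefore
\[ a_1 \cdot a_2 \le \bigl(\sqrt{d_1} + \sqrt{d_2}\bigr)^2. \]

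I would then use the hypothesis. Let $P := \Base(f_1) \cup \Base(f_2)$; after enlarging $P$ to exactly nine elements by adjoining arbitrary extra points of $\B(\p^2) \setminus P$ (which leaves the $a_i$ untouched, since the new points contribute zero multiplicity), the two vectors $a_1, a_2$ lie in the rank-$10$ sublattice $V := \Z e_0 \oplus \bigoplus_{q \in P} \Z e_q$ of signature $(1,9)$. The key element is
\[ F := 3 e_0 - \sum_{q \in P} e_q \in V, \]
which satisfies $F^2 = 9 - |P| = 0$, $e_0 \cdot F = 3$, and $a_i \cdot F = -\omega(a_i) = 3$. The isotropy $F^2 = 0$ is precisely what the "$9$ points" permit.

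I would then split $V \otimes \R = \langle e_0, F \rangle \oplus W$ orthogonally. The plane $\langle e_0, F \rangle$ has Gram determinant $-9$, hence signature $(1,1)$, which forces $W$ to be negative definite of rank $8$. Solving the linear constraints $a_i^{\parallel} \cdot e_0 = d_i$ and $a_i^{\parallel} \cdot F = 3$ gives the closed form $a_i^{\parallel} = e_0 + \tfrac{d_i-1}{3}\, F$, from which direct computation yields $a_1^{\parallel} \cdot a_2^{\parallel} = d_1 + d_2 - 1$ and $(a_i^{\parallel})^2 = 2 d_i - 1$, hence $w_i^2 = a_i^2 - (a_i^{\parallel})^2 = -2(d_i - 1)$ for the perpendicular components $w_i := a_i - a_i^{\parallel} \in W$.

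Applying Cauchy-Schwarz inside the negative-definite space $W$ then gives $w_1 \cdot w_2 \le \sqrt{w_1^2 \cdot w_2^2} = 2\sqrt{(d_1-1)(d_2-1)} \le 2\sqrt{d_1 d_2}$, whence
\[ a_1 \cdot a_2 = (d_1 + d_2 - 1) + w_1 \cdot w_2 \le d_1 + d_2 + 2\sqrt{d_1 d_2} = \bigl(\sqrt{d_1} + \sqrt{d_2}\bigr)^2, \]
and taking square roots finishes the proof. The main conceptual step, and the place where the hypothesis $|P| \le 9$ is essential, is the choice of $F$ as the splitting direction: only when $F^2 = 0$ does the parallel contribution $a_1^\parallel \cdot a_2^\parallel$ reduce to the linear quantity $d_1 + d_2 - 1$; with a positive $F^2$ one would pick up a $d_1 d_2$ term which would overwhelm the bound and destroy the argument.
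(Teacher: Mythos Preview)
Your proof is correct. The paper itself does not supply a proof of this lemma but simply cites \cite[Lemma~5.10]{BlaCan}; your argument via the isotropic anticanonical class $F = 3e_0 - \sum_{q\in P} e_q$ and the resulting orthogonal splitting of the rank-$10$ Lorentzian lattice is precisely the standard technique used in that reference, so the approaches coincide in spirit.
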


\begin{lemma}\label{Lemm:BasePtsbounded}
Let $\psi\in \Bir(\p^2)\setminus \Aut(\p^2)$ be a birational map,
and let $r$ be its number of base-points. Recall that  we have $r \geq 3$ by Lemma~$\ref{Lemma:Hudsonalgorithm}$. Then, the following hold:
\begin{enumerate}
\item\label{Ineq1bspt}
$\lgth(\psi)\ge \frac{\log(\deg(\psi))}{\log(\frac{r+1}{2})}$.
\item\label{Ineq2bspt}
If $r\le 9$, then  $\lgth(\psi)\ge \sqrt{\frac{\deg(\psi)}{5}}$.
\end{enumerate}
\end{lemma}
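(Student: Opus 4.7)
The plan is to establish both inequalities simultaneously by induction on $n = \lgth(\psi) \ge 1$, using a single common decomposition. Take any predecessor $\psi'$ of $\psi$; by Corollary~\ref{Cor:decrdegrlgth} we have $\lgth(\psi') = n-1$, and by Definition~\ref{DefiPredeBirP2} we may write $\psi' = \psi \circ \kappa$ for some Jonqui\`eres element $\kappa \in \Bir(\p^2)$, so that $\psi = \psi' \circ \kappa^{-1}$. Lemma~\ref{Lem:Pred}\ref{Lem:Pred3} yields $\Base(\kappa^{-1}) \subseteq \Base(\psi)$; setting $s := |\Base(\kappa^{-1})| = 2\deg(\kappa) - 1$, we obtain $s \le r$ and hence $\deg(\kappa) \le (r+1)/2$.

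The key combinatorial step, and where I expect the main difficulty, is to bound $|\Base(\psi') \cup \Base(\kappa)| \le r$. To obtain it, I would compute $\psi'^{-1}(e_0) = \kappa^{-1}(\psi^{-1}(e_0))$ by applying $\kappa^{-1}$ to the expression $\psi^{-1}(e_0) = d\, e_0 - \sum_{p \in \Base(\psi)} m_p(\psi)\, e_p$ provided by Lemma~\ref{Lemm:EasyWeyl}\ref{L2}. By Lemma~\ref{Lemma:Imagepts}, for each $p \in \Base(\psi) \setminus \Base(\kappa^{-1})$ we have $\kappa^{-1}(e_p) = e_{\tilde p}$ for some $\tilde p \in \B(\p^2) \setminus \Base(\kappa)$, whereas for $p \in \Base(\kappa^{-1})$ the image $\kappa^{-1}(e_p)$ is a $\Z$-linear combination of $e_0$ and the $e_q$ with $q \in \Base(\kappa)$; the same holds for $\kappa^{-1}(e_0)$. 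It follows that $\Base(\psi')$ is contained in the disjoint union
\[ \Base(\kappa) \;\sqcup\; \{\tilde p : p \in \Base(\psi) \setminus \Base(\kappa^{-1})\}, \]
whose cardinality is $(2\deg\kappa - 1) + (r - s) = r$. In particular $|\Base(\psi')| \le r$ and $|\Base(\psi') \cup \Base(\kappa)| \le r$.

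For part~\ref{Ineq1bspt}, I would prove by induction that $\deg(\psi) \le \bigl(\frac{r+1}{2}\bigr)^n$. The base case $n = 1$ is immediate since then $\psi$ is Jonqui\`eres with $r = 2\deg\psi - 1$. For the inductive step, note that if $\psi' \notin \Aut(\p^2)$ the bound $|\Base(\psi')| \le r$ allows applying the inductive hypothesis to $\psi'$, giving $\deg(\psi') \le \bigl(\frac{r+1}{2}\bigr)^{n-1}$ (if $\psi' \in \Aut(\p^2)$ then $n=1$, already done); combining with the trivial inequality $\deg(\psi) \le \deg(\psi')\deg(\kappa)$ and $\deg(\kappa) \le (r+1)/2$ yields the claim. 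Since $r \ge 3$ gives $\log\bigl(\frac{r+1}{2}\bigr) > 0$, taking logarithms produces the stated bound.

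For part~\ref{Ineq2bspt} (when $r \le 9$), I would prove by induction that $\sqrt{\deg\psi} \le \sqrt{5}\,n$. The base case $n = 1$ follows from $\deg\psi = (r+1)/2 \le 5$. For the inductive step, the bound $|\Base(\psi') \cup \Base(\kappa)| \le r \le 9$ allows applying Lemma~\ref{Lemm:9ptssqrt} with $f_1 = \psi'$ and $f_2 = \kappa$, yielding
\[ \sqrt{\deg\psi} \;\le\; \sqrt{\deg\psi'} + \sqrt{\deg\kappa} \;\le\; \sqrt{5}(n-1) + \sqrt{5} \;=\; \sqrt{5}\,n, \]
where the inductive hypothesis on $\psi'$ applies since $|\Base(\psi')| \le r \le 9$, and $\deg\kappa \le (r+1)/2 \le 5$. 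Squaring gives $\lgth(\psi) = n \ge \sqrt{\deg\psi / 5}$. Once the base-point bound is established, both inductions are routine; the substantive content is the interaction between $\Base(\kappa)$ and the $\tilde p$-points coming from $\Base(\psi) \setminus \Base(\kappa^{-1})$.
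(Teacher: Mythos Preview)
Your proof is correct and follows essentially the same inductive strategy as the paper: take a predecessor $\psi'=\psi\circ\kappa$, bound $\deg(\kappa)\le(r+1)/2$ via $\Base(\kappa^{-1})\subseteq\Base(\psi)$, and then control the base-points of $\psi'$ to enable the induction and the application of Lemma~\ref{Lemm:9ptssqrt}. The only substantive difference lies in how the key bound $|\Base(\psi')\cup\Base(\kappa)|\le r$ is obtained: you compute $\psi'^{-1}(e_0)=\kappa^{-1}(\psi^{-1}(e_0))$ directly in the Picard--Manin space using Lemma~\ref{Lemma:Imagepts}, whereas the paper argues geometrically via a commutative diagram of blow-ups (observing that the resolution $\eta\circ\epsilon$ of $\psi'\circ\varphi^{-1}=\psi$ from the target side blows up exactly $r$ points, since $\eta'$ does). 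Both arguments encode the same fact; your computation is perhaps more elementary, while the paper's diagram makes the geometric reason transparent.
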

\begin{proof}
We prove the result by induction on $\lgth(\psi)$. When $\lgth(\psi)=1$, then $\psi$ is a Jonqui\`eres transformation of degree $d=\deg(\psi)>1$, which implies that $r=2\deg(\psi)-1$. Hence, \ref{Ineq1bspt} is an equality and \ref{Ineq2bspt} holds, as $r\le 9$ yields $\deg(\psi)\le 5$.

Suppose now that $\lgth(\psi)>1$, and let $\varphi\in \Bir(\p^2)$ be a Jonqui\`eres element such that $\psi'=\psi\circ \varphi$ is a predecessor of $\psi$ (which implies in particular that $\lgth(\psi')=\lgth(\psi)-1$). We then have $\Base(\varphi^{-1})\subseteq\Base(\psi)$ (Lemma~\ref{Lem:Pred}), which yields $\Base(\psi'^{-1})\subseteq \Base(\psi^{-1})$ (Corollary~\ref{corollary: inclusion of the base locus of a composition in a special case}). 
This proves that $\varphi$ and $\psi'$ have at most $r$ base-points. In particular, we have $\deg(\varphi)\le \frac{r+1}{2}$.

To prove \ref{Ineq1bspt} we start with
$\deg(\psi)\le \deg(\psi')\cdot \deg(\varphi)\le \deg(\psi')\cdot \frac{r+1}{2}$, which yields $\log(\deg(\psi))\le \log(\deg(\psi'))+\log(\frac{r+1}{2})$. Applying induction to $\psi'$, we then get $\frac{\log(\deg(\psi))}{\log(\frac{r+1}{2})}\le \frac{\log(\deg(\psi'))}{\log(\frac{r+1}{2})}+1\le \lgth(\psi')+1=\lgth(\psi)$, as desired.

To prove \ref{Ineq2bspt}, we denote by $\pi\colon Z\to \p^2$ the blow-up of the base-points of $\varphi^{-1}$. As $\Base(\varphi^{-1})\subseteq \Base(\psi)$, there is a birational morphism $\epsilon\colon X\to Z$, such that $\pi \circ \epsilon$ is the blow-up of the base-points of $\psi$. We then get a commutative diagram
\[\xymatrix@R=0.4cm@C=2cm{
& X\ar[d]^{\epsilon}\ar[rdd]^{\eta'} \\
 & Z\ar[d]^{\pi}\ar[ld]_{\eta} \\
\p^2\ar@/_0.7pc/@{-->}[rr]_{\psi'}\ar@{-->}[r]^{\varphi}&\p^2\ar@{-->}[r]^{\psi}&\p^2,
}\]
where $\eta,\eta'$ are the blow-ups of the base-points of $\varphi$ and $\psi^{-1}$ respectively. As $\eta'$ blows-up $r$ points, the same holds for $\eta \circ \epsilon$. Hence, we find that
$\Base(\psi')\cup \Base(\varphi)\subseteq \Base((\eta \circ \epsilon)^{-1})$
contains at most $r\le 9$ points. We can then apply Lemma~\ref{Lemm:9ptssqrt}, which yields $\sqrt{\deg(\psi)} = \sqrt{ \deg (\psi' \circ \varphi^{-1} ) }     \le \sqrt{\deg(\psi')} +\sqrt{\deg(\varphi)} \le  \sqrt{\deg(\psi')} +\sqrt{5}$.  Applying induction to $\psi'$, we find 
\[\sqrt{\frac{\deg(\psi)}{5}} \le \sqrt{\frac{\deg(\psi')}{5}} +1\le \lgth(\psi')+1=\lgth(\psi).\qedhere\]
\end{proof}

\begin{proposition}\label{Prop:9pts}
Let $\pi\colon X\to \p^2$ be a birational morphism which is
the blow-up of at most $9$ points
and let $\varphi \in  \pi \Aut(X)\pi^{-1} \subseteq \Bir(\p^2)$ be a Cremona transformation. Then, the sequence $n \mapsto \frac{\deg(\varphi^n)}{n^2}$ admits a limit $L \in \mathbb{R}$ when $n$ goes to infinity and we have
\[\begin{array}{c}\dlgth(\varphi) \ge \sqrt{\frac{L}{5}}.\end{array}\] 
\end{proposition}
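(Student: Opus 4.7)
The plan is to combine two ingredients: the fact that each iterate $\varphi^n$ has at most $r\le 9$ base-points, which comes from regularisability on the surface $X$, together with Lemma~\ref{Lemm:BasePtsbounded}\ref{Ineq2bspt} which gives $\lgth(\psi) \ge \sqrt{\deg(\psi)/5}$ for any $\psi \in \Bir(\p^2)\setminus \Aut(\p^2)$ having at most $9$ base-points.

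First, I would set $\psi := \pi^{-1} \circ \varphi \circ \pi \in \Aut(X)$ and let $p_1, \ldots, p_r \in \B(\p^2)$ (with $r\le 9$) denote the points blown up by $\pi$. Under the identification $\Pic(X) \hookrightarrow \ZZ_{\p^2}$, we have $\Pic(X) = \Z e_0 \oplus \bigoplus_{i=1}^r \Z e_{p_i}$. Since $\psi^n$ acts on $\Pic(X)$, the identity $\varphi^n = \pi \circ \psi^n \circ \pi^{-1}$ together with the functoriality of the $_\bullet$-action yields $(\varphi^n)^{-1}(e_0) \in \Pic(X)$. Writing this as $\deg(\varphi^n)\cdot e_0 - \sum m_i e_{p_i}$ shows that $\Base(\varphi^n) \subseteq \{p_1,\ldots,p_r\}$, so that $\varphi^n$ has at most $r\le 9$ base-points.

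Second, I would establish the existence of the limit $L$. The isometry $\psi^* \in O(\Pic(X))$ preserves $K_X$, and the orthogonal complement of $K_X$ in $\Pic(X)$ is a (negative semi-)definite lattice of rank $r\le 9$, on which $\psi^*$ therefore acts with finite image modulo $\Z K_X$ (Weyl group of a root sublattice of type at most $E_9 = \widetilde{E}_8$). Hence, for some $N\ge 1$, the element $(\psi^*)^N$ preserves the line $\Z K_X$ pointwise and acts on $\Pic(X)$ by $v \mapsto v + \alpha(v)\, K_X$ for some linear form $\alpha$. Iterating gives $(\psi^*)^{kN}(v) = v + k\,\alpha(v)\,K_X$, so that $\deg(\varphi^{kN}) = e_0 \cdot \psi^{-kN}(e_0)$ is a quadratic polynomial in $k$. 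Interpolating over all $n$ (not necessarily multiples of $N$) yields convergence of $\deg(\varphi^n)/n^2$ to some $L\in\R_{\ge 0}$. This is the classical Diller--Favre/Gizatullin description of degree growth for regularisable Cremona transformations on rational surfaces of low Picard rank.

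Finally, I apply the lemma. If $\deg(\varphi^n) = 1$ for all $n$ (i.e., $\varphi$ has finite order or is conjugate to an automorphism of $\p^2$), then $L=0$ and the desired inequality $\dlgth(\varphi) \ge 0 = \sqrt{L/5}$ is trivial. Otherwise, $\varphi^n \notin \Aut(\p^2)$ for all but finitely many $n$; for such $n$, Lemma~\ref{Lemm:BasePtsbounded}\ref{Ineq2bspt} gives $\lgth(\varphi^n) \ge \sqrt{\deg(\varphi^n)/5}$. Dividing by $n$ and passing to the limit yields
\[ \dlgth(\varphi) \;=\; \lim_{n\to\infty} \frac{\lgth(\varphi^n)}{n} \;\ge\; \lim_{n\to\infty} \sqrt{\frac{\deg(\varphi^n)}{5\,n^2}} \;=\; \sqrt{\frac{L}{5}}. \]
The main obstacle is the rigorous verification of the existence of $L$: while the bound on base-points is essentially immediate from regularisability, justifying quadratic growth requires the structural analysis of $\psi^* \curvearrowright \Pic(X)$, exploiting the key fact that loxodromic isometries do not occur on rational surface Picard lattices of rank at most $10$.
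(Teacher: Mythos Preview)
Your argument that $\Base(\varphi^n)\subseteq\{p_1,\dots,p_r\}$ and your final passage from Lemma~\ref{Lemm:BasePtsbounded}\ref{Ineq2bspt} to the inequality $\dlgth(\varphi)\ge\sqrt{L/5}$ match the paper exactly. The difference lies entirely in how you establish the existence of $L$.

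The paper does this in one line using Lemma~\ref{Lemm:9ptssqrt}: since $\Base(\varphi^m)\cup\Base(\varphi^{-n})\subseteq\Delta$ has at most $9$ points, that lemma gives $\sqrt{\deg(\varphi^{m+n})}\le\sqrt{\deg(\varphi^m)}+\sqrt{\deg(\varphi^n)}$, so $n\mapsto\sqrt{\deg(\varphi^n)}$ is subadditive and $\lim_{n\to\infty}\sqrt{\deg(\varphi^n)}/n$ exists by Fekete. This is completely self-contained and avoids any analysis of $\Pic(X)$.

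Your route via the action of $\psi^*$ on $\Pic(X)$ is valid in spirit but is considerably heavier, and your sketch has an imprecision: for $r=9$ the formula $(\psi^*)^N(v)=v+\alpha(v)K_X$ holds only for $v\in K_X^\perp$, not on all of $\Pic(X)$. For $e_0\notin K_X^\perp$ one has $(\psi^*)^N(e_0)=e_0+w$ with $w\in K_X^\perp$ (not in $\Z K_X$ in general), and one must then iterate more carefully to see the quadratic term emerge. This can be done, but it is precisely the Gizatullin/Diller--Favre analysis you allude to, and it is external to the paper. Since Lemma~\ref{Lemm:9ptssqrt} is already available, the subadditivity argument is both shorter and more in keeping with the surrounding material.
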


\begin{proof}
Denote by $\Delta\subseteq \B(\p^2)$ the set of points
blown-up by $\pi$. For each $n\in\Z$, we have $\Base(\varphi^n)\subseteq\Delta$. In particular we have $\sqrt{\deg(\varphi^{m+n})}=\sqrt{\deg(\varphi^{m}\circ (\varphi^{-n})^{-1})}\le \sqrt{\deg(\varphi^m)}+\sqrt{\deg(\varphi^{-n})}=\sqrt{\deg(\varphi^m)}+\sqrt{\deg(\varphi^n)}$ for all $m,n\ge 1$ (Lemma~\ref{Lemm:9ptssqrt}). This means that the sequence $n\mapsto \sqrt{\deg(\varphi^n)}$ is subadditive, so $\lim\limits_{n\to \infty} \frac{\sqrt{\deg(\varphi^n)}}{n}$ exists, which is equivalent to saying that $\lim\limits_{n\to \infty} \frac{\deg(\varphi^n)}{n^2}$ exists.

For each $n\ge 1$, the number of base-points of $\varphi^n$ is at most $9$. This yields $\lgth(\varphi^n)\ge \sqrt{\frac{\deg(\varphi^n)}{5}}$ (Lemma~\ref{Lemm:BasePtsbounded}\ref{Ineq2bspt}), whence
$\frac{\lgth(\varphi^n)}{n} \ge \sqrt{\frac{\deg(\varphi^n)}{5n^2}}$ and the result follows by taking the limit when $n$ goes to infinity.
\end{proof}

\begin{corollary}\label{Cor:Halphentwist}
If $\varphi\in \Bir(\p^2)$ is a birational transformation such that $\left(\deg(\varphi^n)\right)_{n\ge 1}$ grows quadratically $($i.e.~$\varphi$ is a Halphen twist$)$,
then $\dlgth(\varphi)\ge \sqrt{\frac{1}{5}\cdot \lim\limits_{n\to \infty} \frac{\deg(\varphi ^n)}{n^2}}>0$.
\end{corollary}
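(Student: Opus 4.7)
The plan is to reduce the corollary to a direct application of Proposition~\ref{Prop:9pts}. Given a Halphen twist $\varphi\in\Bir(\p^2)$, I would first invoke the classical structure theorem for Halphen twists (essentially Gizatullin's theorem, as used in \cite{DillerFavre,BlaDes}): $\varphi$ preserves a unique pencil of elliptic curves, and this pencil has exactly $9$ base-points on $\p^2$ (counted infinitely near), whose blow-up yields a smooth projective surface $X$ together with a birational morphism $\pi\colon X\to\p^2$ such that the induced map $\pi^{-1}\circ\varphi\circ\pi\colon X\dasharrow X$ is a genuine automorphism of $X$ (the pencil becomes an honest elliptic fibration $X\to\p^1$ preserved by this automorphism). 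In particular, $\varphi\in\pi\Aut(X)\pi^{-1}$ and $\pi$ is the blow-up of exactly $9$ points of $\B(\p^2)$.

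Once this regularisation is in hand, the hypotheses of Proposition~\ref{Prop:9pts} are satisfied, so the sequence $n\mapsto\deg(\varphi^n)/n^2$ admits a finite limit $L\in\R$ and $\dlgth(\varphi)\ge\sqrt{L/5}$. By the definition of a Halphen twist recalled in $\S\ref{Subsection:Dynamical-length-of-regularisable-elements}$, the sequence $n\mapsto\deg(\varphi^n)$ grows quadratically, i.e.\ $L>0$, which gives $\dlgth(\varphi)\ge\sqrt{L/5}>0$ and yields the announced inequality.

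The main step (and the only real input beyond Proposition~\ref{Prop:9pts}) is the regularisation on a blow-up of $\p^2$ at $9$ points. I would simply cite it from the references already used in the paper (\cite{GizHalphen,DillerFavre,BlaDes}), since it is the standard description of Halphen twists and is precisely the content underlying the row ``Halphen twists'' of Figure~\ref{FigureSubadditive}. No further calculation is needed: the subadditivity argument that produces the limit $L$ and the lower bound $\sqrt{L/5}$ has already been carried out in full generality inside Proposition~\ref{Prop:9pts}.
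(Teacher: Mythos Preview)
Your proposal is correct and follows essentially the same approach as the paper: cite Gizatullin's result that a Halphen twist is regularised on a blow-up of $\p^2$ at $9$ points, then apply Proposition~\ref{Prop:9pts} directly. The paper's proof is just a terser version of what you wrote, citing \cite{GizHalphen} for the regularisation step without spelling out the role of the invariant elliptic pencil.
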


\begin{proof}
A birational transformation of $\p^2$ has a quadratic growth if and only if it is conjugate to an automorphism of a Halphen surface, obtained by blowing-up $9$ points of $\p^2$ \cite{GizHalphen}. The result then follows from Proposition~\ref{Prop:9pts}.
\end{proof}

\begin{remark}
Using an analogue method as in \cite[Proposition 5.1]{BlaDes} we are able to give a uniform lower bound $C >0$ such that $\dlgth(\varphi)\ge C$ for all Halphen twists.
However, this bound is far from being reached from the known examples.
\end{remark}

\begin{proposition}  \label{Prop:RegLox}
Let $\varphi\in \Bir(\p^2)$ be a loxodromic birational map which is regularisable,
i.e.~such that there exists a birational map $\kappa \colon \p^2\dasharrow X$ that conjugates $\varphi$ to an automorphism $g = \kappa \circ  \varphi \circ  \kappa^{-1} \in \Aut(X)$ where $X$ is a smooth projective surface.

Then, each $X$ as above is isomorphic to the blow-up of  finitely many points $p_1, \ldots,p_r \in \B( \p^2)$ with $r\ge 10$, and  the dynamical length of $\varphi$ satisfies
$\dlgth(\varphi) \ge \frac{\log(\lambda(\varphi))}{\log(\frac{r+1}{2})} > 0.$
\end{proposition}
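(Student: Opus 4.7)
My plan is to mimic the approach of Proposition~\ref{Prop:9pts} and Corollary~\ref{Cor:Halphentwist} but replacing the quadratic-growth bound of Lemma~\ref{Lemm:BasePtsbounded}\ref{Ineq2bspt} (which is tailored to the case of at most $9$ base-points) by the logarithmic bound \ref{Ineq1bspt}, which accommodates arbitrarily many base-points. The first task will be the structural claim about $X$: that it admits a birational morphism $\pi\colon X\to\p^2$ factoring as a sequence of blow-ups of $r\ge 10$ points $p_1,\dots,p_r\in\B(\p^2)$. Existence of such a morphism follows from the classification of minimal rational surfaces combined with an elementary-transformation argument to bypass any intermediate Hirzebruch surface: since $g\in\Aut(X)$ is loxodromic, $X$ is very far from minimal, and one can arrange a morphism to $\p^2$. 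For the bound $r\ge 10$, the key point is that the action of $g^\ast$ on $\Pic(X)\otimes\R$ preserves both the intersection form and the canonical class $K_X$, so it restricts to an isometry of $K_X^\perp$. For $r\le 8$ this orthogonal lattice is negative-definite and its isometry group is the (finite) Weyl group of a del Pezzo surface; for $r=9$ one recovers the affine Weyl group $\widetilde E_8$, whose elements are either of finite order or parabolic (giving rise to Halphen twists); only $r\ge 10$ admits loxodromic isometries. These facts are developed in \cite{BlaCan}.

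Once $\pi$ is in hand, I would set $\psi:=\pi\circ g\circ\pi^{-1}\in\Bir(\p^2)$, which is conjugate to $\varphi$ and therefore satisfies $\dlgth(\psi)=\dlgth(\varphi)$ and $\lambda(\psi)=\lambda(\varphi)$. The central observation is that for every $n\ge 1$
\[ \psi^n \;=\; \pi\circ g^n\circ \pi^{-1}, \]
where $\pi\circ g^n\colon X\to\p^2$ is an honest morphism: every base-point of $\psi^n$ must then come from $\pi^{-1}$, giving the uniform inclusion $\Base(\psi^n)\subseteq\{p_1,\dots,p_r\}$. Since $\psi$ is loxodromic we have $\deg(\psi^n)\ge 2$ for $n$ large, and Lemma~\ref{Lemma:Hudsonalgorithm} guarantees that $\psi^n$ has at least three base-points, so Lemma~\ref{Lemm:BasePtsbounded}\ref{Ineq1bspt} applies. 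Using that $\log((x+1)/2)$ is increasing in $x$, I can replace the (possibly smaller) number of base-points of $\psi^n$ by $r$ in the denominator and get the uniform estimate
\[ \lgth(\psi^n)\;\ge\;\frac{\log\deg(\psi^n)}{\log\frac{r+1}{2}}. \]
Dividing by $n$ and taking $n\to\infty$ yields
\[ \dlgth(\varphi)\;=\;\dlgth(\psi)\;\ge\;\frac{\log\lambda(\varphi)}{\log\frac{r+1}{2}}, \]
and positivity is then immediate: $\lambda(\varphi)>1$ by the loxodromic hypothesis, and $r\ge 10$ ensures $(r+1)/2>1$.

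The main obstacle is really the first step, the structural description of $X$. The inequality itself, once $\pi$ and the uniform containment of base-loci are established, is a straightforward limit computation from Lemma~\ref{Lemm:BasePtsbounded}. The lattice-theoretic input for $r\ge 10$ is classical, but verifying that \emph{every} $X$ to which $\varphi$ can be regularised admits an honest birational morphism to $\p^2$ (rather than only to some $\mathbb F_n$) requires a short but careful case analysis via elementary transformations, crucially exploiting that the loxodromic hypothesis forces the Picard number of $X$ to be large, so that enough $(-1)$-curves are available to redirect the contractions toward $\p^2$.
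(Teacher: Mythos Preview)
Your overall strategy coincides with the paper's: establish a birational morphism $\pi\colon X\to\p^2$ realising $X$ as a blow-up of $r$ points, argue $r\ge 10$, then use the uniform containment $\Base(\psi^n)\subseteq\{p_1,\dots,p_r\}$ together with Lemma~\ref{Lemm:BasePtsbounded}\ref{Ineq1bspt} and pass to the limit. The computational part of your proposal is correct and matches the paper exactly.

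The gap is in your first structural step. Your sketch ``loxodromic $\Rightarrow$ large Picard number $\Rightarrow$ enough $(-1)$-curves to redirect contractions toward $\p^2$'' does not work: a rational surface can have arbitrarily large Picard number and still admit no birational morphism to $\p^2$ (e.g.\ blow up many points on the negative section of $\F_n$, $n\ge 2$; the resulting surface carries a curve of self-intersection $\le -3$, which obstructs any morphism to $\p^2$). So large Picard number alone is insufficient, and your proposed elementary-transformation argument would have to use the loxodromic hypothesis in a more substantive way than just via the Picard number. The paper sidesteps this entirely by invoking \cite[Corollary~1.2]{Har1987}: if a smooth projective rational surface admits no birational morphism to $\p^2$, then the image of $\Aut(X)$ in $\GL(\Pic(X))$ is finite, hence every automorphism has spectral radius $1$ on $\Pic(X)$, contradicting $\lambda(g)>1$. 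This one citation replaces your whole case analysis.

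For $r\ge 10$ your lattice-theoretic argument (negative-definite $K_X^\perp$ for $r\le 8$, degenerate for $r=9$) is a correct and classical alternative to the paper's route, which instead appeals to its own Proposition~\ref{Prop:9pts} (at most quadratic degree growth when $r\le 9$). Either argument is fine here; the paper's has the advantage of being self-contained.
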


\begin{proof}
We first show that there
exists a birational morphism $\eta\colon X\to \p^2$. Suppose the converse, for contradiction. Then, \cite[Corollary 1.2]{Har1987} implies that the action of $\Aut(X)$ on $\Pic(X)$ is finite (i.e.~factorises through the action of a finite group). 
This is impossible: the dynamical degree of $g$, equal to the one of $\varphi$ as both are conjugate, is the spectral radius of the action of $g$ on $\Pic(X)\otimes_\Z \C$ (see the introductions of \cite{DillerFavre} and \cite{BlaCan}
for details on these two facts). This dynamical degree is therefore equal to~$1$, contradicting the fact that $\varphi$ is loxodromic.

We then obtain the existence of  a birational morphism $\eta \colon X \to \p^2 $ which is the blow-up of finitely many points $p_1,\dots,p_{r}\in \B(\p^2)$. Observe moreover that $r\ge10$ because $\varphi$ is loxodromic. One way to see this classical fact is to use Proposition~\ref{Prop:9pts} which implies that $\{\deg(\tilde\varphi^n)\}_{n\ge 1}$ grows at most quadratically if $r\le 9$, where $\tilde\varphi=\eta \circ g\circ \eta^{-1}$. This
is impossible
since $\{\deg(\varphi^n)\}_{n\ge 1}$ grows exponentially and because $\varphi,\tilde\varphi \in \Bir(\p^2)$ are conjugate (by $\eta\circ \kappa\in \Bir(\p^2)$).

 We then replace $\varphi$ with its conjugate $\eta \circ g\circ \eta^{-1}\in \Bir(\p^2)$. After this, we get $\varphi^n  = \eta \circ g^n \circ \eta^{-1}$ for each $n\ge 1$, so $\Base(\varphi^n)\subseteq \{p_1,\dots,p_{r}\}$. By Lemma~\ref{Lemm:BasePtsbounded}\ref{Ineq1bspt} we have
$\lgth(\varphi^n)\ge \frac{\log(\deg(\varphi^n))}{\log(\frac{r+1}{2})}$.
From $\lim\limits_{n\to \infty} (\deg(\varphi^n))^{1/n}=\lambda(\varphi)$, we deduce
$\lim\limits_{n\to \infty} \frac{\log(\deg(\varphi^n))}{n}=\log(\lambda(\varphi))$, which then yields 
\[\dlgth(\varphi)=\lim\limits_{n \to \infty} \frac{\lgth(\varphi^n)}{n}\ge \lim\limits_{n \to \infty} \frac{\log(\deg(\varphi^n))}{n\log(\frac{r+1}{2})}=\frac{\log(\lambda(\varphi))}{\log(\frac{r+1}{2})}>0\]
as desired.
\end{proof}

\begin{lemma}\label{Lemm:AutP2distorted}
Every element of $\Aut(\p^2)$ is distorted in $\Bir(\p^2)$.
\end{lemma}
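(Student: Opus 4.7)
The plan is to prove distortion for every $g\in\Aut(\p^2)=\PGL_3(\k)$ by splitting into cases according to the Jordan decomposition $g=g_s g_u$ (which lies in $\PGL_3(\k)$ since $\k$ is algebraically closed), and exhibiting for each case an explicit finitely generated subgroup $H\subset\Bir(\p^2)$ containing $g$ with $|g^n|_H=o(n)$. If $g$ has finite order the powers $\{g^n\}$ form a bounded set and distortion is trivial, so I may assume $g$ has infinite order.

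For $g$ semisimple of infinite order I would, after conjugating in $\PGL_3$, place $g$ in the standard torus $T=(\k^*)^2\subset\Bir(\p^2)$, acting in an affine chart as $(X,Y)\mapsto(\alpha X,\beta Y)$, with nonzero log-character $v=(\log\alpha,\log\beta)\in\R^2$. The monomial Cremona map $h_M$ attached to the hyperbolic matrix $M=\bigl(\begin{smallmatrix}2&1\\1&1\end{smallmatrix}\bigr)\in\SL_2(\Z)$ normalises $T$, and a direct computation (identical in spirit to the torus calculations of Section~\ref{SubSec:LengthMonomial}) shows that conjugation by $h_M^k$ sends a torus element with log-character $w$ to the one with log-character $M^k w$. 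By Cayley--Hamilton, $M^2-3M+I=0$, so $M+M^{-1}=3I$ and hence $M^2+M^{-2}=7I$. This universal identity yields
\[
g^7=(h_M^{2}\,g\,h_M^{-2})\cdot(h_M^{-2}\,g\,h_M^{2})
\]
as elements of $T$, i.e.\ $g^7$ is a word of length $10$ in $\{g,h_M\}$. Iterating the same identity on $g^{7^{k-1}}$ gives $|g^{7^k}|_{\{g,h_M\}}\le 2\,|g^{7^{k-1}}|+8$, so $|g^{7^k}|=O(2^k)$, and a base-$7$ expansion of an arbitrary exponent yields $|g^N|=O(N^{\log 2/\log 7})=o(N)$.

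For $g$ unipotent I would distort inside $\PGL_3$ itself. If $u=[x:y:z]\mapsto[x+y:y:z]$ is the elementary transvection, then $s=\mathrm{diag}(n,1,1)$ satisfies $s u s^{-1}=u^n$ for any $n\ge 2$, giving $|u^{n^k}|_{\{u,s\}}\le 2k+1$ and hence $|u^N|=O(\log N)$. For a regular Heisenberg unipotent $u$, I would decompose $u=u_2u_1$ with $u_1,u_2$ elementary and $c=[u_1,u_2]$ central; inside $H=\langle u_1,u_2,s_1,s_2\rangle$ (with $s_i$ distorting $u_i$), the commutator identity $c^M=[u_1,u_2^M]=u_1u_2^Mu_1^{-1}u_2^{-M}$ distorts $c$ to length $O(\log M)$, and then the Heisenberg expansion $u^N=u_2^Nu_1^Nc^{N(N-1)/2}$ gives $|u^N|_H=O(\log N)$.

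For the general mixed case $g=g_sg_u$ with both factors nontrivial, take $H=\langle g,g_s,g_u,s,h_M\rangle$, where $s$ and $h_M$ are the distorting elements from the previous two paragraphs. Since $g_s$ and $g_u$ commute, $g^N=g_s^Ng_u^N$ and therefore $|g^N|_H\le|g_s^N|_H+|g_u^N|_H=o(N)$, which proves that $g$ is distorted. The main delicate point to watch in the write-up is the semisimple step: a naive monomial-conjugation argument would fail if $v=\log(g_s)$ happened to be an eigenvector of $M$, but the scalar identity $M^2+M^{-2}=7I$ holds on all of $\R^2$, bypassing this potential degeneracy precisely because the trace of a hyperbolic $M\in\SL_2(\Z)$ forces a universal relation $M+M^{-1}=(\mathrm{tr}\,M)\,I$. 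The remaining formal verification is that, since $\k$ is algebraically closed, $g_s$ and $g_u$ themselves lie in $\PGL_3(\k)\subset\Bir(\p^2)$ and can legitimately be adjoined to $H$.
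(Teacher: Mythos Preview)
Your argument is essentially correct and yields the result, but it differs from the paper's proof in several details, so let me compare.

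\textbf{Semisimple case.} Both proofs exploit that $\GL_2(\Z)$ normalises the torus $T$. The paper uses $\tau:(x,y)\dasharrow(y,xy)$ and a Fibonacci identity to write $g^{2a_n}$ as a word of length $\le 4n+10$ in $\{g,\tau,\rho,\kappa\}$, obtaining $|g^N|=O(\log N)$. Your Cayley--Hamilton identity $M^2+M^{-2}=7I$ is a nice alternative; it gives only $|g^N|=O(N^{\log 2/\log 7})$, which is weaker but still $o(N)$. One cosmetic point: since $\k$ is an arbitrary algebraically closed field, the ``log-character'' language is purely heuristic; you should phrase the identity multiplicatively (which you can, since $M^2+M^{-2}=7I$ holds over~$\Z$).

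\textbf{Non-semisimple case.} Here the two proofs diverge more. You treat elementary and regular unipotents separately, using a Heisenberg commutator trick for the latter, and then handle the mixed case $g=g_sg_u$ by combining the two distortions. The paper is more economical: in characteristic $p>0$ it observes that $g^p$ or $g^{p^2}$ is diagonalisable; in characteristic~$0$ it reduces every non-diagonalisable element to the single translation $(x,y)\mapsto(x,y+1)$ by a birational conjugation (the regular unipotent $(x,y)\mapsto(x+y,y+1)$ is conjugated via $(x,y)\mapsto(x-\tfrac{y(y-1)}{2},y)$), avoiding your Heisenberg computation entirely.

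\textbf{A point to tighten.} In your mixed case you should make explicit that $g_u$ is necessarily an \emph{elementary} transvection: since $g_s\ne 1$ and $g_u\ne 1$ commute, $g_s$ has exactly two distinct eigenvalues in $\PGL_3$, its centralizer is a $\GL_1\times\GL_2$ block, and the only nontrivial unipotents there are elementary. This justifies using a single distorting element $s$ rather than the pair $s_1,s_2$ from your Heisenberg paragraph.
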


\begin{proof}
Let us first start with
a diagonalisable element of $\Aut(\p^2)$.
Up to conjugation, we may assume that this element is
locally given in the diagonal form
$g\colon (x,y)\mapsto (\alpha x,\beta y)$ for some $\alpha,\beta\in \k^*$.
Consider the monomial transformation
$\tau\colon (x,y)\dasharrow (y,xy)$ (which is a monomial transformation
of minimal positive dynamical length by Corollary~\ref{Coro:Smalldynamiclength}). For each $n\ge 1$ we obtain $\tau^n\colon (x,y)\dasharrow (x^{a_{n-1}}y^{a_n},x^{a_n}y^{a_{n+1}})$ where $(a_0,a_1,a_2,\cdots)=(0,1,1,2,3,5,8,\cdots)$ is the Fibonacci sequence.
We then set
$\rho \colon (x,y)\mapsto (y,x)$ and $\kappa \colon (x,y) \dasharrow (x^{-1},y)$, and get
$\varphi_1=\tau^n\circ \rho\colon (x,y)\dasharrow (x^{a_n}y^{a_{n-1}},x^{a_{n+1}}y^{a_n})$ and $\varphi_2=\kappa\circ \varphi_1\circ \kappa\colon (x,y)\dasharrow (x^{a_n}y^{-a_{n-1}},x^{-a_{n+1}}y^{a_n})$
so $\varphi_1\circ g\circ \varphi_1^{-1} \circ \varphi_2\circ g \circ \varphi_2^{-1}\colon
(x,y)\mapsto(\alpha^{2a_n}x,\beta^{2a_n}y)= g^{2a_n}$.
Hence, writing $F=\{g,\tau,\rho,\kappa\}\subseteq \Bir(\p^2)$,
we get $\lvert g^{2a_n} \rvert_F \le 2(\lvert \varphi_1\rvert_F+\lvert \varphi_2\rvert_F+\lvert g\rvert_F)\le 4n+10$. This implies that $g$ is distorted, since $\lim\limits_{n\to \infty} \frac{n}{a_n}=0$.

We now consider the case of a non-diagonalisable element of $\Aut(\p^2)$. This element is
either conjugate to $g\colon [x:y:z]\mapsto [\alpha x:y+z: z]$ for some $\alpha\in \k^*$ or to $h\colon [x:y:z]\mapsto [x+y:y+z:z]$. If $\car(\k)=p>0$, then $g^p$ and $h^{p^2}$ are diagonal and thus distorted, so $g$ and $h$ are distorted.
Hence, we may
assume that $\car(\k)=0$.
Write
$g,h$ locally as $g\colon (x,y)\mapsto (\alpha x,y+1)$ and $h\colon (x,y)\mapsto (x+y,y+1)$, and observe that we only need to show that $g$ is distorted, as $h$ is conjugate to $g$ (with $\alpha=1$) by $(x,y)\mapsto (x-\frac{y(y-1)}{2},y)$. As $g$ is the composition of the two commuting automorphisms  $q\colon (x,y)\mapsto (x,y+1)$ and $r\colon (x,y)\mapsto (\alpha x,y)$, it suffices to prove that both $q$ and $r$ are distorted. As $r$ is diagonal, we only need to show that $q$ is distorted. We set $\varphi\colon (x,y)\mapsto (x,2y)$ and $F=\{q,\varphi \}$. For each $n\ge 1$ we have $\varphi^n\circ q\circ \varphi^{-n}=q^{2^n}$. Hence $\lvert q^{2^n}\lvert_F\le 2n+1$, which implies that $q$ is distorted.
 \end{proof}
 
We finish this section by recalling the following result, stated in \cite{BlaDes} for $\k=\C$, but whose proof in fact works for each algebraically closed field, as we explain now:

\begin{proposition}\label{Prop:BlaDes}
An element $\varphi\in \Bir(\p^2)$ is algebraic if and only if it is of finite order or conjugate to an element of $\Aut(\p^2)$.
\end{proposition}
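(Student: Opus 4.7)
The plan is to follow the strategy of \cite[Proposition 2.3]{BlaDes}, verifying that each ingredient works over an arbitrary algebraically closed field.

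The direction ``of finite order or conjugate to an element of $\Aut(\p^2)$ implies algebraic'' is immediate: a finite cyclic subgroup is itself an algebraic subgroup, and $\Aut(\p^2) = \PGL_3(\k)$ is an algebraic group, so in both cases $\langle \varphi \rangle$ lies inside an algebraic subgroup of $\Bir(\p^2)$.

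For the non-trivial direction, suppose that $\varphi$ is algebraic, which by definition means that the degree sequence $n \mapsto \deg(\varphi^n)$ is bounded. I would first apply the regularisation result (see \cite{DillerFavre} and \cite{BlaCan}): there exist a smooth projective rational surface $X$ and a birational map $\pi \colon X \dasharrow \p^2$ such that $g := \pi^{-1} \circ \varphi \circ \pi$ lies in $\Aut(X)$. This step uses only the action on the Picard-Manin space and is insensitive to the characteristic, since it rests on linear-algebraic properties of this action and on the equivalence between boundedness of $\{\deg(\varphi^n)\}$ and the existence of an invariant big and nef class. Running the equivariant Minimal Model Program on $X$ (available over any algebraically closed field in dimension $2$), we may replace $X$ by a minimal model preserved by $g$, so $X$ is $\p^2$, a Hirzebruch surface $\mathbb{F}_n$, a Del Pezzo surface, or a conic bundle over $\p^1$.

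Next I would consider the Zariski closure $H := \overline{\langle g \rangle}$ inside the algebraic group $\Aut(X)$. If $\dim H = 0$, then $H$ is a finite group scheme, so $g$ (hence $\varphi$) has finite order. If $\dim H \ge 1$, then the identity component $H^\circ$ is a positive-dimensional connected algebraic subgroup of $\Aut^\circ(X)$, and some power $g^n$ lies in $H^\circ$. A case-by-case analysis of the pair $(X, H^\circ)$, exactly as in \cite{BlaDes}, then shows that this one-parameter family of automorphisms can be equivariantly contracted to $\p^2$, so that $g$ itself is conjugate to an automorphism of $\p^2$.

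The main potential obstacle is to verify that the case analysis of \cite{BlaDes} — which is phrased over $\C$ — transfers to arbitrary characteristic. The classification of minimal rational surfaces, the identification of $\Aut^\circ(X)$ as a linear algebraic group, the Jordan decomposition of its elements into commuting semisimple and unipotent parts, and the conjugacy results for one-parameter subgroups on $\p^2$, $\mathbb{F}_n$, and conic bundles are all characteristic-free statements about algebraic groups over algebraically closed fields. The only difference in characteristic $p>0$ is that a unipotent element may have finite order (a power of $p$) instead of infinite order; but this only reinforces the first alternative of the conclusion. Hence the proof of \cite[Proposition~2.3]{BlaDes} goes through verbatim, and the proposition holds in any characteristic.
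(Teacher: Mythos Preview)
Your proposal follows the same overall strategy as the paper---regularise to an automorphism of a smooth projective rational surface, descend to a minimal model, and then argue case by case---but the execution differs in one substantive way, and your sketch glosses over precisely the step that requires care.

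The paper does not use the Zariski closure $\overline{\langle g\rangle}$ at all. Instead, after getting $g\in\Aut(S)$ with finite action on $\Pic(S)$, it invokes \cite[Proposition~2.1]{BlaDes} to produce a $g$-equivariant morphism $S\to X$ with $X=\p^2$ or $X=\F_n$, and then works in explicit coordinates. For $\F_0$ it blows up a fixed point (every automorphism of $\p^1$ has one) and contracts to $\p^2$. For $\F_n$ with $n\ge 1$ it writes the general automorphism as
\[
[y_0:y_1;z_0:z_1]\mapsto[y_0:y_1+y_0P(z_0,z_1);az_0+bz_1:cz_0+dz_1],
\]
performs elementary links at fixed points off the negative section to decrease $n$, and is finally left with an explicit affine automorphism of the form $(x,y)\mapsto(x+p(y),\alpha y)$ or $(x,y)\mapsto(x+p(y),y+\beta)$, which it conjugates by hand to a linear map.

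That last conjugation is exactly where characteristic matters: solving $q(y+\beta)-q(y)=p(y)$ for $q$ requires $\car(\k)=0$. The paper handles characteristic $p>0$ not by saying the argument is ``characteristic-free'' (it isn't), but by observing that in that case the troublesome unipotent part has finite order, so the element falls into the other alternative of the dichotomy. Your final paragraph gestures at this, but your blanket assertion that the conjugacy results ``are all characteristic-free statements'' is not accurate; the paper's contribution is precisely to locate these failures and show that each one feeds into the finite-order branch. So your outline is correct, but the verification you defer to a citation is the actual content of the proof.
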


\begin{proof}
By definition, every element of finite order or conjugate to an element of $\Aut(\p^2)$ is algebraic. It then remains to show that an algebraic element $\varphi\in \Aut(\p^2)$ of infinite order is conjugate to an element of $\Aut(\p^2)$. As $\varphi$ is algebraic, we can conjugate $\varphi$ to an element $g\in \Aut(S)$, where $S$ is a smooth projective rational surface, and such that the action on $\Pic(S)$ is finite. There exists then a birational morphism $S\to X$, where $X=\p^2$ or $X$ is a Hirzebruch surface  $\F_n$ for some $n \geq 0$ (see \cite[Proposition 2.1]{BlaDes}, which is stated in the case where $\k=\C$ but whose proof works over any algebraically closed field ; it is proven there that we may assume $n \neq 1$, but we will not need it). If $X=\p^2$, we are done. The reduction to this case is then done in \cite[Proposition 2.1]{BlaDes} for $\k=\C$; we follow the proof checking what is dependent of the field.  If $X = \F_0 =\p^1\times \p^1$, we blow-up a fixed point (which exists as every automorphism of $\p^1 \times \p^1$ is of the form $(u,v) \mapsto ( \tau_1 (u), \tau_2 (v) )$ or $(u,v) \mapsto ( \tau_2 (v), \tau_1 (u) )$ where $\tau_1,\tau_2$ are automorphisms of $\p^1$ and as each automorphism of $\p^1$ fixes at least a point of $\p^1$) and contract the strict transforms of the two horizontal and vertical lines of self-intersection $0$ through the point, to get a birational map $X \dasharrow \p^2$. As the point is fixed and the union of the two curves contracted is invariant, we obtain an element of $\Aut(\p^2)$.

It remains to consider the case of the Hirzebruch surface $\F_n$ with $n \geq 1$. Recall that $\F_n$ is the quotient of $(\A^2\setminus \{0\})^2$ by the action of $(\mathbb{G}_m)^2$ given by
\[\begin{array}{ccc}
(\mathbb{G}_m)^2 \times (\A^2\setminus \{0\})^2 & \to & (\A^2\setminus \{0\})^2\\
((\mu,\rho), (y_0,y_1,z_0,z_1))&\mapsto& (\mu \rho
^{-n} y_0,\mu y_1,\rho z_0,\rho z_1).\end{array}\]
The class of $(y_0,y_1,z_0,z_1)$ is denoted by $[y_0:y_1;z_0:z_1]$ and the natural $\p^1$-fibration $\F_n \to \p^1$, $[y_0:y_1;z_0:z_1] \mapsto [z_0:z_1]$ by $\pi$. It is well-known that each automorphism of $\F_n$ exchanges the fibres of $\pi$ and is of the form 
\[[y_0:y_1;z_0:z_1]\mapsto [y_0:y_1+y_0P(z_0,z_1);az_0+bz_1:cz_0+cz_1]\]
for some $\left( \begin{array}{rr} a & b \\ c & d \end{array} \right)\in \GL_2(\k)$ and some polynomial $P\in \k[z_0,z_1]$, homogeneous of degree~$n$. If one point of $\F_n$ that does no lie on the exceptional section $s$ given by $y_0=0$ is fixed, we can perform the elementary link $\F_n\dasharrow \F_{n-1}$
(blowing up the point and contracting the strict transform of the fibre through that point) to
decrease the integer $n$. We can thus assume that each fixed point of $g$ is on the exceptional section. Conjugating by an element of $\GL_2(\k)$, we obtain two possibilities, namely
\[\begin{array}{rcl}
\ [y_0:y_1;z_0:z_1]&\mapsto& [y_0:y_1+y_0P(z_0,z_1);\lambda z_0:\mu z_1]   \text{ or}\\
\ [y_0:y_1;z_0:z_1]&\mapsto& [y_0:y_1+y_0P(z_0,z_1);\lambda z_0:\lambda z_1+z_0]\end{array}\]
for some $\lambda,\mu\in \k^*$. 

In the first case, the actions on the two fibres $z_0=0$ and $z_1=0$ having no fixed points outside of $s$, we have $\lambda^n=\mu^n=1$ and $P(0,1)P(1,0)\not=0$.

In the second case, the action on the fibre $z_0=0$ having no fixed points outside of $s$ implies that $\lambda^n=1$ and $P(1,0)\not=0$. 

In both cases, we look at the action on the image of the open embedding $\A^2\to \F_n$, $(x,y)\mapsto [1:x;1:y]$. This action is given respectively by 
\[ (x,y)\mapsto \left(x+P(1,y),\frac{\mu}{\lambda} y\right) \text{ or }
(x,y)\mapsto \left(x+P(1,y),y+\frac{1}{\lambda}\right).\]

In the first case, we write $p(y)=P(1,y)$ and $\alpha=\frac{\mu}{\lambda} \in \k^*$, 
where $\alpha$ is a primitive $k$-th root of unity for some integer $k\ge 1$ that divides $n$. We can conjugate with $(x,y)\mapsto (x+\gamma y^d,y)$ and replace $p(y)$ with $p(y)+\gamma ( \alpha ^d-1)y^d$, so we may assume that $p\in \k[y^k]$. We then conjugate with $(x,y)\mapsto (\frac{x}{p(y)},y)$ to obtain $(x,y)\mapsto (x+1,\alpha y)$ which actually induces an automorphism of $\p^2$.

In the second case, we necessarily have $\car(\k)=0$, as otherwise $g$ would be of finite order.
We write  $p(y)=P(1,y)$ and $\beta = \frac{1}{\lambda} \in \k^*$. It is enough to prove that the polynomial automorphism of $\A^2$ given by $(x,y) \mapsto (x+ p(y), y + \beta)$ is conjugate to the affine polynomial automorphism $a \colon (x,y) \mapsto (x, y + \beta)$. Conjugating $a$ with the polynomial automorphism $(x,y) \mapsto ( x + q(y), y)$, where $q$ is a polynomial, we get the polynomial automorphism $(x,y) \mapsto (x+ q(y+ \alpha) - q(y), y + \beta)$. Since we are in characteristic zero, there exists a polynomial $q$ such that $q(y+ \alpha) - q(y) = p(y)$.
\end{proof}

\section{Lower semicontinuity of the length: the proof of Theorem~\ref{Continuitylength}}
\label{section:lower-semicontinuity-of-the-length}

Throughout this section, $\Pi \colon \p^2 \dasharrow \p^1$ will denote the standard  linear projection \[\begin{array}{ccc} \p^2 & \dasharrow & \p^1 \\
\ [x:y:z]& \mapsto &  [x:y].\end{array}\]

\subsection{Variables}
The proof of Theorem~\ref{Continuitylength}  uses the notion of variables, that we now define.
\begin{definition}\label{Defi:variable}
A rational map $v \colon \p^2 \dasharrow \p^1$ is called a \emph{variable}, if there exists a birational map $f\in \Bir(\p^2)$ such that 
$\Pi\circ f=v$.
\[\xymatrix{
\p^2 \ar@{-->}[r]^{f}  \ar@{-->}[dr]_{v} & \p^2 \ar@{-->}[d]^{\Pi} \\
 & \p^1 }\]
Writing a variable $v \colon \p^2 \dasharrow \p^1$ as $[x:y:z]\dasharrow [v_0(x,y,z):v_1(x,y,z)]$ where $v_0,v_1\in \k[x,y,z]$ are homogeneous of the same degree, without common factor, we define its degree as the common degree of $v_0$ and $v_1$
(which is also the degree of a general fibre of~$v$).
\end{definition}

\begin{remark} \label{Remark:Easy-observations-on-variables}
Let us make the following observations:

\begin{enumerate}
\item
A  rational map $v \colon \p^2 \dasharrow \p^1$ is a variable if and only if there exists a rational map $w \colon \p^2 \dasharrow \p^1$ such that the rational map $(v,w) \colon  \p^2 \dasharrow \p^1 \times \p^1$ is birational.
\item
Writing a rational map $v \colon \p^2 \dasharrow \p^1$ as  $[x:y:z]\dasharrow [v_0(x,y,z):v_1(x,y,z)]$ where $v_0,v_1\in \k[x,y,z]$ are homogeneous of the same degree
$d$, then $v$ is a variable if and only if there exist 
an integer $D \geq d$ and homogeneous polynomials $h,v_2\in \k[x,y,z]$ of degrees $D-d$ and $D$, such that $[x:y:z]\dasharrow [hv_0 : h v_1 : v_2]$ is an element of $\Bir(\p^2)$.
\item
For each $p\in \p^2$, every projection $\pi_p\colon \p^2\dasharrow \p^1$ away from $p$ is a variable of degree $1$. Conversely, all variables of degree $1$ are obtained like this.
\item \label{natural-cations-on-variables}
The group $\Bir(\p^2)$ acts transitively on the set of variables by
right composition.
Similarly, $\Aut(\p^1)$ acts on the set of variables by left-composition.
\end{enumerate}
\end{remark}

\begin{definition}
Let $v\colon \p^2\dasharrow \p^1$ be a variable. We define the length of $v$, written $\lgth(v)$, to be the minimum of the lengths of the birational maps $\varphi\in \Bir(\p^2)$ such that $\Pi\circ \varphi=v$.
\end{definition}

\begin{lemma}  \label{Lem:VariableMapLength}
Let $v\colon \p^2\dasharrow \p^1$ be a variable. For each $\varphi\in \Bir(\p^2)$ such that $\Pi \circ \varphi=v$, we have 
\[\lgth(\varphi)\in \{\lgth(v),\lgth(v)+1\}.\]
\end{lemma}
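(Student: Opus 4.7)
The plan is to exploit the fact that if two birational maps of $\p^2$ have the same composition with $\Pi$, then they differ by an element of $\Jonq_{p_0}$, where $p_0=[0:0:1]$ is the base-point of $\Pi$. The bound will then follow immediately from subadditivity of the length.

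More precisely, I will first observe the trivial lower bound $\lgth(\varphi)\ge \lgth(v)$, which comes directly from the definition of $\lgth(v)$ as an infimum. For the upper bound, let $\psi\in \Bir(\p^2)$ be a birational map realising the minimum, i.e.\ $\Pi\circ \psi = v$ and $\lgth(\psi)=\lgth(v)$. From $\Pi\circ\varphi = \Pi\circ\psi$ we get
\[\Pi\circ (\psi\circ\varphi^{-1}) = \Pi,\]
so that $j:=\psi\circ\varphi^{-1}$ preserves the pencil of lines through $p_0=[0:0:1]$, i.e.\ $j\in \Jonq_{p_0}$. In particular $j$ is a Jonqui\`eres element, and $\varphi = j^{-1}\circ \psi$.

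Hence by subadditivity of the length (which is immediate from the definition), we obtain
\[\lgth(\varphi) \le \lgth(j^{-1}) + \lgth(\psi) \le 1 + \lgth(v),\]
where we used that $j^{-1}\in \Jonq_{p_0}$ has length at most $1$ (it has length $0$ precisely when $j\in \Aut(\p^2)\cap \Jonq_{p_0}$). Combined with the lower bound, this yields $\lgth(\varphi)\in\{\lgth(v),\lgth(v)+1\}$ as required.

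There is essentially no obstacle here: the whole argument is a one-line application of the characterisation of the subgroup fixing the rational map $\Pi$ on the right, namely $\Jonq_{p_0}$ (which could also be phrased as saying that the stabiliser of $e_0 - e_{p_0}\in \ZZ_{\p^2}$ under the action of $\Bir(\p^2)$ is exactly $\Jonq_{p_0}$). The only thing worth checking is that $j$ is genuinely Jonqui\`eres in $\Bir(\p^2)$ (not only in $\Weyl$), but this is automatic since $j\in \Bir(\p^2)$ and $j$ preserves the pencil of lines through $p_0$, hence $j\in\Jonq_{p_0}\subseteq \Jonq\subseteq\Bir(\p^2)$ by definition.
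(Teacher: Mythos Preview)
Your proof is correct and follows essentially the same approach as the paper: pick $\psi$ realising $\lgth(v)$, observe that $\psi\circ\varphi^{-1}$ (the paper uses $\varphi\circ\psi^{-1}$, which is the inverse) preserves $\Pi$ and is therefore Jonqui\`eres, and conclude via subadditivity together with the trivial lower bound from the definition of $\lgth(v)$.
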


\begin{proof}
By definition, there exists $\psi\in \Bir(\p^2)$ such that $\Pi \circ \psi=v$ and
$\lgth(\psi)= \lgth( v)$.
Since $\Pi \circ \varphi \circ (\psi)^{-1}=\Pi$, the map $\varphi\circ(\psi)^{-1}$ is a Jonqui\`eres transformation, which implies that the lengths of $\varphi$ and $\psi$ differ at most by one. As $\lgth(v)\le \lgth(\varphi)$ by definition, we get the result.
\end{proof}

\begin{lemma}\label{lemma:CompositionVariableEndoP1}
Let $v\colon \p^2\dasharrow \p^1$ be a variable, and let $\theta\colon \p^1\to \p^1$ be a morphism. Then, the following are equivalent:
\begin{enumerate}
\item\label{Thetav1}
The rational map $\theta\circ v\colon \p^2\dasharrow \p^1$ is a variable.
\item\label{Thetav2}
The morphism $\theta\colon \p^1\to \p^1$ is an automorphism.
\end{enumerate}
\end{lemma}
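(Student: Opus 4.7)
The implication $\ref{Thetav2}\Rightarrow\ref{Thetav1}$ is essentially Remark~\ref{Remark:Easy-observations-on-variables}\ref{natural-cations-on-variables}, but let me make it explicit in order to fix ideas. Given $\theta\in \Aut(\p^1)$, say $\theta([u:w])=[au+bw:cu+dw]$, lift it to the linear automorphism $\tilde\theta\in \Aut(\p^2)\subseteq \Jonq_{[0:0:1]}$ defined by $[x:y:z]\mapsto [ax+by:cx+dy:z]$, which satisfies $\Pi\circ \tilde\theta=\theta\circ \Pi$. If $v=\Pi\circ f$ for some $f\in \Bir(\p^2)$, then $\theta\circ v=\Pi\circ(\tilde\theta\circ f)$ exhibits $\theta\circ v$ as a variable.

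The substantial direction is $\ref{Thetav1}\Rightarrow\ref{Thetav2}$. The key geometric input is that a general fibre of a variable is irreducible: indeed, if $v=\Pi\circ f$ with $f\in \Bir(\p^2)$, then for a general point $q\in \p^1$ the fibre $v^{-1}(q)$ is the proper transform by $f^{-1}$ of the line $\Pi^{-1}(q)\subseteq \p^2$, which is a single irreducible rational curve since $f^{-1}$ is birational and $\Pi^{-1}(q)$ is not contained in the indeterminacy locus of $f^{-1}$.

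Now assume $\theta\circ v$ is a variable and set $n=\deg(\theta)$. For a general $q\in \p^1$, the preimage $\theta^{-1}(q)$ consists of $n$ distinct points $q_1,\dots,q_n\in \p^1$, each being a general point for which the argument above applies to $v$. Hence $(\theta\circ v)^{-1}(q)=\bigsqcup_{i=1}^{n} v^{-1}(q_i)$ is a disjoint union of $n$ irreducible curves. By the irreducibility of general fibres of the variable $\theta\circ v$, we must have $n=1$, so $\theta$ is an automorphism of $\p^1$.

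The only step requiring slight care is the irreducibility of a general fibre of a variable, but this follows directly from the fact that a birational transformation sends the general line of a pencil (here the pencil $\Pi^{-1}$ of lines through $[0:0:1]$) to an irreducible curve, since birational maps induce bijections between the generic points of the source and the target. No further obstacle is expected.
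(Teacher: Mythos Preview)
Your approach is essentially the paper's: the easy direction is the same reference to Remark~\ref{Remark:Easy-observations-on-variables}\ref{natural-cations-on-variables}, and the substantial direction argues via irreducibility of the general fibre of a variable.

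There is, however, a gap in positive characteristic. The assertion that for general $q$ the preimage $\theta^{-1}(q)$ consists of $n=\deg(\theta)$ distinct points presupposes that $\theta$ is separable. If $\theta$ is inseparable, say the Frobenius $[u:w]\mapsto[u^p:w^p]$ in characteristic $p$, then every fibre $\theta^{-1}(q)$ is set-theoretically a single point, so your component count yields a single irreducible curve $(\theta\circ v)^{-1}(q)=v^{-1}(\theta^{-1}(q))$ and does not force $\deg(\theta)=1$. The fix is to strengthen the fibre statement: the general fibre of a variable is not only irreducible but \emph{reduced} (indeed an integral rational curve, being the proper transform of a line by a birational map), whereas the general scheme-theoretic fibre of $\theta\circ v$ is non-reduced whenever $\theta$ has inseparable degree $>1$. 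With this observation your argument goes through in all characteristics. The paper's invocation of the ``scheme theoretic'' fibre is pointing in this direction, though it is also rather terse on the matter.
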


\begin{proof}

\ref{Thetav1}$\Rightarrow$\ref{Thetav2}:
If $\theta \circ v$ is a variable, its general scheme theoretic fibre is irreducible,
and hence the general scheme theoretic fibre of $\theta$ is irreducible as well.
This proves that $\theta$ has degree $0$ or $1$. As $\theta\circ v$ is non-constant, so is $\theta$, which is thus an automorphism.

\ref{Thetav2}$\Rightarrow$\ref{Thetav1}:  If $\theta$ is an automorphism of $\p^1$, we have already noted in Remark~\ref{Remark:Easy-observations-on-variables}\ref{natural-cations-on-variables} that $\theta \circ v$ is a variable.
\end{proof}

\begin{lemma} \label{lemma:NotDominantVariable}
Let $f\colon \p^2\dasharrow \p^2$ be a non-dominant and non-constant rational map and let $v\colon \p^2\dasharrow \p^1$ be a variable. Then, the following are equivalent:
\begin{enumerate}
\item\label{Fvar1}
There exists a morphism $\kappa\colon \p^1\to \p^2$ such that $\kappa\circ v=f$.
\item\label{Fvar2}
For each linear projection $\pi\colon \p^2\dasharrow \p^1$, there exists a morphism $\theta\colon \p^1\to \p^1$ such that 
$\pi\circ f=\theta\circ v.$
\item\label{Fvar3}
There exists a linear projection $\pi\colon \p^2\dasharrow \p^1$ and a non-constant morphism $\theta\colon \p^1\to \p^1$ such that 
$\pi\circ f=\theta\circ v.$
\end{enumerate}
\end{lemma}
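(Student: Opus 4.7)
The plan is to prove the cycle $\ref{Fvar1} \Rightarrow \ref{Fvar2} \Rightarrow \ref{Fvar3} \Rightarrow \ref{Fvar1}$.

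For $\ref{Fvar1} \Rightarrow \ref{Fvar2}$: given $f = \kappa \circ v$ with $\kappa \colon \p^1 \to \p^2$ a morphism, and given any linear projection $\pi \colon \p^2 \dasharrow \p^1$, the composition $\pi \circ \kappa \colon \p^1 \dasharrow \p^1$ is a rational map from a smooth projective curve to a projective variety, and hence extends uniquely to a morphism $\theta \colon \p^1 \to \p^1$. As $v$ is dominant, the equality of rational maps $\pi \circ f = (\pi \circ \kappa) \circ v = \theta \circ v$ then holds. For $\ref{Fvar2} \Rightarrow \ref{Fvar3}$, the image $C$ of $f$ is an irreducible curve in $\p^2$, so picking a point $q \in \p^2 \smallsetminus C$ (which exists as $\k$ is infinite), the projection $\pi$ away from $q$ satisfies: $\pi|_C$ is non-constant, hence $\pi \circ f$ is non-constant, so the associated $\theta$ furnished by \ref{Fvar2} cannot be constant.

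The main direction is $\ref{Fvar3} \Rightarrow \ref{Fvar1}$. First I would observe that a general fiber $F$ of $v$ is an irreducible rational curve: by Definition~\ref{Defi:variable}, $v = \Pi \circ g$ for some $g \in \Bir(\p^2)$, and $F$ is the $g^{-1}$-image of a general fiber of $\Pi$, which is a line. On $F$, $v$ is constant, so $\pi \circ f = \theta \circ v$ is constant on $F$, which forces $f(F)$ to lie in a fiber $L$ of the linear projection $\pi$, i.e., a line. Since also $f(F) \subseteq C$, we obtain $f(F) \subseteq C \cap L$. But $\theta$ non-constant and $v$ non-constant give $\pi \circ f$ non-constant, meaning $C$ is not contained in any fiber of $\pi$; therefore $C \cap L$ is a finite set. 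Since $F$ is irreducible, a rational map from $F$ to a finite set is constant, so $f|_F$ is constant.

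From this I would deduce the desired factorization. Because $v$ is a variable, picking $w$ such that $(v,w) \colon \p^2 \dasharrow \p^1 \times \p^1$ is birational identifies $\k(\p^2)$ with $\k(v)(w)$, and one checks easily that the subfield of rational functions constant on general fibers of $v$ is exactly $v^*(\k(\p^1)) = \k(v)$. Since $f$ is constant on general fibers of $v$, the inclusion $f^*(\k(C)) \subseteq v^*(\k(\p^1))$ holds, providing a rational map $\tilde{\kappa} \colon \p^1 \dasharrow \p^2$ with $f = \tilde{\kappa} \circ v$; as $\p^1$ is a smooth projective curve, $\tilde{\kappa}$ extends uniquely to a morphism $\kappa \colon \p^1 \to \p^2$ satisfying $\kappa \circ v = f$.

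The only non-routine step is the one I expect to need the most care: verifying that the constant-on-fibers condition implies the function-field inclusion $f^*(\k(C)) \subseteq v^*(\k(\p^1))$. This is where the variable hypothesis on $v$ (as opposed to a merely dominant rational map) enters essentially, via the existence of a companion $w$ making $(v,w)$ birational.
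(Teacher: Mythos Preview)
Your proof is correct. The implications $\ref{Fvar1}\Rightarrow\ref{Fvar2}\Rightarrow\ref{Fvar3}$ match the paper's argument essentially verbatim. For the main direction $\ref{Fvar3}\Rightarrow\ref{Fvar1}$, the paper takes a slightly different route: it first normalises by choosing $\alpha\in\Aut(\p^2)$, $\beta\in\Aut(\p^1)$ and $g\in\Bir(\p^2)$ so that $\beta\circ\pi\circ\alpha=v\circ g=\Pi$, thereby reducing to the case $\pi=v=\Pi$; then in affine coordinates $[1:t:u]$ it writes $f$ as $[1:r(t):s(t,u)]$ and observes that non-dominance forces $s$ to be algebraic over $\k(r)\subseteq\k(t)$, hence $s\in\k(t)$ since $\k(t)$ is algebraically closed in $\k(t,u)$.

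Your fiberwise argument is the geometric counterpart of this same algebraic fact: showing $f$ is constant on a general fiber of $v$ and then invoking the birational identification $\k(\p^2)\simeq\k(v)(w)$ amounts exactly to the statement that $\k(v)$ is algebraically closed in $\k(v,w)$. The paper's coordinate reduction makes the computation explicit and short; your version is coordinate-free and makes the role of the variable hypothesis (the existence of the companion $w$) more transparent. Both reach the same conclusion by the same underlying principle.
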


\begin{proof}
As $f$ is non-constant, $\pi \circ f \colon \p^2 \dasharrow \p^1$ is a well-defined rational map for each linear projection $\pi\colon \p^2\dasharrow \p^1$.

$\ref{Fvar1}\Rightarrow \ref{Fvar2}$: For each linear projection $\pi\colon \p^2\dasharrow \p^1$ we get $\pi\circ f=(\pi\circ \kappa)\circ v$.

$\ref{Fvar2}\Rightarrow \ref{Fvar3}$: Since $f$ is not constant, there is a linear projection $\pi\colon \p^2\dasharrow \p^1$ such that $\pi\circ f$ is not constant.

$\ref{Fvar3}\Rightarrow \ref{Fvar1}$: Let us choose $\alpha\in \Aut(\p^2)$, $\beta\in \Aut(\p^1)$, and $g\in \Bir(\p^2)$ such that $\beta \circ \pi \circ \alpha = v\circ g=\Pi$. We then replace $(\pi,f,\theta,v)$ with $(\beta \circ \pi \circ \alpha,\alpha^{-1}\circ f\circ g,\beta\circ \theta,v\circ g)$ and can assume that $\pi =v= \Pi$.

We write locally the non-constant morphism $\theta\colon \p^1\to \p^1$ as 
$[1:t]\dasharrow [1:r(t)]$ for some $r(t)\in \k(t)\setminus \k$. The equation $\Pi \circ f=\theta\circ \Pi$ implies that $f$ is locally given as
\[[1:t:u]\dasharrow [1:r(t):s(t,u)]\]
for some rational function $s\in \k(t,u)$. As $f$ is not dominant, the two elements $s(t,u)$ and $r(t)$ are algebraically dependent over $\k$. Since $\k(t)$ is algebraically closed in $\k(t,u)$, this shows that $s\in \k(t)$. We can thus write $f$ as
\[[x:y:z]\dasharrow [f_0(x,y):f_1(x,y):f_2(x,y)]\] for some homogeneous polynomials $f_0,f_1,f_2\in \k[x,y]$, and can choose $\kappa\colon \p^1\to \p^2$ to be $[u:v]\mapsto [f_0(u,v):f_1(u,v):f_2(u,v)]$.
\end{proof}

\subsection{Definition of the Zariski topology on $\Bir(\p^2)$ and basic properties}\label{Subsec:ZarTop}

Following \cite{De,Se}, the notion of families of birational maps is defined, and used in Definition \ref{defi: Zariski topology} for describing the natural Zariski topology on $\Bir(X)$.

\begin{definition} \label{Defi:Family}
Let $A,X$ be irreducible algebraic varieties, and let $f$ be a $A$-birational map of the $A$-variety $A\times X$, inducing an isomorphism $U\to V$, where $U,V$ are open subsets of $A\times X$, whose projections on $A$ are surjective.

The rational map $f$ is given by $(a,x)\dasharrow (a,p_2(f(a,x)))$, where $p_2$ is the second projection, and for each $\k$-point $a\in A$, the birational map $x\dasharrow p_2(f(a,x))$ corresponds to an element  $f_a\in \Bir(X)$. The map $a\mapsto f_a$ represents a map from $A$ $($more precisely from the $\k$-points of $A)$ to $\Bir(X)$, and will be called a \emph{morphism} from $A$ to $\Bir(X)$.
\end{definition}

\begin{definition}  \label{defi: Zariski topology}
A subset $F\subseteq \Bir(X)$ is closed in the Zariski topology
if for any algebraic variety $A$ and any morphism $A\to \Bir(X)$ the preimage of $F$ is closed.
\end{definition}

If $d$ is a positive integer, we set $\b_d:= \{  f \in \b, \; \deg (f) \leq d \}$. We will use the following result, which is \cite[Proposition 2.10]{BF13}:

\begin{lemma}  \label{lemma:description-of-the-topology-of-Bir(P2)-as-an-inductive-limit}
A subset $F \subseteq \b$ is closed if and only if $F \cap \b_d$ is closed in $\b_d$ for any $d$.
\end{lemma}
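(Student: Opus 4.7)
The proof splits into the two implications. For the direct direction I plan to realise the inclusion $\b_d \hookrightarrow \b$ as a morphism in the sense of Definition~\ref{Defi:Family}, so that the hypothesis that $F$ is closed in $\b$ forces $F \cap \b_d$ to be closed in $\b_d$. For the converse, the plan is to reduce any morphism $A \to \b$ to a morphism valued in a single $\b_d$, so that the closedness hypothesis on the individual $F \cap \b_d$ can be transferred back to the preimage in $A$.

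\textbf{Direct implication.} From the construction of the algebraic variety structure on $\b_d$ given in \cite[\S2]{BF13}, there is a tautological $\b_d$-birational self-map of $\b_d \times \p^2$ whose specialisation at a point $g \in \b_d$ equals $g$ itself. Interpreting this tautological family through Definition~\ref{Defi:Family}, it represents the inclusion $\iota_d \colon \b_d \hookrightarrow \b$ as a morphism from an algebraic variety to $\b$. Applying Definition~\ref{defi: Zariski topology} to the closed set $F$ and to the morphism $\iota_d$, we get that $\iota_d^{-1}(F) = F \cap \b_d$ is closed in $\b_d$.

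\textbf{Converse implication.} Assume $F \cap \b_d$ is closed in $\b_d$ for every $d$, and let $f\colon A \to \b$ be a morphism from an algebraic variety $A$. For each $d \ge 1$ set
\[
A_d := \{ a \in A \mid \deg(f_a) \le d \}.
\]
The family is given globally by homogeneous polynomials on $A \times \p^2$ of some fixed degree $D$; at a closed point of $A$, the degree of $f_a$ drops below $D$ precisely when these polynomials acquire a common factor, which is a closed condition on $a$. Iterating this observation one obtains that each $A_d$ is closed in $A$ and that $A = \bigcup_d A_d$. Because $A$ is of finite type and Noetherian, the ascending chain $(A_d)_d$ stabilises, so that $A = A_{d_0}$ for some $d_0$. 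The morphism $f$ then factors as $f = \iota_{d_0} \circ g$ for a morphism of algebraic varieties $g\colon A \to \b_{d_0}$ (this is again part of the construction recalled from \cite[\S2]{BF13}, since the closed subset $A_{d_0}$ carries the restricted family and maps naturally to the moduli space $\b_{d_0}$). Consequently
\[
f^{-1}(F) = g^{-1}(F \cap \b_{d_0})
\]
is closed in $A = A_{d_0}$ by continuity of $g$ and closedness of $F \cap \b_{d_0}$ in $\b_{d_0}$.

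\textbf{Main obstacle.} The substantive ingredients, which I expect to be the delicate points, are both imported from \cite[\S2]{BF13}: namely that $\b_d$ really carries the structure of an algebraic variety, that the tautological family on $\b_d \times \p^2$ represents $\iota_d$ as a morphism, and that whenever a morphism $A \to \b$ has bounded degree it factors through $\b_d$ as a morphism of varieties. The remaining topological argument (closed chain of $A_d$'s stabilising, preimage being closed) is then a short manipulation.
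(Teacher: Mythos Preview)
The paper does not prove this lemma at all; it simply records it as \cite[Proposition~2.10]{BF13}. Your outline is a fair sketch of how that argument runs, and you rightly flag that the substantive inputs (variety structure, tautological family, bounded degree and factorisation) all live in \cite{BF13}.

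One point to tighten: you repeatedly treat $\b_d$ as an algebraic variety carrying a tautological family, but in \cite{BF13} (and in the present paper, see the discussion around Lemma~\ref{lemma:bridge-to-an-algebraic-variety}) $\b_d$ is only the \emph{topological quotient} of the quasi-projective variety $\bb_d$ via $\pi_d$; it carries no scheme structure in general. So in the direct implication, the morphism in the sense of Definition~\ref{Defi:Family} should be taken from (irreducible components of) $\bb_d$, not from $\b_d$ itself; closedness of $\pi_d^{-1}(F)$ in $\bb_d$ is then, by the quotient topology, exactly closedness of $F\cap\b_d$ in $\b_d$. Likewise, in the converse the phrase ``morphism of algebraic varieties $g\colon A\to\b_{d_0}$'' should be read as ``continuous map'', with continuity obtained by locally lifting the family to a genuine morphism of varieties $A\to\bb_{d_0}$ (write the birational map by three homogeneous forms of degree $d_0$) and composing with the continuous $\pi_{d_0}$. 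Modulo this rephrasing, the argument is sound. The Noetherian chain is also unnecessary: once the family is written with forms of a fixed degree $D$ you already have $A=A_D$.
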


The aim of this whole section~\ref{section:lower-semicontinuity-of-the-length} is to prove that for each non-negative integer $\ell$ the set
\[\b^{\ell}:= \{ f \in \b, \; \lgth (f) \leq \ell \}\]
is closed in $\b$. By Lemma~\ref{lemma:description-of-the-topology-of-Bir(P2)-as-an-inductive-limit}, this is equivalent to proving that $\b_d^{\ell}:= \b_d \cap \b^{\ell}$ is closed in $\b_d$ for any $d$. We will now describe the topology of $\b_d$. A convenient way to handle this topology is through the map $\pi_d \colon \bb_d \to \b_d$ that we introduce in the next definition and whose properties are given in Lemma~\ref{lemma:bridge-to-an-algebraic-variety} below.

Let us now fix the integer $d \geq 1$. We will constantly use the following piece of notation:

\begin{definition}
Denote by  $\rr_d$ the projective space associated with the vector space of triples $(f_0,f_1,f_2)$ where $f_0,f_1,f_2 \in \k [x,y,z]$ are homogeneous polynomials of degree $d$. The equivalence class of $(f_0,f_1,f_2)$ will be denoted by $[f_0:f_1:f_2]$.

For each $f=[f_0:f_1:f_2]\in \rr_d$, we denote by $\psi_f$ the rational map $\p^2\dasharrow \p^2$ defined by
\[  [x_0:x_1:x_2] \mapsto [f_0 (x_0,x_1,x_2): f_1 (x_0,x_1,x_2): f_2 (x_0,x_1,x_2) ].\]
Writing $\r$ the set of rational maps from $\p^2$ to $\p^2$ and setting $\r_d:=\{ h \in \r, \; \deg (h) \leq d \}$, we obtain a surjective map
\[ \Psi_d \colon \rr_d \to \r_d, \quad f \mapsto \psi_f.\]
This map induces a surjective map $\pi_d \colon \bb_d \to \b_d$, where $\bb_d$ is defined to be $\Psi_d^{-1} ( \b_d)$.
\end{definition}

\begin{remark}  \label{remark:psi-is-also-defined-over-a-field-extension-of-k}
For each field extension $\k\subseteq \k'$, we can similarly associate to each $f\in\rr_d(\k')$ a rational transformation
$\psi_f \colon \p^2_{\k'} \dasharrow \p^2_{\k'}$ defined
over $\k'$. This will be needed in the sequel to use a valuative
criterion.
\end{remark}

We will need the following result:

\begin{proposition} \label{proposition: B'_d-is-locally-closed-and-control-of-its-closure}
The set $\bb_d$ is locally closed in $\rr_d$ and thus inherits from $\rr_d$ the structure of an algebraic variety. Moreover, the following assertions hold:
\begin{enumerate}
\item  \label{ClosureNonDom}
For each $f \in \overline{\bb_d} \setminus \bb_d$, the rational map $\psi_f \colon \p^2 \dasharrow \p^2$ is non-dominant.
\item  \label{ExtensionStillBir}
For each field extension $\k\subseteq \k'$, the set $\bb_d(\k')$ of $\k'$-points of $\bb_d$ is equal to the set $\{f\in \rr_d(\k'),  \; \psi_f \colon \p^2_{\k'} \dasharrow \p^2_{\k'} \text{ is birational }\}$.
\end{enumerate}
\end{proposition}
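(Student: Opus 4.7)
The plan is to reduce everything to a concrete construction using a closed subvariety $V_d \subseteq \rr_d \times \rr_d$ encoding the inverse relation for birational maps, and then apply the valuative criterion of properness to control boundary behavior. I will introduce $V_d$ as the closed subvariety cut out by the polynomial identities
\[ f_i(g_0, g_1, g_2)\, x_j - f_j(g_0, g_1, g_2)\, x_i = 0, \quad g_i(f_0, f_1, f_2)\, x_j - g_j(f_0, f_1, f_2)\, x_i = 0, \quad i, j \in \{0, 1, 2\}, \]
which translate (after equating coefficients of monomials in $x_0, x_1, x_2$) into polynomial conditions in the coefficients of $f$ and $g$. A case analysis then shows that for every $(f, g) \in V_d$, either (I) $\psi_f$ and $\psi_g$ are mutually inverse birational transformations of $\p^2$, or (II) both $\psi_f$ and $\psi_g$ are non-dominant. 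The first identity forces either some $f_i(g)$ to be a nonzero polynomial, in which case $\psi_f \circ \psi_g = \mathrm{id}$ as rational maps, or all $f_i(g) \equiv 0$, in which case the image of $\psi_g$ lies in $V(f_0, f_1, f_2) \subsetneq \p^2$ (a subvariety of dimension at most one, since $f \neq 0$), so $\psi_g$ is non-dominant. A symmetric dichotomy applies to the identity involving $g_i(f)$, and the mixed cases (e.g. $\psi_f \circ \psi_g = \mathrm{id}$ with $\psi_f$ mapping into the proper subvariety $V(g_0, g_1, g_2)$) are ruled out directly, since $\mathrm{id}_{\p^2}$ cannot factor through a proper subvariety of $\p^2$.

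Next, I will deduce assertion~\ref{ClosureNonDom} via the valuative criterion. Given $f \in \overline{\bb_d}$, choose a discrete valuation ring $R$ with residue field $\k$ and fraction field $K$, together with a morphism $\Spec R \to \rr_d$ whose closed point maps to $f$ and whose generic point lies in $\bb_d$. The generic fiber $f_K \in \bb_d(K)$ admits an inverse $g_K := \psi_{f_K}^{-1}$ of polynomial degree at most $d$ (using $\deg f^{-1} = \deg f$, valid over any field), defining a $K$-point of $\rr_d$. By properness of $\rr_d$ this extends to a morphism $\Spec R \to \rr_d$ whose closed-point value is some $g \in \rr_d(\k)$. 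The identities defining $V_d$ hold on the generic fiber for $(f_K, g_K)$, and by closedness of $V_d$ they also hold at the closed point, so $(f, g) \in V_d$. The dichotomy above then forces either $f \in \bb_d$ or $\psi_f$ non-dominant, proving~\ref{ClosureNonDom}.

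Local closedness of $\bb_d$ follows immediately: let $U \subseteq \rr_d$ be the open subset where $\psi_f$ is dominant (openness stems from the fact that $\psi_f$ is non-dominant iff there exists a nonzero polynomial $P$ of degree at most $d^2$ with $P(f_0, f_1, f_2) = 0$ identically in $x, y, z$, a determinantal condition on the corresponding evaluation map); then~\ref{ClosureNonDom} gives $\bb_d = \overline{\bb_d} \cap U$, which is locally closed in $\rr_d$. For the base-change assertion~\ref{ExtensionStillBir}, the construction of $V_d$ and $U$ together with the valuative argument commute with base change along any field extension $\k \subseteq \k'$ (using that closure commutes with flat base change), so running the argument over $\k'$ identifies $\bb_d(\k')$ with the set $\{f \in \rr_d(\k') : \psi_f \text{ is birational over } \k'\}$.

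The main obstacle lies in the case analysis for $V_d$: one must carefully rule out the mixed cases by exploiting that $g \neq 0$ in $\rr_d$, and when $f_0, f_1, f_2$ share a common polynomial factor so that $V(f_0, f_1, f_2)$ becomes a curve rather than a finite set, one must verify that the image of $\psi_g$ lies in this curve and is consequently non-dominant rather than just constant.
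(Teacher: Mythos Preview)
Your approach is close in spirit to the paper's --- both introduce a closed ``inverse-relation'' subvariety of $\rr_d\times\rr_d$ and read off the structure of $\bb_d$ from its projection --- but your logical ordering introduces a circularity. To invoke the valuative criterion in the form you use (``choose a DVR whose generic point lies in $\bb_d$''), you need $\bb_d$ to be at least constructible in $\rr_d$; otherwise there is no reason a point of $\overline{\bb_d}$ is reachable along an arc whose generic point corresponds to a birational $\psi$. But you only establish constructibility/local closedness \emph{after} proving~\ref{ClosureNonDom}, and you prove~\ref{ClosureNonDom} via the valuative criterion. So as written, the argument is circular.

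The fix is immediate from what you already have, and it is essentially the paper's route. Your case analysis on $V_d$ shows that $\mathrm{pr}_1(V_d)\cap U$ (with $U$ the open dominant locus) consists exactly of those $f$ with $\psi_f$ birational, i.e.\ $\bb_d=\mathrm{pr}_1(V_d)\cap U$. Since $\mathrm{pr}_1(V_d)$ is closed (projection from a projective factor) and $U$ is open, this gives local closedness directly, with no valuative criterion needed. Assertion~\ref{ClosureNonDom} then follows at once: $\overline{\bb_d}\subseteq \mathrm{pr}_1(V_d)$, so for $f\in\overline{\bb_d}\setminus\bb_d$ there exists $g$ with $(f,g)\in V_d$, and your dichotomy forces $\psi_f$ non-dominant. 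The paper does exactly this, only with the one-sided condition ``$g\circ f$ is a multiple of the identity'' (splitting further into $F$ and $F_0\subseteq F$ according to whether this multiple is nonzero), which makes the dichotomy slightly more transparent; your two-sided $V_d$ works too but demands the extra ``mixed case'' elimination. Assertion~\ref{ExtensionStillBir} then follows for both approaches because the defining equations of $V_d$ (or $F,F_0$) are polynomial over $\k$ and the whole argument runs verbatim after base change to $\k'$.
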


\begin{proof}
Even if the first assertion is \cite[Lemma 2.4(2)]{BF13}, 
we recall the argument since it
will be used to prove the rest of the proposition. Denote by $ F \subseteq \rr_{d} \times \rr_d$ the closed algebraic variety corresponding to pairs $([g_0:g_1:g_2],[f_0:f_1:f_2])$ such that the ``formal composition'' \[g\circ f=[h_0:h_1:h_2]=[g_0(f_0,f_1,f_2):g_1(f_0,f_1,f_2):g_0(f_0,f_1,f_2)]\] is a ``multiple'' (maybe zero) of the identity. This corresponds to asking
that $h_0y=h_1x$, $h_0z=h_2x$, $h_1z=h_2y$. We then define $F_0\subseteq F$ to be the closed subset such that the formal composition is zero. If $(g,f ) \in F_0$, let us observe that the formal composition $g \circ f$ is zero, so that the rational map $\psi_f \colon \p^2 \dasharrow \p^2$ is non-dominant. Conversely, if $(g,f ) \in F \setminus F_0$, the formal composition $g \circ f$ is non-zero, so that the rational map $\psi_f \colon \p^2\dasharrow \p^2$ is birational.

The second projection $ \mathrm{pr_2} \colon \rr_d \times \rr_d \to \rr_d$  yields two closed subvarieties
\[ G_0= \mathrm{pr_2}(F_0) \subseteq G= \mathrm{pr_2}(F)\subseteq \rr_d.\]
By what has been said above, $\psi_f$ is non-dominant when $f \in G_0$ and birational when $f \in G \setminus G_0$. It follows that $(G \setminus G_0) \subseteq \bb_d$. Since $\deg (\varphi^{-1})  =\deg ( \varphi )$ for each $\varphi \in \b $ (Lemma~\ref{Lemm:EasyWeyl}\ref{L3}), we even get the equality $\bb_d = G \setminus G_0$. This shows that $\bb_d$ is locally closed in $\rr_d$, and also gives~\ref{ClosureNonDom}. To obtain \ref{ExtensionStillBir}, we observe that the construction made in the proof is defined over $\k'$, and that the inverse of any birational transformation of $\p^2$ defined over $\k'$ is still defined over $\k'$.
\end{proof}

The following result, which is \cite[Corollary 2.9]{BF13}, will be crucial for us since it provides us a bridge from the ``weird'' topological space $\b_d$ to the ``nice'' topological space $\bb_d$ which is an algebraic variety.

\begin{lemma} \label{lemma:bridge-to-an-algebraic-variety}
The map $\pi_d \colon \bb_d \to \b_d$ is continuous and closed. In particular, it is a quotient topological map: A subset $F \subseteq \b_d$ is closed if and only if its preimage $\pi_d^{-1}(F)$ is closed.
\end{lemma}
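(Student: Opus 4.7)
The plan is to establish continuity and closedness of $\pi_d$ separately; the quotient topology conclusion will then follow formally, since a continuous closed surjection induces the quotient topology on its target.

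Continuity is immediate from the definition of the Zariski topology on $\Bir(\p^2)$: by Proposition~\ref{proposition: B'_d-is-locally-closed-and-control-of-its-closure}, $\bb_d$ is a quasi-projective variety, and the assignment $f\mapsto\psi_f$ on $\bb_d$ is a morphism to $\b$ in the sense of Definition~\ref{Defi:Family} (the $A$-birational map on any open affine $A\subseteq\bb_d$ has coefficients regular in $a\in A$). Definition~\ref{defi: Zariski topology} then says that the preimage under such a morphism of any closed subset of $\b$ is closed, which is the continuity of $\pi_d\colon\bb_d\to\b_d$.

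For closedness, fix $T\subseteq\bb_d$ closed; since $\pi_d(T)\subseteq\b_d$, it suffices to show that $\pi_d(T)$ is closed in $\b$, i.e.~that for every algebraic variety $A$ and every morphism $\rho\colon A\to\b$, the preimage $\rho^{-1}(\pi_d(T))$ is closed in $A$. Locally on $A$, such a morphism $\rho$ is given by a triple $[F_0:F_1:F_2]$ of polynomials homogeneous of some degree $e$ in $(x,y,z)$ whose coefficients are regular on $A$. Letting $\overline{T}$ denote the closure of $T$ in the projective variety $\rr_d$, I would form the incidence locus
\[ Z=\bigl\{(a,f)\in A\times\overline{T} : F_i(a,\cdot)\,f_j-F_j(a,\cdot)\,f_i=0 \text{ for all } 0\le i<j\le 2\bigr\}, \]
which is closed in $A\times\rr_d$ (the defining equations are polynomial in the coefficients of $F_i$ and $f_j$). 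These equations say precisely that the triples $(F_0(a,\cdot),F_1(a,\cdot),F_2(a,\cdot))$ and $(f_0,f_1,f_2)$ are proportional as polynomial triples, i.e.~define the same rational map $\p^2\dasharrow\p^2$. Projectivity of $\rr_d$ makes the projection $A\times\rr_d\to A$ proper, so $\pr_A(Z)$ is closed in $A$. The identification $\pr_A(Z)=\rho^{-1}(\pi_d(T))$ then uses Proposition~\ref{proposition: B'_d-is-locally-closed-and-control-of-its-closure}\ref{ClosureNonDom}: the points of $\overline{T}\setminus T$ parameterize non-dominant rational maps, and since $\rho(a)$ is birational for every $a\in A$, no such point can occur as the second coordinate of an element of $Z$. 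Hence $\pr_A(Z)=\rho^{-1}(\pi_d(T))$ is closed, as required.

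The main obstacle is conceptual rather than computational: because the fibers of $\pi_d$ can be positive-dimensional (one can multiply all $f_i$ by a common nonzero homogeneous factor without changing the induced rational map), the morphism $\rho\colon A\to\b$ cannot in general be lifted to a morphism $A\to\bb_d$. The incidence locus $Z$ is the natural workaround, parametrizing all possible lifts simultaneously; passing from $T$ to $\overline{T}$ is what allows properness of $A\times\rr_d\to A$ to be exploited, and Proposition~\ref{proposition: B'_d-is-locally-closed-and-control-of-its-closure}\ref{ClosureNonDom} is the key input ensuring that this enlargement introduces no spurious points in the image.
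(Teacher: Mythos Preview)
The paper does not give its own proof of this lemma: it simply records that the statement is \cite[Corollary~2.9]{BF13}. Your argument is correct and is essentially the standard one from that reference. The continuity part is exactly right: the map $\bb_d\to\Bir(\p^2)$ is a morphism in the sense of Definition~\ref{Defi:Family} (after passing to irreducible components if necessary, since that definition requires irreducible $A$), so preimages of closed sets are closed by the very definition of the topology. For closedness, your incidence-locus trick with $Z\subseteq A\times\overline{T}$ and properness of the projection is the natural approach, and your use of Proposition~\ref{proposition: B'_d-is-locally-closed-and-control-of-its-closure}\ref{ClosureNonDom} to rule out boundary contributions from $\overline{T}\setminus T$ is the key step.

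One point you pass over is the assertion that any morphism $\rho\colon A\to\Bir(\p^2)$ is, locally on $A$, represented by a homogeneous triple $[F_0:F_1:F_2]$ with regular coefficients. This is true and is itself a lemma in \cite{BF13}: the $A$-birational self-map of $A\times\p^2$ is given, after trivialising the relevant line bundle locally on $A$, by three sections of $\mathcal{O}_{\p^2}(e)$ with coefficients in $\mathcal{O}_A$, and the surjectivity condition in Definition~\ref{Defi:Family} guarantees these do not all vanish identically on any fibre. It is worth flagging this as a separate ingredient rather than folding it into a parenthetical.
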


Recall that our aim is to prove that $\b_d^{\ell}= \{ f \in \b_d,  \; \lgth (f) \leq \ell \}$ is closed in $\b_d$. By Lemma~\ref{lemma:bridge-to-an-algebraic-variety}, this reduces to prove that $\bb_d^{\ell} := \pi_d^{-1} ( \b_d^{\ell} )$ is closed in (the algebraic variety) $\bb_d$.

We conclude this section by noting that the conjonction of Lemmas \ref{lemma:description-of-the-topology-of-Bir(P2)-as-an-inductive-limit} and \ref{lemma:bridge-to-an-algebraic-variety} gives us the following usefull characterisation (already contained in \cite[Corollary~2.7]{BF13})
of closed subsets of $\b$:

\begin{lemma} \label{lemma:useful-characterisation-of-closed-subsets-of-Bir(P2)}
A subset $F \subseteq \b$ is closed if and only if $\pi_d^{-1}(F \cap \b_d) \subseteq \bb_d$ is closed for any $d$.
\end{lemma}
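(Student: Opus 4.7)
The plan is to derive this lemma as a direct combination of the two preceding results: Lemma~\ref{lemma:description-of-the-topology-of-Bir(P2)-as-an-inductive-limit}, which characterises closedness in $\b$ through closedness in each $\b_d$, and Lemma~\ref{lemma:bridge-to-an-algebraic-variety}, which shows that $\pi_d \colon \bb_d \to \b_d$ is a quotient topological map. Since no new construction is required, this is essentially a formal chain of equivalences; the only ``obstacle'' is making sure the two characterisations are applied in the correct order, and that one does not inadvertently assume finiteness restrictions beyond what is already given.

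Concretely, I would argue as follows. For the forward implication, assume $F \subseteq \b$ is closed. By Lemma~\ref{lemma:description-of-the-topology-of-Bir(P2)-as-an-inductive-limit}, for each $d$ the intersection $F \cap \b_d$ is closed in $\b_d$. Since $\pi_d$ is continuous by Lemma~\ref{lemma:bridge-to-an-algebraic-variety}, the preimage $\pi_d^{-1}(F \cap \b_d) \subseteq \bb_d$ is then closed, as required. For the converse, assume that $\pi_d^{-1}(F \cap \b_d)$ is closed in $\bb_d$ for every $d$. By Lemma~\ref{lemma:bridge-to-an-algebraic-variety}, $\pi_d$ is a quotient topological map, hence a subset of $\b_d$ is closed if and only if its preimage under $\pi_d$ is closed in $\bb_d$; in particular $F \cap \b_d$ is closed in $\b_d$. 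Applying Lemma~\ref{lemma:description-of-the-topology-of-Bir(P2)-as-an-inductive-limit} once more, this proves that $F$ is closed in $\b$.
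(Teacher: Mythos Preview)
Your proof is correct and is exactly the argument the paper has in mind: the paper does not give a separate proof but simply states that the lemma is the conjunction of Lemmas~\ref{lemma:description-of-the-topology-of-Bir(P2)-as-an-inductive-limit} and~\ref{lemma:bridge-to-an-algebraic-variety}, which is precisely the chain of equivalences you wrote out.
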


\subsection{The use of a valuative criterion}

Let us set
\[ R := \k [[t]], \quad \text{and} \quad  K := \k ((t)), \]
where $\k [[t]]$ is the ring of formal power series and $\k ((t))$ its field of fractions (also known as formal Laurent series).

We will also write $\overline{K} = \bigcup_{a \ge 1} \k ((t^{1/a}))$ since this latter field is an algebraic closure of $K$ by Newton-Puiseux theorem \cite[Proposition~4.4]{Ruiz}.

\begin{definition}  \label{definition:definition-of-a(0)-when-a(t)-belongs-to-a-projective-space}
Let $n \ge 1$ be an integer and let $a=a(t) = [a_0 : \cdots : a_n] \in \p^n (K)$ be a $K$-point of the $n$-th projective space $\p^n$. Then, up to multiplying $(a_0, \ldots,a_n)$ with some power of $t$, we may assume that all coefficients $a_i$ belong to $R$ and that the evaluation $(a_0(0), \ldots,a_n(0) )$ at $t=0$ is nonzero. This enables us to define non ambigously the element $a(0) \in \p^n$, also denoted $\lim\limits_{t \to 0} a(t)$, by
\[  a(0) = \lim\limits_{t \to 0} a(t) := [a_0(0) : \cdots : a_n(0) ].\]
\end{definition}

\begin{remark}
More generally, if $X$ is a complete $\k$-variety and $x=x(t) \in X(K)$ is a $K$-point of $X$, one can define $x(0) = \lim\limits_{t \to 0} x(t) \in X$ in the following way: 
The morphism $x \colon \Spec (K) \to X$ admits a unique factorisation
of the form $x = \tilde{x} \circ \iota$ where $\iota \colon \Spec( K ) \into \Spec (R)$ is the open immersion induced by the natural injection $R \into K$ and where $\tilde{x} \colon \Spec (R) \to X$ is a $\k$-morphism (see the valuative criterion of properness given in \cite[(II, Theorem 4.7), page 101]{Hartshorne}).
\end{remark}

The following valuative criterion is classical, see e.g.~\cite[chap. 2, \S 1, pp 52-54]{MFK}. We refer to \cite{FurterPlanePolynomialAutomorphisms} for a proof in characteristic zero and to \cite{BlaConjugacyClasses} for a proof in any characteristic.

\begin{lemma} \label{lemma: valuative criterion}
Let $\varphi \colon X \to Y$ be a morphism between algebraic $\k$-varieties, $X$ being quasi-projective, and $Y$ being projective. Let $y_0$ be a $($closed$)$ point of $Y$.
Then, the two following assertions are equivalent:
\begin{enumerate}
\item We have $y_0 \in \overline{ \varphi (X)}$;
\item There exists a $K$-point $x= x(t) \in X(K)$ such that the $K$-point $y=y(t): = \varphi (x (t)) \in Y(K)$ satisfied $y_0= y (0)$.
\end{enumerate}
\end{lemma}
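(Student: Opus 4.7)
The easy direction $(2) \Rightarrow (1)$ follows from properness of $Y$. Given $x = x(t) \in X(K)$, the composition $y := \varphi \circ x \colon \Spec(K) \to Y$ extends uniquely to $\tilde y \colon \Spec(R) \to Y$ by the valuative criterion of properness. The generic point of $\Spec(R)$ is sent to the point $\varphi(x(t)) \in \varphi(X) \subseteq Y$, and the closed point is sent to $y(0)$; as the closed point lies in the topological closure of the generic one, $y(0) \in \overline{\varphi(X)}$.

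For $(1) \Rightarrow (2)$ I would first reduce. Replacing $X$ by an irreducible component whose image closure contains $y_0$, we may assume $X$ is irreducible, and set $Z := \overline{\varphi(X)}$. If $\dim Z = 0$, Chevalley's theorem forces $\varphi(X) = \{y_0\}$ and a constant $K$-point works; so assume $\dim Z \ge 1$. Fix a projective closure $X \subseteq \bar X$ and, by resolution of indeterminacies of the rational map $\bar X \dasharrow Y$ extending $\varphi$, obtain a projective variety $\tilde X$, a birational morphism $\pi \colon \tilde X \to \bar X$ with $\tilde X_0 := \pi^{-1}(X)$ dense open, and a morphism $\tilde \varphi \colon \tilde X \to Y$ that agrees with $\varphi \circ \pi$ on $\tilde X_0$. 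Since $\tilde X$ is projective, $\tilde\varphi(\tilde X)$ is closed in $Y$ and contains $\varphi(X)$, hence contains $Z$, so there exists $\tilde x_0 \in \tilde X$ with $\tilde\varphi(\tilde x_0) = y_0$.

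Next, invoking the classical fact that any two closed points of an irreducible projective variety lie on a common irreducible curve, I would pick any $q \in \tilde X_0$ and an irreducible curve $C \subseteq \tilde X$ containing both $\tilde x_0$ and $q$. Let $\nu \colon \tilde C \to C$ be the normalisation and $\tilde c_0 \in \nu^{-1}(\tilde x_0)$. The local ring $\mathcal{O} := \mathcal{O}_{\tilde C, \tilde c_0}$ is a DVR with residue field $\k$, and since $\k$ is algebraically closed its completion $\widehat{\mathcal{O}}$ is isomorphic to $R = \k[[t]]$ by Cohen's structure theorem, with fraction field $K$. Because $q \in C \cap \tilde X_0$ and $C$ is irreducible, $C \setminus \tilde X_0$ is a proper closed subset of $C$, so the generic point of $\tilde C$ maps via $\pi \circ \nu$ into $X$, giving a morphism $\Spec(\k(\tilde C)) \to X$.

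Composing with $\Spec(K) \to \Spec(\k(\tilde C))$ induced by the inclusion $\k(\tilde C) \hookrightarrow \mathrm{Frac}(\widehat{\mathcal{O}}) = K$ produces the desired $x(t) \in X(K)$. By uniqueness in the valuative criterion applied to $Y$, the extension of $\varphi \circ x$ to $\Spec(R)$ factors as $\Spec(R) \to \Spec(\mathcal{O}) \to \tilde X \xrightarrow{\tilde\varphi} Y$, where the first arrow comes from $\mathcal{O} \hookrightarrow \widehat{\mathcal{O}} = R$; the closed point therefore maps to $\tilde\varphi(\tilde x_0) = y_0$, so $y_0 = \lim_{t \to 0} \varphi(x(t))$, proving $(2)$. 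The main non-formal step is producing the curve $C$ through $\tilde x_0$ that meets the dense open $\tilde X_0$: this is what converts the abstract closure statement $y_0 \in \overline{\varphi(X)}$ into an actual $K$-point of $X$ specialising to $y_0$, and it (together with Cohen's theorem) is where the algebraic closedness of $\k$ is essentially used.
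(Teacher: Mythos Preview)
The paper does not actually prove this lemma: it states that the result is classical, cites \cite[chap.~2, \S1, pp.~52--54]{MFK}, and refers to \cite{FurterPlanePolynomialAutomorphisms} (characteristic zero) and \cite{BlaConjugacyClasses} (arbitrary characteristic) for proofs. So there is no proof in the paper to compare against, and your task is simply to give a correct self-contained argument.

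Your argument is correct and follows the standard route. A couple of minor points worth tightening: when you invoke the fact that two closed points of an irreducible projective variety lie on a common irreducible curve, you should note that $\tilde X$ is irreducible (it is, since resolution of indeterminacies of an irreducible $\bar X$ by blow-ups preserves irreducibility) and has dimension $\ge 1$ (because $\dim Z \ge 1$ forces $\dim X \ge 1$). Also, the easy case $\tilde x_0 \in \tilde X_0$ (where a constant $K$-point already works) deserves a word, so that you may assume $\tilde x_0 \neq q$ when producing the curve. Finally, your verification that the extension of $\varphi \circ x$ to $\Spec(R)$ coincides with $\Spec(R) \to \Spec(\mathcal O) \to \tilde C \to \tilde X \xrightarrow{\tilde\varphi} Y$ uses that $\tilde\varphi$ agrees with $\varphi\circ\pi$ on $\tilde X_0$ and that the generic point of $C$ lies in $\tilde X_0$; this is exactly right, and the uniqueness in the valuative criterion then pins down $y(0) = \tilde\varphi(\tilde x_0) = y_0$.
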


\begin{remark}
Lemma~\ref{lemma: valuative criterion} is analogue to the case of a continuous map $\varphi \colon X\to Y$ between metric spaces where a point $y_0$ of $Y$ belongs to $ \overline{  \varphi (X) }$ if and only if there exists a sequence $(x_n)_{n \geq 1}$ of $X$ such that ${\displaystyle y_0 = \lim_{n \to + \infty}  \varphi (x_n) }$.
\end{remark}

\begin{remark} \label{remark:f(0)-for-an-element-f-of-Bir(P2)_d(K)}
Applying Definition~\ref{definition:definition-of-a(0)-when-a(t)-belongs-to-a-projective-space} to an element $f=f(t) = [f_0:f_1:f_2]$ of $\rr_d(K)$ 
allows us to define $f(0) \in \rr_d$. If we assume furthermore that $f \in \bb_d(K) \subseteq \rr_d (K)$, note that $f(0)$ necessarily belongs to $\overline{\bb_d}$ by Lemma~\ref{lemma: valuative criterion}, so that $\psi_{f(0)} \colon \p^2 \to \p^2$ is either birational
or non-dominant by Proposition~\ref{proposition: B'_d-is-locally-closed-and-control-of-its-closure}.
Let us recall for clarity that for any $f \in \rr_d(K)$, we have defined a $K$-rational transformation $\psi_f \colon \p^2_K \dasharrow \p^2_K$ in Remark~\ref{remark:psi-is-also-defined-over-a-field-extension-of-k}, and that this transformation is moreover birational if we assume  that $f \in \bb_d(K)$ by Proposition~\ref{proposition: B'_d-is-locally-closed-and-control-of-its-closure}\ref{ExtensionStillBir}.
\end{remark}

We will prove that $\bb_d^{\ell}$ is closed in $\bb_d$. For this, we will prove that  its closure $\overline{\bb_d^{\ell} }$ in $\rr_d$ is such that $\overline{\bb_d^{\ell} } \cap \bb_d = \bb_d^{\ell} $. We begin with the following result which is just a (technical) application of the valuative
criterion
given above. If $\k'$ is an extension field of $\k$, $\Aut_{\k'} (\p^2) \simeq \PGL_3(\k')$, resp. $\Bir_{\k'}(\p^2)$, denotes the group of automorphisms, resp.~birational transformations, of $\p^2$ defined over $\k'$. Actually, we will only consider the cases where $\k' = K$ (since we will use the valuative
criterion
given in Lemma~\ref{lemma: valuative criterion}) and where $\k' =\overline{K}$ (since we need an algebraically closed field in order to apply the machinery about the length that we have 
developed).

\begin{proposition}   \label{proposition:application-of-the-valuative-criterion-to-our-case}
For any $ h \in \overline{\bb_d^{\ell} }$ there exists $f\in \bb_d(K)$ 
such that $h=f(0)$, and such that the birational map $\psi_f \in \Bir_{\overline{K}} (\p^2)$ associated to  $f \in \bb_d( \overline{K} ) $ has length at most $\ell$.
\end{proposition}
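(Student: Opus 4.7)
Plan. The idea is to parametrise length-$\le\ell$ decompositions of elements of $\b_d$ by an algebraic family and then conclude via the valuative criterion of Lemma~\ref{lemma: valuative criterion}.

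First I would bound the degrees of the Jonqui\`eres factors. For any $f\in\b_d^{\ell}$, Corollary~\ref{Cor:decrdegrlgth} produces a chain of predecessors $f=f_0,f_1,\ldots,f_\ell$ with $\deg f_0>\deg f_1>\cdots>\deg f_\ell=1$, so that $\deg f_i\le d$; setting $\psi_i:=f_{i-1}\circ f_i^{-1}\in\Jonq$, Corollary~\ref{corollary= degree of a composition} gives $\deg\psi_i\le\deg f_{i-1}\cdot\deg f_i\le d^2$. Hence $\b_d^\ell$ consists exactly of those $f\in\b_d$ admitting a decomposition $f=\psi_\ell\circ\cdots\circ\psi_1$ into $\ell$ Jonqui\`eres elements of degree at most $N:=d^2$. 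Using the factorisation $\Jonq=\Aut(\p^2)\cdot\Jonq_{p_0}\cdot\Aut(\p^2)$ of Lemma~\ref{Lemm:JoCremWinfty}, the set $\mathcal J_N$ of Jonqui\`eres elements of degree $\le N$ inherits a quasi-projective $\k$-structure, stratified by degree.

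Next I would form the incidence variety. Consider the subvariety
\[Y\subseteq\mathcal J_N^{\,\ell}\times\rr_d\]
cut out by the bi-homogeneous equations $F_ig_j=F_jg_i$ for $i,j\in\{0,1,2\}$, where $[F_0:F_1:F_2]\in\rr_{N^{\ell}}$ is the iterated formal polynomial composition of $\psi_1,\ldots,\psi_\ell$ and $g=[g_0:g_1:g_2]$ is the coordinate on $\rr_d$. These closed equations express that $g$ represents the same rational map $\p^2\dasharrow\p^2$ as $\psi_\ell\circ\cdots\circ\psi_1$, so by the previous paragraph the image of the projection $\pr_2\colon Y\to\rr_d$ is exactly $\bb_d^\ell$, and in particular $\overline{\pr_2(Y)}=\overline{\bb_d^\ell}$. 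Applying Lemma~\ref{lemma: valuative criterion} to $\pr_2\colon Y\to\rr_d$ (quasi-projective source, projective target), for any $h\in\overline{\bb_d^\ell}$ we obtain a $K$-point $y(t)=\big((\psi_1(t),\ldots,\psi_\ell(t)),f(t)\big)\in Y(K)$ with $f(0)=\pr_2(y(t))(0)=h$. Each $\psi_i(t)$ is a Jonqui\`eres element of $\Bir_K(\p^2)$, and the incidence condition forces the equality $\psi_f=\psi_{\psi_\ell(t)}\circ\cdots\circ\psi_{\psi_1(t)}$ in $\Bir_{\overline K}(\p^2)$; in particular $\psi_f$ is birational, whence $f\in\bb_d(K)$ by Proposition~\ref{proposition: B'_d-is-locally-closed-and-control-of-its-closure}\ref{ExtensionStillBir}, and $\lgth(\psi_f)\le\ell$ since $\psi_f$ is written as a product of $\ell$ Jonqui\`eres maps over $\overline K$.

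The main technical point is endowing $\mathcal J_N$ with a genuine quasi-projective structure (assembling its degree strata coherently via the fixed-base-point model of Lemma~\ref{Lemm:JoCremWinfty}) and verifying that the incidence equations $F_ig_j=F_jg_i$ cut out a Zariski-closed subvariety of $\mathcal J_N^{\,\ell}\times\rr_d$, even when the actual degree of the composition drops below $N^{\ell}$ and $g$ acquires a common factor; once this set-up is in place, the valuative criterion delivers the conclusion immediately.
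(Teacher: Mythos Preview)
Your approach is essentially the same as the paper's: bound the degrees of the Jonqui\`eres factors, parametrise the length-$\le\ell$ decompositions by a quasi-projective variety, cut out the incidence with $\rr_d$ by the equations $F_ig_j=F_jg_i$, and apply the valuative criterion. The paper sharpens your bound to $\deg\varphi_i\le d$ (Lemma~\ref{lemma:bound-on-the-degrees-of-the-Jonquieres}) and resolves what you flag as the ``main technical point'' by taking the concrete parameter space $\PP=(\PGL_3)^{\ell+1}\times(\JJonq_{p,d})^{\ell}$, where $\JJonq_{p,d}=\Psi_d^{-1}(\Jonq_p\cap\b_d)$ is closed in $\bb_d$ (Lemma~\ref{Lemm:Jonqpclosed}); this makes the incidence variety $\Delta'\subseteq\bb_d\times\PP$ manifestly quasi-projective without any need to assemble degree strata.
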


The proof of Proposition~\ref{proposition:application-of-the-valuative-criterion-to-our-case} relies on the two following lemmas:

\begin{lemma}  \label{Lemm:Jonqpclosed}
For each $p \in \p^2$, the set  $\JJonq_{p,d} := \Psi_d^{-1}(\Jonq_p \cap \b_d)$ is closed in $\bb_d$.
\end{lemma}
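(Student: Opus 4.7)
The plan is to realise $\JJonq_{p,d}$ as the image of a closed subvariety of $\bb_d\times\p^3$ under the (proper) first projection. The key idea is that being Jonqui\`eres with base point $p$ amounts to a polynomial identity in the coefficients of $f$, parametrised by the coefficients $[a:b:c:d]$ of an auxiliary automorphism of $\p^1$.

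First I would reduce to the case $p=[0:0:1]$. For any $\alpha\in\Aut(\p^2)$, the conjugation map $\bb_d\to\bb_d$, $f\mapsto\alpha f\alpha^{-1}$, given by substitution of linear forms, is a morphism of algebraic varieties, and the equality $\alpha\Jonq_p\alpha^{-1}=\Jonq_{\alpha(p)}$ (Introduction) identifies $\JJonq_{p,d}$ with $\JJonq_{\alpha(p),d}$, so it suffices to treat one choice of $p$.

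With $p=[0:0:1]$ the projection from $p$ is $\pi_p=[x:y]$, and a birational $\psi_f$ lies in $\Jonq_p$ iff $\pi_p\circ\psi_f=\theta\circ\pi_p$ for some rational $\theta\colon\p^1\dasharrow\p^1$. Since $\pi_p\circ\psi_f$ is a variable (composition of the variable $\pi_p$ with an element of $\Bir(\p^2)$), Lemma~\ref{lemma:CompositionVariableEndoP1} forces $\theta$ to be an automorphism, which we may write $\theta=[ax+by:cx+dy]$ with $ad-bc\neq 0$. Clearing denominators, the condition becomes the polynomial identity $f_0(cx+dy)=f_1(ax+by)$ in $\k[x,y,z]$. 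I then introduce the closed subvariety
\[ Z := \{ (f,[a:b:c:d])\in\bb_d\times\p^3 \mid f_0(cx+dy)-f_1(ax+by)=0\} \]
cut out by the vanishing of the coefficients (in $x,y,z$) of this bihomogeneous expression of bidegree $(1,1)$ in $f$ and $[a:b:c:d]$. Since $\p^3$ is complete, the projection $\bb_d\times\p^3\to\bb_d$ is closed, so it is enough to show that the image of $Z$ equals $\JJonq_{p,d}$. The inclusion $\JJonq_{p,d}\subseteq\pr_1(Z)$ is clear from the previous discussion.

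The main obstacle, and the only non-routine step, is the reverse inclusion: one must verify that a point of $Z$ whose parameter satisfies $ad-bc=0$ cannot arise from an $f\in\bb_d$. This is the place where birationality of $\psi_f$ is essential, and it is dispatched by a short case analysis. If $(c,d)=(0,0)$, then $f_1(ax+by)=0$ with $(a,b)\neq(0,0)$ forces $f_1=0$, so that $\psi_f=[f_0:0:f_2]$ has image in the line $\{y=0\}$, contradicting that $\psi_f$ is dominant; otherwise $(a,b)=\lambda(c,d)$ for some $\lambda\in\k$, and the identity reduces to $(f_0-\lambda f_1)(cx+dy)=0$, forcing $f_0=\lambda f_1$ and the analogous non-dominance contradiction. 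Hence $\pr_1(Z)=\JJonq_{p,d}$, and closedness follows.
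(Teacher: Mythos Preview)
Your proof is correct and takes essentially the same approach as the paper: your incidence variety $Z\subseteq\bb_d\times\p^3$ is exactly the paper's $Y\subseteq\bb_d\times\LL$ (with $\LL\simeq\p^3$ parametrising pairs of linear forms $[g_0:g_1]$ in $x,y$, so that your condition $f_0(cx+dy)=f_1(ax+by)$ is the paper's $f_0g_1=f_1g_0$). The only difference is that the paper asserts the equality $\JJonq_{p,d}=\pr_1(Y)$ without justification, whereas you spell out the case analysis showing that degenerate parameters $ad-bc=0$ cannot occur over $\bb_d$.
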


\begin{proof}
Up to applying an automorphism of $\p^2$,
we may assume that $p= [0:0:1]$.
Denote by  $\LL$ the projective space (of dimension 3) associated with the vector space of pairs $(g_0,g_1)$ where $g_0,g_1 \in \k [x,y]$ are homogeneous polynomials of degree $1$. The equivalence class of $(g_0,g_1)$ will be denoted by $[g_0:g_1]$. Denote by $Y\subseteq \bb_d \times \LL$ the closed subvariety given by elements $([f_0:f_1:f_2],[g_0:g_1])$ satisfying $f_0g_1=f_1g_0$. Since the first projection $\pr_1 \colon \bb_d \times \LL \to \bb_d$ is a closed morphism, the lemma follows from the equality $\JJonq_{p,d} = \pr_1 (Y)$.
\end{proof}

\begin{remark}
Lemma~\ref{Lemm:Jonqpclosed} asserts that $\JJonq_{p,d} =\pi_d^{-1}(\Jonq_{p} \cap \b_d )$ is closed in $\bb_d$ for each $d$. By Lemma~\ref{lemma:useful-characterisation-of-closed-subsets-of-Bir(P2)}, this means that $\Jonq_{p}$ is closed in $\Bir(\p^2)$.
\end{remark}

\begin{lemma}  \label{lemma:bound-on-the-degrees-of-the-Jonquieres}
Any Cremona transformation $g \in \b $ of length $\ell$ admits an expression of the form
\[  g = a_1 \circ \varphi_1  \circ \cdots \circ a_\ell \circ \varphi_\ell  \circ a_{\ell+1},\]
where $a_1,\dots,a_{\ell+1}\in \Aut (\p^2)$, $\varphi_1,\dots,\varphi_\ell \in \Jonq_p$,
and $ \deg ( \varphi_i ) \leq \deg (g) $ for each $i$.
\end{lemma}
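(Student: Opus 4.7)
The plan is to build the decomposition iteratively by taking predecessors. First, I would set $g_0 := g$ and, for $i = 1, \ldots, \ell$, choose a predecessor $g_i$ of $g_{i-1}$; by Corollary~\ref{Cor:decrdegrlgth}, this yields $g_\ell \in \Aut(\p^2)$ with $\deg(g_0) > \cdots > \deg(g_\ell) = 1$, so that the Jonqui\`eres transformations $\psi_i := g_i^{-1} \circ g_{i-1}$ witness the equality $g = g_\ell \circ \psi_\ell \circ \cdots \circ \psi_1$. Then, applying Lemma~\ref{Lemm:JoCremWinfty}\ref{GenJBir3} to write each $\psi_i = \alpha_i \circ \tilde{\varphi}_i \circ \beta_i$ with $\tilde{\varphi}_i \in \Jonq_p$ and $\alpha_i, \beta_i \in \Aut(\p^2)$, relabelling the Jonqui\`eres factors from right to left and absorbing $g_\ell$ together with the consecutive automorphisms $\beta_{i+1}\circ \alpha_i$ into new factors $a_j$ will produce an expression of the announced form.

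The key issue, and the main obstacle, is to verify the degree inequality $\deg(\psi_i) \leq \deg(g)$. Since $\deg(g_{i-1})$ decreases with $i$, it is enough to show $\deg(\psi_i) \leq \deg(g_{i-1})$ at each step. To this end I would apply Lemma~\ref{Lem:Pred}\ref{Lem:Pred3} with $f = g_{i-1}$ and the Jonqui\`eres transformation $\psi_i^{-1}$: this gives $\Base(\psi_i) \subseteq \Base(g_{i-1})$. Setting $d' := \deg(\psi_i)$, and noting that $\psi_i \notin \Aut(\p^2)$ (otherwise the degree would not drop from $g_{i-1}$ to $g_i$), its homaloidal type must be $(d';\, d'-1, 1^{2d'-2})$, so that $|\Base(\psi_i)| = 2d' - 1$.

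It then remains to bound $|\Base(g_{i-1})|$ in terms of $d := \deg(g_{i-1})$, splitting into two cases. If $g_{i-1}$ is itself Jonqui\`eres, its homaloidal type is $(d;\, d-1, 1^{2d-2})$, so $|\Base(g_{i-1})| = 2d - 1$ and the inclusion directly forces $2d' - 1 \leq 2d - 1$, i.e.~$d' \leq d$. Otherwise $g_{i-1}$ is non-Jonqui\`eres, which forces $d \geq 4$, and the linear bound $|\Base(g_{i-1})| \leq d + 2$ recalled in Remark~\ref{Rem:BoundPredDegree} (from \cite[Lemma~39]{BCM15}) gives $2d' - 1 \leq d + 2$, whence $d' \leq (d+3)/2 \leq d$. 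The hard part is thus precisely the sharp linear bound on the number of base-points in the non-Jonqui\`eres case; everything else reduces to straightforward bookkeeping with the predecessor algorithm.
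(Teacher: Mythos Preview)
Your proof is correct and follows essentially the same approach as the paper: build the decomposition via successive predecessors, invoke Lemma~\ref{Lem:Pred}\ref{Lem:Pred3} to obtain $\Base(\psi_i)\subseteq\Base(g_{i-1})$, and then bound the number of base-points of $g_{i-1}$ to force $\deg(\psi_i)\le\deg(g_{i-1})$. The only difference is that the paper avoids your Jonqui\`eres/non-Jonqui\`eres case split by citing the uniform bound $|\Base(h)|\le 2\deg(h)-1$ for every $h\in\Bir(\p^2)$ of degree $\ge 2$ (\cite[Lemma~13]{BCM15}), which gives $2d'-1\le 2d-1$ directly.
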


\begin{proof}
This follows from Theorem~\ref{TheMainTheorem} and the fact that if $\varphi$ is a Jonqui\`eres transformation such that $g \circ \varphi$ is a predecessor of $g$, then $\Base(\varphi^{-1})\subseteq \Base(g)$ (Lemma~\ref{Lem:Pred}\ref{Lem:Pred3}), which implies that $g$ hast at least $2\deg(\varphi)-1$ base-points, so $\deg(g)\ge \deg(\varphi)$ (every element of $\Bir(\p^2)$ of degree $d\ge 2$ has at most $2d-1$ points, and equality holds if and only if the map is Jonqui\`eres \cite[Lemma 13]{BCM15}).
\end{proof}

\begin{proof}[Proof of Proposition~$\ref{proposition:application-of-the-valuative-criterion-to-our-case}$]

In order to use Lemma~\ref{lemma: valuative criterion}, we  realise $\bb_d^{\ell}$ as the image of a morphism of algebraic varieties. Let us fix $p=[0:0:1]\in \p^2$.  By Lemma~\ref{lemma:bound-on-the-degrees-of-the-Jonquieres},
an element $f$ of $\bb_d$ belongs to $\bb_d^{\ell}$ if and only if the birational transformation $\psi_f$ admits an expression of the form
\[ \psi_f = a_1 \circ \varphi_1  \circ \cdots \circ a_\ell \circ \varphi_\ell  \circ a_{\ell+1},\]
where $a_1,\dots,a_{\ell+1}\in \Aut (\p^2)$, $\varphi_1,\dots,\varphi_\ell \in \Jonq_p$,
and $ \deg \varphi_i \leq d$ for each $i$.

We now use the closed subvariety $\JJonq_{p,d} \subseteq \bb_d$ given in Lemma~\ref{Lemm:Jonqpclosed}.
Define the product $\PP := ( \PGL_3)^{\ell+1} \times (\JJonq_{p,d})^\ell $ and let $\Comp \colon \PP  \to \bb_{d^\ell}$ be the formal composition morphism defined by
\[ (a_i)_{1 \, \leq \, i \,  \leq  \, \ell+1} \times (\varphi_i)_{1 \, \leq \, i \,  \leq  \, \ell}   \quad  \mapsto  \quad a_1 \circ \varphi_1 \circ \cdots \circ a_\ell \circ \varphi_\ell  \circ a_{\ell+1} .\]

Let $\Delta \subseteq \bb_d \times \bb_{d^\ell}$ be the pseudo-diagonal, i.e.~the set of pairs $(f,g)$ such that $\psi_f = \psi_g$. Being given by the equations $f_i g_j = f_j g_i$, for all $i,j$, where $ f = [ f_0 : f_1 : f_2 ]  \in \bb_d$, $ g = [ g_0 : g_1 : g_2 ]  \in \bb_{d^\ell}$, the set $\Delta$ is closed into $\bb_d \times \bb_{d^\ell}$.

Denote by $ \id   \times \Comp \colon \bb_d \times \PP \to \bb_d \times \bb_{d^\ell}$, the morphism sending $(f,p)$ to $(f, \Comp (p) )$ and by $\Delta'$ the closed variety defined by
\[ \Delta ' := (\id   \times \Comp)^{-1} (\Delta) \subseteq \bb_d \times \PP. \]
By what has been said above, we have $\bb_d^{\ell}= \pr_1 (\Delta') $ where $\pr_1 \colon \bb_d \times \PP \to \bb_d$ is the first projection. Setting $\varphi = \iota \circ \pr_1$ where $\iota \colon \bb_d \into \rr_d$ is the natural injection, we also have $\bb_d^{\ell}= \varphi (\Delta') $.

Since $h \in \overline{ \varphi (\Delta') } $,  Lemma~\ref{lemma: valuative criterion} yields us the existence of an element $ (f,p) = (f(t), p(t) ) \in  \Delta' ( K ) \subseteq \bb_d(K)  \times \PP(K)$ such that $h= f(0)$. We observe that the birational map $\psi_f \in \Bir_{\overline{K}} (\p^2)$ associated to  $f \in \bb_d( \overline{K} ) $ has length at most $\ell$.
\end{proof}

\subsection{The end of the proof of Theorem \ref{Continuitylength}}

The main result of the previous section (Proposition~\ref{proposition:application-of-the-valuative-criterion-to-our-case}) asserts that any element $h \in \overline{\bb_d^{\ell}}$ is equal to $f(0)$ for a certain element $f \in \bb_d (K)$ such that the length of $\psi_f \in \Bir_{\overline{K}} (\p^2)$ is at most $\ell$. The main technical result of the present section is Proposition~\ref{proposition: the heart of the proof}
which establishes that the limit $\psi_{f(0)} \colon \p^2 \dasharrow \p^2$ is either birational of length $\leq \ell$ or non-dominant (however, in the non-dominant case, we need to prove a stronger statement in order to make an induction).
 This information is sufficient for showing that $\bb_d^{\ell}$ is closed in $\bb_d$ thus proving Theorem~\ref{Continuitylength}. We begin with the following simple lemma to be used in the proof of Proposition~\ref{proposition: the heart of the proof}.

\begin{lemma} \label{lemma: linear algebra over the field k((t))}
Let $V$ be a finite dimensional vector space over $k$ and let $u,v \in K \otimes_k V  $ be two vectors such that 
\begin{enumerate}
\item The vectors $u,v$ are linearly independent over $K$;
\item The vector $v$ belongs to  $R \otimes_k V$ and its evaluation $v(0)$ at $t = 0$ is nonzero.
\end{enumerate}
Then, there exist $\alpha, \beta \in K$ such that:
\begin{enumerate}
\item The vector $\tilde{u} := \alpha u + \beta v$ belongs to $R \otimes_k V$;
\item The vectors $\tilde{u}(0)$ and $v(0)$ are linearly independent over $k$.
\end{enumerate}
\end{lemma}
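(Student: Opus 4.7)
The plan is to run a division algorithm with respect to the uniformizer $t\in R$, using linear independence of $u,v$ over $K$ only at the very end (to guarantee termination).

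First I would make the harmless reduction to the case where $u\in R\otimes_k V$. Indeed, writing $u=\sum_i t^{n_i} u_i$ with finitely many $u_i\in V$ and $n_i\in\Z$, multiplying $u$ by a suitable power $t^a$ (which amounts to absorbing $t^a$ into the eventual scalar $\alpha$) puts $u$ in $R\otimes_k V$. If the specialisations $u(0)$ and $v(0)$ are already $k$-linearly independent we simply take $\tilde u=u$, $\alpha=t^a$, $\beta=0$. Otherwise, we enter the following iterative procedure.

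Next I would define by induction a sequence $u_0,u_1,\ldots\in R\otimes_k V$ and scalars $\lambda_0,\lambda_1,\ldots\in k$ as follows: set $u_0:=u$; given $u_n\in R\otimes_k V$ such that $u_n(0)$ and $v(0)$ are $k$-linearly dependent, there is a unique $\lambda_n\in k$ with $u_n(0)=\lambda_n v(0)$ (taking $\lambda_n=0$ when $u_n(0)=0$); since $u_n-\lambda_n v\in R\otimes_k V$ vanishes at $t=0$, the element $u_{n+1}:=(u_n-\lambda_n v)/t$ lies in $R\otimes_k V$. If at some step $n$ the vectors $u_n(0)$ and $v(0)$ become $k$-linearly independent, I stop and set $\tilde u:=u_n$. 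A straightforward induction shows
\[ u_n \;=\; t^{-n}\bigl(u-q_n(t)v\bigr), \qquad q_n(t):=\sum_{i=0}^{n-1}\lambda_i t^i\in k[t],\]
so $\tilde u=\alpha u+\beta v$ with $\alpha=t^{-n}\in K^{*}$ and $\beta=-t^{-n}q_n(t)\in K$, as required.

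The only point that needs a real argument is termination. Assuming for a contradiction that the process never stops, the polynomials $q_n(t)$ form a Cauchy sequence in the $t$-adic complete ring $R$, converging to some $\hat q(t):=\sum_{i\ge 0}\lambda_i t^i\in R$. For every $n$ we have $u-q_n(t)v=t^n u_n\in t^n(R\otimes_k V)$, and since $V$ is finite-dimensional the $t$-adic topology on $R\otimes_k V$ is separated. Passing to the limit in $n$ therefore yields $u=\hat q(t)\,v$ in $K\otimes_k V$, contradicting the hypothesis that $u$ and $v$ are linearly independent over $K$. Hence the algorithm stops after finitely many steps, and the output $\tilde u$ satisfies the required properties. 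The main obstacle is exactly this termination step, i.e.\ converting ``$K$-linear independence'' into the $t$-adic statement above; everything else is purely formal.
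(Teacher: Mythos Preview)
Your proof is correct. The termination argument is sound: since $V$ is finite-dimensional, $R\otimes_k V\cong R^{\dim V}$ is $t$-adically separated, and your limiting identity $u=\hat q(t)\,v$ does contradict $K$-linear independence.

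The paper takes a shorter, non-iterative route. It completes $e_1:=v(0)$ to a basis $e_1,\dots,e_n$ of $V$, writes $u=\sum u_i e_i$ and $v=\sum v_i e_i$ with $v_1(0)=1$, and in a single stroke sets $w:=u-\dfrac{u_1}{v_1}\,v$. Since $v_1$ is a unit in $R$, the $e_1$-coefficient of $w$ vanishes identically; linear independence of $u,v$ guarantees $w\neq 0$, so after multiplying by the correct power $t^j$ one obtains $\tilde w(0)\in (k e_2\oplus\cdots\oplus k e_n)\setminus\{0\}$, automatically independent from $v(0)=e_1$. In effect, the paper kills the component along $v(0)$ exactly once using that $v_1$ is invertible, whereas your algorithm peels it off one $t$-adic digit at a time and then appeals to completeness for termination. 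Your approach is basis-free and conceptually pleasant as a division algorithm; the paper's approach avoids the limit argument entirely and is a bit more direct.
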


\begin{proof}
Let us complete the vector $e_1:= v(0)$ in a basis $e_1, \ldots, e_n$ of $V$. Decomposing the vectors $u,v$ in this basis, we obtain expressions
\[ u = \sum_i u_i e_i, \quad v = \sum_i v_i e_i, \]
where $u_1,\dots,u_n \in K$, $v_1,\dots,v_n\in R$ and $v_1(0)=1, v_i(0)=0$ for
$i=2,\dots,n$. The vector $w:= u - \frac{u_1}{v_1} v$ is nonzero and admits an expression
\[ w = \sum_i w_i e_i, \]
where $w_1,\dots,w_n\in K$ and $w_1=0$. Let $j$ be the unique integer such that the vector $\tilde{w} := t^jw$ belongs to $R \otimes_k V$ and its evaluation  $\tilde{w}(0)$ at $t=0$ is nonzero. The vectors $\tilde{w}(0)$ and $v(0) =e_1$ are linearly independent over $\k$ because $\tilde{w}(0)\in (\k e_2\oplus\cdots \oplus\k e_n)\setminus \{0\}$. Since $\tilde{w} = t^j (u - \frac{u_1}{v_1} v)$, it is enough to set $\alpha = t^j$ and $\beta = - t^j \frac{u_1}{v_1}$.
\end{proof}

\begin{proposition} \label{proposition: the heart of the proof}
Let $f = f(t) \in \bb_d(K) \subseteq \bb_d ( \overline{K} )$ be an element such that the associated birational map $\psi_f \in \Bir_{\overline{K}} (\p^2)$ has length $\ell \ge 0$
and denote by $f(0) \in \overline{\bb_d} \subseteq  \rr_d$ the evaluation of $f=f(t)$ at $t=0$ 
$($see Definition~$\ref{definition:definition-of-a(0)-when-a(t)-belongs-to-a-projective-space}$ and Remark~$\ref{remark:f(0)-for-an-element-f-of-Bir(P2)_d(K)})$. Then, the following implications hold:
\begin{enumerate}
\item
If $f(0)\in \bb_d$, then the birational map $\psi_{f(0)}$ is of length $\le \ell $.
\item
If $f(0)\in \overline{\bb_d} \setminus \bb_d$, then the rational map $\psi_{f(0)}$ is equal to $\kappa \circ v$ for some variable $v \colon \p^2\dasharrow \p^1$ of length $\le \ell$ and some morphism $\kappa\colon \p^1\to \p^2$.
\end{enumerate}
In particular, for each linear projection $\pi\colon \p^2\dasharrow \p^1$, the composition $\pi\circ \psi_{f(0)}\colon \p^2\dasharrow \p^1$ is either not defined or equal to $\rho\circ v$ for some variable $v \colon \p^2\dasharrow \p^1$ of length $\le \ell$ and some endomorphism $\rho \colon \p^1\to \p^1$.\end{proposition}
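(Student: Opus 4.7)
I will proceed by induction on $\ell\ge 0$, proving statements~(1) and~(2) simultaneously. The ``in particular'' conclusion then follows formally: in case~(1), $\pi\circ\psi_{f(0)}$ is itself a variable of length $\le\ell$ (take $\rho=\id$); in case~(2), writing $\psi_{f(0)}=\kappa\circ v$ gives $\pi\circ\psi_{f(0)}=(\pi\circ\kappa)\circ v$, where $\pi\circ\kappa\colon\p^1\to\p^1$ is a morphism (or undefined, precisely when $\kappa$ is the constant map to the centre of $\pi$). For the base case $\ell=0$, one has $\psi_f\in\PGL_3(\overline K)$, which forces $d=1$ and $f=[f_0\!:\!f_1\!:\!f_2]$ with linear forms $f_i$ over $K$. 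The evaluations $f_i(0)$ span a $\k$-subspace of $\k[x,y,z]$ of some dimension $r\in\{1,2,3\}$; if $r=3$ then $\psi_{f(0)}\in\Aut(\p^2)$ has length $0$, and if $r\le 2$ the image of $\psi_{f(0)}$ lies in a linear subspace of $\p^2$ of dimension $r-1\le 1$, yielding $\psi_{f(0)}=\kappa\circ v$ with $v$ a linear projection from a point (a variable of length~$0$) and $\kappa$ a linear morphism.

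For the inductive step $\ell\ge 1$, I apply Lemma~\ref{lemma:bound-on-the-degrees-of-the-Jonquieres} over $\overline K$ to decompose $\psi_f=g\circ\varphi$ with $\lgth(g)\le\ell-1$ and $\varphi\in\Jonq_q(\overline K)$ for some $q\in\p^2(\overline K)$. Such a decomposition is defined over a finite extension $K'=\k((t^{1/N}))$ of $K$; after the $\k$-isomorphism $t\mapsto t^{1/N}$ (which does not affect evaluations at $t=0$) I may assume that $g,\varphi$, and $q=q(t)\in\p^2(K)$ are defined over $K$. The inductive hypothesis applied to $g$ yields the two-case analysis for $g(0)$. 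To analyse $\varphi(0)$, I use the relation $\pi_q\circ\varphi=\tau\circ\pi_q$ for some $\tau\in\PGL_2(K)$, where $\pi_q\colon\p^2\dasharrow\p^1$ is the projection from $q$. The limit $\tau(0)$ is a nonzero $2\times 2$ matrix over $\k$ of rank $r'\in\{1,2\}$: if $r'=2$, then $\pi_{q(0)}\circ\varphi(0)=\tau(0)\circ\pi_{q(0)}$ is dominant, forcing $\varphi(0)$ to be dominant, hence birational by Proposition~\ref{proposition: B'_d-is-locally-closed-and-control-of-its-closure}, and lying in $\Jonq_{q(0)}$; if $r'=1$, then $\tau(0)$ is a constant map $\p^1\to\p^1$ and the image of $\varphi(0)$ lies in a single fiber $L$ of $\pi_{q(0)}$, which is a line through $q(0)$.

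When $\varphi(0)$ is birational, specialising at a generic $P\in\p^2$ (where $\varphi(0)$ is defined and $\varphi(0)(P)$ lies outside the indeterminacy locus of $g(0)$) shows $\psi_{f(0)}=g(0)\circ\varphi(0)$ as rational maps. The four resulting sub-cases---depending on whether $g(0)$ is birational of length $\le\ell-1$ or of the form $\kappa_g\circ v_g$---are handled by composition, using Lemma~\ref{Lem:VariableMapLength} and subadditivity of length to produce either a birational map or a variable of length $\le(\ell-1)+1=\ell$. When $\varphi(0)$ is non-dominant, I factor $\varphi(0)=\iota_L\circ v_\varphi$ with $\iota_L\colon L\hookrightarrow\p^2$ the inclusion and $v_\varphi\colon\p^2\dasharrow L\simeq\p^1$, so that $\psi_{f(0)}=(g(0)\circ\iota_L)\circ v_\varphi=\kappa\circ v_\varphi$, where $\kappa\colon\p^1\to\p^2$ is a morphism since any rational map from the smooth projective curve $L$ to a projective variety extends. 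The main obstacle is the technical claim that $v_\varphi$ is a \emph{variable} of length $\le 1$. To establish it, I would write $\varphi$ in standard Jonqui\`eres form $\varphi=[f_0(x,y)P(x,y,z)\!:\!f_1(x,y)P(x,y,z)\!:\!Q(x,y,z)]$ with $[f_0\!:\!f_1]=\tau$, so that the rank-$1$ degeneration of $\tau(0)$ forces $f_i(0)=c_i\,\ell(x,y)$ for a common linear form $\ell$ and scalars $c_i$; after clearing the gcd $h=\gcd(\ell P(0),Q(0))$ one gets $v_\varphi=[\ell P(0)/h\!:\!Q(0)/h]$. The irreducibility of the generic member of this pencil (needed to make $v_\varphi$ a variable) is inherited from the birationality of $\varphi$ for generic $t$, and an explicit birational completion of $v_\varphi$ to a Jonqui\`eres map yields the length bound $\lgth(v_\varphi)\le 1\le\ell$, concluding case~(2) of the proposition.
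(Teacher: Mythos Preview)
Your overall plan---split $\psi_f=g\circ\varphi$, specialise each factor, and reassemble---has a genuine gap precisely in the ``$\varphi(0)$ non-dominant'' branch, and this gap is not a technicality. Write $f=[f_0\!:\!f_1\!:\!f_2]$, $g=[g_0\!:\!g_1\!:\!g_2]$, $\varphi=[\varphi_0\!:\!\varphi_1\!:\!\varphi_2]$ with all entries in $R[x,y,z]$ and nonzero reductions. The relation $\psi_f=\psi_g\circ\psi_\varphi$ gives $g_i(\varphi_0,\varphi_1,\varphi_2)=t^{m}\tilde h\,f_i$ with $\tilde h\in R[x,y,z]$, $\tilde h(0)\neq0$ and $m\ge 0$. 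Your identity $\psi_{f(0)}=\psi_{g(0)}\circ\psi_{\varphi(0)}$ is only valid when $m=0$. If $\psi_{\varphi(0)}$ has image contained in the common zero locus of the $g_i(0)$ (for instance, the line $L$ is a component of the common factor of the $g_i(0)$, or $\psi_{\varphi(0)}$ is constant at a base-point of $\psi_{g(0)}$), then $m>0$, the formal composition $g(0)\circ\varphi(0)$ is identically zero, and $\psi_{f(0)}$ is \emph{not} the composition of the two limits. In that situation your factorisation $\psi_{f(0)}=(g(0)\circ\iota_L)\circ v_\varphi$ simply does not hold, and you have no control over $f(0)$. Even when the image of $\psi_{\varphi(0)}$ avoids that locus, the claim that $v_\varphi$ is a variable (rational generic fibre) does not follow from ``irreducibility inherited from generic $t$''; limits of pencils of rational curves need not have rational general member. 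A smaller slip: in the base case $\ell=0$ does \emph{not} force $d=1$; one has $f=[ha_0\!:\!ha_1\!:\!ha_2]$ with $h$ of degree $d-1$, and the analysis must carry $h$ along.

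The paper circumvents exactly this failure of ``limit of a composition $=$ composition of limits'' by composing on the \emph{left} with a Jonqui\`eres $\varphi$ normalised so that $\Pi\circ\varphi=\Pi$ (after moving a maximal-multiplicity base-point of $(\psi_f)^{-1}$ to $[0\!:\!0\!:\!1]$ via an element of $\PGL_3(R)$). Then the first two components of $f$ and of $g$ with $\psi_g=\varphi\circ\psi_f$ are \emph{literally equal} as elements of $K[x,y,z]$, so $\Pi\circ\psi_{f(0)}=\Pi\circ\psi_{g(0)}$ without any assumption on how $\varphi$ specialises, and induction on $g$ gives $\Pi\circ\psi_{f(0)}=\theta\circ v$ with $\lgth(v)\le\ell-1$. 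The residual degenerate case (when $\Pi\circ\psi_{f(0)}$ is constant) is handled not by a case split on $\varphi(0)$ but by a linear-algebra trick over $K$: one replaces $f_0$ by a $K$-linear combination $\alpha f_0+\beta f_1$ chosen so that the new first two limits are $\k$-independent, reducing to the non-degenerate case while leaving the relevant projection $\pi\circ\psi_{f(0)}$ unchanged. This is the key idea your argument is missing.
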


\begin{proof}
We prove the result by induction on $\ell$. 

\smallskip

{\bf Case of length $\ell=0$.} The equality $ \ell =0$ corresponds exactly to
asking that $\psi_f\in \Aut_K(\p^2)$. We write $f=[h a_0:h a_1 :h a_2]$, where $h\in K[x,y,z]$ is homogeneous of degree $d-1$ and $[a_0:a_1:a_2] \in \bb_1(K) \simeq \PGL_3(K)$. We can moreover assume that the coefficients of $h$ belong to $R \subseteq K$ and that the evaluation $h(0)$ of $h$ at $t=0$ is non-zero. Similarly, we can assume that $a_0,a_1,a_2$ have coefficients in $R$ and that at least one of these has a non-zero value at $t=0$. The element $[a_0(0):a_1(0):a_2(0)]\in \rr_1$ corresponds to a $3\times 3$-matrix. If the matrix is of rank $3$, the element $f(0) \in \bb_d$ corresponds to a linear automorphism $\psi_{f(0)} \in  \b$ of length $0$. If the matrix is of rank $2$, then $\psi_{f(0)}$
admits a decomposition
in the form $\kappa \circ v$ where $\kappa \colon \p^1 \to \p^2$ is a linear morphism and $v \colon \p^2 \dasharrow \p^1$ is a linear variable, i.e.~of degree
$1$. The last case is when the matrix has rank $1$, which corresponds to the case where
$\psi_{f(0)} \colon \p^2 \to \p^2$ is the constant map to some point $a \in \p^2$. Let $\kappa \colon \p^1 \to \p^2$ be the  constant map to $a$ and let $v$ be any variable of length $0$, then we have $\psi_{f(0)} = \kappa \circ v$.

\smallskip

{\bf Case of length $ \ell \geq 1$.}

This implies that the birational transformation $(\psi_f)^{-1} \in \Bir_{\overline{K} }(\p^2)$ admits at least one base-point. The proof is divided into the following steps:

{\bf Reduction to the case where all base-points are defined over $K$:}

By assumption, the birational map $\psi_{f}\in \Bir_{\overline{K} }(\p^2)$ has length $\ell $. Replacing $t$ with $t^{\frac{1}{a}}$ for some $a \ge 1$, we can thus assume that all base-points of $(\psi_f)^{-1}$ are defined over $K$.

{\bf Denote by $p\in \p^2(K)$ a base-point of maximal multiplicity of $(\psi_f)^{-1}$.}

{\bf Reduction to the case where $p= [0:0:1]$.} 

Write $p= [ p_0: p_1 : p_2]$ where each $p_i$ belongs to $K$. Up to multiplying $(p_0, p_1, p_2)$ by  $t^i$ for some well chosen (and unique) integer $i$, we may assume that $p_0,p_1,p_2\in R$ and that $p_i(0)\not=0$ for some $i$.
Let us choose coefficients $b_{ij}$ in the field $k$ such that the following matrix has nonzero determinant:
\[ M=\left(\begin{array}{lll} b_{00} & b_{01} &p_0(0)  \\ b_{10} & b_{11}&p_1(0)  \\ b_{20} & b_{21}& p_2(0)  \end{array}\right). \]
In other words, we have $M \in \GL_3 (\k)$. This implies that the matrix
\[ B(t)= \left(\begin{array}{lll}b_{00} & b_{01} &p_0  \\ b_{10} & b_{11}&p_1  \\ b_{20} & b_{21}& p_2 \end{array}\right)  \in \M_3 ( R) \]
is invertible in $\M_3( R )$  (because its determinant is invertible). The evaluation at $t=0$ of the corresponding automorphism  $\beta\in \Aut_K( \p^2) = \PGL_3(K)$ 
is the element of $\PGL_3(\k)=\GL_3(\k)/\k^*$ given by the class of the matrix $M\in \GL_3(\k)$.
We can replace $f$ with $\tilde{f}=\beta^{-1} \circ f  \in B'_d(K)$ (formal composition), because we have $\tilde{f}(0)=  \beta(0)^{-1} \circ f(0)$, where $\beta(0)$ belongs to $\PGL_3(\k)$. After this change, the point $p$ is equal to $[0:0:1] \in \p^2 \subseteq \p^2(K)$.

As in Definition~\ref{definition:definition-of-a(0)-when-a(t)-belongs-to-a-projective-space} (see also Remark~\ref{remark:f(0)-for-an-element-f-of-Bir(P2)_d(K)}), we write $f=[f_0:f_1:f_2]$ where the components  $f_i \in R [x,y,z]$ satisfy  $(f_0(0),f_1(0),f_2(0))\not=(0,0,0)$.

If $(f_0(0),f_1(0))=(0,0)$, then $\psi_{f(0)}$ is the constant map to $p$: Hence we have $f(0)\not\in \bb_d$ and the result is trivially true by taking $\kappa \colon \p^1\to \p^2$ the constant map to $p$ and $v \colon \p^2 \dasharrow \p^1$ any variable of length $\leq \ell $. We can thus assume that $(f_0(0),f_1(0))\not=(0,0)$, which means that $\psi_{f(0)}$ is not the constant map to $p$, and can consider the rational map $\Pi \circ \psi_{f(0)}\colon \p^2\dasharrow \p^1$, given by $[x:y:z]\dasharrow [f_0(0)(x,y,z):f_1(0)(x,y,z)]$. We achieve the proof by studying two cases, depending on whether this rational map is constant or not.

{\bf Case A: The rational map $\Pi \circ \psi_{f(0)}$ is not  constant --  construction of an element of length $\ell -1$.}

Since $p=[0:0:1]$ is a base-point  of maximal multiplicity of $(\psi_f)^{-1} \in \Bir_{\overline{K} } (\p^2)$, by Corollary~\ref{Cor:decrdegrlgth} there exists an element $\varphi \in \Jonq_p( \overline{K} )$ (i.e.~an element of $\Bir_{\overline{K} } (\p^2)$ which preserves the pencil of lines through $p$) such that $\varphi  \circ \psi_f  \in \Bir_{\overline{K} }(\p^2)$ is of length $\ell -1$ and of degree smaller than $\deg(\psi_f)\le d$.

We  may moreover assume that $\varphi$ satisfies the two following assertions:
\begin{enumerate}[(i)]
\item \label{phi-defined-over-K}
$\varphi$ is defined over $K$, i.e.~$\varphi \in \Jonq_p(K)$. 
\item  \label{Pi-o-phi=Pi}
$\varphi$ preserves a general line through $p$, i.e.~$\Pi \circ \varphi = \Pi$.
\end{enumerate}

To obtain \ref{phi-defined-over-K}, we could
use the fact that all base-points of $\varphi$ are defined over $K$ (since they are
contained in the base locus of $(\psi_f)^{-1}$). Alternatively, we can use the same trick as above: Since $\varphi$ is defined over $\k ((t^{1/a}))$ for some integer $a \geq 1$, it is enough to replace $t$ with $t^{1/a}$.

To obtain \ref{Pi-o-phi=Pi}, it is enough to note that any element $ \varphi \in \Jonq_p$ may be written as a composition $\alpha \circ \tilde\varphi$ where $\alpha \in \Aut (\p^2) \cap \Jonq_p$ and $\tilde \varphi \in \Jonq_p$ preserves a general line through $p$.

Let $g\in \bb_d(K)$ be such that $\psi_g=  \varphi  \circ  \psi_f $. Note that the assumption \ref{Pi-o-phi=Pi} above shows us that $\Pi \circ \psi_f = \Pi \circ \psi_g$. As before, write $g=[g_0:g_1:g_2]$ where the components  $g_i \in R [x,y,z]$ satisfy  $(g_0(0),g_1(0),g_2(0))\not=(0,0,0)$.
The fact that $\psi_{f(0)}$ is not the constant map to $p$ corresponds exactly to saying that $(f_0(0),f_1(0))\not=(0,0)$. Replacing $\varphi$ with its composition with $[x:y:z]\mapsto [t^{-i}x:t^{-i}y:z]$ for some well chosen integer
$i\ge 0$ we may replace
$(g_0,g_1,g_2)$ with $(t^{-i}g_0,t^{-i}g_1,g_2)$ and then assume that $(g_0(0),g_1(0))\not=(0,0)$. We obtain then a rational map 
\[\begin{array}{rrcl}
\nu\colon &\p^2&\dasharrow &\p^1\\
& [x:y:z]& \mapsto &  [f_0(0)(x,y,z):f_1(0)(x,y,z)]=[g_0(0)(x,y,z):g_1(0)(x,y,z)]\end{array}\]
which satisfies $\nu=\Pi \circ \psi_{f(0)}=\Pi \circ \psi_{g(0)}$ and is thus non-constant by hypothesis.

Applying the induction hypothesis to $g$ (and $g(0)$), the map $\nu=\Pi \circ \psi_{ g(0) } = \Pi \circ \psi_{ f(0) }$ is equal to $\theta \circ v$, where $v\colon \p^2\dasharrow \p^1$ is a variable of length at most $ \ell -1$ and $\theta\colon \p^1\to \p^1$ is an endomorphism.
Moreover, $\theta$ is non-constant since $\nu$ is non-constant.
 
a) If $\psi_{f(0)}$ is a birational map, then $\nu = \Pi \circ \psi_{f(0)}$ is a variable. Since $\nu=\theta \circ  v$, this implies that $\theta \in \Aut(\p^1)$ (Lemma~\ref{lemma:CompositionVariableEndoP1}) and thus that $\lgth(\nu)=\lgth(v)\le \ell -1$. In particular, $\lgth(\psi_{f(0)})\le \ell $ since $\nu=\Pi \circ \psi_{f(0)}$ (Lemma~\ref{Lem:VariableMapLength}) as we wanted.
 
b) If $\psi_{f(0)}$ is not a birational map, then it is non-dominant (see Remark~\ref{remark:f(0)-for-an-element-f-of-Bir(P2)_d(K)}). The equality $\nu = \Pi \circ \psi_{f(0)}=\theta \circ v$ yields the existence of a morphism $\kappa \colon \p^1\to \p^2$ such that $\kappa \circ v=\psi_{f(0)}$ (Lemma~\ref{lemma:NotDominantVariable}). This achieves the proof in this case.\\

{\bf Case B: The rational map $\Pi \circ \psi_{f(0)}$ is constant.}

Let $[\lambda : \mu] \in \p^1$ be the constant value of the map $\Pi \circ \psi_{f(0)}$. There exists a homogeneous polynomial $h \in \k[x,y,z]$ such that $(f_1(0),f_2(0))=(\lambda h, \mu h)$. Up to replacing $f$ with $\alpha \circ f$, where $\alpha \in \Aut (\p^2)$ is of the form $[x:y:z]\mapsto [ax+by:cy+dy:z]$, we can assume that $(\lambda, \mu) =(0,1)$, which implies that $f_0(0)=0$ and $f_1(0) \neq 0$.

In this case, we have $f(0)\notin \bb_d$ and $\psi_{f(0)}$ is the rational map $[x:y:z] \dasharrow [0:f_1(0)(x,y,z):f_2(0)(x,y,z)]$. Writing $\pi \colon \p^2\dasharrow \p^1, [x:y:z] \mapsto [y:z]$ the projection away from $[1:0:0]$, it remains to see that the rational map $\pi\circ \psi_{f(0)}\colon \p^2\dasharrow \p^1$, $[x:y:z] \dasharrow [f_1(0)(x,y,z):f_2(0)(x,y,z)]$ is the composition of a variable $\p^2 \dasharrow \p^1$ of length $\le \ell$ and an endomorphism of $\p^1$.

To show this,  let us note that by Lemma~\ref{lemma: linear algebra over the field k((t))}, there exist $\alpha, \beta\in K$ such that $\tilde{f_0}=\alpha f_0+\beta f_1\in R [x,y,z]$ and $\tilde{f_0}(0)$ does not belong to $\k\cdot f_1(0)$.

We observe that the result holds for $\tilde{f} := [\tilde{f_0} : f_1 : f_2] \in \bb_d(K)$. Indeed, $\psi_{\tilde{f}}$ and $\psi_{f}$ only differ by an element of $\Aut(\p^2)(K)$ that fixes $p=[0:0:1]$, so $\psi_{\tilde{f}}$ and $\psi_{f}$ have the same length, $p$ is a base-point of $\psi_{\tilde{f}}^{-1}$ of maximal multiplicity, and all base-points of $\psi_{\tilde{f}}^{-1}$ are defined over $K$. Moreover, $\tilde{f}$ satisfies Case~A. The result then holds for $f$, since $\pi\circ \psi_{\tilde{f}(0)}=\pi\circ \psi_{f(0)}$.
\end{proof}

\begin{proof}[Proof of Theorem~$\ref{Continuitylength}$]
We have already explained why Proposition~\ref{proposition: the heart of the proof} implies Theorem~\ref{Continuitylength}. Let us however summarise the proof. We want to show that $\b ^{\ell} =\{f\in \b, \; \lgth(f)\le \ell\}$ is closed in $\b$ for each integer $\ell \ge 0$. By Lemma~\ref{lemma:useful-characterisation-of-closed-subsets-of-Bir(P2)}, this is equivalent to saying that $\bb_d^{\ell} =\pi_d^{-1}( \b_d^{\ell}  )$ is closed in $\bb_d$ for each $d$. This latter point directly follows from Propositions  \ref{proposition:application-of-the-valuative-criterion-to-our-case} and \ref{proposition: the heart of the proof}.
\end{proof}

\end{document}